\def\twocell[#1]{\arrow[#1, dash, phantom, "\Rightarrow"{scale=1.125, yshift=-.4pt, description, allow upside down, sloped, inner sep=0pt}]}
\newtheorem{theorem}{Theorem}[section]
\newtheorem{corollary}[theorem]{Corollary}
\newtheorem{lemma}[theorem]{Lemma}
\newtheorem{proposition}[theorem]{Proposition}
\theoremstyle{definition}
\newtheorem{construction}[theorem]{Construction}
\newtheorem{convention}[theorem]{Convention}
\newtheorem{definition}[theorem]{Definition}
\newtheorem*{definition*}{Definition}
\newtheorem{example}[theorem]{Example}
\newtheorem{non-ex}[theorem]{Non-example}
\newtheorem{notation}[theorem]{Notation}
\newtheorem{remark}[theorem]{Remark}
\newtheorem*{claim*}{Claim}
\newtheorem{warning}[theorem]{Warning}
\newtheorem{introthm}{Theorem}
\newcommand{\notehelper}[3]{\textcolor{#3}{$\blacksquare$}\marginpar{\ifodd\thepage\raggedright\else\raggedleft\fi\color{#3}\tiny \textbf{#2:} #1}}
\newcommand{\qednow}{\pushQED{\qed}\qedhere\popQED}
\DeclareMathOperator{\Cc}{\mathcal{C}}
\DeclareMathOperator{\Dd}{\mathcal{D}}
\DeclareMathOperator{\Ee}{\mathcal{E}}
\newcommand{\Z}         {{\mathbb{Z}}}
\renewcommand{\phi}{\varphi}
\renewcommand{\epsilon}{\varepsilon}
\renewcommand{\S}{{{\mathscr S}}}
\newcommand{\Sp}{\textup{Sp}}
\DeclareMathOperator{\Spc}{Spc}
\DeclareMathOperator{\SPC}{SPC}
\DeclareMathOperator{\CMON}{CMON}
\DeclareMathOperator{\Cat}{Cat}
\DeclareMathOperator{\Grpd}{Grpd}
\DeclareMathOperator{\Set}{Set}
\newcommand{\PrL}{\textup{Pr}^{\textup{L}}}
\DeclareMathOperator{\essim}{ess\,im}
\DeclareMathOperator{\Hom}{Hom}
\DeclareMathOperator{\Fun}{Fun}
\DeclareMathOperator{\PSh}{PSh}
\DeclareMathOperator{\CMon}{CMon}
\DeclareMathOperator{\RelCat}{RelCat}
\newcommand{\tcat}{{\mathcal T}}
\newcommand{\cat}[1]{\textbf{\textup{#1}}}
\newcommand{\maps}{\textup{Hom}}
\newcommand{\catop}{^{\mathrm{op}}}
\newcommand{\op}{{\textup{op}}}
\renewcommand{\smallint}{{\textstyle\int}}
\newcommand{\groU}{{\mathfrak U}}
\newcommand{\groV}{{\mathfrak V}}
\newcommand{\groW}{{\mathfrak W}}
\DeclareMathOperator{\colim}{colim}
\DeclareMathOperator{\const}{const}
\DeclareMathOperator{\id}{id}
\DeclareMathOperator{\pr}{pr}
\DeclareMathOperator{\BC}{BC}
\DeclareMathOperator{\Orb}{Orb}
\DeclareMathOperator{\Glo}{Glo}
\DeclareMathOperator{\res}{res}
\DeclareMathOperator{\ind}{ind}
\DeclareMathOperator{\coind}{coind}
\DeclareMathOperator{\Nm}{Nm}
\newcommand{\ulhelper}[2]{\underline{\setbox0=\hbox{$#1#2$}\dp0=1pt \box0\relax}}
\newcommand{\ul}[1]{{\mathpalette\ulhelper{#1}}\hbox{\rule[-2pt]{0pt}{0pt}}}
\newcommand{\ulFun}{\ul{\Fun}}
\newcommand{\finSets}{\mathbb{F}}
\newcommand{\ulfinPsets}{\ul{\finSets}_T^{\Orbital}}
\newcommand{\ulfinptdPsets}{\ul{\finSets}_{T,*}^{\Orbital}}
\newcommand{\ulPCMon}{\ul{\CMon}^{\Orbital}}
\newcommand{\Orbital}{P}
\newcommand{\bbU}{\mathbf{U}}
\newcommand{\bbV}{{\mathbf{V}}}
\newcommand{\blank}{{\textup{--}}}
\newcommand{\tcatUn}[1]{\mathop{\hfuzz=10pt\hbox to 0pt{$\textstyle\bm\int$}\kern.3pt\raise.2pt\hbox to 0pt{$\textstyle\bm\int$}\lower.2pt\hbox to 0pt{$\textstyle\bm\int$}\kern.3pt\hbox to 0pt{$\textstyle\bm\int$}\kern-.1pt\raise.1pt\hbox{\color{white}$\textstyle\int$}}}
\newcommand{\GammaS}{{\Gamma\kern-1.5pt\mathscr S}}
\newcommand{\mySp}{{\mathscr S\kern-1.75ptp}}
\newcommand{\mathscrGr}{{\mathscr G\kern-1.25ptr}}
\DeclareMathOperator{\fib}{fib}
\newcommand{\ev}{\mathrm{ev}}
\newcommand{\coev}{\mathrm{coev}}
\let\smashp=\wedge
\newcommand{\iso}{\xrightarrow{\;\smash{\raisebox{-0.5ex}{\ensuremath{\scriptstyle\sim}}}\;}}
\newcommand{\qquadtext}[1]{\qquad\textrm{#1}\qquad}
\newcommand\noloc{%
	\nobreak
	\mspace{6mu plus 1mu}
	{:}
	\nonscript\mkern-\thinmuskip
	\mathpunct{}
	\mspace{2mu}
}
\def\nop@r@m{nop@r@m}
\newif\ifbr@cketed
\def\br@cketed{}
\let\@uldcite=\cite
\def\st@rcite#1#2{\@uldcite[#2]{#1}}
\def\@mscite#1{\@ifstar{\st@rcite{#1}}{\ifbr@cketed\@uldcite[\br@cketed]{#1}\else\@uldcite{#1}\fi}}
\renewcommand{\cite}[1][\nop@r@m]{\def\temp{#1}\ifx#1\nop@r@m\relax\br@cketedfalse\else\br@cketedtrue\fi\gdef\br@cketed{#1}\@mscite}
\begin{document}
	\title[The universal property of equivariant spectra]{Partial parametrized presentability and\\ the universal property of equivariant spectra}
	\author{Bastiaan Cnossen}
	\address{B.C.: Fakultät für Mathematik, Universität Regensburg, 93040 Regensburg, Germany (\textit{current address}) \&
	Mathematisches Institut, Rheinische Friedrich-Wilhelms-Universit\"at Bonn, Endenicher Allee 60, 53115 Bonn, Germany}
	\author{Tobias Lenz}
	\address{T.L.: Mathematisches Insitut, Rheinische Friedrich-Wilhelms-Universit\"at Bonn, Endenicher Allee 60, 53115 Bonn, Germany (\textit{current address}) \&
	{\hskip0pt minus 1pt} Mathematical{\hskip0pt minus 1pt} Institute,{\hskip0pt minus 1pt} University{\hskip0pt minus 1pt} of{\hskip0pt minus 1pt} Utrecht,{\hskip0pt minus 1pt} Budapestlaan{\hskip0pt minus 1pt} 6,{\hskip0pt minus 1pt} 3584{\hskip0pt minus 1pt} CD{\hskip0pt minus 1pt} Utrecht, The Netherlands}
	\author{Sil Linskens}
	\address{S.L.: Fakultät für Mathematik, Universität Regensburg, 93040 Regensburg, Germany (\textit{current address}) \&
	Mathematisches Institut, Rheinische Friedrich-Wilhelms-Universit\"at Bonn, Endenicher Allee 60, 53115 Bonn, Germany}

	\begin{abstract}
		We introduce a notion of \emph{partial presentability} in parametrized higher category theory and investigate its interaction with the concepts of parametrized semiadditivity and stability from \cite{CLL_Global}. In particular, we construct the free partially presentable $T$-categories in the unstable, semiadditive, and stable contexts and explain how to exhibit them as full subcategories of their fully presentable analogues.

		Specializing our results to the setting of (global) equivariant homotopy theory, we obtain a notion of \emph{equivariant presentability} for the global categories of \cite{CLL_Global}, and we show that the global category of genuine equivariant spectra is the free global category that is both equivariantly presentable and equivariantly stable. As a consequence, we deduce the analogous result about the $G$-category of genuine $G$-spectra for any finite group $G$, previously formulated by \cite{nardin2017thesis}.
	\end{abstract}

	\maketitle

	\begingroup\parskip=.5\parskip
	\tableofcontents
	\endgroup

	\section{Introduction}
	The term \emph{equivariant mathematics} was coined by Balmer and dell'Ambrogio \cite{balmerAmbrogio_Mackey} to refer in a unified way to the study of objects with group actions across a wide range of mathematical disciplines, for example in representation theory or equivariant homotopy theory. Given a group homomorphism $\alpha\colon H \to G$, any $G$-action on an object $X$ can naturally be restricted to an $H$-action, and accordingly most notions of `equivariant objects' give rise to \textit{global categories}: collections of \hbox{($\infty$-)}categories\footnote{We work in the context of higher category theory throughout, and so we will refer to $\infty$-categories simply as `categories.'} $\Cc(G)$ for every finite group $G$ equipped with suitably coherent restriction functors $\alpha^*\colon \Cc(G) \to \Cc(H)$, or more precisely~categories parametrized over the $2$-category $\Glo$ of finite connected groupoids.

	Many fundamental concepts of (higher) category theory have analogues in the world of global categories, leading for instance to notions of \textit{presentability}, \textit{equivariant semiadditivity}, and \textit{equivariant stability}. These properties were introduced and studied by the present authors in the previous article \cite{CLL_Global}, where we in particular showed that the universal presentable, presentable equivariantly semiadditive, and presentable equivariantly stable global categories all admit explicit models in terms of \emph{global homotopy theory} in the sense of \cite{schwede2018global,hausmann-global,g-global}.

	The presentability condition on a global category $\Cc$ used in these results is quite strong: in particular, it demands the existence of left adjoints to \emph{all} restriction functors $\alpha^*\colon \Cc(G) \to \Cc(H)$. This is in fact too strong for certain applications: several interesting examples, like the global category sending $G$ to the category of \emph{genuine $G$-spectra}, only admit such adjoints for \emph{injective} homomorphisms.

	In this article, we will therefore introduce and study a weaker notion of presentability for global categories called \textit{equivariant presentability}, which emphasizes the role of the subgroup inclusions among all group homomorphisms and allows one to capture these additional examples. As our main results, we will show that the universal examples of equivariantly presentable global categories in the unstable, semiadditive, and stable contexts are given by equivariant homotopy theory:

	\begin{introthm}[Universal property of equivariant spaces, \Cref{thm:orbi-S-univ-prop}]
		\label{introthm:orbi-S-univ-prop}
		The global category $\ul\S$ which associates to a finite group $G$ the category $\S_G$ of $G$-spaces is the free equivariantly presentable global category on one generator: for every equivariantly presentable global category $\Dd$, evaluation at the $1$-point space $* \in \ul\S(1)$ induces an equivalence of global categories
		\begin{equation*}
			\ul\Fun^{\textup{eq-cc}}_{\Glo}(\ul\S,\mathcal D)\iso\mathcal D
		\end{equation*}
		where the left hand side denotes a certain global category of `equivariantly cocontinuous' functors.
	\end{introthm}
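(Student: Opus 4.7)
The plan is to deduce this theorem by specializing the abstract construction of the free equivariantly presentable global category on one generator (one of the main results promised in the abstract) to the case $T=\Glo$ with $P$ the subcategory of faithful functors, i.e.\ of injective group homomorphisms. This general machinery should produce a global category $\Ee$ equipped with a distinguished generator $x\in\Ee(1)$ satisfying, for every equivariantly presentable $\Dd$, the universal property $\ul\Fun^{\textup{eq-cc}}_{\Glo}(\Ee,\Dd)\iso\Dd$ via evaluation at $x$. The theorem then reduces to constructing an equivalence $\Ee\simeq\ul\S$ that sends $x$ to the one-point $G$-space $*\in\ul\S(1)$.

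To produce such an equivalence, the strategy is to describe $\Ee$ as a parametrized free cocompletion. Classically, the free presentable category on a single generator is $\Spc\simeq\PSh(*)$; the equivariant analogue should identify the fiber $\Ee(G)$ with the free cocompletion of the $\Orbital$-slice of $\Glo$ over $G$, which is (equivalent to) the orbit category $\Orb_G$. Invoking Elmendorf's theorem $\S_G\simeq\PSh(\Orb_G)$ then yields a pointwise equivalence $\Ee(G)\simeq\ul\S(G)$, under which the distinguished object $x$ corresponds to the representable presheaf $G/G$, i.e.\ to the one-point $G$-space. These identifications should moreover be strictly compatible with evaluation at $x$ on both sides.

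To assemble these pointwise equivalences into an equivalence of global categories, one must match the structure functors along arbitrary homomorphisms $\alpha\colon H\to G$. For injective $\alpha$, the restriction $\alpha^*\colon\Ee(G)\to\Ee(H)$ arising from the abstract construction is dictated by its right adjointability properties, and should correspond under Elmendorf to precomposition along the induced functor $\Orb_H\to\Orb_G$; this is essentially a formal consequence of the universal property of presheaves. The main obstacle is handling non-injective homomorphisms: the global category $\ul\S$ supports restriction along \emph{all} homomorphisms, whereas the free construction $\Ee$ a priori only encodes the structure demanded by equivariant presentability, namely restrictions and adjoints along injective morphisms. Reconstructing restriction along a quotient $\pi\colon G\twoheadrightarrow G/N$, and verifying compatibility with both sides, is where most of the real work lies; the expected mechanism is a Beck--Chevalley argument applied to the pullback squares in $\Glo$ that express a general $\alpha$ as a composite of an injection and a quotient, together with the fact that both sides turn out to be fully determined by their values on the injective part of $\Glo$.
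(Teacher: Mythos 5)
Your starting point is the same as the paper's: the abstract machinery (Corollary~\ref{cor:spc-s-t-univ-property} applied to the cleft $\Orb\subset\Glo$) produces $\ul\Spc_{\Orb\triangleright\Glo}$ as the free equivariantly presentable global category, and the problem reduces to identifying it with $\ul\S$. But your proposed identification has a genuine gap in how the comparison is actually effected. If you want to obtain the comparison functor from the universal property of $\ul\Spc_{\Orb\triangleright\Glo}$, you must first know that $\ul\S$ is equivariantly cocomplete --- in particular that the left adjoints $i_!$ satisfy base change for pullbacks of injections along \emph{arbitrary} homomorphisms in $\Glo$ (including pullbacks that only exist in $\PSh(\Glo)$). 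Your proposal never establishes this; the paper does it by constructing a fully faithful global functor $\const\colon\ul\S\to\ul\S^\textup{gl}$ into the globally presentable global category of global spaces, admitting an $\Orb$-right adjoint (Lemma~\ref{lemma:triv-S-Sgl}, which rests on model-categorical input from \cite{g-global}), so that $\ul\S$ inherits equivariant cocompleteness as a subcategory closed under equivariant colimits.

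If instead you try to build the equivalence directly by assembling the pointwise Elmendorf equivalences $\ul\Spc_{\Orb\triangleright\Glo}(G)\simeq\PSh(\Orb_G)\simeq\S_G$, you face an unbounded coherence problem: an equivalence of functors $\Glo^\op\to\Cat$ requires compatible naturality data for all homomorphisms and all $2$-cells, and a Beck--Chevalley argument does not manufacture this data. Moreover, the ``fact'' you invoke --- that both sides are determined by their values on the injective part of $\Glo$ --- is false for general global categories: $\iota^*\colon\Cat_{\Glo}\to\Cat_{\Orb}$ is far from fully faithful, and the whole point of the paper's cleft formalism is that the inflation functors along surjections carry genuinely extra information. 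The paper sidesteps both issues at once: once the comparison functor $F$ exists, its full faithfulness is deduced from the commuting square relating $F$ to the two fully faithful embeddings $\iota_!\colon\ul\Spc_{\Orb\triangleright\Glo}\hookrightarrow\ul\Spc_{\Glo}$ and $\const\colon\ul\S\hookrightarrow\ul\S^\textup{gl}$ over the known equivalence $\ul\Spc_{\Glo}\simeq\ul\S^\textup{gl}$, and only essential surjectivity uses the Elmendorf-type generation of $\S_G$ by orbits $G/H=i_!(*)$, which is the one place where your intuition matches the actual argument. To repair your proposal you would need either the embedding into global spaces (or some other proof that $\ul\S$ is equivariantly cocomplete), or an explicit coherent model for the functoriality you wish to transport.
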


	\begin{introthm}[Universal property of equivariant special $\Gamma$-spaces, \Cref{thm:gamma-univ-prop}]
		\label{introthm:gamma-univ-prop}
		The global category $\ul{\GammaS}_*^\textup{spc}$ which associates to each finite group $G$ the category of special $\Gamma$-$G$-spaces in the sense of Shimakawa \cite{shimakawa} is the free equivariantly presentable equivariantly semiadditive global category on one generator: for every equivariantly presentable equivariantly semiadditive global category $\mathcal D$ evaluation at the free commutative monoid ${\mathbb P}(*)$ provides an equivalence of global categories
		\begin{equation*}
			\ul\Fun^{\textup{eq-cc}}_{\Glo}(\ul{\GammaS}_*^\textup{spc},\mathcal D)\iso\mathcal D.
		\end{equation*}
	\end{introthm}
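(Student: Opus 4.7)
My strategy is to deduce this from \Cref{introthm:orbi-S-univ-prop} via a free semiadditive construction. By the general theory of partial parametrized presentability developed earlier in the paper, I expect the inclusion of equivariantly presentable equivariantly semiadditive global categories into equivariantly presentable ones to be reflective, with left adjoint sending a presentable global category $\mathcal C$ to a free semiadditive completion of the form $\ulPCMon(\mathcal C)$, i.e.\ equivariant commutative monoid objects in $\mathcal C$. Granting this, for any equivariantly presentable equivariantly semiadditive $\mathcal D$, composition of adjunctions and \Cref{introthm:orbi-S-univ-prop} yield
\[
    \ul\Fun^{\textup{eq-cc}}_{\Glo}\bigl(\ulPCMon(\ul\S), \mathcal D\bigr)
    \;\simeq\; \ul\Fun^{\textup{eq-cc}}_{\Glo}(\ul\S, \mathcal D)
    \;\simeq\; \mathcal D,
\]
where the first equivalence uses that the right-hand functor category is automatically semiadditive since $\mathcal D$ is. Unwinding definitions, the composite equivalence is evaluation at the image of $* \in \ul\S(1)$ under the unit $\ul\S \to \ulPCMon(\ul\S)$, which is precisely the free equivariant commutative monoid $\mathbb P(*)$.

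The real content is then to identify $\ulPCMon(\ul\S)$ with $\ul{\GammaS}_*^\textup{spc}$. Group-by-group, this is an equivariant refinement of Segal's theorem: for each finite group $G$, one needs an equivalence between $G$-equivariant commutative monoids in $\S_G$ and special $\Gamma$-$G$-spaces in Shimakawa's sense \cite{shimakawa}. The pointwise comparison should proceed along familiar lines, interpreting both sides as semiadditive (space-valued) functors out of an equivariant analogue of the finite-pointed-sets category $\Gamma^\op$, or equivalently, as $\Gamma$-object-valued diagrams satisfying a Segal-type condition; this is essentially Shimakawa's original observation, recast in the parametrized language.

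The main obstacle I anticipate is upgrading this pointwise identification to an equivalence of global categories, i.e.\ ensuring compatibility with \emph{all} restriction functors $\alpha^*$ rather than only those along injective homomorphisms. On the $\ulPCMon(\ul\S)$ side the restrictions come from the abstract $\Glo$-parametrized formalism of \cite{CLL_Global}, while on the $\ul{\GammaS}_*^\textup{spc}$ side they are defined combinatorially on $\Gamma$-space diagrams; one must verify that these intertwine coherently, most likely by exhibiting both sides as partial left Kan extensions along a single universal equivariant Segal functor. Once this global comparison is established, the theorem follows formally by combining it with the adjunction argument above and \Cref{introthm:orbi-S-univ-prop}.
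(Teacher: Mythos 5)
Your high-level strategy --- realize $\ul{\GammaS}_*^\textup{spc}$ as a free semiadditive completion of $\ul\S$ and feed this into \Cref{introthm:orbi-S-univ-prop} --- points in a reasonable direction, but both of its pillars hide genuine gaps. First, the reflectivity you ``expect'' is not available in the equivariantly presentable setting: \Cref{thm:Semiadd_omnibus} only exhibits $\ulPCMon(\Cc)$ as the presentable $P$-semiadditive completion with respect to \emph{$T$-cocontinuous} functors into \emph{$T$-presentable} targets, and extending such a statement to partially presentable targets is precisely the nontrivial content of \Cref{prop:C-bar} and \Cref{thm:CMon-ST-univ-prop}. The argument there does not apply $\ulPCMon$ to $\ul\S$ at all; it constructs a different object, namely the full subcategory $\ul\CMon^{\Orb}_{\Orb\triangleright\Glo}\subset\ul\CMon^{\Orb}_{\Glo}$ generated under equivariant colimits by $\mathbb P(*)$, and establishing its universal property already requires embedding a general equivariantly presentable semiadditive target into a globally cocomplete one in a larger universe (\Cref{lemma:semiadd-T-cocompl}) together with a delicate localization argument. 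You cannot simply quote a partial-presentability analogue of \Cref{thm:Semiadd_omnibus}; you would have to prove it, and it is not clear that $\ulPCMon(\ul\S)$ (rather than the generated subcategory) is even the correct model of the free object.

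Second, the identification with $\ul{\GammaS}_*^\textup{spc}$ is where the real work lies, and your sketch underestimates it in two ways. The pointwise statement you propose is not the right one: the value of $\ulPCMon(\ul\S)$ at $G$ is a category of $\Glo$-parametrized semiadditive functors involving data over \emph{all} homomorphisms into $G$, not ``commutative monoids in $\S_G$,'' and its comparison with Shimakawa's special $\Gamma$-$G$-spaces is a genuine equivariant Segal-machine theorem rather than a recasting of Shimakawa's observation. Moreover, the coherence problem you flag is worse than you suggest: by \Cref{warning:L_not_global_left_adj} the derived restriction functors along \emph{non-injective} homomorphisms do not preserve specialness, so there is no naive combinatorial description of the restriction functors of $\ul{\GammaS}_*^\textup{spc}$ to intertwine with. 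The paper circumvents both problems by routing through $G$-\emph{global} homotopy theory: it embeds $\ul{\GammaS}_*^\textup{spc}$ fully faithfully into $\ul\GammaS^\textup{gl, spc}_*$ with an $\Orb$-right adjoint (\Cref{prop:Lconst-Gamma}), invokes the already-established equivalence $\ul\GammaS^\textup{gl, spc}_*\simeq\ul\CMon^{\Orb}_{\Glo}$ from \cite{CLL_Global} (\Cref{thm:Gamma-gl-comp}), and matches the two full subcategories by a generation argument (\Cref{lemma:GammaS-generated}). Without an argument of comparable substance replacing your appeal to a ``universal equivariant Segal functor,'' the proof is incomplete.
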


	\begin{introthm}[Universal property of genuine equivariant spectra, \Cref{thm:mysp-univ-prop}]
		\label{introthm:mysp-univ-prop}
		The global category $\ul\mySp$ which associates to a finite group $G$ the category $\mySp_G$ of genuine $G$-spectra is the free equivariantly presentable equivariantly stable global category on one generator: for any other such $\mathcal D$ evaluation at the sphere spectrum ${\mathbb S}$ defines an equivalence
		\[
		\Fun^\textup{eq-cc}_{\Glo}(\ul\mySp,\mathcal D)\iso\mathcal D.
		\]
	\end{introthm}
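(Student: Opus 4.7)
The plan is to bootstrap from \Cref{introthm:gamma-univ-prop}, using that every equivariantly stable global category is automatically equivariantly semiadditive. Since $\ul\mySp$ is itself equivariantly presentable and equivariantly semiadditive, \Cref{introthm:gamma-univ-prop} produces a canonical equivariantly cocontinuous functor
\[
L\colon \ul{\GammaS}_*^\textup{spc}\longrightarrow\ul\mySp
\]
sending the free commutative monoid ${\mathbb P}(*)$ to the sphere spectrum ${\mathbb S}$. For any equivariantly presentable equivariantly stable target $\Dd$, the evaluation-at-${\mathbb S}$ functor factors as
\[
\ul\Fun^\textup{eq-cc}_{\Glo}(\ul\mySp,\Dd)\xrightarrow{\;L^*\;}\ul\Fun^\textup{eq-cc}_{\Glo}(\ul{\GammaS}_*^\textup{spc},\Dd)\iso\Dd,
\]
where the second equivalence is \Cref{introthm:gamma-univ-prop} itself. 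It therefore suffices to prove that $L^*$ is an equivalence, i.e.\ that $L$ exhibits $\ul\mySp$ as the universal equivariantly cocontinuous recipient of $\ul{\GammaS}_*^\textup{spc}$ among equivariantly presentable equivariantly stable global categories.

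To prove this, I would realize $L$ as the unit of the parametrized stabilization adjunction between equivariantly presentable equivariantly semiadditive and equivariantly presentable equivariantly stable global categories, i.e.\ as the parametrized analogue of $\Sigma^\infty_+ \colon \CMon(\Spc) \to \Sp$. Concretely, $L$ should appear as a Bousfield localization of $\ul{\GammaS}_*^\textup{spc}$ that inverts suspension in a parametrized sense; fibrewise over each finite group $G$, this ought to recover the classical stabilization from special $\Gamma$-$G$-spaces to $\mySp_G$, obtained by group completion followed by inverting the representation-sphere suspensions $-\wedge S^V$.

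The main obstacle is identifying this abstract parametrized stabilization with $\ul\mySp$ itself. Beyond the fibrewise comparison, the delicate point is compatibility with the parametrized structure along injective group homomorphisms — that is, that the induction left adjoints on $\ul\mySp$ are precisely the stabilizations of those on $\ul{\GammaS}_*^\textup{spc}$. For this I would appeal to the general parametrized stabilization machinery developed in the body of the paper, together with the classical Wirthm\"uller isomorphism identifying induction and coinduction of $G$-spectra along subgroup inclusions, which is exactly what upgrades equivariant semiadditivity to equivariant stability in the $G$-spectrum setting. Once the identification $L \simeq \textup{stab}$ is in place, the universal property follows at once, and the $G$-categorical statement of \cite{nardin2017thesis} can be recovered as a corollary by restricting along the canonical inclusion of the slice of $\Glo$ at $BG$.
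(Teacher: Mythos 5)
Your first reduction is sound and in fact matches the paper's architecture more closely than you may realize: since equivariant stability is by definition equivariant semiadditivity plus fiberwise stability, the universal equivariantly presentable equivariantly stable global category is obtained from the semiadditive one of \Cref{introthm:gamma-univ-prop} by a purely \emph{fiberwise} stabilization. The paper makes this precise by setting $\ul\Sp^{\Orb}_{\Orb\triangleright\Glo}\coloneqq\Sp\otimes\ul\CMon^{\Orb}_{\Orb\triangleright\Glo}$, i.e.\ tensoring each fiber with ordinary spectra, and checking that this inherits equivariant cocompleteness, that it is automatically equivariantly stable because the semiadditivity was already present in the input, and that it has the expected universal property (\Cref{thm:sp-univ-property}). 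Note that this is \emph{not} a Bousfield localization inverting representation spheres: only $S^1$ is inverted fiberwise, and the invertibility of representation spheres in $\ul\mySp$ is an output of the theorem, not an ingredient. Likewise, the Wirthm\"uller isomorphism is not what needs to be verified at this stage — ambidexterity is already encoded in the special $\Gamma$-$G$-space input — so invoking it does not address the actual remaining difficulty.

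That remaining difficulty is the one your proposal leaves essentially open: identifying the abstract stabilization with $\ul\mySp$, coherently in $G$ and compatibly with the induction functors (a Beck--Chevalley condition, not an ambidexterity statement). Fiberwise this amounts to a genuine-$G$-spectra recognition theorem ($\mySp_G\simeq\Sp\otimes(\text{special }\Gamma\text{-}G\text{-spaces})$, essentially Guillou--May/Nardin), and making it natural in $G$ is exactly where the work lies; a fiberwise equivalence plus Wirthm\"uller does not assemble into a global functor. The paper sidesteps this by a change of route: it constructs a fully faithful global functor $\cat{L}\id\colon\ul\mySp\to\ul\mySp^{\textup{gl}}$ into $G$-global spectra admitting an $\Orb$-right adjoint (checked at the level of model categories, where the Beck--Chevalley maps are pointset-level isomorphisms), deduces from this that $\ul\mySp$ is equivariantly cocomplete and stable, and then identifies the canonical functor $\ul\Sp^{\Orb}_{\Orb\triangleright\Glo}\to\ul\mySp$ as fully faithful by comparison with the already-established global universal property of $\ul\mySp^{\textup{gl}}$ from \cite{CLL_Global}, with essential surjectivity following from generation of $\mySp_G$ under colimits and desuspensions by the orbit suspension spectra $\Sigma^\infty_+(G/H)\simeq i_!\mathbb S$. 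If you want to pursue your route instead, you would need to supply the coherent equivariant recognition principle as an external input; as written, your proposal asserts the conclusion of that step rather than proving it.
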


	These results suggest that the original, stronger notion of presentability from \cite{CLL_Global} can be viewed as a characteristic feature of global homotopy theory, distinguishing it from classical equivariant homotopy theory, and we will accordingly use the term \emph{global presentability} for it below.

	From these results one may immediately deduce the analogous universal properties for the \emph{$G$-categories} of $G$-spaces, $\Gamma$-$G$-spaces and $G$-spectra for every fixed finite group $G$; we spell this out in the final section of the article.

	\subsection*{Partial presentability in parametrized higher category theory}
	The above notions of equivariant presentability, semiadditivity, and stability are in fact instances of more general notions defined in the setting of \emph{parametrized higher category theory} as introduced in \cite{exposeI}. Such parametrized notions usually come in various degrees of `parametrized refinement': in particular, \cite{CLL_Global} studied various levels of semiadditivity and stability that can exist in a parametrized category, encoded in the choice of a so-called \emph{atomic orbital} subcategory of the parametrizing category $T$. Equivariant stability and semiadditivity of global categories correspond to the case of the wide subcategory $\Orb\subset\Glo$ of faithful functors.

	To study the analogous situation for presentability of parametrized categories, we introduce \emph{clefts} $S\subset T$ in the present article and associate to each of them a notion of presentability, interpolating between na\"ive, or `fiberwise,' presentability and the full parametrized presentability considered e.g.~in \cite{martiniwolf2022presentable,hilman2022parametrised,CLL_Global}. The aforementioned atomic orbital subcategories are examples of clefts, and equivariant presentability of global categories is again recovered from the case $\Orb\subset \Glo$.

	\begin{definition*}[\Cref{def:S-Presentability}, \Cref{lem:CharacterizationMCocompleteness}]
		A $T$-category $\Cc\colon T\catop \to \Cat$ is said to be \emph{$S$-presentable} if the following conditions are satisfied:
		\begin{enumerate}
			\item $\Cc$ is fiberwise presentable, i.e.~it factors through the non-full subcategory $\PrL\subset \Cat$ of presentable categories and colimit-preserving functors.
			\item For every morphism $f\colon A \to B$ in $S$, the restriction $f^*\colon \Cc(B) \to \Cc(A)$ admits a left adjoint $f_!\colon \Cc(A) \to \Cc(B)$, and these left adjoints satisfy base change for pullbacks along arbitrary maps in $T$ (see \Cref{lem:CharacterizationMCocompleteness} for a precise definition).
		\end{enumerate}
	\end{definition*}

	As one of our key technical results (Theorem~\ref{thm:fracture}), we moreover show how clefts give rise to \emph{fractured $\infty$-topoi} in the sense of \cite{SAG}*{Definition~20.1.2.1}, which allows us to investigate the behavior of partial presentability under changing the parametrizing category along a cleft. Using this `change of parameter' yoga, we then establish analogues of the results from \cite{martiniwolf2021limits, CLL_Global} in the partially presentable world by constructing the free unstable, semiadditive, and stable examples of $S$-presentable $T$-categories, and relating them both to the corresponding universal $S$-presentable $S$-categories as well as $T$-presentable $T$-categories:\footnote{For brevity we only state the stable cases of these theorems here, and we refer the reader to Lemma~\ref{lemma:iota!-ext} and \Cref{cor:spc-s-t-univ-property} resp.~Theorems~\ref{thm:CMon-ST-adj} and \ref{thm:CMon-ST-univ-prop} for the unstable and semiadditive versions.}

	\begin{introthm}[\Cref{thm:sp-univ-property}]
		\label{introthm:Abstract_Universal_Properties}
		Let $S \subset T$ be a cleft, and let $P \subset T$ be an atomic orbital subcategory such that $P\subset S$. Then there exists an $S$-presentable $P$-stable $T$-category $\ul\Sp_{S\triangleright T}^P$ with the following universal property: for any $S$-presentable $P$-stable $T$-category $\mathcal D$, evaluation at a certain object $\mathbb S$ induces an equivalence \[\ul\Fun_T^\textup{$S$-cc}(\ul\Sp^P_{S\triangleright T},\mathcal D)\iso\mathcal D.\]
	\end{introthm}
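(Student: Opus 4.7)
The plan is to construct $\ul\Sp_{S\triangleright T}^P$ as the image under a change-of-parameter functor $\iota_!\colon \Cat_S \to \Cat_T$ --- left adjoint to restriction $\iota^*$ along the cleft inclusion $\iota\colon S \hookrightarrow T$ --- of the free $S$-presentable $P$-stable $S$-category $\ul\Sp_S^P$. The latter exists by applying the main results of \cite{CLL_Global} to the atomic orbital subcategory $P \subset S$ at the $S$-parametrized level. The key input is the fractured $\infty$-topos structure on $T$ produced by Theorem~\ref{thm:fracture}, which is what makes $\iota_!$ fully faithful and well-behaved on the relevant subcategories of $S$-presentable parametrized categories. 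I would accordingly define
\[\ul\Sp_{S\triangleright T}^P \;:=\; \iota_!\,\ul\Sp_S^P.\]

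Next I would verify the required properties. For \emph{$S$-presentability:} fiberwise presentability and the existence of left adjoints along maps in $S$ are inherited from $\ul\Sp_S^P$; the subtle point is base change against \emph{all} pullbacks in $T$, not just those in $S$, and this is exactly what the fractured topos structure of a cleft encodes about $\iota_!$. For \emph{$P$-stability:} since $P \subset S$ and $\iota_!$ agrees with $\ul\Sp_S^P$ on $S$-fibers (by fully faithfulness), $P$-stability of $\iota_!\,\ul\Sp_S^P$ follows from $P$-stability of $\ul\Sp_S^P$ as an $S$-category. The ``moreover'' statement $\iota^*\,\ul\Sp_{S\triangleright T}^P \simeq \ul\Sp_S^P$ is just the fully faithfulness of $\iota_!$.

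For the universal property, let $\Dd$ be any $S$-presentable $P$-stable $T$-category. After checking that the adjunction $\iota_! \dashv \iota^*$ restricts to $S$-cocontinuous functors on both sides, $S$-cocontinuous functors $\iota_!\,\ul\Sp_S^P \to \Dd$ correspond to $S$-cocontinuous $S$-functors $\ul\Sp_S^P \to \iota^*\Dd$. Since $\iota^*\Dd$ is $S$-presentable and $P$-stable as an $S$-category, the $S$-parametrized universal property of $\ul\Sp_S^P$ from \cite{CLL_Global} identifies the latter, via evaluation at the sphere, with the objects of $\iota^*\Dd$, and hence with those of $\Dd$. Running this argument in the slices $T_{/U}$ --- which is well-behaved under the cleft --- upgrades the statement to the desired equivalence of $T$-categories $\ul\Fun_T^{\textup{$S$-cc}}(\ul\Sp_{S\triangleright T}^P,\Dd) \simeq \Dd$, with $\mathbb S := \iota_!(\text{sphere of }\ul\Sp_S^P)$ serving as the generator.

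The main obstacle is establishing the adjunction $\iota_! \dashv \iota^*$ at precisely this level, with $\iota_!$ fully faithful and compatible with all the categorical structures in play. This requires both the fractured-topos machinery (to promote $S$-base change to $T$-base change for $\iota_!$) and a careful verification that the adjunction restricts to $S$-cocontinuous functors. The analogous results in the unstable and semiadditive settings (Lemma~\ref{lemma:iota!-ext}, \Cref{cor:spc-s-t-univ-property}, and \Cref{thm:CMon-ST-adj,thm:CMon-ST-univ-prop}) should supply the framework, so that the stable case reduces to verifying that $P$-stabilization commutes with $\iota_!$ --- a formal consequence once the semiadditive case is in hand.
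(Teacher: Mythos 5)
Your construction starts from the wrong functor. You define $\ul\Sp^P_{S\triangleright T}$ as the left Kan extension $\iota_!\ul\Sp^P_S$ for the adjunction $\iota_!\dashv\iota^*$ between $\Cat_S$ and $\Cat_T$, but since $\iota\colon S\hookrightarrow T$ is wide and \emph{not full}, that Kan extension is not fully faithful (the unit $\Cc\to\iota^*\iota_!\Cc$ is not an equivalence), it does not have the right fibers, and above all it cannot manufacture the one piece of data that is actually at stake: the restriction functors along the non-$S$ morphisms of $T$. The paper never uses $\iota_!\colon\Cat_S\to\Cat_T$; the only change-of-parameter adjunction it controls is $\iota^*\dashv\iota_*$ (\Cref{thm:iota*-cc}, \Cref{cor:S-cocomplete-adjunction}), and the symbol $\iota_!$ appearing in \Cref{lemma:iota!-ext} and \Cref{thm:CMon-ST-adj} denotes an $S$-functor $\ul\Spc_S\to\iota^*\ul\Spc_T$, resp.\ $\ul\CMon^P_S\to\iota^*\ul\CMon^P_T$, internal to $\Cat_S$. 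The actual construction runs in the opposite direction: one takes the full $T$-subcategory of the $T$-presentable $\ul\CMon^P_T$ generated under $S$-colimits by ${\mathbb P}(*)$, uses the cleft/fracture axioms to see that it is closed under all restrictions (\Cref{lemma:iota!-essim}), proves its universal property among $S$-cocomplete $P$-semiadditive $T$-categories directly at the $T$-level (\Cref{prop:C-bar}, which requires a genuine universe-enlargement argument, \Cref{lemma:semiadd-T-cocompl}), and only afterwards identifies its underlying $S$-category with $\ul\CMon^P_S$. Then $\ul\Sp^P_{S\triangleright T}\coloneqq\Sp\otimes\ul\CMon^P_{S\triangleright T}$ is the \emph{fiberwise} stabilization.

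The reduction of the universal property to the $S$-level also fails as stated. By \Cref{warn:S-cocomplete-vs-und-S-cocomplete}, $S$-cocompleteness of a $T$-category $\Dd$ (and $S$-cocontinuity of $T$-functors into it) involves Beck--Chevalley conditions against pullbacks along \emph{arbitrary} maps of $T$, so it is strictly more than a property of $\iota^*\Dd$; and the target statement is an equivalence of $T$-categories $\ul\Fun_T^{S\textup{-cc}}(\ul\Sp^P_{S\triangleright T},\Dd)\iso\Dd$, whose functoriality in non-$S$ morphisms of $T$ cannot be seen from $\Fun_S^{S\textup{-cc}}(\ul\Sp^P_S,\iota^*\Dd)$ — passing to slices $T_{/U}$ only reproduces $S$-morphisms. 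The transposition you invoke is not an instance of any adjunction the cleft provides; the available internal equivalence is $\ul\Fun_T^{S\textup{-cc}}(\Cc,\iota_*\Ee)\simeq\iota_*\ul\Fun_S^{S\textup{-cc}}(\iota^*\Cc,\Ee)$ (\Cref{cor:iota**-cc-internal}), which points the other way and is used only to \emph{identify} the underlying $S$-category, not to prove the $T$-level universal property. What is correct in your outline is the final step: once $\ul\CMon^P_{S\triangleright T}$ and its universal property are established, deducing the stable case by applying $\Sp\otimes\blank$ fiberwise is comparatively formal (\Cref{cor:check-cocontinuity-on-Sigma}).
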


	The expression `$\ul\Sp_{S\triangleright T}^P$' is to be parsed as `$\left(\ul\Sp_{S}^P\right)_{\triangleright T}$': the underlying $S$-category of $\ul\Sp_{S\triangleright T}^P$ agrees with the free $S$-presentable $P$-stable $S$-category $\ul\Sp_S^P$.

	\begin{introthm}[\Cref{thm:sp-adjunction}]\label{introthm:Abstract_subcat}
		Let $P\subset S\subset T$ be as above and consider the unique $S$-cocontinuous $T$-functor
		\begin{equation*}
			\iota_!\colon\ul\Sp^P_{S\triangleright T}\to\ul\Sp^P_{T}
		\end{equation*}
		sending $\mathbb S$ to $\mathbb S$. Then $\iota_!$ is fully faithful, and its underlying $S$-functor sits in a sequence of $S$-adjoints $\iota_! \dashv \iota^* \dashv \iota_*$.
	\end{introthm}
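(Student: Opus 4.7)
The $T$-functor $\iota_!$ itself is immediate from \Cref{introthm:Abstract_Universal_Properties}: since $\ul\Sp^P_T$ is $T$-presentable it is in particular $S$-presentable, and it is $P$-stable by assumption, so its universal property applied to $\ul\Sp^P_T$ with distinguished object $\mathbb S$ produces a unique $S$-cocontinuous $T$-functor $\iota_!$ with $\iota_!(\mathbb S)\simeq\mathbb S$.

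For the $S$-adjoints on the underlying $S$-functor, the plan is to work with the underlying $S$-categories. By the second part of \Cref{introthm:Abstract_Universal_Properties}, the underlying $S$-category of $\ul\Sp^P_{S\triangleright T}$ is the free $S$-presentable $P$-stable $S$-category $\ul\Sp^P_S$, and the underlying $S$-category of $\ul\Sp^P_T$ remains $S$-presentable and $P$-stable. Since the underlying $S$-functor of $\iota_!$ is an $S$-cocontinuous functor between $S$-presentable $S$-categories, a parametrized adjoint functor theorem supplies a genuine $S$-right-adjoint $\iota^*$. To further produce $\iota_*$, one has to show that $\iota^*$ itself preserves $S$-colimits; by adjunction this reduces to $\iota_!$ preserving $S$-compact objects. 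Both $\ul\Sp^P_S$ and $(\ul\Sp^P_T)|_S$ are $S$-compactly generated by the $P$-orbits of $\mathbb S$, and since $\iota_!(\mathbb S)\simeq\mathbb S$ one expects this comparison of generators to be essentially tautological.

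Once both adjoints are in place, fully faithfulness of $\iota_!$ is equivalent to the unit $\id\to\iota^*\iota_!$ being an equivalence. Assuming $\iota^*$ has been shown $S$-cocontinuous in the previous step, both $\id$ and $\iota^*\iota_!$ are $S$-cocontinuous $S$-endofunctors of $\ul\Sp^P_S$, so by the universal property of $\ul\Sp^P_S$ as the free $S$-presentable $P$-stable $S$-category on $\mathbb S$ the unit is determined by its component at $\mathbb S$. It therefore suffices to show that the natural map $\mathbb S\to\iota^*\iota_!(\mathbb S)\simeq\iota^*(\mathbb S)$ is an equivalence. Mapping in from the $P$-orbits of $\mathbb S$ on the left and using the adjunction identity $\maps(\blank,\iota^*(\mathbb S))\simeq\maps(\iota_!(\blank),\mathbb S)$, this reduces precisely to the preservation-of-generators statement used above.

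The main obstacle I anticipate lies in the second step: upgrading the pointwise right adjoint to an honest $S$-functor $\iota^*$ and then verifying that $\iota^*$ preserves $S$-colimits. This is where the cleft formalism and the fractured $\infty$-topos machinery of \Cref{thm:fracture} are expected to do the real work, presumably by providing explicit descriptions of $\iota^*$ and $\iota_*$ as restriction and right Kan extension along a suitable map of parametrizing indexing data, so that the triple $\iota_!\dashv\iota^*\dashv\iota_*$ and the fully faithfulness of $\iota_!$ both become formal consequences of the construction.
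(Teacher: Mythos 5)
There is a genuine gap, and it sits exactly where you place the least weight: the full faithfulness of $\iota_!$. You reduce it to showing that the unit $\mathbb S\to\iota^*\iota_!\mathbb S\simeq\iota^*\mathbb S$ is an equivalence and then test against the generators $f_!\Sigma^n\mathbb S$ of $\ul\Sp^P_S$; by adjunction this asks precisely that $\iota_!$ induce equivalences $\maps_{\ul\Sp^P_S}(f_!\Sigma^n\mathbb S,\mathbb S)\to\maps_{\ul\Sp^P_T}(f_!\Sigma^n\mathbb S,\mathbb S)$, i.e.\ full faithfulness of $\iota_!$ on a generating family. This is not ``essentially tautological,'' and it is a different statement from $\iota_!$ preserving compact objects: for $\Orb\subset\Glo$ it is the comparison of equivariant with global stable homotopy groups of spheres, which is exactly the content one has to prove, so your argument is circular at this point. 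Your auxiliary claim that $(\ul\Sp^P_T)|_S$ is $S$-compactly generated by the $P$-orbits of $\mathbb S$ is moreover false in general: it would force $\iota_!$ to be essentially surjective, whereas the whole point is that its image is a proper subcategory (global spectra are not generated by the sphere alone). Finally, the $S$-cocontinuity of $\iota^*$ --- needed both for $\iota_*$ and for your generator argument to make sense --- requires the Beck--Chevalley conditions over and above the pointwise cocontinuity that a compactness argument would deliver, and you explicitly leave this open.

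The paper's route is different and sidesteps all of this: $\ul\Sp^P_{S\triangleright T}$ is \emph{defined} as $\Sp\otimes\ul\CMon^P_{S\triangleright T}$, and \Cref{thm:sp-adjunction} is deduced from its semiadditive precursor \Cref{thm:CMon-ST-adj} by applying the $2$-functor $\Sp\otimes\blank$. Full faithfulness and the adjunction $\iota_!\dashv\iota^*$ are inherited directly (the unit of $\Sp\otimes\iota_!\dashv\Sp\otimes\iota^*$ is induced by the unit at the $\CMon$-level, which is already known to be an equivalence), and the further adjoint $\iota_*$ comes from the pointwise right adjoint together with \Cref{cor:check-cocontinuity-on-Sigma}. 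The hard content --- identifying $\ul\CMon^P_S$ with the subcategory of $\ul\CMon^P_T$ generated under $S$-colimits by $\mathbb P(*)$ via matching universal properties, and constructing $\iota^*$ and $\iota_*$ explicitly from restriction along $\ul{\mathbb F}^P_{S,*}\to\ul{\mathbb F}^P_{T,*}$ and the fractured-topos functors --- lives in \Cref{thm:CMon-technical} and \Cref{prop:iota*-semiadd-RA}; that is where the work you defer to ``the cleft formalism'' actually happens, and without some substitute for it your sketch does not close.
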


	This then allows us to deduce Theorems \ref{introthm:orbi-S-univ-prop}, \ref{introthm:gamma-univ-prop}, and \ref{introthm:mysp-univ-prop} from their global analogues established in \cite{CLL_Global}: building on the model categorical results of \cite{g-global}, we show that the global categories $\ul\S$ of equivariant spaces, $\ul\GammaS_*^\text{spc}$ of equivariant special $\Gamma$-spaces, and $\ul\mySp$ of equivariant spectra likewise embed into their global counterparts, and furthermore that the images of these embeddings match up with those on the parametrized side.

	\subsection*{Outlook}
	While Theorem~\ref{introthm:Abstract_subcat} above (together with its unstable and semiadditive versions) explains how to obtain the $S$-presentable universal examples as full subcategories of their $T$-presentable analogues, it is sometimes also possible to go the other way round, and to actually reconstruct the universal fully presentable categories from the partially presentable ones: namely, as the third author shows in \cite{Linskens2023globalization}, under somewhat more restrictive conditions on the pair $S\subset T$ the forgetful functor from $T$-presentable to $S$-presentable $T$-categories admits a left adjoint, which can be explicitly computed in terms of certain partially lax limits. Furthermore this left adjoint preserves the subcategories of $P$-stable $T$-categories for $P\subset S$. Specializing to the inclusion $\Orb\subset\Glo$ again, the main results of the present paper as well as its prequel \cite{CLL_Global} then yield a description of \emph{$G$-global spectra} as a partially lax limit of $H$-equivariant spectra over all homomorphisms $H\to G$, generalizing the result for $G=1$ proven in \cite{LNP}. Note that this application manifestly relies on the universal properties as global categories, instead of their $G$-equivariant shadows for fixed groups $G$.

	\subsection*{Organization}
	We begin by recalling the necessary background on parametrized higher category theory in Section~\ref{sec:prelim}. We then introduce the notion of a cleft in Section~\ref{sec:cleft} and explain its connection to fractured $\infty$-topoi. We moreover show that any atomic orbital subcategory and any right class of a factorization system give rise to a cleft, in particular establishing our key example $\Orb\subset\Glo$.

	Section \ref{sec:Partial-presentability} explains how a cleft $S$ of $T$ yields a well-behaved theory of partial presentability for $T$-categories, and how general (co)limits behave under changing the parametrizing category along a cleft. This allows us to reinterpret and extend work of Martini and Wolf \cite{martiniwolf2021limits} on freely adding $S$-colimits, in particular identifying the free $S$-presentable $T$-category with a full subcategory of the free $T$-presentable $T$-category. In Section \ref{sec:spaces} we use this to describe the free equivariantly presentable global category as the underlying global category of a diagram of model categories of equivariant spaces, proving Theorem~\ref{introthm:orbi-S-univ-prop}.

	In Section \ref{sec:semiadd} we recall the notion of $P$-semiadditivity from \cite{CLL_Global} for atomic orbital subcategories $P\subset T$. Given a cleft $S$ with $P\subset S$, we then construct the free $S$-presentable $P$-semiadditive $T$-category as an extension of the corresponding $S$-category, and we once again exhibit it as a full subcategory of the free $T$-presentable $P$-semiadditive $T$-category. Combining this with results from \cite{CLL_Global}, we then prove Theorem~\ref{introthm:gamma-univ-prop} describing the free equivariantly presentable equivariantly semiadditive global category in terms of equivariant $\Gamma$-spaces in Section~\ref{sec:gamma-orbi}.

	The next two sections are then devoted to the stable case: In Section \ref{sec:stable} we construct the free $S$-presentable $P$-stable $T$-category, and relate it to the corresponding presentable $S$- and $T$-categories, proving Theorems~\ref{introthm:Abstract_Universal_Properties} and~\ref{introthm:Abstract_subcat}. From this we then deduce Theorem~\ref{introthm:mysp-univ-prop} in Section~\ref{sec:spectra}, giving an explicit model of the free equivariantly presentable equivariantly stable global category via equivariant stable homotopy theory.

	In the final section we restrict the global universal properties of Theorems~\ref{introthm:orbi-S-univ-prop},~\ref{introthm:gamma-univ-prop} and~\ref{introthm:mysp-univ-prop} to universal properties for the restricted $G$-categories, and make concrete the results in this context.

	In \Cref{sec:Calculus_Of_Mates} we recall some facts from the calculus of mates used throughout the paper.

	\subsection*{Conventions}
	We work in the context of higher category theory throughout, and refer to $\infty$-categories as `categories.' We fix a chain of Grothendieck universes $\groU\in\groV\in\groW$, and we will use the terms `small category,' `(large) category,' and `very large category' to refer to $\groU$-small, $\groV$-small, and $\groW$-small categories, respectively. A `locally small category' will mean a $\groV$-small category such that all its mapping spaces have $\groU$-small homotopy groups.

	\subsection*{Acknowledgements}
	At the time of writing, B.C.\ and S.L.\ were associate members of the Hausdorff Center for Mathematics at the University of Bonn. B.C.\ was supported by the Max Planck Institute for Mathematics in Bonn. S.L.\ was supported by the DFG Schwerpunktprogramm 1786 ``Homotopy Theory and Algebraic Geometry'' (project ID SCHW 860/1-1).

	\section{Preliminaries on parametrized higher categories}\label{sec:prelim}
	We begin by recalling the necessary background on parametrized higher category theory, as developed in \cite{exposeI, nardin2016exposeIV, shah2021parametrized} and, from the perspective of categories internal to $\infty$-topoi, in \cite{martini2021yoneda, martiniwolf2021limits, martiniwolf2022presentable}. Throughout this section, let us fix a small category $T$.

	\begin{definition}
		A \textit{$T$-category} is a functor $\Cc\colon T\catop \to \Cat$ into the (very large) category of categories. If $\Cc$ and $\Dd$ are $T$-categories, then a \textit{$T$-functor} $F\colon \Cc \to \Dd$ is a natural transformation from $\Cc$ to $\Dd$. The category $\Cat_T$ of $T$-categories is defined as the functor category $\Cat_T \coloneqq \Fun(T\catop, \Cat)$.
	\end{definition}

	\begin{example}\label{ex:Glo}
		\label{ex:globalCategory}
		Define $\Glo$ as the $(2,1)$-category of finite groups, group homomorphisms, and conjugations, i.e.~a 2-morphism $h\colon f \Rightarrow f'$ in $\Glo$ between group homomorphisms $f,f'\colon G \to H$ is an element $h \in H$ such that $f'(g) = hf(g)h^{-1}$ for all $g \in G$. In particular, $\Glo$ comes with a fully faithful functor $B\colon \Glo\hookrightarrow \mathrm{Grpd}$ into the $(2,1)$-category of groupoids which sends a finite group $G$ to the corresponding $1$-object groupoid $BG$. We will use the term \emph{global category} for a $\Glo$-category, \textit{global functor} for a $\Glo$-functor, etc.
	\end{example}

	\begin{example}\label{ex:G-cat}
		For a finite group $G$, let $T = \Orb_G$ be the \emph{orbit category of $G$}, the full subcategory of the $1$-category of $G$-sets spanned by the transitive $G$-sets. Following \cite{exposeI}, we will refer to $\Orb_G$-categories as \textit{$G$-categories}.
	\end{example}

	Let us mention some common examples of $T$-categories:

	\begin{example}\label{ex:T-cat-presheaves}
		Every presheaf $X$ on $T$ gives rise to a $T$-category $\ul{X}\colon T\catop \to \Cat$ by postcomposing with the inclusion $\Spc \hookrightarrow \Cat$ of spaces into categories. In particular, every object $A\in T$ yields a $T$-category $\ul A$ via the Yoneda embedding.
	\end{example}

	\begin{example}\label{ex:T-objects}
		Every category $\Ee$ gives rise to a \textit{$T$-category of $T$-objects} $\ul{\Ee}_T$, given by $\ul{\Ee}_T(B) = \Fun((T_{/B})\catop,\Ee)$ where the functoriality of $T_{/B}$ is given by straightening the cocartesian target fibration $T^{[1]}\to T$.
	\end{example}

	\begin{example}\label{ex:const-global-sections}
		Any category $\Ee$ gives rise a \textit{constant} $T$-category $\const_{\Ee}\colon A\mapsto\mathcal E$. The construction $\Ee \mapsto \const_{\Ee}$ is left adjoint to the \textit{underlying category functor} $\Gamma\colon \Cat_T \to \Cat$ which sends $\Cc$ to $\Gamma(\Cc) \coloneqq \lim_{B \in T\catop} \Cc(B)$.
	\end{example}

	\begin{convention}[{cf.\ \cite[Convention~2.1.15]{CLL_Global}}]\label{conv:limit-ext}
		Any $T$-category $\Cc\colon T\catop \to \Cat$ admits a unique extension to a limit-preserving functor $\PSh(T)\catop \to \Cat$, which we will abusively denote by $\Cc$ again. By convention, all limits and colimits of objects in $T$ are taken in the presheaf category $\PSh(T)$.
	\end{convention}

	\begin{example}
		Viewing $\mathcal C$ as a functor $\PSh(T)\catop \to \Cat$ as above, its value at the terminal presheaf $1$ is given by the underlying category $\Gamma(\mathcal C)$ of $\Cc$, in the sense of Example~\ref{ex:const-global-sections}.
	\end{example}

	\begin{example}
		The category $\Cat_T$ is cartesian closed, i.e.~given $T$-categories $\Cc$ and $\Dd$, there is a $T$-category $\ulFun_T(\Cc,\Dd)$ of $T$-functors, characterized by the property that there is a natural equivalence
		\begin{align*}
			\maps(\Ee \times \Cc, \Dd) \simeq \maps(\Ee,\ulFun_T(\Cc,\Dd))
		\end{align*}
		for every third $T$-category $\Ee$. We let
		\begin{align*}
			\Fun_T(\Cc,\Dd) \coloneqq \Gamma(\ulFun_T(\Cc,\Dd))
		\end{align*}
		denote the underlying category of $\ulFun_T(\Cc,\Dd)$. By adjunction, its objects can be identified with $T$-functors $\Cc\simeq\Cc\times\const_{[0]} \to \Dd$, while its morphisms are \emph{natural transformations} of $T$-functors, i.e.~functors $\Cc\times\const_{[1]}\to\Dd$.
	\end{example}

	To describe these functor categories more explicitly, we will use:

	\begin{lemma}[Categorical Yoneda lemma, {\cite[Corollary~2.2.8]{CLL_Global}}]
		\label{lem:YonedaLemma}
		For every presheaf $B \in \PSh(T)$ and every $T$-category $\Cc$, there is an equivalence of categories
		\[
		\Fun_T(\ul{B},\Cc) \iso \Cc(B),
		\]
		natural in both variables, determined by the fact that for $B \in T$ it is given by evaluation at the identity $\id_B \in \maps_T(B,B) = \ul{B}(B)$.\qed
	\end{lemma}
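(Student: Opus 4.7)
The plan is to first reduce to the case where $B\in T$ is representable, and then handle the representable case via the enriched Yoneda lemma for the $\Cat$-valued functor category $\Cat_T=\Fun(T\catop,\Cat)$.

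For the reduction step, I would write a general $B\in\PSh(T)$ as a colimit of representables $B\simeq\colim_{i\in I}\ul{A_i}$. Since the inclusion $\Spc\hookrightarrow\Cat$ is left adjoint to the core functor and therefore preserves colimits, we also have $\ul B\simeq\colim_{i\in I}\ul{A_i}$ in $\Cat_T$ after postcomposition with this inclusion. Applying $\Fun_T(-,\Cc)$ then sends this colimit to the limit $\lim_{i\in I}\Fun_T(\ul{A_i},\Cc)$, while by Convention~\ref{conv:limit-ext} the extension $\Cc\colon\PSh(T)\catop\to\Cat$ sends it to $\lim_{i\in I}\Cc(A_i)$. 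A natural equivalence in the representable case therefore extends uniquely and naturally to all of $\PSh(T)$.

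For the representable case $B\in T$, I would write down the evaluation functor $F\mapsto F_B(\id_B)$ together with its candidate inverse sending $x\in\Cc(B)$ to the $T$-natural transformation $\ul B\to\Cc$ whose component at $A\in T$ is $f\mapsto f^*(x)$. At the level of objects these are mutually inverse by $T$-naturality of $F$. To upgrade this to an equivalence of $\infty$-categories I would invoke the $\Cat$-enriched Yoneda lemma for $\Fun(T\catop,\Cat)$, which identifies the enriched hom-object $\Fun_T(\ul B,\Cc)$ directly with $\Cc(B)$; alternatively one can deduce this from the adjunction $-\times\ul B\dashv\ulFun_T(\ul B,-)$, reducing to ordinary space-valued Yoneda applied fiberwise.

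The main obstacle I expect is precisely this $\infty$-categorical coherence in the representable case: the object-level bijection must be upgraded to a genuine equivalence of categories, including compatibilities on all higher morphism data. Once this is established, naturality in $\Cc$ is immediate from the evaluation formula, and naturality in $B$ follows from the reduction step combined with naturality in the representable case.
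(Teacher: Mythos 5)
The paper does not prove this lemma here but imports it from \cite{CLL_Global}*{Corollary~2.2.8}, and your argument is essentially the standard one used there: reduce to representable $B$ via the fact that both $\Fun_T(\ul{(-)},\Cc)$ and the limit-extension of $\Cc$ send colimits in $\PSh(T)$ to limits, then settle the representable case by reducing (via the adjunction $-\times\ul B\dashv\ulFun_T(\ul B,-)$ and the cosimplicial objects $\ulFun_T(\const_{[n]},\Cc)$) to the space-valued Yoneda lemma. Your outline is correct and correctly identifies that the only real work is packaging the object-level formula $F\mapsto F_B(\id_B)$ coherently, which the levelwise space-valued Yoneda argument accomplishes.
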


	Combining this with the (internal) adjunction equivalence for $\ul\Fun_T$ we immediately get:

	\begin{corollary}[cf.~\cite{CLL_Global}*{Corollary~2.2.9}]
		Let $\Cc,\Dd\in\Cat_T$ and $X\in\PSh(T)$. There are natural equivalences
		\begin{equation*}
			\ul\Fun_T(\Cc,\Dd)(X)\simeq\Fun_T(\Cc\times\ul X,\Dd)\simeq\Fun_T\!\big(\Cc,\ul\Fun_T(\ul X,\Dd)\big).\qednow
		\end{equation*}
	\end{corollary}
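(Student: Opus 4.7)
The plan is to combine the categorical Yoneda lemma (Lemma~\ref{lem:YonedaLemma}) with the internal hom adjunction characterizing $\ulFun_T$. As a first step, I would apply Lemma~\ref{lem:YonedaLemma} to the $T$-category $\ulFun_T(\Cc,\Dd)$ at the presheaf $X \in \PSh(T)$, obtaining a natural equivalence
\[
\ulFun_T(\Cc,\Dd)(X)\simeq\Fun_T(\ul X,\ulFun_T(\Cc,\Dd)).
\]

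For the remaining two equivalences, I would like to invoke the internal hom adjunction at the level of underlying functor categories, i.e., $\Fun_T(\Ee\times\Cc,\Dd)\simeq\Fun_T(\Ee,\ulFun_T(\Cc,\Dd))$ for every $T$-category $\Ee$. This upgrade of the $\maps$-level characterization quoted in the excerpt is formal: probing by a further $T$-category $\Ff$ and applying the defining adjunction twice yields
\[
\maps\!\big(\Ff,\ulFun_T(\Ee\times\Cc,\Dd)\big)\simeq\maps(\Ff\times\Ee\times\Cc,\Dd)\simeq\maps\!\big(\Ff,\ulFun_T(\Ee,\ulFun_T(\Cc,\Dd))\big),
\]
naturally in $\Ff$, so that the Yoneda lemma in $\Cat_T$ gives an equivalence $\ulFun_T(\Ee\times\Cc,\Dd)\simeq\ulFun_T(\Ee,\ulFun_T(\Cc,\Dd))$ of $T$-categories; applying $\Gamma$ then produces the desired equivalence of underlying categories. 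Specializing to $\Ee=\ul X$ and using symmetry of the cartesian product produces the first claimed equivalence $\ulFun_T(\Cc,\Dd)(X)\simeq\Fun_T(\Cc\times\ul X,\Dd)$, while re-applying the same upgraded adjunction with the roles of $\Cc$ and $\ul X$ interchanged produces the second equivalence $\Fun_T(\Cc\times\ul X,\Dd)\simeq\Fun_T(\Cc,\ulFun_T(\ul X,\Dd))$.

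There is no serious obstacle in any of these steps — every manipulation is a direct consequence of either Yoneda or cartesian closedness of $\Cat_T$, and naturality in all three of $\Cc$, $\Dd$, and $X$ is preserved automatically throughout. The one small point that requires a line of justification (and which I would expect to be the main thing worth spelling out in the write-up) is precisely the passage from the $\maps$-level adjunction to the $\Fun_T$-level adjunction sketched above.
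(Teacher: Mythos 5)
Your proposal is correct and follows the same route as the paper, which simply combines the categorical Yoneda lemma with the internal adjunction equivalence for $\ul\Fun_T$ and treats the result as immediate. The only extra content in your write-up is the formal upgrade of the $\maps$-level adjunction to the $\Fun_T$-level via probing and Yoneda in $\Cat_T$, which is exactly the right (and routine) point to spell out.
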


	In particular, we can (and will) identify objects of $\ul\Fun_T(\Cc,\Dd)(X)$ with $T$-functors $\Cc\times\ul X\to\Dd$ or equivalently $\Cc\to\ul\Fun_T(\ul X,\Dd)$.

	\begin{example}\label{ex:spc-T}
		As $\PSh(T)$ has pullbacks, the target map $\PSh(T)^{[1]}\to\PSh(T)$ is a \emph{cartesian} fibration, so we can straighten it to a functor \[\ul\Spc_T\coloneqq\PSh(T)_{/\bullet}\colon\PSh(T)^\op\to{\Cat}.\] Explicitly, this sends $X\in\PSh(T)$ to the slice $\PSh(T)_{/X}$ and a map $f\colon Y\to X$ to the pullback functor $f^*\colon\PSh(T)_{/Y}\to\PSh(T)_{/X}$. By \cite{HTT}*{Theorem~6.1.3.9 and Proposition~6.1.3.10}, this functor preserves limits, so it defines a $T$-category via our convention.

		As the notation suggests, this can be identified with the $T$-category of $T$-objects (Example~\ref{ex:T-objects}) in $\Spc$: \cite{CLL_Global}*{Remark~2.1.16} constructs an equivalence between the two which is given in degree $A\in T$ by the colimit extension $\PSh(T_{/A})\to\PSh(T)_{/A}$ of the slice of the Yoneda embedding $T\to\PSh(T)$ over $A$.
	\end{example}

	\subsection{Adjunctions}In $\Cat_T$ there is a natural notion of (internal) \emph{adjunctions}: a $T$-functor $F\colon\mathcal C\to\mathcal D$ is left adjoint to $G\colon\mathcal D\to\mathcal C$ if there are natural transformations $\eta\colon\id\Rightarrow GF$ and $\epsilon\colon FG\Rightarrow\id$ satisfying the triangle identities up to homotopy. We will frequently rely on the following `pointwise criterion' for adjoints:

	\begin{proposition}[see \cite{martiniwolf2021limits}*{Proposition~3.2.9 and Corollary~3.2.11}]\label{prop:adjoints-BC}
		A functor $F\colon\Cc\to\Dd$ of $T$-categories admits a right adjoint if and only if the following hold:
		\begin{enumerate}
			\item For every $A\in T$ the functor $F_A\colon\Cc(A)\to\Dd(A)$ admits a right adjoint $G_A$.
			\item For every $f\colon A\to B$ in $T$ the Beck--Chevalley transformation $f^*\circ G_B\Rightarrow G_A \circ f^*$ given by the composite
			\begin{equation*}
				f^* G_B\xRightarrow{\;\eta\;} G_A F_A f^* G_B\xRightarrow{\;\sim\;} G_A f^*  F_B G_B\xRightarrow{\;\epsilon\;} G_A f^*
			\end{equation*}
			is an equivalence.
		\end{enumerate}
		Moreover, in this case the following hold:
		\begin{enumerate}
			\item[(1\/$'\!$)] For every $X\in\PSh(T)$ the functor $F_X\colon\Cc(X)\to\Dd(X)$ (cf.~Convention~\ref{conv:limit-ext}) admits a right adjoint $G_X$.
			\item[(2\/$'\!$)] For every $f\colon X\to Y$ in $\PSh(T)$ the Beck--Chevalley map $G_X f^*\Rightarrow f^* G_Y$ is an equivalence.
		\end{enumerate}
		Finally, the right adjoint $G$ is given in degree $X\in\PSh(T)$ by $G_X$ as above and the unit and counit are given pointwise by the unit and counit of $F_X\dashv G_X$.\qed
	\end{proposition}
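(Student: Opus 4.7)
My plan is to exploit the fact that an internal adjunction in $\Cat_T = \Fun(T\catop, \Cat)$ can be assembled from pointwise data that is appropriately natural. The necessity direction is essentially bookkeeping: if $F\dashv G$ internally, evaluation at $A\in T$ yields $F_A\dashv G_A$ with unit $\eta_A$ and counit $\epsilon_A$; and the $T$-naturality of $G$ provides coherent isomorphisms $f^*G_B\simeq G_Af^*$ for each $f\colon A\to B$. Tracing through the definitions shows these are precisely the Beck--Chevalley maps written down in (2), so (2) holds.

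For sufficiency, I would unstraighten and reduce to a relative adjoint functor theorem. The $T$-functor $F$ corresponds to a cocartesian-edge-preserving functor $\widetilde F\colon\widetilde\Cc\to\widetilde\Dd$ of cocartesian fibrations over $T\catop$. Condition~(1) furnishes a fiberwise right adjoint, and condition~(2) states that this fiberwise right adjoint is compatible with cocartesian pushforward (i.e.\ that the mate square is an equivalence). A standard fibered adjoint functor theorem then glues this data into a right adjoint $\widetilde G$ of $\widetilde F$ in $\Cat_{/T\catop}$ whose unit and counit preserve cocartesian edges; straightening produces a $T$-functor $G\colon\Dd\to\Cc$ together with natural transformations $\eta$ and $\epsilon$ satisfying the triangle identities, since these identities hold in each fiber.

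For the moreover part, Convention~\ref{conv:limit-ext} extends $\Cc$ and $\Dd$ uniquely to limit-preserving functors on $\PSh(T)\catop$, and the natural transformations $F$, $G$, $\eta$, $\epsilon$ extend accordingly. For each $X\in\PSh(T)$, evaluation yields an adjunction $F_X\dashv G_X$ with unit $\eta_X$ and counit $\epsilon_X$, proving~(1$'$); and for any $f\colon X\to Y$ in $\PSh(T)$, the Beck--Chevalley equivalence in~(2$'$) is simply the structure isomorphism of $G$ viewed as a morphism of functors $\PSh(T)\catop\to\Cat$.

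The main obstacle is the sufficiency step: pinning down the correct form of the relative adjoint functor theorem (one needs a genuinely fibrational, rather than merely pointwise, statement) and verifying that the explicit Beck--Chevalley formula in~(2) matches its hypothesis. An alternative, avoiding unstraightening, would be to construct $G$, $\eta$, and $\epsilon$ directly as data in the internal hom $\Fun_T(\Dd,\Cc)$ using~(2) to produce the required coherences by hand; this is conceptually transparent but requires more careful bookkeeping of the higher-coherence layer.
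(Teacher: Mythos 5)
The paper offers no proof of this proposition at all: it is imported verbatim from Martini--Wolf (Proposition~3.2.8 and Corollary~3.2.10 of the cited work), so there is nothing internal to compare your argument against. That said, your sketch is the standard argument and is essentially the one underlying the cited result. For the sufficiency step, the ``relative adjoint functor theorem'' you are looking for is Lurie's \emph{Higher Algebra}, Propositions~7.3.2.6 and 7.3.2.11--12: condition~(1) alone already produces a right adjoint $\widetilde G$ of $\widetilde F$ \emph{relative to} $T\catop$ (a functor over $T\catop$ that need not preserve cocartesian edges), and condition~(2) is then exactly the statement that $\widetilde G$ carries cocartesian edges to cocartesian edges, so that it straightens to a $T$-functor; the unit and counit are automatically fiberwise, and the triangle identities hold because they hold in each fiber. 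Separating these two roles of the hypotheses would make your sketch airtight. For the ``moreover'' part, your phrase ``extend accordingly'' hides the only real content: one should either invoke the fact that the restriction functor $\Fun^{\lim}(\PSh(T)\catop,\Cat)\to\Fun(T\catop,\Cat)$ is an equivalence of $(\infty,2)$-categories (so the whole adjunction datum transports and can be evaluated at any $X\in\PSh(T)$), or argue directly that a limit of a right-adjointable diagram of categories again admits a right adjoint computed pointwise --- and the latter uses condition~(2) once more, it is not a purely formal extension. With those two references pinned down the proof is complete.
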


	We refer the reader to \Cref{sec:Calculus_Of_Mates} for more information on Beck--Chevalley transformations.

	\subsection{Limits and colimits} Next, we come to parametrized notions of limits and colimits. While this can be developed `internally' using the notions of parametrized adjunctions and parametrized functor categories, we will instead take a purely `pointwise' perspective in the spirit of the previous proposition in this paper.

	\begin{remark}
		Below we will for simplicity restrict ourselves to the case of colimits; the theory of limits is then formally dual.
	\end{remark}

	\begin{definition}\label{defi:fiberwise-cc}
		A $T$-category $\Cc$ is called \emph{fiberwise cocomplete} if $\mathcal C(A)$ is cocomplete for every $A\in T$ and the restriction $f^*\colon \mathcal C(B)\to\mathcal C(A)$ is cocontinuous for every $f\colon A\to B$. A $T$-functor $F\colon\mathcal C\to\mathcal D$ is called \emph{fiberwise cocontinuous} if $F_A\colon\mathcal C(A)\to\mathcal D(A)$ is cocontinuous for every $A\in T$.
	\end{definition}

	Note that in the above situation $\Cc(X)$ is more generally cocomplete for any $X\in\PSh(T)$, and for any $f\colon X\to Y$ in $\PSh(T)$ the restriction $f^*\colon \Cc(Y)\rightarrow \Cc(X)$ is cocontinuous, see \cite{HTT}*{Corollary~5.1.2.3 and Lemma~5.4.5.5}.

	\begin{definition}\label{defi:U-cc}
		Let $\bbU\subset\ul\Spc_T$ be any $T$-subcategory. We say that a $T$-category $\Cc$ \emph{admits $\bbU$-colimits}, or that $\Cc$ is \textit{$\bbU$-cocomplete}, if the following conditions are satisfied:
		\begin{enumerate}
			\item For every $D\in\PSh(T)$ and every $(f\colon C\to D)\in\bbU(D)$ the restriction $f^*\colon\Cc(D)\to\Cc(C)$ admits a left adjoint $f_!$.
			\item For any pullback
			\begin{equation*}
				\begin{tikzcd}
					A\arrow[r, "g"]\arrow[d, "u"']\arrow[dr,phantom,"\lrcorner"{very near start}] & B\arrow[d, "t"]\\
					C\arrow[r, "f"'] & D
				\end{tikzcd}
			\end{equation*}
			in $\PSh(T)$ such that $f\in\bbU(D)$ (and hence $g\in\bbU(B)$ as $\bbU$ is a $T$-subcategory), the Beck--Chevalley transformation $g_!u^*\Rightarrow t^*f_!$ is an equivalence.
		\end{enumerate}
		If $\Dd$ is another $\bbU$-cocomplete $T$-category, then a $T$-functor $F\colon\Cc\to\Dd$ is called \emph{$\bbU$-cocontinuous} if for every $(f\colon C\to D)\in\bbU(D)$ the Beck--Chevalley map $f_! F_C\Rightarrow F_D f_!$ is an equivalence.
	\end{definition}

	\begin{remark}\label{rk:U-cc-restriction-to-repr}
		In the definition of a $\bbU$-cocomplete $T$-category, it suffices that the above conditions are satisfied whenever $B$ and $D$ are representable, see~\cite{CLL_Global}*{Remark~2.3.15}, and likewise for $\bbU$-cocontinuity.
	\end{remark}

	\begin{definition}
		\label{def:T-cc}
		A $T$-category $\Cc$ is called \emph{$T$-cocomplete} if it is fiberwise cocomplete (Definition~\ref{defi:fiberwise-cc}) and $\ul\Spc_T$-cocomplete (Definition~\ref{defi:U-cc}).

		Similarly, a $T$-functor $F\colon\Cc\to\Dd$ between $T$-cocomplete $T$-categories is called \emph{$T$-cocontinuous} if it is fiberwise cocontinuous and $\ul\Spc_T$-cocontinuous.
	\end{definition}

	\begin{example}\label{ex:T-spc-cc}
		The $T$-category $\ul\Spc_T$ is $T$-cocomplete, see \cite{martiniwolf2021limits}*{Example~5.2.8}.
	\end{example}

	\begin{example}
		If $\Dd$ is $\bbU$-cocomplete for some $\bbU\subset\ul\Spc_T$, and $\Cc$ is any $T$-\hskip0ptcategory, then $\ul\Fun_T(\Cc,\Dd)$ is again $\bbU$-cocomplete, see~\cite{CLL_Global}*{Corollary~2.3.25}.
	\end{example}

	\begin{example}\label{ex:la-are-cc}
		Any left adjoint $F\colon\Cc\to\Dd$ of $T$-cocomplete $T$-categories is $T$-cocontinuous: it is fiberwise cocontinuous since each functor $F_A\colon \Cc(A) \to \Dd(A)$ is a left adjoint, and for a morphism $f\colon A \to B$ the Beck--Chevalley map from Definition~\ref{defi:U-cc} is simply the total mate of the Beck--Chevalley map from Proposition~\ref{prop:adjoints-BC}. Conversely, a functor of $T$-cocomplete categories is a $T$-left adjoint if and only if it is $T$-cocontinuous and admits a pointwise right adjoint.
	\end{example}

	\begin{definition}\label{def:cocont-functor-cat}
		Let $\bbU\subset\ul\Spc_T$ be any $T$-subcategory. For any $\bbU$-cocomplete $T$-categories $\Cc,\Dd$ and any $X\in\PSh(T)$ we write $\ul\Fun_T^\textup{$\bbU$-cc}(\Cc,\Dd)(X)\subset\ul\Fun_T(\Cc,\Dd)(X)$ for the full subcategory spanned by those functors $F\colon\Cc\to\ul\Fun_T(\ul X,\Dd)$ that are $\bbU$-cocontinuous.

		Similarly, we define $\ul\Fun_T^\textup{$T$-cc}(\Cc,\Dd)(X)$ whenever $\Cc$ and $\Dd$ are $T$-cocomplete.
	\end{definition}

	By \cite{martiniwolf2021limits}*{Remark~4.2.1} the above define $T$-subcategories of $\ul\Fun_T(\Cc,\Dd)$.

	\begin{remark}
		The articles \cite{martiniwolf2021limits} and \cite{CLL_Global} use an a priori different definition of $\ul\Fun_T^\textup{$\bbU$-cc}$ and $\ul\Fun_T^\text{$T$-cc}$, see \cite{CLL_Global}*{Proposition~
			2.3.26 and Remark~2.3.27} for the equivalence to the above.
	\end{remark}

	With this terminology at hand, we can now formulate the universal property of $T$-spaces:

	\begin{theorem}[\cite{martiniwolf2021limits}*{Theorem~7.1.1}]\label{thm:univ-prop-T-spc}
		For any $T$-cocomplete $\mathcal D$, evaluation at the terminal object defines an equivalence of $T$-categories
		\begin{equation*}
			\ul\Fun_T^\textup{$T$-cc}(\ul\Spc_T,\Dd)\iso\Dd.\qednow
		\end{equation*}
	\end{theorem}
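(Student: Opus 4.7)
The plan is to deduce the universal property from the general universal property of \emph{parametrized presheaf categories} established by Martini--Wolf. For any $T$-category $\Cc$, the $T$-category $\ul\PSh_T(\Cc)$ of $T$-presheaves on $\Cc$ carries a parametrized Yoneda embedding $y_\Cc\colon \Cc \to \ul\PSh_T(\Cc)$ exhibiting it as the free $T$-cocompletion of $\Cc$: for every $T$-cocomplete $T$-category $\Dd$, restriction along $y_\Cc$ induces an equivalence of $T$-categories
\begin{equation*}
	\ul\Fun^\textup{$T$-cc}_T(\ul\PSh_T(\Cc),\Dd) \iso \ul\Fun_T(\Cc,\Dd).
\end{equation*}
Taking $\Cc = \const_{[0]}$ to be the terminal $T$-category, the right-hand side simplifies to $\ul\Fun_T(\const_{[0]},\Dd) \simeq \Dd$, since $\const_{[0]}$ is the terminal object of $\Cat_T$ and hence the unit of the cartesian monoidal structure. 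Thus the desired statement will follow provided one can identify $\ul\Spc_T \simeq \ul\PSh_T(\const_{[0]})$ and verify that under this identification the Yoneda embedding corresponds to picking out the terminal object of $\ul\Spc_T$ in each degree.

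Concretely, I would proceed as follows. First, via the description of $\ul\Spc_T$ from Example~\ref{ex:spc-T} as $X\mapsto\PSh(T)_{/X}$, observe that for each $A\in T$ one has $\ul\Spc_T(A) = \PSh(T)_{/A} \simeq \PSh(T_{/A}) = \Fun((T_{/A})\catop,\Spc)$, which matches the degree-$A$ value of the $T$-category of $T$-objects in $\Spc$ (Example~\ref{ex:T-objects}); this is precisely $\ul\PSh_T(\const_{[0]})$. Second, note that the identity map $\id_X\in \PSh(T)_{/X} = \ul\Spc_T(X)$ is a terminal object of this slice, so the $T$-functor $y\colon \const_{[0]}\to\ul\Spc_T$ picking out these terminal objects in each degree is well-defined and corresponds to the parametrized Yoneda embedding. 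Third, identify restriction along $y$ with evaluation at the terminal object $1\in\Gamma(\ul\Spc_T)$ by unwinding the adjunction $\Fun_T(\const_{[0]},\Dd)\simeq\Gamma(\Dd)$ (Example~\ref{ex:const-global-sections}).

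The main technical obstacle is the verification that the Yoneda embedding $y_{\const_{[0]}}\colon \const_{[0]}\to \ul\PSh_T(\const_{[0]})$ agrees, under the equivalence $\ul\PSh_T(\const_{[0]})\simeq \ul\Spc_T$, with the section picking out the terminal object in each slice $\PSh(T)_{/X}$. This is essentially a parametrized version of the fact that the classical Yoneda embedding $[0] \to \Spc$ lands at the one-point space, but in our setting requires carefully tracking how the straightening of the target fibration interacts with the Yoneda construction of \cite{martini2021yoneda}. Once this identification is in place, the theorem follows immediately by combining the universal property of $\ul\PSh_T(\const_{[0]})$ with the natural equivalence $\ul\Fun_T(\const_{[0]},\Dd)\simeq\Dd$, with naturality in $\Dd$ being automatic from the construction.
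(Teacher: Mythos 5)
Your proposal is correct. Note that the paper itself does not prove this statement at all --- it is quoted verbatim as Theorem~7.1.1 of \cite{martiniwolf2021limits} --- so the only meaningful comparison is with the cited source, and your reduction to the free $T$-cocompletion property of parametrized presheaf categories (Theorem~7.1.11 of \emph{op.~cit.}, which the present paper also invokes, e.g.\ in the proof of Theorem~\ref{thm:mixed-presheaves}) is exactly the standard route. Two small remarks: the identification $\ul\PSh_T(\const_{[0]})\simeq\ul\Spc_T$ is immediate from $\ul\Fun_T(\const_{[0]},\Ee)\simeq\Ee$ (the terminal $T$-category being the unit for the cartesian structure, and equal to its own opposite), and the step you flag as the ``main technical obstacle'' is likewise routine --- the parametrized Yoneda embedding is given pointwise by $c\mapsto\maps(\blank,c)$, which for the terminal $T$-category is the terminal presheaf, corresponding to the terminal object under the above equivalence. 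The only caveat worth keeping in mind is logical order in the reference: your argument is a genuine proof only insofar as the presheaf universal property is established independently of the universe statement, which is the case in \cite{martiniwolf2021limits}.
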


	\subsection{Presentability} Finally, we come to the notion of \emph{presentability} for $T$-\hskip0ptcategories from \cite{martiniwolf2022presentable}:

	\begin{definition}\label{defi:fib-pres}
		A $T$-category $\Cc\colon T^\op\to\Cat$ is called \emph{fiberwise presentable} if it factors through the non-full subcategory $\PrL\subset\Cat$ of presentable categories and left adjoint functors.
	\end{definition}

	In this case the limit extension again factors through $\PrL$, i.e.~for any $X\in\PSh(T)$ the category $\Cc(X)$ is presentable, and for any map $f\colon X\to Y$ of presheaves the restriction $f^*\colon\Cc(Y)\to\Cc(X)$ is a left adjoint, see~\cite{HTT}*{Proposition~5.5.3.13}.

	\begin{definition}\label{defi:T-pres}
		A $T$-category is called \emph{$T$-presentable} if it is fiberwise presentable and $T$-cocomplete.
	\end{definition}

	\begin{remark}\label{rk:T-pres-implies-limits}
		Any $T$-presentable category is also $T$-complete, see \cite{martiniwolf2022presentable}*{Corollary~6.2.5}.
	\end{remark}

	\begin{example}
		The $T$-category $\ul\Spc_T$ of $T$-spaces is $T$-presentable: clearly each $\PSh(T)_{/X}$ is presentable, each $f^*\colon\PSh(T)_{/Y}\to\PSh(T)_{/X}$ is a left adjoint by local cartesian closedness, and finally $\ul\Spc_T$ is $T$-cocomplete by Example~\ref{ex:T-spc-cc}.
	\end{example}

	\begin{example}
	\label{ex:Functor_Cat_Presentable}
		If $\Cc$ is small and $\Dd$ is $T$-presentable, then $\ul\Fun_T(\Cc,\Dd)$ is again $T$-presentable, see \cite{martiniwolf2022presentable}*{Corollary~6.2.6}.
	\end{example}

	\begin{remark}
		Let $\Cc$ be $T$-presentable and $\Dd$ be locally small and $T$-cocomplete. Combining Example~\ref{ex:la-are-cc} with the usual non-parametrized Special Adjoint Functor Theorem \cite{HTT}*{Corollary 5.5.2.9(1)}, we see that a $T$-functor $\Cc\to\Dd$ is a left adjoint if and only if it is $T$-cocontinuous.
	\end{remark}

	\section{Cleft categories}\label{sec:cleft}
	Let $T$ be a small category and let $S\subset T$ be a (wide) subcategory. Associated to the inclusion $\iota\colon S\hookrightarrow T$ we have a natural restriction functor $\iota^*\colon\Cat_T\to\Cat_S$, which admits both a left adjoint $\iota_!$ as well as a right adjoint $\iota_*$, given by left and right Kan extension, respectively. One of the central questions of the present paper is under which conditions the adjunction $\iota^*\dashv\iota_*$ interacts nicely with parametrized concepts, and in particular with the notions of parametrized colimits for $T$-categories and $S$-categories discussed above.

 	To address this question, we make use of a more `geometric' description of the adjunction $\iota^* \dashv \iota_*$. By identifying $T$-categories with limit-preserving functors on $\PSh(T)\catop$ as in Convention~\ref{conv:limit-ext}, we see that precomposition with any colimit-preserving functor $f\colon\PSh(S)\to\PSh(T)$ determines a functor $f^*\colon \Cat_T \to \Cat_S$. The left Kan extension functor $\iota_!\colon \PSh(S) \to \PSh(T)$ satisfies $\iota_!A= A$, so applying this construction to $f=\iota_!$ recovers $\iota^*\colon \Cat_T \to \Cat_S$, and consequently the right adjoint $\iota_*\colon \Cat_S \to \Cat_T$ of $\iota^*$ is obtained by precomposition with $\iota^*\colon \PSh(T) \to \PSh(S)$, with the unit and counit of the adjunction $\iota^*\colon\Cat_T\rightleftarrows\Cat_S \noloc\iota_*$ given by plugging in the unit and counit of the adjunction $\iota_!\colon\PSh(S)\rightleftarrows\PSh(T)\noloc\iota^*$.

    The above description suggests that we can understand the category theoretic behavior of the adjunction $\iota^*\colon\Cat_T\rightleftarrows\Cat_S \noloc\iota_*$ in terms of the geometric, or topos-theoretic, behavior of the adjunction $\iota_!\colon\PSh(S)\rightleftarrows\PSh(T)\noloc\iota^*$. As a concrete example, consider the question of whether $\iota^*\colon\Cat_T\to\Cat_S$ preserves cocompleteness. If $\Cc \in \Cat_T$ is $T$-cocomplete, it is easy to see that the $S$-category $\iota^*\Cc$ is fiberwise cocomplete and that its restriction functors admit pointwise left adjoints, without any restrictions on $\iota$. However, the Beck--Chevalley condition for these adjoints does not always hold: it translates to the requirement that $\iota_!\colon \PSh(S) \to \PSh(T)$ preserves pullbacks. Similarly, one can translate cocontinuity of the unit $\Cc\to\iota_*\iota^*\Cc$ into a pullback condition on the \emph{counit} of $\iota_!\dashv\iota^*$. Upon closer inspection, it turns out that all the required conditions we will need for a well-behaved theory can be nicely summed up in Lurie's notion of a \emph{fractured $\infty$-topos} \cite{SAG}*{Definition~20.1.2.1}:

	\begin{definition}
		\label{defi:fracture-subcategory}
		Let $\mathcal{X}$ be an $\infty$-topos. A functor $j_!\colon \mathcal{Y} \to \mathcal{X}$ is called a \textit{fracture subcategory} if the following conditions are satisfied:
		\begin{enumerate}[label=(F\arabic*), start=0]
			\item The functor $j_!$ is a monomorphism of categories, i.e.\ it is faithful and the induced functor on groupoid cores is even fully faithful.
			\item \label{it:F1_Preserves_Pullbacks} The functor $j_!\colon \mathcal{Y} \to \mathcal{X}$ preserves pullbacks.
			\item The functor $j_!\colon \mathcal{Y} \to \mathcal{X}$ admits a right adjoint $j^*\colon \mathcal{X} \to \mathcal{Y}$ which is conservative and preserves colimits.
			\item \label{it:F3_Counit_Pullback_Square} For every morphism $f\colon X \to Y$ in $\mathcal{Y}$, the naturality square
			\[
			\begin{tikzcd}
				j_!j^*j_!X\arrow[r, "j_!j^*j_!f"]\arrow[d, "\epsilon j_!"'] &[1em] j_!j^*j_!Y\arrow[d, "\epsilon j_!"]\\
				j_!X\arrow[r, "j_!f"'] & j_!Y
			\end{tikzcd}
			\]
			of the counit transformation $\epsilon\colon j_!j^* \to \id$ is a pullback square in $\mathcal{X}$.
		\end{enumerate}
		An $\infty$-topos $\mathcal{X}$ equipped with a fracture subcategory $\mathcal{Y}$ is called a \textit{fractured $\infty$-topos}.
	\end{definition}

	However, these axioms are quite strong, making them somewhat hard to check directly. Accordingly, before coming to the parametrized applications of fractured $\infty$-topoi sketched above, we devote the present section to their construction from simpler, less geometric data. Namely, as in the introductory example we will be interested in the special case of functors $\PSh(S)\to\PSh(T)$ arising as left Kan extension along the inclusion $S\hookrightarrow T$ of a wide subcategory. It turns out that in this can case we can give a more explicit characterization in terms of the indexing categories $S$ and $T$:

	\begin{definition}\label{defi:cleft}
		Let $T$ be a small category. A wide subcategory $S\subset T$ is called a \emph{cleft} of $T$ if the following conditions are satisfied:
		\begin{enumerate}[label=(C\arabic*), ref=C\arabic*]
			\item \label{item:F-canc} The subcategory $S$ contains all equivalences of $T$ and is left-cancellable, i.e.~whenever $f$ and $g$ are composable maps in $T$ with $g \in S$ and $gf\in S$, then $f\in S$.
			\item\label{item:F-pb} For any map $f\colon A\to B$ in $S$ and any map $g\colon B'\to B$ in $T$ there exists a map $f'\colon X'\to B'$ in $\PSh(S)$ and a pullback square
			\begin{equation*}
				\begin{tikzcd}
					\iota_!X'\arrow[d]\arrow[r, "\iota_!f'"] &\iota_!B'\arrow[d, "g"]\\
					\iota_!A\arrow[r, "\iota_!f"']& \iota_!B
				\end{tikzcd}
			\end{equation*}
			in $\PSh(T)$, where $\iota_!\colon\PSh(S)\to\PSh(T)$ denotes left Kan extension along the inclusion $\iota\colon S\hookrightarrow T$.
			\item\label{item:F-idem} If $\alpha\colon A\to B$, $\beta\colon B\to A$ are maps in $T$ such that $\beta\alpha=\id_A$ and $\alpha\beta$ is a map in $S$, then also $\alpha$ belongs to $S$ (whence so does $\beta$ by left cancellability).
		\end{enumerate}
		We call a small category $T$ equipped with a cleft $S\subset T$ a \emph{cleft category}.
	\end{definition}

	\begin{remark}
		As Axiom (\ref{item:F-idem}) might look somewhat exotic, we record several more familiar properties that imply this axiom:
		\begin{enumerate}
			\item[(\ref*{item:F-idem}$'$)] Any idempotent $e\colon B \to B$ in $S$ is the identity.
			\makeatletter\protected@edef\@currentlabel{\ref*{item:F-idem}$'$}\makeatother \label{item:F-idem-equ}
			\item[(\ref*{item:F-idem}$''$)] The morphisms of $S$ are closed under retracts in the arrow category of $T$.
			\makeatletter\protected@edef\@currentlabel{\ref*{item:F-idem}$''$}\makeatother\label{item:F-retract}
			\item[(\ref*{item:F-idem}$'''$)] \makeatletter\protected@edef\@currentlabel{\ref*{item:F-idem}$'''$}\makeatother\label{item:F-2-of-6}The morphisms of $S$ satisfy the \emph{restricted $2$-out-of-$6$ property}: given composable $f,g,h$ in $T$ such that $hg$ and $gf$ belong to $S$, so does $f$.
		\end{enumerate}
		Indeed, to see that (\ref{item:F-idem-equ}) implies (\ref{item:F-idem}), note that the map $\alpha\beta\colon B \to B$ is an idempotent in $S$ and thus the identity. It follows that $\alpha$ and $\beta$ are (mutually inverse) equivalences, hence belong to $S$ by (\ref{item:F-canc}). In case of (\ref{item:F-retract}), it suffices to observe that the diagram
		\begin{equation*}
			\begin{tikzcd}
				A\arrow[r, "\alpha"]\arrow[d, "\alpha"'] & B\arrow[r, "\beta"] \arrow[d, "\alpha\beta"{description}] & A\arrow[d, "\alpha"]\\
				B\arrow[r, equal] & B \arrow[r, equal] & B
			\end{tikzcd}
		\end{equation*}
		expresses $\alpha$ as a retract of $\alpha\beta$. Finally, applying (\ref{item:F-2-of-6}) to the chain
		$\alpha,\beta,\alpha$ also implies (\ref{item:F-idem}).
	\end{remark}

	\begin{remark}
		\label{rmk:AdmissibilityGivescleft}
		Axiom (\ref{item:F-pb}) is a relaxation of the following more familiar condition:
		\begin{enumerate}
			\item[(\ref*{item:F-pb}$'$)] Pullbacks of maps in $S$ along maps in $T$ exist in $T$ and belong to $S$.\makeatletter\protected@edef\@currentlabel{\ref*{item:F-pb}$'$}\makeatother\label{item:F-pb-strong}
		\end{enumerate}
		Wide subcategories $S \subset T$ satisfying axioms (\ref{item:F-canc}), (\ref{item:F-pb-strong}) and (\ref{item:F-retract}) are called \textit{admissibility structures} in \cite{SAG}*{Definition~20.2.1.1}. In particular, every admissibility structure on $T$ is also a cleft in the above sense.
	\end{remark}

	Let us mention some examples of cleft categories:

	\begin{example}[Trivial clefts]
		Every category $T$ admits two extremal clefts: letting $S$ consist of all maps in $T$ constitutes the maximal cleft on $T$, while letting $S$ consist of only the equivalences of $T$ constitutes the minimal cleft on $T$.
	\end{example}

	\begin{example}[Factorization systems]
		\label{ex:factorization-cleft}
		Let $(E,M)$ be a \emph{factorization system} on $T$. We will prove in Proposition~\ref{prop:factorization-cleft} below that the right class $M$ is a cleft.
	\end{example}

	\begin{example}[Atomic orbital subcategories]
		\label{ex:atomic-orbital-cleft}
		Let $P\subset T$ be an \emph{atomic orbital subcategory} in the sense of \cite{CLL_Global}*{Definition~4.3.1}. We will prove in Proposition~\ref{prop:fission-cleft} below that $P\subset T$ is a cleft category.
	\end{example}

	\begin{example}\label{ex:Orb}
		Recall the \emph{global indexing category} $\Glo$ from Example~\ref{ex:Glo}. We define a wide subcategory $\Orb\subset\Glo$ spanned by the \emph{injective} homomorphisms. Then $\Orb\subset\Glo$ is a cleft category: this follows either from \Cref{ex:atomic-orbital-cleft} together with \cite{CLL_Global}*{Example~4.3.3} or from Example~\ref{ex:factorization-cleft} with \cite{LNP}*{Proposition~6.14}.
	\end{example}

	\subsection{Clefts vs.\ fractures} As promised, we will prove as the main result of this section:

	\begin{theorem}\label{thm:fracture}
		For a wide subcategory $S\subset T$, the following are equivalent:
		\begin{enumerate}
			\item The subcategory $S$ is a cleft (Definition~\ref{defi:cleft}).\label{item:frac-fiss}
			\item The left Kan extension functor $\iota_!\colon\PSh(S)\to\PSh(T)$ along the inclusion $\iota\colon S \hookrightarrow T$ is a fracture subcategory (Definition~\ref{defi:fracture-subcategory}).\label{item:frac-frac}
		\end{enumerate}
	\end{theorem}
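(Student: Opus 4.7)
The plan is to verify each fracture-subcategory axiom (F0)--(F3) against the cleft conditions (C1)--(C3), exploiting the fact that for a wide subcategory inclusion $\iota\colon S \hookrightarrow T$ the restriction functor $\iota^*\colon\PSh(T)\to\PSh(S)$ is a right adjoint that automatically preserves all colimits (they are pointwise in presheaves) and is conservative (since $\iota$ is bijective on objects). This establishes (F2) in both directions without any reference to the cleft axioms. Similarly, (F0) reduces to $\iota\colon S\hookrightarrow T$ being a monomorphism of $\infty$-categories, which for wide subcategories amounts to closure of $S$ under equivalences—this is part of (C1).

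For the main direction $(1)\Rightarrow(2)$, the remaining work is in (F1) and (F3). For (F1), the cleft axiom (C2) translates almost directly into preservation of pullbacks of representables in $\PSh(T)$: given $f\colon A\to B$ in $S$ and $g\colon B'\to B$ in $T$, (C2) provides the pullback in the desired form, and $\iota_!$ sends representables to representables. To extend to arbitrary pullbacks in $\PSh(S)$, I will use the standard density argument that every presheaf is a colimit of representables, together with the fact that finite limits in $\infty$-topoi commute with filtered colimits; equivalently, one verifies that the cocontinuous extension of a ``flat'' functor from the base category remains left exact. For (F3), I will use the explicit left Kan extension formula
\[
\iota_!\iota^* Y(B)=\colim_{(A,\alpha\colon B\to A)\in(B\downarrow\iota)} Y(A).
\]
The axiom (C3) is essential here: it rules out degenerate components in these comma categories coming from non-trivial idempotent splittings, which would otherwise obstruct the required pullback property. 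Combined with (C1) and the already-established (F1), the check of (F3) then reduces to representable data, where it follows from (C2).

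For the converse $(2)\Rightarrow(1)$: condition (C2) follows from (F1) applied to a representable pullback situation, noting that the pullback $\iota_!A\times_{\iota_!B} B'$ in $\PSh(T)$ can be computed as $\iota_!(X')$ for $X'$ the corresponding pullback in $\PSh(S)$, where (F3) guarantees that the restriction-then-extension of the representable $B'$ produces the correct result even when $B'\notin S$. Condition (C1) is extracted from (F0), which forces $\iota\colon S\hookrightarrow T$ itself to contain equivalences and satisfy left-cancellability. Condition (C3) is the most delicate and follows by applying (F3) to an appropriate representable and using the splitting $\beta\alpha=\id_A$ to force $\alpha$ to descend from $S$. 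I expect the main technical obstacle throughout to be the careful combinatorial manipulation of comma categories and left Kan extensions needed to pass between the fracture and cleft axioms—in particular, establishing (F3) from (C1) and (C3), where one must carefully track how the idempotent axiom rules out problematic contributions to the colimit formula for $\iota_!\iota^*$.
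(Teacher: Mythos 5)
The central gap is your treatment of (F0). You claim it ``reduces to $\iota\colon S\hookrightarrow T$ being a monomorphism of $\infty$-categories,'' hence to closure of $S$ under equivalences. This is not so: (F0) asserts that the left Kan extension $\iota_!\colon\PSh(S)\to\PSh(T)$ is a monomorphism of categories, and since $\iota$ is wide but not full, this does not follow from any formal property of $\iota$ itself---it is in fact the hardest part of the implication $(1)\Rightarrow(2)$. Faithfulness of $\iota_!$ already requires an argument: via the pointwise Kan extension formula one shows that the comma category $A/\iota$ splits as the disjoint union of $A/S$ and its complement (this is where left-cancellability enters), so that the unit $X\to\iota^*\iota_!X$ is a summand inclusion. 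Fullness on groupoid cores is more delicate still and is precisely where Axiom (C3) is needed: given an equivalence $f\colon\iota_!A\iso\iota_!Y$, representing $f(\id_A)$ as a class $[\alpha,y]$ one produces $\beta$ with $\beta\alpha=\id$ and $\alpha\beta$ in $S$, and (C3) is exactly what forces $\alpha$ to lie in $S$. Indeed (C3) is \emph{necessary} for (F0)---there is a counterexample (perfect complexes over a ring, with $S$ the maps whose fiber vanishes in $K_0$) satisfying (C1) and even the strong pullback axiom but not (F0). Relatedly, you have misplaced the role of (C3): it is not what makes (F3) work. The counit pullback property for corporeal presheaves follows from (C1) and (C2) alone, via the facts that the unit $\eta\colon X\to\iota^*\iota_!X$ is a monomorphism whose naturality squares are pullbacks, together with a cancellation and locality analysis of admissible morphisms; (C3) enters only through (F0).

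Two smaller points. For (F1), ``finite limits commute with filtered colimits'' is not the right tool: the colimits in the density argument are not filtered. What one actually uses is that colimits in the topos $\PSh(T)$ are universal and that the essential image of the fully faithful $(\iota_!)_{/A}$ is closed under colimits, so pullback along any map preserves corporeality; combined with full faithfulness of the slice functors this gives the Beck--Chevalley equivalences and hence pullback-preservation (note also that $\iota_!$ preserves pullbacks but not the terminal object, so ``left exact'' is not the right framing). For the converse direction, deriving (C1) and (C3) needs more care than you indicate: left-cancellability comes from full faithfulness of the slice functors $(\iota_!)_{/A}$, which uses the fracture axioms beyond (F0), and (C3) is extracted from the monomorphism property (F0) by a sequential-colimit (telescope) argument applied to the idempotent $\alpha\beta$, not from (F3).
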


	\begin{remark}
		In the special case where $S \subset T$ defines an admissibility structure on $T$, cf.\ \Cref{rmk:AdmissibilityGivescleft}, the implication $(\ref{item:frac-fiss})\Rightarrow(\ref{item:frac-frac})$ was already proved by Lurie in \cite{SAG}*{Theorem~20.2.4.1}. In the examples we care about, and in particular for the inclusion $\Orb\subset\Glo$, the stronger Axiom (\ref{item:F-pb-strong}) of an admissibility structure is not satisfied: the required pullbacks do \emph{not} exist before passing to presheaves. The above strengthening of Lurie's result will therefore be crucial for our purposes.
	\end{remark}

	The proof of Theorem~\ref{thm:fracture} will occupy this whole subsection; it is somewhat involved and may be skipped on a first reading.

	While both implications are non-trivial, most of the work will go into proving that clefts give rise to fractured $\infty$-topoi. We therefore fix a cleft category $\iota\colon S\hookrightarrow T$, and we start with some elementary consequences of the axioms.

	\begin{lemma}\label{lemma:iota!-ff}
		The functor $(\iota_!)_{/A}\colon \PSh(S)_{/A} \to \PSh(T)_{/A}$ is fully faithful for any object $A \in S$.
	\end{lemma}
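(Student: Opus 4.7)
The plan is to reduce the statement to a simple property of the $1$-categorical slices $S_{/A}$ and $T_{/A}$. Since $A\in S\subset T$ is representable on both sides, I would start from the standard equivalences $\PSh(S)_{/A}\simeq\PSh(S_{/A})$ and $\PSh(T)_{/A}\simeq\PSh(T_{/A})$ and claim that, under these, the slice functor $(\iota_!)_{/A}$ is identified with left Kan extension along the induced slice functor $\iota_{/A}\colon S_{/A}\to T_{/A}$. To verify this identification, I would note that both functors are cocontinuous and agree on representables: an object $(g\colon B\to A)$ of $S_{/A}$, regarded as a representable presheaf on $S_{/A}$, is sent by $(\iota_!)_{/A}$ to the same object viewed as a representable presheaf on $T_{/A}$, since $\iota_!$ preserves representables.

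With this identification in hand, the result follows from the standard fact that left Kan extension along a fully faithful functor of small categories is fully faithful (\cite{HTT}*{Proposition~4.3.2.15}), provided I can show that $\iota_{/A}\colon S_{/A}\to T_{/A}$ is itself fully faithful. This is where axiom~(\ref{item:F-canc}) enters: given $g\colon B\to A$ and $h\colon C\to A$ in $S_{/A}$, a point of $\maps_{T_{/A}}(g,h)$ consists of a map $f\colon B\to C$ in $T$ together with a path $hf\simeq g$ in $\maps_T(B,A)$. Since $g\in S$ and $S\subset T$ is a subcategory in the sense of \cite{HTT}*{Section~1.2.11}, this path forces $hf$ to lie in $S$, and left-cancellability then yields $f\in S$. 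A componentwise version of the same observation upgrades this to an equivalence of the full mapping spaces.

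The main technical step I foresee is cleanly establishing the identification $(\iota_!)_{/A}\simeq(\iota_{/A})_!$; after that, the remaining argument is formal and relies only on axiom~(\ref{item:F-canc}), leaving axioms~(\ref{item:F-pb}) and~(\ref{item:F-idem}) untouched.
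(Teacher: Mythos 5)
Your proposal is correct and follows essentially the same route as the paper: identify $(\iota_!)_{/A}$ with left Kan extension along $\iota_{/A}\colon S_{/A}\to T_{/A}$ via the slice equivalences of Example~\ref{ex:spc-T}, observe that $\iota_{/A}$ is fully faithful by left-cancellability (Axiom~(\ref{item:F-canc})), and conclude since left Kan extension along a fully faithful functor is fully faithful. Your explicit verification of the identification (both functors cocontinuous and agreeing on representables) is exactly the content the paper delegates to its earlier example.
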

	\begin{proof}
		As recalled in Example~\ref{ex:spc-T}, $(\iota_!)_{/A}$ may be identified with the functor $(\iota_{/A})_!\colon \PSh(S_{/A}) \to \PSh(T_{/A})$ given by left Kan extension along $\iota_{/A}\colon S_{/A}\to T_{/A}$. By Axiom (\ref{item:F-canc}), $S$ is left cancellable, so that $\iota_{/A}$ is fully faithful. Thus, also the Kan extension $(\iota_{/A})_!$ is fully faithful, whence so is $(\iota_!)_{/A}$.
	\end{proof}

	\begin{lemma}\label{lemma:iota!-essim}
		For every $g\colon A \to B$ in $T$, the pullback functor $g^*\colon \PSh(T)_{/B} \to \PSh(T)_{/A}$ sends the essential image of $(\iota_!)_{/B}$ to the essential image of $(\iota_!)_{/A}$.

		\begin{proof}
		 The functor $g^*\colon \PSh(T)_{/B} \to \PSh(T)_{/A}$ preserves colimits as $\PSh(T)$ is an $\infty$-topos. Since the essential image of the fully faithful left adjoint $(\iota_!)_{/A}$ is closed under colimits, it will be enough to show that $g^*$ maps any element of the form $\iota_!f\colon\iota_!X\to\iota_!B$ for $f\colon X\to B$ a map in $S$ into the essential image of $(\iota_!)_{/A}$. This is precisely Axiom (\ref{item:F-pb}), finishing the proof.
		\end{proof}
	\end{lemma}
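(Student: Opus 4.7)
The plan is to reduce the statement to the defining Axiom (\ref{item:F-pb}) of a cleft by a density argument. Concretely, I would first note that $(\iota_!)_{/B}\colon \PSh(S)_{/B}\to\PSh(T)_{/B}$ is a fully faithful left adjoint (by \Cref{lemma:iota!-ff}) and hence, being colimit-preserving, has essential image closed under all colimits that exist in $\PSh(T)_{/B}$. On the other side, $g^*\colon \PSh(T)_{/B}\to \PSh(T)_{/A}$ preserves colimits, since $\PSh(T)$ is an $\infty$-topos and therefore locally cartesian closed. Putting these two observations together, the set of objects $x\in\PSh(T)_{/B}$ whose image $g^*x$ lies in the essential image of $(\iota_!)_{/A}$ is closed under colimits.

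Next, I would identify a set of generators. Unwinding the slice construction (as in \Cref{ex:spc-T}), $\PSh(S)_{/B}$ is generated under colimits by maps $f\colon X\to B$ with $X\in S$ representable (these are precisely the slices of representables by the Yoneda embedding). Applying the fully faithful, colimit-preserving $(\iota_!)_{/B}$, the essential image of $(\iota_!)_{/B}$ is therefore generated under colimits by objects of the form $\iota_!f\colon \iota_!X\to\iota_!B$ for $f\colon X\to B$ a morphism of $S$ with $X,B\in S$ representable. By the closure observation of the previous paragraph, it suffices to check the assertion on these generators.

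Finally, I would invoke Axiom (\ref{item:F-pb}) directly: applied to $f\colon X\to B$ in $S$ and $g\colon A\to B$ in $T$, it produces a map $f'\colon X'\to A$ in $\PSh(S)$ together with a pullback square exhibiting $g^*(\iota_!f)\simeq \iota_!f'$ in $\PSh(T)_{/A}$, which clearly lies in the essential image of $(\iota_!)_{/A}$. This concludes the proof.

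I do not anticipate any serious obstacle here; the statement is tailor-made for Axiom~(\ref{item:F-pb}), and the only non-formal ingredient is the closure of the essential image of a colimit-preserving fully faithful functor under colimits in the codomain, which is a standard consequence of the fact that such a functor is a \emph{reflective} embedding. The one thing to keep track of carefully is that the Yoneda-style generation of $\PSh(S)_{/B}$ by morphisms with representable source is correctly matched with the representable hypothesis on the domain of $f$ in Axiom~(\ref{item:F-pb}).
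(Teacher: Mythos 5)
Your proposal is correct and follows essentially the same route as the paper's proof: reduce by colimit-preservation of $g^*$ and closure of the essential image of the fully faithful left adjoint $(\iota_!)_{/A}$ under colimits to the generators $\iota_!f$ with $f\colon X\to B$ in $S$, and then apply Axiom~(\ref{item:F-pb}) directly. The only difference is that you spell out the density/generation step that the paper leaves implicit.
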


		\begin{construction}
			\label{constr:iota^*}
			The functor $\iota^*\colon\PSh(T)\to\PSh(S)$ preserves pullbacks, so it induces a map
			\begin{equation}\label{eq:iota*-cart}
				\PSh(T)^{[1]}\to\PSh(S)^{[1]}\times_{\PSh(S)}\PSh(T)
			\end{equation}
			of cartesian fibrations over $\PSh(T)$.

			We now define the $S$-functor $\iota^*\colon \iota^*\ul\Spc_T \to \ul\Spc_S$ as the composite
			\[
			\iota^*\ul\Spc_T = \PSh(T)_{/\iota_!(\bullet)} \longrightarrow \PSh(S)_{/\iota^*\iota_!(\bullet)} \xrightarrow{\;\eta^*\,} \PSh(S)_{/\bullet} = \ul\Spc_S,
			\]
			where the first map is obtained from the straightening of $(\ref{eq:iota*-cart})$ by restricting along $\iota_!\colon \PSh(S) \to \PSh(T)$, while the second map is obtained by pullback along the unit transformation $\eta\colon \id \Rightarrow \iota^*\iota_!$.
		\end{construction}

		\begin{lemma}\label{lemma:iota!-pullback}
			The left Kan extension functor $\iota_!\colon\PSh(S)\to\PSh(T)$ preserves pullbacks.
			\begin{proof}
				For any $X\in\PSh(S)$, the above functor $\iota^*\colon\PSh(T)_{/\iota_!X}\to\PSh(S)_{/X}$ admits a left adjoint given by $(\iota_!)_{/X}$. The lemma then precisely amounts to saying that the Beck--Chevalley map $(\iota_!)_{/X}g^*\Rightarrow g^*(\iota_!)_{/Y}$ is an equivalence for any $g\colon X\to Y$ in $\PSh(S)$. By Propositions~\ref{prop:adjoints-BC} it suffices to check this in the case that $X$ and $Y$ are representable, i.e.~$g$ is a map in $S$. Since the functors $(\iota_!)_{/X}$ and $(\iota_!)_{/Y}$ are fully faithful in this case by Lemma~\ref{lemma:iota!-ff}, the Beck--Chevalley condition is equivalent to the condition that $g^*$ preserves their essential images, which is an instance of Lemma~\ref{lemma:iota!-essim}.
			\end{proof}
		\end{lemma}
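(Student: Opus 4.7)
The strategy is to translate the statement into a Beck--Chevalley equivalence, reduce to the case where all objects are representable via the pointwise adjoint criterion of Proposition~\ref{prop:adjoints-BC}, and then finish using Lemmas~\ref{lemma:iota!-ff} and~\ref{lemma:iota!-essim}.

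Concretely: for each $g\colon X\to Y$ in $\PSh(S)$, the claim that $\iota_!$ preserves all pullbacks of cospans $X\to Y\leftarrow Z$ is, naturally in $Z$, the assertion that the Beck--Chevalley transformation
$(\iota_!)_{/X}\circ g^*\Rightarrow (\iota_!g)^*\circ(\iota_!)_{/Y}$
of functors $\PSh(S)_{/Y}\to\PSh(T)_{/\iota_!X}$ is an equivalence. Construction~\ref{constr:iota^*} supplies a right adjoint $\iota^*$ to each $(\iota_!)_{/X}$, so this is a genuine BC square of adjunctions. The pointwise criterion of Proposition~\ref{prop:adjoints-BC}, applied to the $\PSh(S)\catop$-indexed data $X\mapsto \PSh(T)_{/\iota_! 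X}$ and $X\mapsto \PSh(S)_{/X}$ with $(\iota_!)_{/(-)}$ as the natural transformation between them, then reduces checking this BC condition for arbitrary $g$ in $\PSh(S)$ to the case of $g$ a morphism in $S$.

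For $g\colon X\to Y$ in $S$, Lemma~\ref{lemma:iota!-ff} makes $(\iota_!)_{/X}$ and $(\iota_!)_{/Y}$ fully faithful, so their essential images are reflective subcategories of the respective slices. In this situation the BC map is automatically an equivalence provided $(\iota_!g)^*$ sends the essential image of $(\iota_!)_{/Y}$ into the essential image of $(\iota_!)_{/X}$: both sides of the BC map then lie in that reflective subcategory, and applying $\iota^*$ yields $g^*$ in both cases (using $\iota^*\iota_!\simeq \id$ on $\PSh(S)$, which comes from full faithfulness of $\iota$), so the comparison is an equivalence after a conservative reflection. The required preservation of essential image is exactly Lemma~\ref{lemma:iota!-essim}.

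The main non-formal input is therefore bundled into Lemma~\ref{lemma:iota!-essim}, which itself encodes the pullback axiom~(\ref{item:F-pb}) of a cleft; the hard part of the \emph{present} lemma is then just the bookkeeping in the second paragraph, namely confirming that Proposition~\ref{prop:adjoints-BC} applies to a slice-over-presheaf situation rather than to honest $T$-categories, and keeping track of the necessary size conventions. Since all the functors involved preserve limits, this should follow directly from Convention~\ref{conv:limit-ext} and the general parametrized machinery, so I do not expect any real difficulty beyond this setup.
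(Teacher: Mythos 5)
Your proposal is correct and follows essentially the same route as the paper: rewrite pullback-preservation as the Beck--Chevalley condition for the sliced adjunctions $(\iota_!)_{/X}\dashv\iota^*$, reduce to representable $X,Y$ (so $g\in S$) via Proposition~\ref{prop:adjoints-BC}, and then use full faithfulness (Lemma~\ref{lemma:iota!-ff}) to convert the BC condition into preservation of essential images, which is Lemma~\ref{lemma:iota!-essim}. One small correction: your parenthetical justification ``$\iota^*\iota_!\simeq\id$ on $\PSh(S)$, which comes from full faithfulness of $\iota$'' is wrong as stated --- $\iota\colon S\hookrightarrow T$ is a wide subcategory inclusion, faithful but not full, and the global unit $\id\Rightarrow\iota^*\iota_!$ is only a monomorphism (Lemma~\ref{lemma:iota!-faithful}); the fact you actually need is that the \emph{sliced} unit is invertible, i.e.\ full faithfulness of $(\iota_!)_{/X}$, which follows from left-cancellability of $S$ (Axiom~(\ref{item:F-canc})) and which you have already correctly invoked via Lemma~\ref{lemma:iota!-ff}.
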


		\begin{construction}
			As a consequence of the previous lemma, $\iota_!$ induces a map $\PSh(S)^{[1]}\to\PSh(T)^{[1]}\times_{\PSh(T)}\PSh(S)$ of cartesian fibrations, which we straighten to an $S$-functor $\iota_!\colon\ul\Spc_S\to\iota^*\ul\Spc_T$. For any presheaf $X$ in $\PSh(S)$, this is given by $(\iota_!)_{/X}\colon\PSh(S)_{/X}\to\PSh(T)_{/\iota_!X}$. This is fully faithful for representable $X$ by \Cref{lemma:iota!-ff}, and hence also for general $X$.
		\end{construction}

		\begin{lemma}\label{lemma:iota!-iota*-adj}
			The $S$-functor $\iota_!\colon\ul\Spc_S\hookrightarrow\iota^*\ul\Spc_T$ is left adjoint to the $S$-functor $\iota^*\colon\iota^*\ul\Spc_T \to \ul\Spc_S$ from Construction~\ref{constr:iota^*}.
			\begin{proof}
				We have already seen in the proof of Lemma~\ref{lemma:iota!-pullback} that $\iota^*$ admits a left adjoint $L$ which agrees \emph{pointwise} with $\iota_!$. In the same way, one shows that $\iota_!$ is indeed a left adjoint (with adjoint agreeing pointwise with $\iota^*)$. But then $L\simeq\iota_!$ because left adjoint functors out of $\ul\Spc_S$ are characterized by their value on the terminal presheaf by \Cref{thm:univ-prop-T-spc}. We conclude that $\iota_!$ is left adjoint to $\iota^*$ as claimed.
			\end{proof}
		\end{lemma}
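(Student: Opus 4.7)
The plan is to verify the $S$-adjunction via the pointwise criterion of Proposition~\ref{prop:adjoints-BC}, with the main input being pullback-preservation of $\iota_!$ (Lemma~\ref{lemma:iota!-pullback}), and then identify the resulting left adjoint with $\iota_!$ via the universal property of $\ul\Spc_S$ (Theorem~\ref{thm:univ-prop-T-spc}).

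First I would apply Proposition~\ref{prop:adjoints-BC} to show that the $S$-functor $\iota^*\colon\iota^*\ul\Spc_T\to\ul\Spc_S$ admits some $S$-left adjoint $L$. Pointwise, each component $\iota^*_{A}\colon \PSh(T)_{/\iota_!A}\to\PSh(S)_{/A}$ is obtained by pulling back along the unit followed by $\iota^*$, and has a left adjoint given by $(\iota_!)_{/A}$ as noted in the proof of Lemma~\ref{lemma:iota!-pullback}. For the Beck--Chevalley requirement along a restriction $f\colon A\to B$ in $S$, the mate of the relevant square is precisely the assertion that $f^*\colon\PSh(T)_{/\iota_!B}\to\PSh(T)_{/\iota_!A}$ intertwines with $\iota_!$, i.e.~that $\iota_!$ preserves the pullback witnessing Axiom (\ref{item:F-pb}); this is exactly Lemma~\ref{lemma:iota!-pullback}. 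The final clause of Proposition~\ref{prop:adjoints-BC} then tells us that $L$ agrees pointwise with $(\iota_!)_{/A}$, i.e.~with $\iota_!$ as a pointwise functor.

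Next, the same pointwise criterion applied in the opposite direction shows that $\iota_!\colon\ul\Spc_S\to\iota^*\ul\Spc_T$ itself is an $S$-left adjoint: the pointwise right adjoint is again $(\iota^*)_A$, and the required Beck--Chevalley equivalence is the mate of the one established above. In particular $\iota_!$ is $S$-cocontinuous, and similarly $L$ is $S$-cocontinuous as any $S$-left adjoint is. Both functors send the terminal object of $\ul\Spc_S$ to the terminal object of $\iota^*\ul\Spc_T$ (namely $\mathrm{id}_{\iota_!(\bullet)}$), hence agree on the terminal. Applying the $S$-analogue of Theorem~\ref{thm:univ-prop-T-spc}, which characterizes $S$-cocontinuous functors out of $\ul\Spc_S$ by their value at the terminal presheaf, then yields an equivalence $L\simeq\iota_!$ of $S$-functors. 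Consequently $\iota_!\dashv\iota^*$ as claimed.

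The only nontrivial step is the Beck--Chevalley verification, but this has already been carried out in the preceding lemma; the rest is essentially bookkeeping. A minor point to keep in mind is that Theorem~\ref{thm:univ-prop-T-spc} needs $\iota^*\ul\Spc_T$ to be $S$-cocomplete, which follows immediately from the pointwise checks above (fiberwise cocompleteness holds because $\PSh(T)_{/\iota_!A}$ is a topos and $(\iota_!f)^*$ is a left adjoint, and the Beck--Chevalley conditions along $S$ are again subsumed by Lemma~\ref{lemma:iota!-pullback}).
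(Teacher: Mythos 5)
Your proposal is correct and follows essentially the same route as the paper: establish via the pointwise criterion (Proposition~\ref{prop:adjoints-BC}) that $\iota^*$ admits an $S$-left adjoint $L$ agreeing pointwise with $(\iota_!)_{/A}$, with the Beck--Chevalley input supplied by Lemma~\ref{lemma:iota!-pullback}, observe that $\iota_!$ is itself an $S$-left adjoint, and then identify $L\simeq\iota_!$ via the universal property of $\ul\Spc_S$ since both are cocontinuous and preserve the terminal object. The only (cosmetic) slip is that the components of $\iota^*$ apply $\iota^*$ \emph{first} and then pull back along the unit, not the other way around; this does not affect the argument.
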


		\begin{lemma}\label{lemma:iota!-ext}
			There is a unique $T$-category $\ul\Spc_{S\triangleright T}$ equipped with a fully faithful $T$-functor $\iota_!\colon \ul\Spc_{S\triangleright T} \hookrightarrow \ul\Spc_T$ whose underlying $S$-functor is $\iota_!\colon \ul\Spc_S \hookrightarrow \iota^*\ul\Spc_T$.
		\end{lemma}

		Here the expression `$\ul\Spc_{S\triangleright T}\!$' is to be parsed as `$(\ul\Spc_S)_{\triangleright T}$,' reflecting the fact that this is a (canonical) extension of the $S$-category $\ul\Spc_S$ to a $T$-category.

		\begin{proof}
			The statement is equivalent to the claim that the essential image of the inclusion $\iota_!\colon \ul\Spc_S \hookrightarrow \iota^*\ul\Spc_T$ is in fact a $T$-subcategory of $\ul\Spc_T$, which is precisely the content of Lemma~\ref{lemma:iota!-essim}.
		\end{proof}

		As an upshot, Axiom (\ref{item:F-pb}) holds without any representability assumptions on $A$, $B$, or $B'\!$.

	\begin{lemma}\label{lemma:iota!-faithful}
		For any presheaf $X\in\PSh(S)$, the unit map $\eta_X\colon X\to\iota^*\iota_!X$ is a monomorphism. Put differently, the functor $\iota_!\colon\PSh(S)\to\PSh(T)$ is faithful.
		\begin{proof}
			This works in exactly the same way as for admissibility structures \cite{SAG}*{Proposition~20.2.4.5-(a)}: By Kan's pointwise formula, the presheaf $\iota^*\iota_!X$ is given in degree $A \in S$ by $\colim_{B\in(A/\iota)^\op} X(B)$ with $A/\iota := A/T \times_T S$, and the unit map $\eta\colon X(A)\to(\iota^*\iota_!X)(A)$ corresponds under this identification with the structure map of the term $\id_A \in A/\iota$. Since this term is contained in the full subcategory $A/S \subset A/\iota$ of maps in $S$, we may factor $\eta$ as
			\[
				X(A) \to \colim_{B \in (A/S)\catop} X(B) \to \colim_{B\in(A/\iota)^\op} X(B).
			\]
			The first map is an equivalence (the object $\id_A \in A/S$ being terminal), and thus it remains to show that the second map is a monomorphism. For this, we claim that the category $A/\iota$ is a disjoint union of the full subcategory $A/S$ and its complement (consisting of maps not in $S$), i.e.~any object $t\colon A \to B$ in $A/\iota$ mapping to or from an object in $A/S$ must itself be in $A/S$. Indeed, let $s\colon A\to B'$ be any map in $S$: if there is a map $s\to t$ in $A/\iota$, then $t$ belongs to $S$ as the latter is a subcategory; on the other hand, if there is a map $t\to s$, then $t$ belongs to $S$ by left cancellability.

			It follows that $\colim_{B\in(A/\iota)^\op} X(B)$ splits as a disjoint union $\colim_{B\in(A/S)^\op} X(B) \amalg Y$, finishing the proof.
		\end{proof}
	\end{lemma}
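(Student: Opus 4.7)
My plan is to analyze the unit map pointwise via Kan's pointwise formula. For an object $A \in S$, we may identify $(\iota^*\iota_! X)(A)$ with $\colim_{(f\colon A\to B) \in (A/\iota)^\op} X(B)$, where $A/\iota$ denotes the comma category of morphisms in $T$ from $A$ to objects of $S$, with morphisms given by maps in $S$ over $A$. Under this identification, the unit map $\eta_{X,A}$ is the structure map of the colimit corresponding to the element $\id_A \in A/\iota$.

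Consider now the full subcategory $A/S \subset A/\iota$ spanned by the morphisms in $S$: in this subcategory $\id_A$ is terminal, so $\colim_{(A/S)^\op} X \simeq X(A)$, and $\eta_{X,A}$ factors as the canonical comparison map $\colim_{(A/S)^\op} X \to \colim_{(A/\iota)^\op} X$. The crucial combinatorial claim I would then verify is that $A/S$ is actually a union of connected components of $A/\iota$: given any morphism $g\colon B \to B'$ in $S$ connecting two objects $(f\colon A\to B)$ and $(f'\colon A\to B')$ of $A/\iota$, i.e., with $g\circ f = f'$, membership in $A/S$ propagates in both directions. Indeed, if $f' \in S$, then $f\in S$ by the left cancellability axiom (\ref{item:F-canc}), since both $g$ and $g\circ f = f'$ belong to $S$; conversely, if $f \in S$, then $f' = g\circ f$ lies in $S$ as a composite of maps in $S$.

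Once this decomposition is in hand, the colimit splits as $\colim_{(A/\iota)^\op} X \simeq X(A) \sqcup Y$ for some space $Y$, and $\eta_{X,A}$ is identified with the coproduct inclusion, hence a monomorphism. Varying $A$, this shows $\eta_X$ is pointwise a monomorphism and therefore a monomorphism in $\PSh(S)$; faithfulness of $\iota_!$ follows from the standard fact that a left adjoint is faithful if and only if the unit is a monomorphism. The main potential obstacle is the component decomposition of $A/\iota$, but the left cancellability axiom (\ref{item:F-canc}) is tailor-made for precisely this purpose, so the argument is essentially forced; notably, neither (\ref{item:F-pb}) nor (\ref{item:F-idem}) enter the proof at all.
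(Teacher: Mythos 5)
Your proposal is correct and follows essentially the same route as the paper's proof: the same pointwise Kan-formula identification, the same factorization of the unit through $\colim_{(A/S)^\op}X \simeq X(A)$, and the same observation that left cancellability (together with closure of $S$ under composition) makes $A/S$ a union of connected components of $A/\iota$, so the colimit splits off $X(A)$ as a disjoint summand. Your closing remark that only Axiom (\ref{item:F-canc}) is used is also accurate.
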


	\begin{lemma}\label{lemma:unit-pb}
		Let $f\colon X\to Y$ be a map in $\PSh(S)$. Then the naturality square
		\begin{equation*}
			\begin{tikzcd}
				X\arrow[r, "f"]\arrow[d, "\eta"'] & Y\arrow[d,"\eta"]\\
				\iota^*\iota_!X\arrow[r, "\iota^*\iota_!f"'] & \iota^*\iota_!Y
			\end{tikzcd}
		\end{equation*}
		of the unit transformation $\eta\colon \id \Rightarrow \iota^*\iota_!$ is a pullback square.
		\begin{proof}
			Again, this is analogous to the proof for admissibility structures \cite{SAG}*{Proposition~20.2.4.5-(b)}. The proof of the previous lemma shows that after evaluating at $A\in T$ the naturality square is equivalent to a square of the form
			\begin{equation*}
				\begin{tikzcd}
					X(A)\arrow[d,hook]\arrow[r, "f(A)"] &[1.25em] Y(A)\arrow[d,hook]\\
					X(A)\amalg X'\arrow[r, "f(A)\amalg f'\,"'] & Y(A)\amalg Y'\llap,
				\end{tikzcd}
			\end{equation*}
			which is evidently a pullback.
		\end{proof}
	\end{lemma}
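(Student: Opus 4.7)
The plan is to evaluate the square pointwise at each object $A \in S$ and reduce it to a square of coproduct inclusions, using the decomposition established in the proof of Lemma~\ref{lemma:iota!-faithful}. By pointwise evaluation for limits of presheaves, it suffices to check that the square is a pullback after applying $\mathrm{ev}_A\colon\PSh(S)\to\Spc$ for every $A\in S$.

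Fix $A\in S$. Recall from the proof of Lemma~\ref{lemma:iota!-faithful} that the indexing category $A/\iota = A/T\times_T S$ splits as a disjoint union $A/S\amalg(A/\iota\setminus A/S)$: given $t\colon A\to B$ in $A/\iota$ and any morphism in $A/\iota$ between $t$ and some $s\colon A\to B'$ with $s\in S$, left cancellability (Axiom~\ref{item:F-canc}) together with closure of $S$ under composition forces $t\in S$. Applying Kan's pointwise formula to any presheaf $Z\in\PSh(S)$ therefore yields a natural decomposition
\begin{equation*}
(\iota^*\iota_!Z)(A)\;\simeq\;\colim_{B\in(A/S)^\op}Z(B)\;\amalg\;\colim_{B\in(A/\iota\setminus A/S)^\op}Z(B)\;\simeq\;Z(A)\amalg Z'(A),
\end{equation*}
where the second equivalence uses that $\id_A$ is terminal in $A/S$ and where $Z'(A)$ denotes the second colimit. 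As shown in the previous lemma, under this identification the component $\eta_A\colon Z(A)\to(\iota^*\iota_!Z)(A)$ is the canonical coproduct inclusion.

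This decomposition is manifestly natural in $Z$, since it comes from a partition of the indexing category that is independent of $Z$. Applied to $f\colon X\to Y$ and $\id_Y$, it exhibits the naturality square evaluated at $A$ as
\begin{equation*}
\begin{tikzcd}
X(A)\arrow[r, "f(A)"]\arrow[d, hook] &[1.25em] Y(A)\arrow[d, hook]\\
X(A)\amalg X'(A)\arrow[r, "f(A)\amalg f'(A)"'] & Y(A)\amalg Y'(A),
\end{tikzcd}
\end{equation*}
where the vertical maps are the coproduct inclusions and the bottom horizontal map decomposes as a coproduct. Since coproducts in $\Spc$ are disjoint, this square is a pullback, finishing the proof.

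The only mild subtlety is the claim that the decomposition is natural, so that $\iota^*\iota_! f$ really splits as $f(A)\amalg f'(A)$ with matching summands on the two sides; this is automatic once one notes that the partition of $A/\iota$ depends only on $A$, not on the coefficient presheaf. Beyond that, everything is a direct application of the colimit decomposition already set up in the proof of Lemma~\ref{lemma:iota!-faithful}, so no new ideas or axioms are required.
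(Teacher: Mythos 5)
Your proof is correct and follows essentially the same route as the paper's: both evaluate pointwise, invoke the disjoint-summand decomposition of $A/\iota$ from the proof of Lemma~\ref{lemma:iota!-faithful} to split $(\iota^*\iota_!Z)(A)$ as $Z(A)\amalg Z'(A)$ naturally in $Z$, and conclude by disjointness of coproducts in $\Spc$. Your added remark on the naturality of the splitting (coming from a partition of the indexing category independent of the coefficient presheaf) is a correct and worthwhile elaboration of a point the paper leaves implicit.
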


	Our next goal is to prove the following sharpening of \Cref{lemma:iota!-faithful}:

	\begin{proposition}\label{prop:iota!-mono}
		The faithful functor $\iota_!\colon\PSh(S)\to\PSh(T)$ is fully faithful on groupoid cores, and thus a monomorphism in the ($\infty$-)category of ($\infty$-)categories.
	\end{proposition}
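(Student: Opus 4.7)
My plan is to leverage the already-established faithfulness from Lemma~\ref{lemma:iota!-faithful}: it suffices to show that every equivalence $f\colon\iota_!X\iso\iota_!Y$ in $\PSh(T)$ is of the form $\iota_!h$ for some $h\colon X\to Y$ in $\PSh(S)$, since such an $h$ will automatically be an equivalence by conservativity of $\iota_!$. I will first establish this conservativity directly from Lemma~\ref{lemma:unit-pb}: if $\phi$ satisfies that $\iota_!\phi$ is an equivalence, then so is $\iota^*\iota_!\phi$, and the naturality pullback square then exhibits $\phi$ as the pullback of an equivalence along the monomorphism $\eta$, forcing $\phi$ to be an equivalence as well.

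The heart of the argument will be to show that the restricted equivalence $\iota^*f\colon\iota^*\iota_!X\to\iota^*\iota_!Y$ respects the canonical summand decompositions $\iota^*\iota_!X=X\amalg X^\perp$ and $\iota^*\iota_!Y=Y\amalg Y^\perp$ from the proof of Lemma~\ref{lemma:iota!-faithful}. For this I will apply Lemma~\ref{lemma:iota!-essim} to the equivalence $f$ itself, viewed as a morphism in $\PSh(T)$---this extended version of the lemma is valid thanks to the strengthened form of Axiom~(\ref{item:F-pb}) noted in the paragraph following Lemma~\ref{lemma:iota!-ext}. Concretely, for $A\in S$ and $y\in Y(A)$ the corresponding map $A\to\iota_!Y$ in $\PSh(T)$ factors as $\iota_!v$ for the unique $v\colon A\to Y$ in $\PSh(S)$, and hence lies in the essential image of $(\iota_!)_{/Y}$; pulling back along $f$ produces a map $A\to\iota_!X$ corresponding to $f^{-1}(\eta_Y(y))\in\iota_!X(A)=X(A)\amalg X^\perp(A)$, and Lemma~\ref{lemma:iota!-essim} then forces this element to lie in the $X(A)$-summand. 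Running the symmetric argument with $f^{-1}$ provides the reverse inclusion, so $f$ restricts to a bijection of the $X(A)$- and $Y(A)$-summands, and naturality in $A$ promotes this to the full summand-preservation claim.

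Once summand preservation is available, the lift is immediate: the adjoint map $g:=\iota^*f\circ\eta_X\colon X\to\iota^*\iota_!Y$ lands in the $Y$-summand and hence factors through $\eta_Y$ as $\eta_Y\circ h$ for a unique $h\colon X\to Y$ in $\PSh(S)$; the adjunction $\iota_!\dashv\iota^*$ then identifies $\iota_!h$ with $f$, and the conservativity step makes $h$ an equivalence. The main technical subtlety I anticipate is making precise the applicability of Lemma~\ref{lemma:iota!-essim} to the morphism $f$ between non-representable presheaves, but this should follow formally via the same colimit-preservation argument as the original proof of the lemma, now exploiting the representability-free form of Axiom~(\ref{item:F-pb}).
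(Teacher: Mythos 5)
Your overall strategy (reduce to showing that every equivalence $f\colon\iota_!X\iso\iota_!Y$ is admissible and then lift it via the unit squares) matches the paper's, and your first and third paragraphs are fine. The gap is in the central step. Lemma~\ref{lemma:iota!-essim} and its extension to non-representable bases (Lemma~\ref{lemma:iota!-ext}) only assert that the pullback of $\iota_!v\colon A\to\iota_!Y$ along $f$ lies in the \emph{essential image} of $(\iota_!)_{/X}$, i.e.\ that there is a pullback square exhibiting $f^{-1}\circ\iota_!v$ as $\iota_!(v')\circ e^{-1}$ for some $v'\colon P'\to X$ in $\PSh(S)$ and some equivalence $e\colon\iota_!P'\iso A$ in $\PSh(T)$. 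This does \emph{not} yet imply that $f^{-1}(\eta_Y(y))$ lands in the $X(A)$-summand of $(\iota^*\iota_!X)(A)$: for that you need the map $\iota_!(v')\circ e^{-1}$ to be admissible in the sense of Definition~\ref{defi:admissible-early}, i.e.\ to lie in the image of $\iota_!$ on the mapping space $\maps_{\PSh(S)}(A,X)$, and this requires the equivalence $e$ of corporeal presheaves to itself be admissible --- which is precisely (the representable case of) the proposition you are trying to prove. This is the subtlety flagged immediately after Definition~\ref{defi:admissible-early}: before the proposition is established, ``admissible'' depends on the chosen preimages, whereas membership in the full subcategory $\bbU_S$ is a property of an object of the slice only up to arbitrary equivalence over the base.

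A sanity check confirms the argument cannot be repaired without new input: your proof only invokes consequences of Axioms (\ref{item:F-canc}) and (\ref{item:F-pb}), but Remark~\ref{rk:F-idem-necessary} together with the $K_0(R)$ example shows that these two axioms alone do not make $\iota_!$ a monomorphism, so Axiom (\ref{item:F-idem}) must enter somewhere. In the paper's proof it enters in Step 1, for representable $X=A$: writing $f_A(\id_A)=[\alpha,y]$ via the pointwise formula for $\iota_!Y$, one produces $\beta$ with $\beta\alpha=\id$ and $\alpha\beta\in S$, and (\ref{item:F-idem}) is exactly what forces $\alpha\in S$. The general case is then deduced by covering $Y$ with representables and using the pullback and cancellation properties of admissible maps (Lemmas~\ref{lemma:admissible-local} and~\ref{lemma:admissible-canc}); that reduction is close in spirit to what you propose, but the representable case genuinely requires the extra axiom and an argument of a different nature.
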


	The proof of this proposition is surprisingly subtle and will require some further preparations.

	\begin{definition}\label{defi:admissible-early}
	Let $X,Y\in\PSh(S)$. We call $f\colon \iota_!X\to \iota_!Y$ in $\PSh(T)$ \emph{admissible} if it lies in the image of the inclusion $\iota_!\colon \maps_{\PSh(S)}(X,Y) \hookrightarrow \maps_{\PSh(T)}(\iota_!X,\iota_!Y)$.
	\end{definition}

	Beware that a priori this depends on the equivalence classes of $X$ and $Y$ in $\PSh(S)$, not only on the equivalence classes of their left Kan extensions in $\PSh(T)$, and only once we have proven Proposition~\ref{prop:iota!-mono} will we know that this independent of the choices of preimages.

	\begin{lemma}\label{lemma:admissible-local}
	Let $X,Y\in\PSh(S)$ and let $f\colon \iota_!X\to\iota_!Y$ be a map in $\PSh(T)$.
		\begin{enumerate}
			\item The map $f$ is admissible if and only if its adjunct $\tilde{f}\colon X \to \iota^*\iota_!Y$ factors through the monomorphism $\eta\colon Y \to \iota^*\iota_!Y$.
			\item Let $(g_i)_{i\in I}\colon \coprod_{i\in I} X_i \twoheadrightarrow X$ be an effective epimorphism in $\PSh(S)$. Then $f$ is admissible if and only if the composite $f \circ \iota_!(g_i)\colon \iota_!X_i \to \iota_!Y$ is admissible for every $i \in I$.
			\item Let $(h_i)_{i\in I}\colon\coprod_{i\in I} Y_i\twoheadrightarrow Y$ be an effective epimorphism in $\PSh(S)$. Then $f$ is admissible if and only if for every $i \in I$ there exists a pullback diagram in $\PSh(T)$ of the form
			\[
			\begin{tikzcd}
				\iota_!X_i \rar{\iota_!h_i'} \dar[swap]{f_i} \drar[pullback] & \iota_!X \dar{f} \\
				\iota_!Y_i \rar[swap]{\iota_!h_i} & \iota_!Y
			\end{tikzcd}
			\]
			such that $f_i$ is admissible.
		\end{enumerate}
	\end{lemma}

	For the definition of effective epimorphisms, see \cite[Corollary~6.2.3.5]{HTT}. In a presheaf topos $\PSh(T)$, $f\colon X\rightarrow Y$ is an effective epimorphism if for all $A\in T$, $f(A)\colon X(A)\rightarrow Y(A)$ induces an epimorphism on $\pi_0$ \cite[Corollary~7.2.15]{HTT}. Note also that effective epimorphisms are not necessarily epimorphisms in the categorical sense of the term: for example, $S^1\hookrightarrow S^2$ is an effective epimorphism in $\Spc$, but it isn't a (categorical) epimorphism, not even in the homotopy category of spaces.

	\begin{proof}
		Part (1) is immediately clear from the definitions. Using (1), we see that part (2) is equivalent to the statement that the adjunct map $\tilde{f}\colon X \to \iota^*\iota_!Y$ factors through the unit $\eta_Y\colon Y \to \iota^*\iota_!Y$ if and only if each of the composites $\tilde{f} \circ g_i\colon X_i \to \iota^*\iota_!Y$ do. But since $\eta_Y$ is a monomorphism, the functor $(\eta_Y)_!\colon \PSh(T)_{/Y} \to \PSh(T)_{/\iota^*\iota_!Y}$ is fully faithful, hence this is immediate from the assumption that $X$ is the $\Delta^\op$-shaped colimit of the \v{C}ech nerve of $(g_i)_{i\in I}$. For part (3), the `only if'-direction follows directly from Lemma~\ref{lemma:iota!-pullback}. For the `if'-direction, observe that the map $(h_i')_{i \in I}\colon \coprod_{i \in I} X_i \to X$ from part (3) is an effective epimorphism in $\PSh(S)$: by \Cref{lemma:unit-pb} it is a pullback of the effective epimorphism $\iota^*\iota_!(h_j)_{j\in J}\colon\iota^*\iota_!\coprod_{j\in J} Y_j\to\iota^*\iota_!Y$. The claim thus follows from part (2), as for every $i \in I$ the composite $f \circ \iota_!(h'_i) = \iota_! h_i \circ f_i$ is admissible by assumption.
	\end{proof}

	\begin{lemma}\label{lemma:admissible-canc}
		Let $X,Y,Z\in\PSh(S)$ and let $f\colon \iota_!X\to \iota_!Y$ and $g\colon \iota_!Y\to \iota_!Z$ be maps in $\PSh(T)$ such that $g$ and $gf$ are admissible. Then also $f$ is admissible.
		\begin{proof}
			By the previous lemma, we have to show that the composite $\iota^*(f)\eta\colon X \to \iota^*\iota_!Y$ factors through $\eta\colon Y\to\iota^*\iota_!Y$. However, by Lemma~\ref{lemma:unit-pb} and admissibility of $g$ the latter is pulled back from the unit $\eta\colon Z\to \iota^*\iota_!Z$ along $\iota^*(g)$. It therefore suffices to show that $\iota^*(g)\iota^*(f)\eta$ factors accordingly. However, this is immediate from admissibility of $gf$.
		\end{proof}
	\end{lemma}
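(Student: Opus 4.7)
The plan is to characterize admissibility in terms of a factorization problem via Lemma~\ref{lemma:admissible-local}(1): a map $h\colon\iota_!X'\to\iota_!Y'$ is admissible if and only if its adjunct $\iota^*(h)\circ\eta_{X'}\colon X'\to\iota^*\iota_!Y'$ factors through the unit $\eta_{Y'}\colon Y'\to\iota^*\iota_!Y'$. My task is therefore to show that $\iota^*(f)\circ\eta_X\colon X\to\iota^*\iota_!Y$ factors through $\eta_Y$, given the analogous factorizations coming from the admissibility of $g$ and of $gf$.

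The key geometric input is Lemma~\ref{lemma:unit-pb} applied to a chosen lift of $g$. Since $g$ is admissible I can pick $\bar g\colon Y\to Z$ in $\PSh(S)$ with $\iota_!\bar g\simeq g$, so that $\iota^*(g)=\iota^*\iota_!\bar g$. Lemma~\ref{lemma:unit-pb} then produces a pullback square
\[
\begin{tikzcd}
Y \arrow[r, "\bar g"] \arrow[d, "\eta_Y"'] & Z \arrow[d, "\eta_Z"] \\
\iota^*\iota_!Y \arrow[r, "\iota^*(g)"'] & \iota^*\iota_!Z
\end{tikzcd}
\]
in $\PSh(S)$. By the universal property of this pullback, a map $X\to\iota^*\iota_!Y$ factors through $\eta_Y$ if and only if its post-composition with $\iota^*(g)$ factors through $\eta_Z$.

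The argument then closes by applying this criterion to $\iota^*(f)\circ\eta_X$: since $\iota^*(g)\circ\iota^*(f)\circ\eta_X = \iota^*(gf)\circ\eta_X$ is the adjunct of $gf$, it factors through $\eta_Z$ by the admissibility of $gf$ together with Lemma~\ref{lemma:admissible-local}(1). Hence $\iota^*(f)\circ\eta_X$ factors through $\eta_Y$, so that $f$ is admissible.

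I do not expect any substantive obstacle here; the argument is essentially a diagram chase around the pullback square, with Lemmas~\ref{lemma:admissible-local} and~\ref{lemma:unit-pb} doing all the real work. The one bookkeeping subtlety is that at this stage in the paper Proposition~\ref{prop:iota!-mono} is not yet available, so the lift $\bar g$ is not a priori unique; but this is harmless, since Definition~\ref{defi:admissible-early} only requires \emph{existence} of such a preimage, and any choice of $\bar g$ produces a pullback square of the required form.
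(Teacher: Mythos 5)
Your proposal is correct and follows essentially the same route as the paper: reduce via Lemma~\ref{lemma:admissible-local}(1) to a factorization through the unit, use Lemma~\ref{lemma:unit-pb} and the admissibility of $g$ to exhibit $\eta_Y$ as a pullback of $\eta_Z$ along $\iota^*(g)$, and conclude from the admissibility of $gf$. Your extra remark about the non-uniqueness of the lift $\bar g$ at this stage is a fair point of care that the paper leaves implicit, and your resolution of it is right.
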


	We are now ready for the proof of \Cref{prop:iota!-mono}:

	\begin{proof}[Proof of Proposition~\ref{prop:iota!-mono}]
		In light of the faithfulness of $\iota_!\colon \PSh(S) \to \PSh(T)$ from \Cref{lemma:iota!-faithful}, it remains to show that $\iota_!$ is full on cores. Note that it suffices to prove that for presheaves $X,Y\in\PSh(S)$ any equivalence $f\colon\iota_!X\iso\iota_!Y$ is admissible. We will prove this in two steps:

		\textit{Step 1}: We will first treat the special case where $X = A \in S$ is a representable presheaf. Consider the image $f_A(\id_A) \in \pi_0\big((\iota_!Y)(A)\big)$ of the identity map $\id_A \in \pi_0\big(\iota_!(A)(A)\big)$. Kan's pointwise formula shows $(\iota_!Y)(A) \simeq \colim_{B\in (A/\iota)^\op} Y(B)$, whence $\pi_0((\iota_!Y)(A))\cong \colim_{B\in (A/\iota)^\op} \pi_0Y(B)$ since $\pi_0\colon\Spc\to\Set$ is a left adjoint. Thus, we can identify $f(\id_A)$ (as a class in $\pi_0$) with a class $[\alpha,y]$ for some morphism $\alpha\colon A \to B$ in $T$ and some object $y \in Y(B)$. By \Cref{lemma:admissible-local}, we have to prove that this class $[\alpha,y]$ lies in the image of the monomorphism
		\[
		\eta_A\colon Y(A) \simeq \colim_{B\in (A/S)^\op} Y(B) \hookrightarrow \colim_{B\in (A/\iota)^\op} Y(B) = (\iota^*\iota_!Y)(A)
		\]
		(or rather, its effect on $\pi_0$) induced by the disjoint summand inclusion $A/S \hookrightarrow A/\iota$ (see the proof of Lemma~\ref{lemma:iota!-faithful}). In other words, we have to show that $\alpha$ is a morphism in $S$.

		Since the map $f_B\colon \maps_{T}(B,A) = (\iota_!A)(B) \iso (\iota_!Y)(B)$ is an equivalence, there exists a map $\beta\colon B\to A$ in $T$ satisfying $f_B(\beta) = [\id_B,y]$ in $\pi_0(\iota_!Y(B))$. We thus have
		\[
			f_A(\beta\alpha)=\alpha^*f_B(\beta)=[\alpha,y]=f_A(\id),
		\]
		and since also $f_A$ is an equivalence we deduce that $\beta\alpha=\id$. On the other hand, we have $[\id_B,y] = f_B(\beta)=\beta^*f_A(\id_A)=\beta^*[\alpha,y] = [\alpha\beta,y]$, and since $A/S\hookrightarrow A/\iota$ is a disjoint summand inclusion we see that $\alpha\beta$ belongs to $S$. It follows from Axiom (\ref{item:F-idem}) that also $\alpha$ belongs to $S$, finishing Step 1.

		\textit{Step 2:} We will now deduce the statement for an arbitrary presheaf $X \in \PSh(S)$. Pick an effective epimorphism $(h_j)_{j\in J}\colon\coprod_{j\in J} Y_j \twoheadrightarrow Y$ in $\PSh(S)$ for representable $Y_j$, and choose for each $j\in J$ a pullback
		\begin{equation*}
			\begin{tikzcd}
				\iota_!P_j\arrow[r, "\iota_!h_j'"]\arrow[dr,phantom,"\lrcorner"{very near start}]\arrow[d, "f_j"'] & \iota_!X\arrow[d, "f"]\\
				\iota_!Y_j\arrow[r, "\iota_!h_j"'] & \iota_!Y
			\end{tikzcd}
		\end{equation*}
		in $\PSh(T)$ using Axiom (\ref{item:F-pb}). As $f$ is an equivalence, so is each $f_j$. As $Y_j$ is representable, it follows from Step 1 that $f^{-1}_j\colon \iota_!Y_j \iso \iota_!P_j$ is admissible, and thus by \Cref{lemma:admissible-canc} also $f_j$ is admissible. It thus follows from Lemma~\ref{lemma:admissible-local} that also $f$ is admissible, completing the proof of the proposition.
	\end{proof}

	\begin{remark}\label{rk:F-idem-necessary}
		Axiom (\ref{item:F-idem}) is necessary for the previous proposition: every wide subcategory $\iota\colon S\hookrightarrow T$ for which the left Kan extension functor $\iota_!\colon \PSh(S) \to \PSh(T)$ is a monomorphism of categories automatically satisfies (\ref{item:F-idem}). To see this, consider morphisms $\alpha,\beta$ as in Axiom (\ref{item:F-idem}), and define $X \in \PSh(S)$ to be the colimit of the diagram
		\begin{equation*}
			A\xrightarrow{\alpha\beta}A\xrightarrow{\alpha\beta}\cdots.
		\end{equation*}
		Since $\iota_!$ preserves colimits, it follows that $\iota_!X$ is the colimit of the analogous diagram in $\PSh(T)$. But since $\alpha$ and $\beta$ are maps in $T$, the maps $\alpha\colon A \to B$ exhibit $B$ as another colimit of this diagram, and thus we get an equivalence $\iota_!X\simeq\iota_! B$ in $\PSh(T)$ compatible with the colimit structure maps. Assuming that $\iota_!$ is a monomorphism, it follows that $X \simeq B$ is a representable presheaf on $S$, and thus the map $\alpha\colon A \to B$ in $T$ agrees up to equivalence in $T$ with the structure maps $A\to X$, which belong to $S$ by construction. As $S$ contains all equivalences, this shows that also $\alpha$ belongs to $S$, finishing the argument.
	\end{remark}

	Note moreover that (\ref{item:F-idem}) is not implied by the remaining two axioms as the following example shows:

	\begin{example}
		Let $R\not=0$ be a commutative ring. We let $T=\text{Perf}(R)$ be the stable category of perfect $R$-chain complexes, and we let $S$ consist of those $f\colon X\to Y$ such that $[X]=[Y]\in K_0(R)$, or equivalently (by the defining relations of $K_0$) such that the fiber of $f$ vanishes in $K_0$.

		The first description makes it clear that $S$ is a subcategory, contains all equivalences, and even satisfies $2$-out-of-$3$, proving (\ref{item:F-canc}). On the other hand, the second description shows that $S$ is closed under pullbacks, proving (\ref{item:F-pb-strong}). However, (\ref{item:F-idem}) does not hold: $0\to R\to 0$ is the identity and $R\to0\to R$ belongs to $S$ as $[R]=[R]$, but neither $0\to R$ nor $R\to0$ are contained in $S$ as $[R]\not=0$ in $K_0(R)$.
	\end{example}

	\begin{definition}\label{defi:admissible}
		Following Lurie's notation and terminology for fractured $\infty$-topoi, we let $\PSh(T)^\textup{corp} \subset \PSh(T)$ denote the (non-full) essential image of the left Kan extension functor $\iota_!\colon\PSh(S)\to\PSh(T)$. A presheaf on $T$ is called \textit{corporeal} if it is an object of $\PSh(T)^\textup{corp}$, and a morphism between two corporeal presheaves on $T$ is called \textit{admissible} if it is a morphism in $\PSh(T)^\textup{corp}$. Beware that despite the terminology and notation both of these notions depend on $S$.
	\end{definition}

	Note that for two $X,Y\in\PSh(S)$ a map $f\colon\iota_!X \to\iota_!Y$ is admissible in the sense of \Cref{defi:admissible} if and only if it is admissible in the sense of \Cref{defi:admissible-early} above.

	\begin{lemma}\label{lemma:admissible-basic}
		Let $X,Y,Z\in\PSh(T)$ be corporeal presheaves.
		\begin{enumerate}
			\item Let $f\colon X\to Z$ be an admissible morphism, and let $g\colon Y\to Z$ be arbitrary. Then the base change $g^*(f)\colon g^*(X)\to Y$ of $f$ along $g$ is again an admissible morphism of corporeal presheaves.\label{item:ab-pb}
			\item Let $f\colon X\twoheadrightarrow Y$ be an effective epimorphism, and let $g\colon Y\to Z$ be arbitrary. Assume that $f$ and $gf$ are admissible. Then also $g$ is admissible.\label{item:ab-cover}
			\item Let $f\colon X\to Y$, $g\colon Y\to Z$ be maps such that $g$ and $gf$ are admissible. Then $f$ is admissible.\label{item:ab-canc}
		\end{enumerate}
		\begin{proof}
			The first statement is a consequence of Lemma~\ref{lemma:iota!-ext}, while the second statement follows from Lemma~\ref{lemma:admissible-local}. Finally, the third statement follows from Lemma~\ref{lemma:admissible-canc}.
		\end{proof}
	\end{lemma}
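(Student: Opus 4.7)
The plan is to derive each part directly from the preparatory lemmas. Parts~(1) and~(3) will be essentially immediate, while part~(2) requires a small additional observation.

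For part~(3), after lifting the corporeal presheaves as $X = \iota_!X'$, $Y = \iota_!Y'$, $Z = \iota_!Z'$, the hypothesis that $g$ and $gf$ are admissible means they likewise lift to maps in $\PSh(S)$, and the desired admissibility of $f$ is then literally the content of \Cref{lemma:admissible-canc}.

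For part~(1), I would invoke \Cref{lemma:iota!-ext}, which identifies admissible morphisms to corporeal presheaves with the image of a sub-$T$-category $\ul\Spc_{S\triangleright T}\hookrightarrow\ul\Spc_T$. Extending along \Cref{conv:limit-ext}, this inclusion becomes a limit-preserving sub-presheaf of $\ul\Spc_T$ on all of $\PSh(T)\catop$; in particular, pullback along any morphism $g\colon Y\to Z$ in $\PSh(T)$ preserves the associated sub-fibers. Viewing $f\colon X\to Z$ as an object of the sub-fiber over $Z$, its pullback $g^*(f)\colon g^*(X)\to Y$ then lies in the sub-fiber over $Y$, yielding both corporeality of $g^*(X)$ and admissibility of $g^*(f)$.

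The main obstacle is part~(2). Writing $f = \iota_!(f')$ for some $f'\colon X'\to Y'$ in $\PSh(S)$, my plan is to apply \Cref{lemma:admissible-local}(2) to the singleton cover $(f')$, which reduces admissibility of $g$ to admissibility of $g\circ\iota_!(f') = gf$, the latter being part of our hypothesis. The delicate point is thus to verify that $f'$ is indeed an effective epimorphism in $\PSh(S)$. For this I would exploit that $\iota^*$ is simultaneously a right adjoint (to $\iota_!$) and a left adjoint (to $\iota_*$), hence preserves all small limits and colimits. In particular, $\iota^*(f) = \iota^*\iota_!(f')$ is an effective epimorphism in $\PSh(S)$. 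By \Cref{lemma:unit-pb}, the naturality square of $\eta\colon\id\Rightarrow\iota^*\iota_!$ at $f'$ is a pullback, exhibiting $f'$ as a base change of $\iota^*\iota_!(f')$ along $\eta_{Y'}\colon Y'\to\iota^*\iota_! Y'$; since effective epimorphisms in the $\infty$-topos $\PSh(S)$ are stable under base change, $f'$ is an effective epimorphism as required.
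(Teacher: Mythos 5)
Your proof is correct and follows essentially the same route as the paper, which simply cites Lemma~\ref{lemma:iota!-ext}, Lemma~\ref{lemma:admissible-local}, and Lemma~\ref{lemma:admissible-canc} for parts (1), (2), and (3), respectively. The only substantive addition is your verification in part (2) that the lift $f'$ of $f$ is an effective epimorphism in $\PSh(S)$ (via colimit- and limit-preservation of $\iota^*$ together with Lemma~\ref{lemma:unit-pb} and base-change stability of effective epimorphisms), a step the paper leaves implicit; your argument for it is valid.
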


	We now come to the final missing ingredient of the proof of Theorem~\ref{thm:fracture}:

	\begin{proposition}\label{prop:counit-pb}
		Let $f\colon X\to Y$ be a map in $\PSh(T)^\textup{corp}$. Then the naturality square
		\begin{equation*}
			\begin{tikzcd}
				\iota_!\iota^*X\arrow[d, "\epsilon"']\arrow[r, "\iota_!\iota^*f"] & \iota_!\iota^*Y\arrow[d, "\epsilon"]\\
				X\arrow[r, "f"'] & Y
			\end{tikzcd}
		\end{equation*}
		is a pullback in $\PSh(T)$.
	\end{proposition}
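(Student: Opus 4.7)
Since $f$ is admissible, we may write $f=\iota_!\tilde f$ for some $\tilde f\colon \tilde X\to\tilde Y$ in $\PSh(S)$, and the plan is to check the claim at every $A\in T$ by exploiting the disjoint-summand decomposition $C/\iota\simeq C/S\amalg (C/\iota)^{c}$ isolated in the proof of \Cref{lemma:iota!-faithful} (here $(C/\iota)^{c}$ denotes the full subcategory spanned by those $C\to C'$ in $T$ with $C'\in S$ but the map itself not in $S$). Commuting colimits past the coproduct yields a natural direct-sum decomposition of presheaves
\[
\iota_!\iota^*\iota_!\tilde Z\simeq \iota_!\tilde Z\amalg Q(\tilde Z)
\]
for every $\tilde Z\in\PSh(S)$, where $Q(\tilde Z)(A)=\colim\tilde Z(C')$ over pairs $(A\to C,\,C\to C')$ with $C,C'\in S$ and $C\to C'\notin S$. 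Under this decomposition the map $\iota_!\eta_{\tilde Z}$ is the inclusion of the first summand, while $\epsilon_{\iota_!\tilde Z}$ restricts to the identity on the first summand and to some natural map $\alpha_{\tilde Z}\colon Q(\tilde Z)\to \iota_!\tilde Z$ on the second. This will be the main organizing device.

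By naturality of the decomposition in $\tilde Z$, the map $\iota_!\iota^*\iota_!\tilde f$ translates into $\iota_!\tilde f\amalg Q(\tilde f)$ for an induced natural map $Q(\tilde f)\colon Q(\tilde X)\to Q(\tilde Y)$, so the counit square for $\tilde f$ takes the form
\[
\begin{tikzcd}
\iota_!\tilde X\amalg Q(\tilde X)\arrow[r, "\iota_!\tilde f\amalg Q(\tilde f)"]\arrow[d, "(\id{,}\,\alpha_{\tilde X})"'] & \iota_!\tilde Y\amalg Q(\tilde Y)\arrow[d, "(\id{,}\,\alpha_{\tilde Y})"]\\
\iota_!\tilde X\arrow[r, "\iota_!\tilde f"'] & \iota_!\tilde Y
\end{tikzcd}
\]
in $\PSh(T)$. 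By universality of coproducts in the $\infty$-topos $\PSh(T)$, such a square is a pullback if and only if the canonical comparison $Q(\tilde X)\to\iota_!\tilde X\times_{\iota_!\tilde Y}Q(\tilde Y)$ induced by $Q(\tilde f)$ and $\alpha_{\tilde X}$ is an equivalence.

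I expect this last step --- verifying that the comparison map is an equivalence --- to be the main obstacle. The plan is to establish it levelwise at each $A\in T$: expanding both sides as colimits of spaces $\tilde X(C')$ indexed by explicit categories of factorizations $A\to C\to C'$, the claim reduces, via universality of colimits in $\Spc$, to a combinatorial identification of the indexing categories, for which Axioms~(\ref{item:F-canc})--(\ref{item:F-idem}) will be needed to handle the $S$-vs-$T$ distinction. A potentially cleaner route is to feed in \Cref{lemma:unit-pb}: applying $\iota_!$ (which preserves pullbacks by \Cref{lemma:iota!-pullback}) to the unit-naturality pullback square already provides a closely related pullback square in $\PSh(T)$, and combining it with the triangle identity $\epsilon_{\iota_!\tilde Z}\circ\iota_!\eta_{\tilde Z}=\id$ together with disjointness of the coproduct decomposition should force the required compatibility of $Q(\tilde f)$ with the decomposition and pin down the comparison map to an equivalence without a direct combinatorial computation.
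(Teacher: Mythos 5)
Your reduction is correct as far as it goes: writing $f=\iota_!\tilde f$, the splitting $A/\iota\simeq A/S\amalg(A/\iota)^c$ from the proof of \Cref{lemma:iota!-faithful} does give a natural decomposition $\iota_!\iota^*\iota_!\tilde Z\simeq\iota_!\tilde Z\amalg Q(\tilde Z)$ with $Q(\tilde Z)=\iota_!Q'(\tilde Z)$, the triangle identity identifies $\epsilon$ on the first summand with the identity, and by universality and disjointness of coproducts in $\PSh(T)$ the counit square is a pullback if and only if the square
\begin{equation*}
	\begin{tikzcd}
		Q(\tilde X)\arrow[r, "Q(\tilde f)"]\arrow[d, "\alpha_{\tilde X}"'] & Q(\tilde Y)\arrow[d, "\alpha_{\tilde Y}"]\\
		\iota_!\tilde X\arrow[r, "\iota_!\tilde f"'] & \iota_!\tilde Y
	\end{tikzcd}
\end{equation*}
is one. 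But this $Q$-square is exactly as hard as the original statement, and neither of your two proposed routes closes it. The cleaner route via \Cref{lemma:unit-pb} provably gives nothing: applying $\iota_!$ to the unit square yields, under your decomposition, the square whose vertical maps are the first-summand inclusions $\iota_!\tilde Z\hookrightarrow\iota_!\tilde Z\amalg Q(\tilde Z)$; since $\iota_!\iota^*\iota_!\tilde f$ preserves the summands, that square is \emph{automatically} a pullback by disjointness of coproducts. In other words, \Cref{lemma:unit-pb} only constrains the behaviour of $\epsilon$ on the summand $\iota_!\tilde Z$ (where the triangle identity already determines it) and carries no information whatsoever about $\alpha_{\tilde Z}\colon Q(\tilde Z)\to\iota_!\tilde Z$, which is the only thing at issue.

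The remaining route, a levelwise combinatorial check, is a bare assertion and understates the difficulty: at $A\in T$ both rows of the $Q$-square are colimits over categories of factorizations $A\to C\to C'$, and a pullback of colimits of spaces is not computed by "identifying the indexing categories" --- one has to run a descent argument, decompose $\tilde Y$ into representables, pull $\iota_!\tilde f$ back along the structure maps of the colimit presenting $\alpha_{\tilde Y}$, and use Axiom~(\ref{item:F-pb}) (extended to presheaves as in \Cref{lemma:iota!-ext}) together with left-cancellability to identify the fibers; Axiom~(\ref{item:F-idem}) is not actually what is needed here. For comparison, the paper avoids the coproduct decomposition entirely: it shows that $\PSh(T)_{/\epsilon}\circ(\iota_!)_{/\iota^*Y}$ is an equivalence $\PSh(S)_{/\iota^*Y}\simeq(\PSh(T)_{/Y})^{\textup{corp}}$, verifies using the closure properties of admissible maps (\Cref{lemma:admissible-basic}) that these functors assemble into an $S$-left adjoint of the parametrized $\iota^*$, and then reads off the counit square as the resulting Beck--Chevalley equivalence evaluated at $\id_{\iota^*Y}$. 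You would need to supply an argument of comparable substance for the $Q$-square before your proof is complete.
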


	For the proof we will use:

	\begin{lemma}
		Let $Y\in\PSh(T)$ be an arbitrary presheaf. Then the composite
		\begin{equation}\label{eq:epsilon-push-forward}
			\PSh(S)_{/\iota^*Y}\xrightarrow{\iota_!}\PSh(T)_{/\iota_!\iota^*Y}\xrightarrow{\PSh(T)_{/\epsilon}}\PSh(T)_{/Y}
		\end{equation}
		induces an equivalence onto the non-full subcategory $(\PSh(T)_{/Y})^\textup{corp}$ whose objects are those $X\to Y$ where $X$ is corporeal (but there is no condition on the map to $Y$) and whose morphisms are the admissible maps in $\PSh(T)$.
		\begin{proof}
			It is clear that $(\ref{eq:epsilon-push-forward})$ factors through $(\PSh(T)_{Y})^\text{corp}$, so it only remains to show that the induced functor is essentially surjective and fully faithful. For this we observe that since $\iota_!$ and $\iota^*$ are adjoint, the map $\maps(X,\iota^*Y)\to\maps(\iota_!X,Y), g\mapsto \epsilon\circ\iota_!(g)$ is an equivalence for any $X\in\PSh(S)$. This immediately implies essential surjectivity, while for full faithfulness we observe that for objects $X,X' \in \PSh(S)_{/\iota^*Y}$ the induced map on mapping spaces fits in the following diagram of fiber sequences:
            \[
            \begin{tikzcd}
                \Hom_{\PSh(S)_{/\iota^*Y}}(X,X') \dar[hookrightarrow] \rar & \Hom_{\PSh(S)}(X,X') \rar \dar[hookrightarrow]{\iota_!} & \Hom_{\PSh(S)}(X,\iota^*Y) \dar{\simeq} \\
                \Hom_{\PSh(T)_{/Y}}(\iota_!X,\iota_!X') \rar & \Hom_{\PSh(T)}(\iota_!X,\iota_!X') \rar & \Hom_{\PSh(T)}(\iota_!X,Y).
            \end{tikzcd}
            \] We now simply note that the middle map is a monomorphism by \Cref{lemma:iota!-faithful}, with image the admissible maps.
		\end{proof}
	\end{lemma}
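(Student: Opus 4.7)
The plan is to verify three things in succession: that the composite in $(\ref{eq:epsilon-push-forward})$ factors through $(\PSh(T)_{/Y})^\textup{corp}$, that the induced functor is essentially surjective, and that it is fully faithful.

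The factorization is immediate from the definitions, since any object in the image of $\iota_!$ is corporeal by definition of $\PSh(T)^\textup{corp}$, and any morphism in the image arises by applying $\iota_!$ to a morphism in $\PSh(S)$ and is hence admissible in the sense of Definition~\ref{defi:admissible}. For essential surjectivity, I would start with an object $f\colon X\to Y$ of $(\PSh(T)_{/Y})^\textup{corp}$, choose a presheaf $X_0\in\PSh(S)$ with $\iota_!X_0\simeq X$, and transpose $f$ across the adjunction $\iota_!\dashv\iota^*$ to obtain a map $g\colon X_0\to\iota^*Y$. Sending $g$ through the composite produces $\epsilon\circ\iota_!(g)$, which recovers $f$ by the triangle identity.

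The heart of the argument is full faithfulness. Given objects $X,X'\in\PSh(S)_{/\iota^*Y}$, I plan to fit the induced map on mapping spaces into a morphism of fiber sequences whose source is
\begin{equation*}
\maps_{\PSh(S)_{/\iota^*Y}}(X,X')\longrightarrow\maps_{\PSh(S)}(X,X')\longrightarrow\maps_{\PSh(S)}(X,\iota^*Y)
\end{equation*}
and whose target computes $\maps_{\PSh(T)_{/Y}}(\iota_!X,\iota_!X')$ as the fiber of $\maps_{\PSh(T)}(\iota_!X,\iota_!X')\to\maps_{\PSh(T)}(\iota_!X,Y)$ over the given structure map. The right vertical map is the adjunction equivalence, while the middle vertical map is the action of $\iota_!$ on mapping spaces, which is a monomorphism by Proposition~\ref{prop:iota!-mono} whose image is precisely the admissible maps by Definition~\ref{defi:admissible-early}. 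Passing to fibers over matching structure maps and restricting the target to the admissible locus then yields the desired equivalence on mapping spaces.

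The main subtle point I anticipate is that $(\PSh(T)_{/Y})^\textup{corp}$ is defined as a \emph{non-full} subcategory, so one must carefully identify its mapping spaces with the admissible components of $\maps_{\PSh(T)_{/Y}}(\iota_!X,\iota_!X')$ before running the diagram chase. With Proposition~\ref{prop:iota!-mono} in hand, however, this identification becomes automatic, and the remaining verification reduces to a standard comparison of fiber sequences.
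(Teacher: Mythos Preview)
Your proposal is correct and follows essentially the same approach as the paper: factor through the corporeal subcategory, use the adjunction $\iota_!\dashv\iota^*$ for essential surjectivity, and compare fiber sequences for full faithfulness. The only cosmetic difference is that you cite Proposition~\ref{prop:iota!-mono} where the paper cites the weaker Lemma~\ref{lemma:iota!-faithful}; either suffices, since all that is needed here is that $\iota_!$ be faithful (a monomorphism on mapping spaces) with image the admissible maps.
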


	\begin{proposition}
		The $T$-functor $\iota^*\colon\PSh(T)_{/\bullet}\to\PSh(S)_{/\iota^*(\bullet)}$ admits an $S$-left adjoint (that is, the underlying $S$-functor admits a parametrized left adjoint) given pointwise by the composites $(\ref{eq:epsilon-push-forward})$.
		\begin{proof}
			It is clear that the composites $(\ref{eq:epsilon-push-forward})$ yield a pointwise left adjoint, so it only remains to check the Beck--Chevalley condition. By the previous lemma, this amounts to saying that the adjunction $\PSh(T)_{/f}\colon\PSh(T)_{/\iota_!X}\rightleftarrows\PSh(T)_{/\iota_!Y}\noloc f^*$ restricts to an adjunction $(\PSh(T)_{/\iota_!X})^\text{corp}\rightleftarrows(\PSh(T)_{/\iota_!Y})^\text{corp}$ for any admissible $f\colon\iota_!X\to\iota_!Y$, i.e.
			\begin{enumerate}
				\item The right adjoint $f^*$ restricts to $(\PSh(T)_{/\iota_!Y})^\text{corp}\to(\PSh(T)_{/\iota_!X})^\text{corp}$.
				\item For each $Z\in(\PSh(T)_{/\iota_!Y})^\text{corp}$ the counit $\PSh(T)_{/f}f^*Z\to Z$ is admissible.
				\item For each $W\in(\PSh(T)_{/\iota_!X})^\text{corp}$ the unit $W\to f^*\PSh(T)_{/f}W$ is admissible.
			\end{enumerate}
			For this, let $g\colon Z\to Z'$ be a map in $(\PSh(T)_{/\iota_!Y})^\text{corp}$ and consider the coherent cube
			\begin{equation*}
				\begin{tikzcd}[column sep=tiny, row sep=small]
					& f^*Z'\arrow[dd]\arrow[ddrr,phantom,"\lrcorner"{very near start,xshift=-5pt,yshift=4pt}]\arrow[rr, "\epsilon"{near end}] && Z'\arrow[dd]\\
					f^*Z\arrow[ur, "f^*g"]\arrow[ddrr,phantom,"\lrcorner"{very near start,xshift=-5pt,yshift=4pt}]\arrow[rr, crossing over, "\epsilon"{near end}]\arrow[dd] && Z\arrow[ru,"g"]\\
					& \iota_!X\arrow[rr, "f"{description, near end}] && \iota_!Y\\
					\iota_!X\arrow[rr, "f"'{near end}]\arrow[ur,equal] && \iota_!Y\arrow[ur,equal]\arrow[from=uu,crossing over]
				\end{tikzcd}
			\end{equation*}
			Lemma~\ref{lemma:admissible-basic}-(\ref{item:ab-pb}) then shows that the objects $f^*Z$ and $f^*Z'$ are corporeal and that the maps $\epsilon\colon f^*Z\to Z$ and $\epsilon \colon f^*Z'\to Z'$ are admissible, proving the second claim and one half of the first claim. Together with Lemma~\ref{lemma:admissible-basic}-(\ref{item:ab-canc}) we then conclude that $f^*g$ is again admissible, proving the remaining half of of the first claim.

			Finally, if $W\in(\PSh(T)_{/\iota_!X})^\text{corp}$, then as a morphism in $\PSh(T)$ the unit $\eta\colon W\to f^*\PSh(T)_{/f}W$ is right inverse to the counit $\epsilon$. Thus, $\eta$ is admissible by another application of Lemma~\ref{lemma:admissible-basic}-(\ref{item:ab-canc}).
		\end{proof}
	\end{proposition}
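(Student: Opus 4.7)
The plan is to apply the pointwise criterion for $S$-parametrized left adjoints (Proposition~\ref{prop:adjoints-BC}) to the underlying $S$-functor: I must verify pointwise existence of left adjoints at every $A\in S$ together with the Beck--Chevalley compatibility along maps $f\colon A\to B$ in $S$. Pointwise existence is already supplied by the preceding lemma, which exhibits the composite (\ref{eq:epsilon-push-forward}) as an equivalence $\PSh(S)_{/\iota^*Y}\simeq (\PSh(T)_{/Y})^\textup{corp}$ inverse to the appropriate restriction of $\iota^*_Y$; composing with the inclusion $(\PSh(T)_{/Y})^\textup{corp}\hookrightarrow\PSh(T)_{/Y}$ then yields the pointwise left adjoint $L_Y$.

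For the Beck--Chevalley verification I transport everything through this identification, so that the condition becomes the statement that for every admissible morphism $f\colon\iota_!X\to\iota_!Y$ (coming from a map in $S$) the post-composition/pullback adjunction $\PSh(T)_{/f}\dashv f^*$ in $\PSh(T)$ restricts to an adjunction between the corporeal subcategories $(\PSh(T)_{/\iota_!X})^\textup{corp}$ and $(\PSh(T)_{/\iota_!Y})^\textup{corp}$. Unwinding, this requires three checks: that $f^*$ sends corporeal presheaves over $\iota_!Y$ to corporeal ones over $\iota_!X$ and admissible morphisms to admissible morphisms; that the counit $\epsilon\colon f^*Z\to Z$ is admissible for every corporeal $Z\to\iota_!Y$; and that the unit $\eta\colon W\to f^*\PSh(T)_{/f}(W)$ is admissible for every corporeal $W\to\iota_!X$. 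The left adjoint $\PSh(T)_{/f}$ preserves corporeal objects automatically, since it only modifies the structure map to $\iota_!Y$.

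All three checks collapse onto Lemma~\ref{lemma:admissible-basic}. Pulling back the admissible map $f$ along the structure map of a corporeal $Z$ and invoking part~(\ref{item:ab-pb}) of that lemma at once produces corporeality of $f^*Z$ and admissibility of the counit; admissibility of the induced map $f^*g$ for a morphism $g\colon Z\to Z'$ between corporeals then follows from this by a further application of the left-cancellation property of part~(\ref{item:ab-canc}). The unit $\eta$ is right-inverse to the counit (their composite being $\id_W$), so admissibility of $\eta$ likewise follows from that of $\epsilon$ and of $\id_W$ by one more application of part~(\ref{item:ab-canc}). I expect the main subtlety to lie in packaging the abstract Beck--Chevalley condition of Proposition~\ref{prop:adjoints-BC} as the restrictability of the pullback/post-composition adjunction to corporeals; once that translation is in place, each of the required admissibility statements becomes a one-line consequence of Lemma~\ref{lemma:admissible-basic}.
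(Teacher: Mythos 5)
Your proposal is correct and follows essentially the same route as the paper: reduce the Beck--Chevalley condition, via the identification of $\PSh(S)_{/\iota^*Y}$ with $(\PSh(T)_{/Y})^\textup{corp}$, to the statement that $\PSh(T)_{/f}\dashv f^*$ restricts to the corporeal subcategories, and then verify the three resulting checks (preservation of corporeal objects and admissible morphisms by $f^*$, admissibility of the counit, admissibility of the unit) using parts~(\ref{item:ab-pb}) and~(\ref{item:ab-canc}) of Lemma~\ref{lemma:admissible-basic} exactly as the paper does, including the observation that the unit is a right inverse to the counit.
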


	\begin{proof}[Proof of Proposition~\ref{prop:counit-pb}]
		We may assume without loss of generality that $f$ is of the form $\iota_!f'$ for some $f'\colon X'\to Y'$ in $\PSh(S)$. In this case, the previous proposition shows that the Beck--Chevalley transformation
		\begin{equation*}
			\begin{tikzcd}
				\PSh(S)_{/\iota^*Y}\arrow[d, "\PSh(T)_{/\epsilon}\circ\iota_!"']\arrow[r, "(\iota^*f)^*"] &[1.5em] \PSh(S)_{/\iota^*X}\arrow[d, "\PSh(T)_{/\epsilon}\circ\iota_!"]\arrow[dl,Rightarrow]\\
				\PSh(T)_{/Y}\arrow[r, "f^*"'] & \PSh(T)_{/X}
			\end{tikzcd}
		\end{equation*}
		is an equivalence. Chasing through the identity of $\iota^*Y$ precisely yields the claim.
	\end{proof}

	We are now ready to prove Theorem~\ref{thm:fracture}.
	\begin{proof}[Proof of Theorem~\ref{thm:fracture}]
		If $\iota\colon S\hookrightarrow T$ is a cleft category, then $\iota_!\colon \PSh(S) \to \PSh(T)$ is a fracture subcategory:
		\begin{enumerate}[label=(F\arabic*),start=0]
			\item The functor $\iota_!$ is a monomorphism by Proposition~\ref{prop:iota!-mono}.
			\item The functor $\iota_!$ preserves pullbacks by Lemma~\ref{lemma:iota!-pullback}.
			\item The right adjoint $\iota^*$ of $\iota_!$ is clearly cocontinuous, and it is conservative as $S$ contains all objects of $T$.
			\item The pullback condition for the counit was verified in Proposition~\ref{prop:counit-pb}.
		\end{enumerate}

		Conversely, assume that $\iota_!\colon\PSh(S)\to\PSh(T)$ is a fracture subcategory. Then $(\iota_!)_{/X}\colon\PSh(S)_{/X}\to\PSh(T)_{/\iota_!X}$ is fully faithful for any $X$ by \cite{SAG}*{Proposition~20.1.3.1}; specializing to $X=A\in S$, we see that left Kan extension along $\iota_{/A}\colon S_{/A}\hookrightarrow T_{/A}$ is fully faithful, whence so is $\iota_{/A}$ itself by the Yoneda Lemma. Letting $A$ vary, this precisely amounts to saying that $S$ is left cancellable, proving (\ref{item:F-canc}).

		For (\ref{item:F-pb}), consider a map $f\colon X\to Y$ in $\PSh(S)$ and a map $g\colon \iota_!Y'\to\iota_!Y$ in $\PSh(T)$. Write $\tilde g\colon Y'\to \iota^*\iota_!Y$ for the adjunct of $g$, and define $X'$ via the following pullback square in $\PSh(S)$:
		\[
		\begin{tikzcd}
			X' \dar[swap]{f'} \rar \arrow[dr,phantom,"\lrcorner"{very near start}] & \iota^*\iota_!X \dar{\iota^*\iota_!f} \\
			Y' \rar[swap]{\tilde{g}} & \iota^*\iota_!Y.
		\end{tikzcd}
		\]
		In the diagram
		\begin{equation*}
			\begin{tikzcd}
				\iota_!(X')\arrow[d, "\iota_!(f')"']\arrow[r] & \iota_!\iota^*\iota_!(X)\arrow[d,"\iota_!\iota^*\iota_!f"{description}] \arrow[r, "\epsilon"] & \iota_!(X)\arrow[d, "\iota_!(f)"]\\
				\iota_!(Y')\arrow[r, "\iota_!(\tilde g)"'] & \iota_!\iota^*\iota_!(Y)\arrow[r, "\epsilon"'] & \iota_!(Y),
			\end{tikzcd}
		\end{equation*}
		the left-hand square is a pullback as $\iota_!$ preserves pullbacks by \ref{it:F1_Preserves_Pullbacks}, while the right-hand square is a pullback square by \ref{it:F3_Counit_Pullback_Square}. Thus, the total square expresses $\iota_!(f')$ as a pullback of $\iota_!(f)$ along $g$, showing (\ref{item:F-pb}).

		Finally, Axiom (\ref{item:F-idem}) holds because $\iota_!$ is a monomorphism, see Remark~\ref{rk:F-idem-necessary}.
	\end{proof}

	\subsection{Examples}\label{subsec:Examples-clefts}  We close this section by establishing our two key examples of cleft categories. We begin with Example~\ref{ex:factorization-cleft}, for which we recall:

	\begin{definition}
		A \emph{factorization system} on an category $T$ consists of two wide subcategories $E,M\subset T$ satisfying the following conditions:
		\begin{enumerate}
			\item Both $E$ and $M$ contain all equivalences.
			\item Every morphism in $E$ is \emph{left orthogonal} to every morphism in $M$ in the following sense: for every pair of morphisms $e\colon A \to B$ in $E$ and $m\colon X \to Y$ in $M$ and every solid square
			\[\begin{tikzcd}
				A & X \\
				B & Y,
				\arrow["e"', from=1-1, to=2-1]
				\arrow[from=1-1, to=1-2]
				\arrow["m", from=1-2, to=2-2]
				\arrow[from=2-1, to=2-2]
				\arrow[dashed, from=2-1, to=1-2]
			\end{tikzcd}\] there is a contractible space of dotted lifts making both triangles commute, i.e.~the square
			\[\begin{tikzcd}
				{\maps_T(B,X)} &[.5em] {\maps_T(B,Y)} \\
				{\maps_T(A,X)} & {\maps_T(A,Y)}
				\arrow["{- \circ e}"', from=1-1, to=2-1]
				\arrow["{m \circ -}", from=1-1, to=1-2]
				\arrow["{- \circ e}", from=1-2, to=2-2]
				\arrow["{m \circ -}"', from=2-1, to=2-2]
			\end{tikzcd}\] is a pullback square in the category of spaces;
			\item Every morphism $f\in T$ admits a factorization $f= me$, with $e$ in $E$ and $m$ in $M$.
		\end{enumerate}
	\end{definition}

	\begin{remark}
		The above definition follows \cite{abfj-left-exact}*{Definition~3.1.6}. By {Lemma 3.1.9} of \emph{op.~cit.}, the class $E$ in a factorization system is \emph{exactly} the class of morphisms in $T$ which are left orthogonal to all morphisms in $M$, and vice-versa. In particular, this implies that both $E$ and $M$ are closed under retracts, so that the above is equivalent to~\cite{HTT}*{Definition 5.2.8.8} (where this condition is assumed a priori).
	\end{remark}

	\begin{proposition}\label{prop:factorization-cleft}
		Let $(E,M)$ be a factorization system on $T$. Then the right class $M$ is a cleft of $T$.
	\end{proposition}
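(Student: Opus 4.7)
The plan is to verify each of the three cleft axioms (\ref{item:F-canc}), (\ref{item:F-pb}), (\ref{item:F-idem}) for $M$. Axioms (\ref{item:F-canc}) and (\ref{item:F-idem}) will be quick consequences of standard properties of factorization systems, while Axiom (\ref{item:F-pb}) will be the main work. For (\ref{item:F-canc}): every equivalence lies in $M$ by definition, and left-cancellability will follow from essential uniqueness of $(E,M)$-factorizations — given $g, gf \in M$, an $(E, M)$-factorization $f = me$ yields a second $(E,M)$-factorization $gf = (gm) \circ e$ of $gf \in M$, so comparing with the trivial factorization $gf = gf \circ \id$ forces $e$ to be an equivalence, giving $f \simeq m \in M$. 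For (\ref{item:F-idem}), the remark after Definition~\ref{defi:cleft} reduces this to the closure of $M$ under retracts in the arrow category, which is a standard property of the right class of any factorization system.

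For the pullback axiom (\ref{item:F-pb}), given $f\colon A \to B$ in $M$ and $g\colon B' \to B$ in $T$, the plan is to exhibit the pullback $P \coloneqq A \times_B B'$ computed in $\PSh(T)$ as the image under $\iota_!\colon \PSh(M) \to \PSh(T)$ of a suitable morphism $f'\colon X' \to B'$ in $\PSh(M)$. The engine of this identification is the $(E, M)$-factorization: for any $C \in T$ and $C$-point $(a, c, \alpha) \in P(C)$, the common composite $fa = gc\colon C \to B$ admits a unique $(E, M)$-factorization $C \xrightarrow{e} C' \xrightarrow{h} B$, and orthogonality of $e$ against $f \in M$ produces a canonical lift $a'\colon C' \to A$ with $fa' = h$; by left-cancellability (applied to $f, h \in M$) this lift automatically lies in $M$. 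An analogous analysis for the $B'$-leg, using the $(E,M)$-factorization of $c$ itself and comparing with $h$ via uniqueness, will produce a corresponding $M$-valued lift $c'\colon C' \to B'$.

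Systematically, this will exhibit $P(C)$ as a colimit of spaces $X'(C')$ indexed by $E$-maps $C \to C'$ with $C' \in M$, and the essential uniqueness of $(E,M)$-factorizations should make this indexing category cofinal in the comma category parametrising the left Kan extension formula for $(\iota_! X')(C)$, giving the desired equivalence $\iota_! X' \simeq P$ compatible with the projection to $B'$. I expect the main technical obstacle to be twofold: first, pinning down the precise definition of $X' \in \PSh(M)$ so that the second projection $f'$ lands in the Yoneda image of $B' \in M$ rather than merely in the restriction of the representable on $B'$ from $T$ — this requires care because $g$ need not lie in $M$; and second, carrying out the cofinality argument coherently as $C$ varies. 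Both amount to careful exploitation of the universal property and essential uniqueness of $(E,M)$-factorizations, together with the left-cancellability already verified in the first step.
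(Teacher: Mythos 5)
Your handling of Axioms (\ref{item:F-canc}) and (\ref{item:F-idem}) is correct and matches the paper: left-cancellability follows from essential uniqueness of $(E,M)$-factorizations exactly as you describe, and (\ref{item:F-idem}) reduces to retract-closure of $M$, which holds for the right class of any factorization system. The gap is in (\ref{item:F-pb}), and it is twofold. First, as you half-notice, the requirement that $f'\colon X'\to B'$ be a morphism in $\PSh(M)$ to the \emph{representable} presheaf on $B'$ forces the $M$-condition onto the $B'$-leg: the candidate $X'$ must consist of those points of $P=A\times_BB'$ whose projection to $B'$ lies in $M$. But your lifting procedure factors the common composite $fa\simeq gc\colon C\to B$ and lifts $a$ through that factorization's $E$-part; this produces an $M$-valued lift of the \emph{$A$-leg} and gives no control over the $B'$-leg. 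The correct move is the reverse: factor $c\colon C\to B'$ itself as $C\xrightarrow{e_c}C''\xrightarrow{m_c}B'$ and lift $a$ along $e_c$ using orthogonality of $e_c\in E$ against $f\in M$ (for the square with bottom $gm_c$). Your proposed repair --- ``comparing with $h$ via uniqueness'' --- does not work: the factorization of $c$ (a map to $B'$) and that of $gc$ (a map to $B$) are unrelated by any uniqueness statement when $g\notin M$. Second, even with the right $X'$, your cofinality step only reindexes the Kan-extension colimit over $E$-maps out of $C$ (that part is fine: the relevant comma categories are the contractible spaces of $(E,M)$-factorizations); it does not show that the resulting colimit maps by an equivalence onto $P(C)$, which is the actual content and is left unproved.

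The paper sidesteps both issues by working in $\PSh(T_{/B'})\simeq\PSh(T)_{/B'}$: the pullback $P\to B'$ corresponds to the presheaf $(c\colon C\to B')\mapsto\maps_{/B}(gc,f)$, which inverts every map over $B'$ whose underlying map lies in $E$ --- this is literally the orthogonality square for $E$ against $f$. Since factorization exhibits $M_{/B'}\hookrightarrow T_{/B'}$ as a reflective (Bousfield) localization at exactly these maps, ``inverts $E$-maps over $B'$'' is equivalent to ``left Kan extended from $\PSh(M_{/B'})$,'' which is (\ref{item:F-pb}). That one standard fact about reflective localizations packages precisely the colimit identification you are attempting by hand; to salvage your pointwise approach you would need to prove (or invoke) that statement rather than match fibers directly.
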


	If $T$ has pullbacks, this proposition appears (with a rather different proof) as \cite{SAG}*{Proposition~20.2.2.1}.

	\begin{proof}
		By assumption $M\subset T$ is a wide subcategory containing all equivalences, and it is left cancellable by \cite{HTT}*{Proposition 5.2.8.6-(3)}, proving (\ref{item:F-canc}). Moreover, Axiom (\ref{item:F-retract}) was noted in the previous remark.

		It remains to verify (\ref{item:F-pb}), i.e.~that for every $f\colon A\to B$ in $T$ the pullback functor $f^*\colon\PSh(T)_{/B}\to\PSh(T)_{/A}$ maps the image of $M_{/B}$ into $\PSh(M)_{/A}$.	We will prove this more generally for $\PSh(M)_{/B}$. For this we observe that the diagram
		\begin{equation}\label{diag:slice-in-out}
			\begin{tikzcd}
				\PSh(M_{/A})\arrow[r, "\sim"]\arrow[d, "(\iota_{/A})_!"'] & \PSh(M)_{/A}\arrow[d, "(\iota_!)_{/A}"]\\
				\PSh(T_{/A})\arrow[r, "\sim"'] & \PSh(T)_{/A}
			\end{tikzcd}
		\end{equation}
		with the horizontal equivalences as in Example~\ref{ex:spc-T} commutes up to equivalence since both paths are cocontinuous and agree on the Yoneda image. Arguing in the same way for $B$, it then suffices to show: \emph{if $X\in\PSh(T_{/B})$ is left Kan extended from $\PSh(M_{/B})$, then its restriction to $\PSh(T_{/A})$ is left Kan extended from $\PSh(M_{/A})$}.

		To this end, let $g\colon X\to A$ be any map in $T$, and fix a factorization
		\begin{equation*}
			\begin{tikzcd}[column sep=small]
				X\arrow[rr, "e"]\arrow[dr, bend right=15pt, "g"'] && Y\arrow[dl, bend left=15pt, "m"]\\
				& A
			\end{tikzcd}
		\end{equation*}
		with $e$ in $E$ and $m$ in $M$. Viewing this as a map in $T_{/A}$, \cite{HTT}*{Remark~5.2.8.3} shows that for every other other $t\in T_{/A}$ the map $e^*\colon\maps(m,t)\to\maps(g,t)$ is an equivalence. It follows that $\iota_{/A}\colon M_{/A} \hookrightarrow T_{/A}$ admits a left adjoint $\lambda_A\colon T_{/A} \to M_{/A}$ sending $g$ to $m$ with unit $g\to m$ given by the above triangle. In particular, all units live in the subcategory $T_{/A}^E\coloneqq E\times_T T_{/A}$; conversely, an easy $2$-out-of-$3$ argument shows that $\lambda_A$ inverts all maps in $T_{/A}^E$. By abstract nonsense about Bousfield localizations, it follows that $\lambda_A$ is a localization at $T_{/A}^E$, so that $X\in\PSh(T_{/A})$ is left Kan extended if and only if it inverts $T_{/A}^E$. Arguing in the same way for $B$, the proposition follows as the postcomposition functor $T_{/f}\colon T_{/A}\to T_{/B}$, $g\mapsto f\circ g$ obviously restricts to $T_{/A}^E\to T_{/B}^E$.
	\end{proof}

	Next, we recall atomic orbital subcategories from \cite{CLL_Global}*{Definition~4.3.1}:

	\begin{definition}\label{def:atomic-orbital}
		A wide subcategory $P\subset T$ containing all equivalences is called \emph{atomic orbital} if the following conditions are satisfied:
		\begin{enumerate}
			\item For every $p\colon C\to D$ in $P$ and $t\colon B\to D$ in $T$ there exists a pullback
			\begin{equation*}
				\begin{tikzcd}
					\coprod_{i=1}^n A_i\arrow[r, "(p_i)_{i=1,\dots,n}"]\arrow[dr,phantom,"\lrcorner"{very near start,xshift=-7pt,yshift=3pt}] \arrow[d] &[2.5em] B\arrow[d, "t"]\\
					C\arrow[r, "p"'] & D
				\end{tikzcd}
			\end{equation*}
			in $\PSh(T)$ such that each $p_i\colon A_i\to B$ belongs to $P$.
			\item\label{item:ao-disj-sum} For every $p\colon A\to B$ in $P$ the diagonal $A\to A\times_BA$ is a disjoint summand inclusion in $\PSh(T)$, i.e.\ it is equivalent to an inclusion of the form $A \hookrightarrow A \amalg C$ for some $C \in \PSh(T)$.
		\end{enumerate}
	\end{definition}

	\begin{remark}
		By \cite{CLL_Global}*{Lemma~4.3.2} we can equivalently replace (\ref{item:ao-disj-sum}) by the following axiom:
		\begin{enumerate}
			\item[(\ref*{item:ao-disj-sum}$'$)] Every map in $P$ that admits a section in $T$ is an equivalence.
		\end{enumerate}
	\end{remark}

	\begin{example}
		For a finite group $G$, the orbit category $\Orb_G$ from \Cref{ex:G-cat} is atomic orbital as a subcategory of itself. Similarly the subcategory $\Orb \subset \Glo$ from \Cref{ex:Orb} is atomic orbital \cite{CLL_Global}*{Example~4.3.3}.
	\end{example}

	Atomic orbital subcategories were introduced in \cite{CLL_Global} to encode different degrees of `parametrized semiadditivity,' and we will revisit them from this perspective in Section~\ref{sec:semiadd}. For now we are interested in them as examples of clefts:

	\begin{proposition}\label{prop:fission-cleft}
		Any atomic orbital subcategory $P\subset T$ is a cleft.
	\end{proposition}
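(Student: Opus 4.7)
The plan is to verify the three cleft axioms (\ref{item:F-canc}), (\ref{item:F-pb}) and (\ref{item:F-idem}) directly from the atomic orbital axioms (1), (2) and (2$'$). The technical engine in all three verifications is that every representable $\ul B\in\PSh(T)$ is \emph{connected}: since colimits in $\PSh(T)$ are computed pointwise, $\Hom_{\PSh(T)}(\ul B,-)$ preserves coproducts, so any map out of $\ul B$ into a coproduct factors through a unique summand; in particular, any nonempty summand of a representable is the whole representable.

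I will dispatch Axiom (\ref{item:F-pb}) and the first half of (\ref{item:F-canc}) immediately. Axiom (\ref{item:F-pb}) is a repackaging of Axiom (1): the latter gives a pullback of $p\colon A\to B$ along any $t\colon B'\to B$ in $T$ as $\coprod_i A_i\xrightarrow{(p_i)}B'$ with each $p_i\in P$, and this map is visibly $\iota_!(f')$ for the corresponding map $f'$ in $\PSh(P)$ since $\iota_!$ preserves coproducts and restricts to Yoneda on representables. Containment of equivalences is part of the definition of atomic orbitality. For left-cancellability, given $g\colon B\to C$ and $gf\colon A\to C$ in $P$, I apply Axiom~(1) in both directions to the pullback $W:=A\times_C B$: pulling $g$ back along $gf$ presents $W$ as $\coprod_i A_i$ with $p_i\colon A_i\to A$ in $P$, while pulling $gf$ back along $g$ presents the same $W$ as $\coprod_k B_k$ with $q_k\colon B_k\to B$ in $P$. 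The pair $(\id_A,f)$ defines a section $\sigma\colon A\to W$ of the first projection, which by connectedness factors through a unique summand $A_j$; since $p_j$ has a section and lies in $P$, Axiom~(2$'$) makes it an equivalence. The inclusion $A_j\hookrightarrow W$ must also factor through some unique $B_{k(j)}$ by connectedness of $A_j$, and since $A_j$ is then a nonempty summand of the representable $B_{k(j)}$ it must equal it. Thus the second projection $A_j\to B$ is identified with $q_{k(j)}\in P$, and $f=q_{k(j)}\circ p_j^{-1}$ lies in $P$.

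For (\ref{item:F-idem}) I prove the equivalent stronger condition (\ref{item:F-idem-equ}) that every idempotent $e\colon B\to B$ in $P$ is the identity: applied to $\alpha\beta$ (which is idempotent because $(\alpha\beta)^2=\alpha(\beta\alpha)\beta=\alpha\beta$), this forces $\alpha\beta=\id_B$, and together with $\beta\alpha=\id_A$ makes $\alpha$ and $\beta$ mutually inverse equivalences. To prove the idempotent claim I pull $e$ back along itself: Axiom~(1) presents $B\times_B B$ as $\coprod_\ell C_\ell$ with both projections coproducts of $P$-maps, while Axiom~(2) identifies the diagonal $\delta$ with the inclusion of a distinguished summand $C_0\simeq B$ on which both projections restrict to $\id_B$. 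The map $(\id_B,e)\colon B\to B\times_B B$ factors through a unique summand $C_k$; precomposing with $e$ yields $(e,e)=\delta\circ e$, which factors both through $C_k$ (via $(\id_B,e)\circ e$) and through $C_0$ (via $\delta$). Connectedness of $B$ then forces $C_k=C_0$, and reading off the second projection on $C_0$ immediately yields $e=\id_B$. The main obstacle throughout is reconciling two different summand decompositions of the same pullback presheaf; both in (\ref{item:F-canc}) and in (\ref{item:F-idem}) this is handled by connectedness of representables together with the resulting triviality of nonempty summands of representables.
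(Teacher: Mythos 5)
Your proof is correct, and on the key axiom it takes a genuinely different route from the paper. The treatment of (\ref{item:F-pb}) coincides with the paper's, which reads it off directly from \Cref{def:atomic-orbital}-(1). For left-cancellability the paper simply cites \cite{CLL_Global}*{Lemma~4.3.5}, whereas you reprove it from scratch by comparing the two decompositions of $W=A\times_CB$ into representables and exploiting connectedness of representables; this is a valid, self-contained argument. The real divergence is in Axiom (\ref{item:F-idem}): the paper first proves the stronger statement that \emph{every} endomorphism in a slice $P_{/A}$ is invertible, via a pigeonhole argument on the `indices' of the maps $(f^k,f^{k+1})\colon B\to B\times_AB$, and only then specializes to idempotents. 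You prove just the idempotent case (\ref{item:F-idem-equ}) --- which is all the cleft axiom needs --- by a one-step argument: idempotency makes $(\id_B,e)\colon B\to B\times_{e,B,e}B$ well defined and forces it into the diagonal summand supplied by \Cref{def:atomic-orbital}-(2), whence $e\simeq\id_B$. Your route is shorter and avoids the pigeonhole bookkeeping, at the cost of proving less (the invertibility of arbitrary endomorphisms in $P_{/A}$ is of independent interest). One point to watch: the identification $(\id_B,e)\circ e\simeq\delta\circ e$ compares two maps into a pullback that agree only componentwise, so it implicitly invokes a coherence of the chosen homotopy $e\simeq e^2$. This is harmless for the split idempotents $e=\alpha\beta$ that actually arise in verifying (\ref{item:F-idem}), and the paper's own index manipulations make identifications of exactly the same kind, so I would not count it as a gap --- but a sentence acknowledging it would strengthen the write-up.
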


	For the proof we will use:

	\begin{lemma}
		Let $P$ be atomic orbital (say, as a subcategory of itself) and consider an object $A\in P$. Then any endomorphism in $P_{/A}$ is invertible.
		\begin{proof}
			Let $B\in P_{/A}$ and fix a decomposition $B\times_AB=\coprod_{i=1}^nX_i$ into representables. We introduce the following terminology:
			\begin{enumerate}
				\item[(A)] Given any map $g\colon C\to B\times_AB$ from a representable, it factors through a unique $X_i$, and we call $i\eqqcolon\text{idx}(g)$ the \emph{index} of $g$.
				\item[(B)] An index $i\in\{1,\dots,n\}$ is called \emph{good} if the projection $\textup{pr}_2\colon B\times_AB\to B$ to the second factor restricts to an equivalence $X_i\to B$.
			\end{enumerate}
			Now let $f$ be an endomorphism of $B$, inducing a map $(1,f)\colon B\to B\times_AB$. We claim that $\text{idx}(1,f)$ is good, which will then imply the lemma as $(1,f)$ induces an equivalence onto $X_{\text{idx}(1,f)}$, being a section to the map $\pr_1\colon X_{\text{idx}(1,f)}\to B$ in $P$.

			To prove the claim, we make the following basic observations:
			\begin{enumerate}
				\item\label{item:ex-good} Given any endomorphism $g$ of $B$, the index $\text{idx}(g,1)$ is good (arguing as above using that $\textup{pr}_2(g,1)=1)$.
				\item\label{item:right-stability} Given any map $\alpha\colon X\to Y$ of representables and a map $\beta\colon Y\to B\times_AB$, we have $\text{idx}(\beta\alpha)=\text{idx}(\beta)$.
				\item\label{item:left-stability} If $\alpha,\beta\colon X\rightrightarrows B\times_AB$ are maps from a representable with $\text{idx}(\alpha)=\text{idx}(\beta)$ and $\gamma$ is any endomorphism of $B\times_AB$, then $\text{idx}(\gamma\alpha)=\text{idx}(\gamma\beta)$.
			\end{enumerate}
			By (\ref{item:right-stability}), we have
			\begin{equation*}
				\text{idx}(1,f)=\text{idx}(f^k,f^{k+1})
			\end{equation*}
			for any $k\ge0$. Now by the pigeonhole principle we find $\ell>k\ge0$ with $\text{idx}(f^k,1)=\text{idx}(f^\ell,1)$ and hence also
			\begin{equation*}
				\text{idx}(f^k,f^{k+1})=\text{idx}(f^\ell,f^{k+1})
			\end{equation*}
			by $(\ref{item:left-stability})$ applied to $1\times_Af^{k+1}$. However, by construction $\ell\ge k+1$, whence
			\begin{equation*}
				\text{idx}(f^\ell,f^{k+1})=\text{idx}(f^{\ell-k-1},1)
			\end{equation*}
			by another application of $(\ref{item:right-stability})$. Altogether we therefore get
			\begin{equation*}
				\text{idx}(1,f)=\text{idx}(f^k,f^{k+1})=\text{idx}(f^\ell,f^{k+1})=\text{idx}(f^{\ell-k-1},1)
			\end{equation*}
			and the right hand side is good by $(\ref{item:ex-good})$, finishing the proof.
		\end{proof}
	\end{lemma}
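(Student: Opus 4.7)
Let $f\colon B \to B$ represent an endomorphism in $P_{/A}$; I will produce an inverse by exploiting the atomic orbital decomposition of $B \times_A B$ together with a pigeonhole argument on iterates $f^n$.

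Using the orbital condition, decompose $B \times_A B \simeq \coprod_{i=1}^n X_i$ with each $X_i$ representable and each projection $p_j|_{X_i}\colon X_i \to B$ in $P$. Since $B$ is representable (hence connected in $\PSh(T)$), any map out of $B$ into $B \times_A B$ factors through exactly one summand; write $\mathrm{idx}(-)$ for the label of this component. The graph $(1,f)$ lands in some $X_{i_0}$, and the factorization $B \to X_{i_0}$ is a section of $p_1|_{X_{i_0}}$, so by atomicity this projection is an equivalence. To conclude that $f = p_2 (1,f)$ is an equivalence it remains to show that $p_2|_{X_{i_0}}$ is also an equivalence; I call an index with this property \emph{good}.

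The handle on goodness is the symmetric observation that $\mathrm{idx}(g,1)$ is automatically good for any endomorphism $g$, since then $(g,1)$ supplies a section to $p_2|_{X_{\mathrm{idx}(g,1)}}$ and atomicity takes over. To reduce $\mathrm{idx}(1,f)$ to this form, apply pigeonhole to the family $\mathrm{idx}(f^k, 1)$ for $k \geq 0$: two values must coincide, yielding $\mathrm{idx}(f^k, 1) = \mathrm{idx}(f^\ell, 1)$ for some $\ell > k$. The remainder is index-chasing via two elementary lemmas --- (i) $\mathrm{idx}(\beta \alpha) = \mathrm{idx}(\beta)$ whenever $\alpha$ is a map of representables (by connectivity of the source), and (ii) $\mathrm{idx}(\alpha) = \mathrm{idx}(\beta)$ implies $\mathrm{idx}(\gamma \alpha) = \mathrm{idx}(\gamma \beta)$ for any endomorphism $\gamma$ of $B \times_A B$ --- which should cascade into
\[
\mathrm{idx}(1,f) = \mathrm{idx}(f^k, f^{k+1}) = \mathrm{idx}(f^\ell, f^{k+1}) = \mathrm{idx}(f^{\ell-k-1}, 1),
\]
via (i) applied to precomposition by $f^k$, (ii) applied to the endomorphism $\mathrm{id} \times_A f^{k+1}$, and (i) again on the factorization $(f^\ell, f^{k+1}) = (f^{\ell-k-1}, 1)\, f^{k+1}$. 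The right-hand side is good, so $\mathrm{idx}(1,f)$ is too, finishing the argument.

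\textbf{Main obstacle.} The fiddliest point is setting up the indexing cleanly in the $\infty$-categorical setting, especially the postcomposition stability (ii): it implicitly requires that $B \times_A B = \coprod X_i$ is a genuine coproduct decomposition in $\PSh(T)$ (so that ``which component a map lands in'' is a homotopy-invariant datum), and that the restriction of any endomorphism $\gamma$ to a connected $X_i$ lands in a single component. The finiteness of $n$, on which the pigeonhole depends, is itself a nontrivial input baked into the atomic orbital axioms through the requirement that pullbacks decompose as \emph{finite} coproducts of representables.
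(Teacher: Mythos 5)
Your argument is correct and follows essentially the same route as the paper's own proof: the same orbital decomposition of $B\times_A B$, the same notion of good index, the same pigeonhole on $\mathrm{idx}(f^k,1)$, and the identical chain $\mathrm{idx}(1,f)=\mathrm{idx}(f^k,f^{k+1})=\mathrm{idx}(f^\ell,f^{k+1})=\mathrm{idx}(f^{\ell-k-1},1)$. The subtleties you flag (homotopy-invariance of the component a map lands in, and finiteness of the decomposition) are exactly the points the paper also relies on, so there is nothing to add.
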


	\begin{proof}[Proof of Proposition~\ref{prop:fission-cleft}]
		Axiom (\ref{item:F-canc}) follows from \cite{CLL_Global}*{Lemma~4.3.5}, while (\ref{item:F-pb}) is immediate from \Cref{def:atomic-orbital}-(1). To prove (\ref{item:F-idem-equ}), we note that any idempotent $e\colon A\to A$ defines an endomorphism of itself considered as an object of $P_{/A}$. By the previous lemma, we conclude that $e$ is an invertible idempotent, hence homotopic to the identity.
	\end{proof}

	\section{Partial presentability}
	\label{sec:Partial-presentability}
	Given a small category $T$, there is a natural notion of \textit{$T$-presentability} for a $T$-category, recalled in \Cref{defi:T-pres}. This is quite a strong condition on $\Cc$: it in particular requires that the restriction functors $f^*\colon \Cc(B) \to \Cc(A)$ admit left adjoints for \textit{all} morphisms $f\colon A \to B$, which is unfortunately not satisfied in several naturally occurring examples, see for example Warning~\ref{warn:sp-no-fiberwise-limits} about the global category of equivariant spectra.

	The goal of this section is to introduce and study relaxations of the notion of presentability for a $T$-category $\Cc$. While we still demand that $\Cc$ be fiberwise presentable, we will weaken the cocompleteness assumption: more precisely, for any cleft $S\subset T$, we will introduce notions of \emph{$S$-cocompleteness} and \emph{$S$-presentability}, see Subsection~\ref{subsec:SPresentability}. In Subsection~\ref{subsec:ColimitsInKanExtensions} we discuss the relation between $S$-presentable $T$-categories and $S$-presentable $S$-categories. We end this section in Subsection \ref{subsec:S-cocompletion} with a discussion of the \emph{$S$-cocompletion} of a small $T$-category and the relation to the $S$-cocompletion of its underlying $S$-category.

	\subsection{$\bm S$-(co)limits and $\bm S$-presentability}
	\label{subsec:SPresentability}
	We fix a cleft category $S\subset T$ and we write $\iota\colon S\hookrightarrow T$ for the inclusion. In this subsection we study what it means for a $T$-category $\mathcal C$ to be \emph{$S$-(co)complete} or \emph{$S$-presentable}.

	\begin{definition}
		\label{def:bbU_S}
		We define the $T$-subcategory $\bbU_S \subset \ul\Spc_T$ as the essential image of the fully faithful $T$-functor $\iota_!\colon \ul\Spc_{S\triangleright T}\hookrightarrow \ul\Spc_T$ from \Cref{lemma:iota!-ext}: for an object $A \in T$, the subcategory $\bbU_S(A) \subset \ul\Spc_T(A) = \PSh(T)_{/A}$ is the full subcategory spanned by the admissible maps.
	\end{definition}

	\begin{definition}[$S$-(co)completeness]
		A $T$-category $\Cc$ is called \emph{$S$-cocomplete} if it is fiberwise cocomplete and admits all $\bbU_S$-colimits in the sense of Definition~\ref{defi:U-cc}. Dually, $\Cc$ is called \emph{$S$-complete} if it is fiberwise complete and admits all $\bbU_S$-limits.
	\end{definition}

	\begin{definition}[$S$-presentability]
		\label{def:S-Presentability}
		A $T$-category $\Cc$ is called \textit{$S$-presentable} if it is $S$-cocomplete and fiberwise presentable (Definition~\ref{defi:fib-pres}).
	\end{definition}

	\begin{warning}
		As recalled in Remark~\ref{rk:T-pres-implies-limits} any $T$-presentable $T$-category is also $T$-complete. In contrast, there are interesting examples of $T$-categories that are $S$-presentable in the above sense, but \textit{not} $S$-complete, see Warnings~\ref{warn:sp-no-finite-limits} and~\ref{warn:sp-no-fiberwise-limits}.
	\end{warning}

	We will now provide a description of $S$-(co)completeness in terms of pointwise conditions. For this we first introduce:

	\begin{definition}
		\label{defi:admissible-general}
		A morphism $f\colon X \to Y$ in $\PSh(T)$ is called \textit{admissible} if it defines an object in $\bbU_S(Y)\subset\ul\Spc_T$, i.e.~for every $A\in T$ and $t\colon A\to Y$ in $\PSh(T)$ the pulled back map $t^*
		(f)\colon t^*(X)\to A$ is an admissible map of corporeal objects in the sense of Definition~\ref{defi:admissible}.
	\end{definition}

	\begin{remark}
		Note that for a corporeal object $Y$ this recovers the previous definition by Lemma~\ref{lemma:admissible-basic}.
	\end{remark}

	By the pasting law, the admissible maps are closed under composition, and it is clear that every equivalence is admissible; in particular, the admissible maps define a wide subcategory $\PSh(T)^\text{ad}\subset\PSh(T)$. By another application of the pasting law, this is closed under pulling back along arbitrary maps in $\PSh(T)$.

	\begin{lemma}\label{lemma:S-cc-adj-general}
		Let $\mathcal C$ be a fiberwise cocomplete $T$-category. Then the following are equivalent:
		\begin{enumerate}
			\item For every $m\colon A\to B$ in $S$, the functor $m^*\colon\mathcal C(B)\to\mathcal C(A)$ admits a left adjoint $m_!$.
			\item For every $B'\in T$ and any admissible $n\colon A'\to B'$ the functor $n^*\colon \Cc(B') \to \Cc(A')$ admits a left adjoint $n_!$.
		\end{enumerate}
		\begin{proof}
			It is immediate that (2) implies (1). Conversely, let $B'\in T$ and consider an object $n\colon A'\rightarrow B'$ in $\bbU_S(B')$. Decomposing a preimage in $\PSh(S)_{/B'}$ into representables, we get an equivalence $(k_i)_{i\in I}\colon\colim_{i\in I}A'_i\simeq A'$ for a functor $A'_\bullet\colon I\to T$ such that for every $i\in I$ the composite $n_i=nk_i\colon A_i'\to A'\to B'$ lies in $S$. Then $n^*:\Cc(B')\to\Cc(A')$ agrees up to equivalence with the functor $\mathcal C(B')\to\lim_{i\in I}\mathcal C(A'_i)$ induced by the $n_i^*$. Now each of these $n_i^*$ admits a left adjoint by assumption and moreover $\mathcal C(B')$ is cocomplete; thus, also $n^*$ admits a left adjoint by \cite{descent-lim}*{Theorem B$^\op$}.
		\end{proof}
	\end{lemma}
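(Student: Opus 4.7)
The plan is to prove the two implications separately. The direction (2) $\Rightarrow$ (1) is essentially immediate: any $m\colon A \to B$ in $S$ is an object of $\bbU_S(B)$ (since it sits in the essential image of $\iota_!$ even before passing to presheaves), hence is admissible, and the hypothesis applies directly.

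For the harder direction (1) $\Rightarrow$ (2), I would fix $B' \in T$ and an admissible $n\colon A' \to B'$. By definition of $\bbU_S$ and \Cref{lemma:iota!-ext}, $n$ corresponds to a unique morphism $n'\colon X' \to B'$ in $\PSh(S)_{/B'}$ with $\iota_! n' \simeq n$. Decomposing $X'$ as a colimit $\colim_{i\in I} A_i'$ of representables in $\PSh(S)$ (viewed over $B'$), and using that $\iota_!$ preserves colimits, we write $A' \simeq \colim_{i\in I} A_i'$ in $\PSh(T)_{/B'}$, with each structure map $n_i\colon A_i' \to B'$ a composite of a map in $S$ with $n'$ composed on the right — i.e., a morphism of $S$.

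Invoking the limit-preserving extension of $\Cc$ to $\PSh(T)\catop$ (Convention~\ref{conv:limit-ext}), we obtain an equivalence $\Cc(A') \simeq \lim_{i \in I\catop} \Cc(A_i')$ under which $n^*$ corresponds to the canonical functor $\Cc(B') \to \lim_{i\in I\catop} \Cc(A_i')$ induced by the $n_i^*$. By hypothesis~(1), each $n_i^*$ admits a left adjoint, and by fiberwise cocompleteness $\Cc(B')$ is cocomplete. This places us in exactly the setting of the cited Theorem~B$\catop$ from \cite{descent-lim}, which guarantees that a functor into a limit of categories admits a left adjoint provided each component functor does and the source is cocomplete. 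Applying it yields the desired left adjoint $n_!$.

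The main obstacle is the bookkeeping in the second step: one must verify that the chosen preimage $n'$ really exists and is unique (which follows from \Cref{lemma:iota!-ext} and Lemma~\ref{lemma:iota!-ff} for the relevant slices), and that decomposing $X'$ into representables in $\PSh(S)$ genuinely expresses $n$ as a colimit of morphisms in $S$ in $\PSh(T)_{/B'}$. Once the decomposition is in place, the abstract adjoint existence statement does all the remaining work.
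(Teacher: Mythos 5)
Your proposal is correct and follows essentially the same route as the paper's proof: choose a preimage in $\PSh(S)_{/B'}$, decompose it into representables so that the composites $n_i\colon A_i'\to B'$ land in $S$, identify $n^*$ with the induced functor into $\lim_{i}\Cc(A_i')$ via the limit-preserving extension of $\Cc$, and conclude with \cite{descent-lim}*{Theorem~B$^\op$}. The extra bookkeeping you flag (uniqueness of the preimage, the colimit decomposition in the slice) is exactly what the paper's terser phrasing leaves implicit.
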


	\begin{remark}
	For later use, we make the construction of the left adjoint given in \emph{loc.~cit.~}semi-explicit, keeping the notation from the previous proof:
		\begin{enumerate}
			\item For $X\in\Cc(A')$, $n_!X$ is the colimit of a suitable $I^\op$-diagram with $i\mapsto n_{i!}k_i^*(X)$.
			\item The counit $n_!n^*X=\colim_{i\in I^\op} n_!k_i^*n^*X=\colim_{i\in I^\op} n_{i!}n_i^*X\to X$ is induced by a cocone given at $i\in I^\op$ by the counit of $n_{i!}\dashv n_i^*$.
			\item The unit $Y\to n^*n_!Y$ is given after restricting along $k_i$ by the composite $k_i^*Y\to n_i^*n_{i!}k_i^*Y\to n_i^*\colim_{j\in J}n_{j!}k_j^*Y$ of the unit and the structure map of the colimit.
		\end{enumerate}
	\end{remark}

	Using this we can now prove:

	\begin{lemma}
		\label{lem:CharacterizationMCocompleteness}
		Let $\mathcal C$ be a $T$-category. Then $\mathcal C$ is $S$-cocomplete if and only if the following conditions are satisfied:
		\begin{enumerate}
			\item The $T$-category $\Cc$ is fiberwise cocomplete,
			\item For every morphism $m\colon A \to B$ in $S$, the restriction $m^*\colon \Cc(B) \to \Cc(A)$ admits a left adjoint $m_!$,
			\item For every pullback square
			\begin{equation*}
				\begin{tikzcd}
					A\arrow[r, "m"]\arrow[d, "t"'] \arrow[dr,phantom,"\lrcorner"{very near start}] & B\arrow[d, "u"]\\
					A'\arrow[r, "n"'] & B'
				\end{tikzcd}
			\end{equation*}
			in $\PSh(T)$ where $n$ belongs to $S$ and $u$ is a map in $T$, the Beck--Chevalley map $m_!t^*\to u^*n_!$ is an equivalence (note that $m_!$ exists by Lemma~\ref{lemma:S-cc-adj-general}).\label{item:cmcc-BC}
		\end{enumerate}
		The dual characterization for $S$-completeness also holds.
		\begin{proof}
			By definition, $S$-cocompleteness implies all of the above conditions. Conversely, if these three conditions are satisfied, it only remains by the previous lemma together with Remark~\ref{rk:U-cc-restriction-to-repr} to show that the Beck--Chevalley condition (\ref{item:cmcc-BC}) actually holds without representability assumption on $A'$.

			For this we fix a decomposition $(k_i)_{i\in I}\colon\colim_{i\in I} A'_i\simeq A'$ in $\PSh(T)$ into representables as before. We now pull back each individual $n_i=nk_i$ along $u$ to an $m_i$, and then appeal to universality of colimits to obtain a pullback
			\begin{equation*}
				\begin{tikzcd}
					\colim_{i\in I}(A'_i\times_{B'}B)\arrow[dr,phantom,"\lrcorner"{very near start},xshift=-5pt]\arrow[d, "t=\colim t_i"']\arrow[r, "m=(m_i)"]&[1.75em] B\arrow[d, "u"]\\
					\colim_{i\in I} A'_i\arrow[r, "(n_i)"'] & B'\llap.
				\end{tikzcd}
			\end{equation*}

			It then follows from cocontinuity of $u^*$ and the above description of unit and counit, that the Beck--Chevalley map $m_!t^*X\to u^*n_!X$ is given for any $X\in\Cc(\colim_{i\in I}A_i)$ as a colimit (over $I^\op$) of the Beck--Chevalley maps
			\begin{equation*}
				m_{i!}t_i^*k_i^*X\to u^*n_{i!}k_i^*X,
			\end{equation*}
			each of which is an equivalence by assumption.
		\end{proof}
	\end{lemma}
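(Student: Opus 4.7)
The forward direction is essentially definitional: $S$-cocompleteness of $\Cc$ packages fiberwise cocompleteness together with the existence and Beck--Chevalley compatibility of left adjoints $n_!$ for every admissible $n\colon A'\to B'$ in $\PSh(T)$, and specializing to representable $A'$ and $B'$ (where admissibility forces $n$ to be in $S$) gives exactly conditions (1), (2), (3).

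For the converse, the plan is to leverage \Cref{lemma:S-cc-adj-general} to upgrade (2) to the statement that the pullback functor $n^*\colon \Cc(B')\to \Cc(A')$ admits a left adjoint for every admissible $n\colon A'\to B'$ with $B'$ representable; together with Remark~\ref{rk:U-cc-restriction-to-repr}, this reduces the remaining task to verifying the Beck--Chevalley equivalence $m_!t^*\iso u^*n_!$ only for pullback squares in which $B$ and $B'$ are representable, but where $A'$ may be an arbitrary admissible object over $B'$. In other words, the input is the Beck--Chevalley condition for morphisms in $S$ (i.e.~for representable $A'$), and we need to bootstrap to the case where $A'$ is any presheaf admitting an admissible map to the representable $B'$.

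The bootstrap proceeds by fixing a preimage of $n\colon A' \to B'$ under $\iota_!\colon \PSh(S)\to\PSh(T)$ and decomposing its domain in $\PSh(S)$ into representables, yielding $(k_i)_{i\in I}\colon\colim_{i\in I} A'_i\iso A'$ in $\PSh(T)$ such that each $n_i\coloneqq n\circ k_i\colon A'_i\to B'$ is a morphism in $S$. Pulling back each $n_i$ against $u\colon B\to B'$ in $T$, universality of colimits in the $\infty$-topos $\PSh(T)$ then exhibits $A\simeq\colim_{i\in I}(A'_i\times_{B'}B)$ with $m=(m_i)_{i\in I}$. Now I would invoke the semi-explicit description of $n_!$ from \Cref{lemma:S-cc-adj-general} (and its follow-up remark) to identify both $n_!X$ and, analogously, $m_!(t^*X)$ as $I^\op$-colimits of the representable-level pushforwards $n_{i!}k_i^*X$ respectively $m_{i!}t_i^*k_i^*X$. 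Combined with cocontinuity of $u^*$ (which holds by fiberwise cocompleteness and \cite{HTT}*{Lemma~5.4.5.5}), this presents the Beck--Chevalley map $m_!t^*X\to u^*n_!X$ as the colimit over $I^\op$ of the Beck--Chevalley maps $m_{i!}t_i^*k_i^*X\to u^*n_{i!}k_i^*X$, each of which is an equivalence by hypothesis~(3).

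The main technical obstacle I expect is the bookkeeping around this last step: one must verify that the colimit description of $n_!$ (coming from the application of \cite{descent-lim}*{Theorem~B$^\op$} inside the proof of \Cref{lemma:S-cc-adj-general}) is compatible with unit-counit data in such a way that the colimit of the representable Beck--Chevalley maps is the Beck--Chevalley map of the colimit, rather than merely some other natural transformation between the same source and target. Everything else is essentially formal. The dual characterization of $S$-completeness follows by passing to $T\catop$-categories, using that the definition of $\bbU_S$ is self-dual in the relevant sense once one works with the fractured $\infty$-topos structure supplied by \Cref{thm:fracture}.
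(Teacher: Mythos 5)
Your proposal is correct and follows essentially the same route as the paper: reduce via \Cref{lemma:S-cc-adj-general} and \Cref{rk:U-cc-restriction-to-repr} to representable $B,B'$, decompose $A'$ into representables through a preimage in $\PSh(S)$, pull back along $u$ using universality of colimits, and identify the Beck--Chevalley map as an $I^\op$-colimit of the representable-level ones via the semi-explicit unit/counit description. The ``bookkeeping'' concern you flag is precisely what the paper resolves by appealing to that explicit description, so nothing further is needed.
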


	\begin{warning}\label{warn:S-cocomplete-vs-und-S-cocomplete}
		Even for a fiberwise cocomplete $T$-category, being $S$-cocomplete is not just a property of the underlying $S$-category: the former includes more Beck--Chevalley conditions.
	\end{warning}

	\begin{lemma}
		Let $F\colon\mathcal C\to\mathcal D$ be a $T$-functor of $S$-cocomplete $T$-categories. Then the following are equivalent:
		\begin{enumerate}
			\item The $T$-functor $F$ preserves fiberwise colimits and $\bbU_S$-colimits.
			\item The $T$-functor $F$ preserves fiberwise colimits and for every map $m$ in $S$ the Beck--Chevalley map $m_!F\to Fm_!$ is an equivalence.
			\item The $S$-functor $\iota^*F$ is $S$-cocontinuous.
		\end{enumerate}
		The dual statement for $S$-complete categories also holds.
	\end{lemma}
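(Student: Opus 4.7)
The plan is to establish (1)$\Rightarrow$(2)$\Rightarrow$(3)$\Rightarrow$(1). The implication (1)$\Rightarrow$(2) is purely formal: any $m\colon A\to B$ in $S$ is admissible (its image in $\PSh(T)$ is literally $\iota_!m$), so it represents an object of $\bbU_S(B)$, and the desired Beck--Chevalley equivalence $m_!F\iso Fm_!$ is then a special case of $\bbU_S$-cocontinuity.

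For (2)$\Leftrightarrow$(3) I would unpack what $S$-cocontinuity of the $S$-functor $\iota^*F$ means and match it against (2). Fiberwise cocontinuity of $\iota^*F$ (tested over $A\in S$) agrees with fiberwise cocontinuity of $F$ (tested over $A\in T$) because $S\subset T$ is wide. For the $\ul\Spc_S$-cocontinuity half, \Cref{rk:U-cc-restriction-to-repr} reduces the check to the case of representable target $B\in S$, and one then argues as in \Cref{lemma:S-cc-adj-general} to further reduce to maps $m\colon A\to B$ already lying in $S$, by decomposing an arbitrary preimage in $\PSh(S)$ into representables. For such $m$ the Beck--Chevalley transformation associated to the $S$-functor $\iota^*F$ coincides literally with the map $m_!F\to Fm_!$ appearing in condition (2).

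The last implication (2)$\Rightarrow$(1) is where the work actually happens, and it mirrors the argument already carried out in \Cref{lem:CharacterizationMCocompleteness}. By \Cref{rk:U-cc-restriction-to-repr} it suffices to verify the Beck--Chevalley equivalence $n_!F\to Fn_!$ for admissible $n\colon A\to B$ with $B\in T$ representable. Choosing a preimage of $A$ in $\PSh(S)$ and decomposing it into representables yields $A\simeq\colim_{i\in I}A_i$ in $\PSh(T)$ together with composites $n_i\colon A_i\to A\to B$ in $S$. The explicit description of the left adjoint $n_!$ recalled after \Cref{lemma:S-cc-adj-general} then exhibits $n_!$, together with its unit and counit, as an $I^\op$-colimit of the corresponding data for the $n_{i!}$. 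Since $F$ preserves fiberwise colimits, the Beck--Chevalley map $n_!F\to Fn_!$ is therefore identified with the $I^\op$-colimit of the maps $n_{i!}F\to Fn_{i!}$, each of which is an equivalence by (2). The main obstacle here is the bookkeeping required to see that these identifications of $n_!$, of its unit and counit, and of the Beck--Chevalley transformation as colimits of their $n_i$-counterparts are mutually compatible, but no ingredient beyond what already appears in \Cref{lem:CharacterizationMCocompleteness} is required, and the dual statement for $S$-completeness follows by the formally dual argument.
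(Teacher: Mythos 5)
Your proposal is correct and takes essentially the same approach as the paper's (very brief) proof: both reduce the $(1)\Leftrightarrow(2)$ equivalence to the colimit-decomposition argument already carried out in \Cref{lem:CharacterizationMCocompleteness}, and both rest on the observation that conditions (2) and (3) only concern the underlying $S$-functor $\iota^*F$. The paper simply packages your $(2)\Leftrightarrow(3)$ unpacking into the one-line remark that this is ``clear,'' and phrases the three conditions as two biconditionals rather than a cycle, but the substance is identical.
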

	\begin{proof}
		The equivalence between (1) and (2) follows just as in \Cref{lem:CharacterizationMCocompleteness}. Since the conditions in (2) only depend on the underlying $S$-functor $\iota^*F$, the equivalence between (2) and (3) is clear.
	\end{proof}

	\begin{definition}
		An $S$-functor $F$ satisfying the above equivalent conditions is called \emph{$S$-cocontinuous}. We write $\Cat_T^{S\textup{-cc}}\subset\Cat_T$ for the very large category of $S$-cocomplete $T$-categories and $S$-cocontinuous functors, and $\Pr_T^S\subset\Cat_T^{S\textup{-cc}}$ for the full subcategory spanned by the $S$-presentable $T$-categories.
	\end{definition}

	\begin{lemma}\label{lemma:fun-S-cocomplete}
		Let $\Cc,\Dd\in\Cat_T$ such that $\Dd$ is $S$-cocomplete. Then $\ul\Fun_T(\Cc,\Dd)$ is again $S$-cocomplete. Moreover, for any $F\colon\Cc\to\Cc'$ the restriction $\ul\Fun_T(\Cc',\Dd)\to\ul\Fun_T(\Cc,\Dd)$ is $S$-cocontinuous.
		\begin{proof}
			This is a special case of \cite{CLL_Global}*{Corollary~2.3.25}.
		\end{proof}
	\end{lemma}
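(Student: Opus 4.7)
My plan is to reduce both assertions to the analogous facts about $\Dd$ by decomposing $S$-cocompleteness into its two constituent pieces, as permitted by \Cref{lem:CharacterizationMCocompleteness}: fiberwise cocompleteness and admitting all $\bbU_S$-colimits. The decomposition is especially convenient here because the latter piece is, by definition, a special case of $\bbU$-cocompleteness for a general $T$-subcategory $\bbU\subset\ul\Spc_T$, for which the paper has already recorded that $\ul\Fun_T(\Cc,\Dd)$ inherits $\bbU$-cocompleteness and $\bbU$-cocontinuity of restriction from $\Dd$ (the Example following \Cref{ex:T-spc-cc}, quoting \cite{CLL_Global}*{Corollary~2.3.25}). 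Applied to $\bbU=\bbU_S$, this immediately gives the $\bbU_S$-colimit half of both claims, so it will suffice to handle the fiberwise part.

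For fiberwise cocompleteness, I would identify $\ul\Fun_T(\Cc,\Dd)(X)\simeq\Fun_T(\Cc\times\ul X,\Dd)$ as a full subcategory of the functor category $\Fun_T(\Cc\times\ul X,\Dd)$ and check the conditions of \Cref{defi:fiberwise-cc} pointwise. Concretely: a small diagram $K\to\Fun_T(\Cc\times\ul X,\Dd)$ admits a colimit computed pointwise in $\Dd$ because each value $\Dd(Y)$ is cocomplete and the restriction functors of $\Dd$ are fiberwise cocontinuous, so that the pointwise colimit defines a natural transformation $\Cc\times\ul X\to\Dd$. For a map $f\colon X'\to X$ in $\PSh(T)$, the restriction $f^*\colon\ul\Fun_T(\Cc,\Dd)(X)\to\ul\Fun_T(\Cc,\Dd)(X')$ is precomposition with $\id_\Cc\times f$ and so manifestly commutes with these pointwise colimits.

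For the moreover part, precomposition with $F\colon\Cc\to\Cc'$ preserves fiberwise colimits by the same pointwise computation, so only the Beck--Chevalley condition on the $\bbU_S$-left adjoints needs to be checked. But this is again an instance of the cited general fact about $\ul\Fun_T(-,\Dd)$: \cite{CLL_Global}*{Corollary~2.3.25} directly states that for any $T$-functor between source categories, restriction is $\bbU$-cocontinuous.

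The only step requiring any real care is therefore the verification of fiberwise cocompleteness of $\ul\Fun_T(\Cc,\Dd)(X)$ from that of $\Dd$; the rest is a clean appeal to the already-established Example. I do not anticipate a genuine obstacle, since all the required preservation statements reduce to pointwise cocompleteness and cocontinuity in $\Dd$, which are hypotheses.
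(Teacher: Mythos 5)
Your proof is correct and follows essentially the same route as the paper, whose entire proof is the observation that the statement is a special case of \cite{CLL_Global}*{Corollary~2.3.25} — that corollary applies to arbitrary classes of diagram shapes, including the constant $T$-categories that encode fiberwise colimits, so it covers both halves of your decomposition at once. Your separate pointwise verification of the fiberwise part is sound but could simply be absorbed into the same citation.
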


	\begin{definition}
		We write $\ul\Fun_T^{S\text{-cc}}(\mathcal C,\mathcal D)\subset\ul\Fun_T(\mathcal C,\mathcal D)$ for the full subfunctor spanned in degree $X\in\PSh(T)$ by the $S$-cocontinuous functors $\mathcal C\to\ul\Fun_T(\ul X,\Dd)$.
	\end{definition}

	\begin{lemma}
		$\ul\Fun_T^{S\textup{-cc}}(\mathcal C,\mathcal D)$ defines a $T$-subcategory of $\ul\Fun_T(\Cc,\Dd)$.
		\begin{proof}
			If $X\to Y$ is any map in $\PSh(T)$, then Lemma~\ref{lemma:fun-S-cocomplete} shows that composing with the restriction $\ul\Fun_T(\ul Y,\Dd)\to\ul\Fun_T(\ul X ,\Dd)$ preserves $S$-cocontinuous functors. To see that this subfunctor is limit preserving, it suffices to observe that the functors $\ul\Fun_T(\ul Y,\Dd)\to\ul\Fun_T(\ul A,\Dd)$ for all $A\to Y$ with $A$ representable are jointly conservative and hence detect $S$-cocontinuity, cf.~the proof of \cite{CLL_Global}*{Proposition~2.3.28}.
		\end{proof}
	\end{lemma}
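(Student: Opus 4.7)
The proof will break into the two points sketched in the excerpt: (i) closure of the subfunctor under restriction along maps in $\PSh(T)$, and (ii) limit-preservation of the subfunctor on $\PSh(T)^\op$. Both ingredients together will guarantee that $\ul\Fun_T^{S\text{-cc}}(\Cc,\Dd)$ really is a $T$-subcategory in the sense of Convention~\ref{conv:limit-ext}.

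First I will handle closure. Given a map $f\colon X\to Y$ in $\PSh(T)$, the restriction functor $f^*\colon\ul\Fun_T(\ul Y,\Dd)\to\ul\Fun_T(\ul X,\Dd)$ is itself $S$-cocontinuous by Lemma~\ref{lemma:fun-S-cocomplete} (applied to the map $\ul X\to\ul Y$). If $F\colon\Cc\to\ul\Fun_T(\ul Y,\Dd)$ is $S$-cocontinuous, then so is the composite $f^*\circ F\colon\Cc\to\ul\Fun_T(\ul X,\Dd)$, because $S$-cocontinuous functors compose: fiberwise cocontinuity is preserved under composition, and the Beck--Chevalley maps for the composite factor as composites of the two individual Beck--Chevalley maps, each of which is an equivalence. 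This shows that $\ul\Fun_T^{S\text{-cc}}(\Cc,\Dd)$ is closed under all restriction functors in $\PSh(T)$, and in particular under those for maps in $T$.

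For limit preservation, write an arbitrary $X\in\PSh(T)$ as a colimit of representables $X=\colim_{A\to X}\ul A$ over the slice $T_{/X}$. Since $\ul\Fun_T(\Cc,\Dd)$ is limit-preserving on $\PSh(T)^\op$, the canonical comparison map identifies $\ul\Fun_T(\Cc,\Dd)(X)$ with $\lim_{A\to X}\ul\Fun_T(\Cc,\Dd)(A)$, and this limit is computed in $\Cat$. It therefore suffices to show that a functor $F\colon\Cc\to\ul\Fun_T(\ul X,\Dd)$ is $S$-cocontinuous if and only if each of the restrictions $F_A\colon\Cc\to\ul\Fun_T(\ul A,\Dd)$, for representable $A\to X$, is $S$-cocontinuous. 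The ``only if'' direction is a special case of step (i). For the ``if'' direction, recall that $S$-cocontinuity of $F$ (on representables, which suffices by Remark~\ref{rk:U-cc-restriction-to-repr}) is equivalent to requiring that certain Beck--Chevalley natural transformations in $\ul\Fun_T(\ul X,\Dd)$ are equivalences. Such equivalences can be checked after applying the family of restriction functors $\ul\Fun_T(\ul X,\Dd)\to\ul\Fun_T(\ul A,\Dd)$ for $A\to X$ representable, since these functors are jointly conservative: an object of $\ul\Fun_T(\ul X,\Dd)(C)\simeq\Fun_T(\ul C\times\ul X,\Dd)$ is determined up to equivalence by its restrictions along $\ul C\times\ul A\to\ul C\times\ul X$. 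Moreover, these restriction functors commute with the formation of the relevant Beck--Chevalley maps (by naturality and by $S$-cocontinuity of the restrictions established in step (i)). Hence if each $F_A$ is $S$-cocontinuous, all the corresponding Beck--Chevalley maps become equivalences after restriction to every representable, and therefore are themselves equivalences. This is essentially the argument used in \cite{CLL_Global}*{Proposition~2.3.28}.

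The main obstacle is the ``if'' direction of step (ii): one must be careful that the Beck--Chevalley transformations in $\ul\Fun_T(\ul X,\Dd)$ associated to a morphism in $S$ are compatible with the restrictions to representables. Once one grants that the restrictions are $S$-cocontinuous (step (i)) and jointly conservative, this compatibility is a routine naturality check, so this obstacle is genuinely mild and the proof should be short.
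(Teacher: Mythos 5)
Your proposal is correct and follows the same route as the paper: closure under restriction via Lemma~\ref{lemma:fun-S-cocomplete} together with composability of $S$-cocontinuous functors, and limit-preservation via the joint conservativity of the restrictions to representables, which detect $S$-cocontinuity exactly as in the proof of \cite{CLL_Global}*{Proposition~2.3.28}. Your extra care about the compatibility of the Beck--Chevalley maps with restriction is precisely the routine check the paper delegates to that reference.
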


	\subsection{Colimits in Kan extensions}
	\label{subsec:ColimitsInKanExtensions}
	Recall that for any functor $\alpha\colon S\to T$ the restriction $\alpha^*\colon\Cat_T\to\Cat_S$ admits a right adjoint $\alpha_*$, which can be computed via restriction along $\alpha^*\colon\PSh(T)\to\PSh(S)$. We will now study the interplay of these adjoints with parametrized colimits and limits in the case that $\alpha=\iota$ is a cleft category.

	\begin{convention}
		For the rest of this subsection let us fix a cleft category $\iota\colon S\hookrightarrow T$ and a $T$-subcategory $\bbV^{(T)}\subset\bbU_S\subset\ul\Spc_T$. We will write $\bbV^{(S)}$ for the $S$-subcategory defined as the preimage of $\iota^*\bbV^{(T)}$ along the inclusion $\ul\Spc_S \hookrightarrow \iota^*\ul\Spc_T$.
	\end{convention}

	\begin{lemma}\label{lemma:V-restr}
		Let $A\in T$. Then $\iota^*\colon\PSh(T)_{/A}\to\PSh(S)_{/\iota^*A}$ restricts to a map $\bbV^{(T)}(A)\to\bbV^{(S)}(\iota^*A)$.
		\begin{proof}
			Let $(u\colon X\to A)\in\bbV^{(T)}(A)$ arbitrary. By assumption on $\bbV^{(T)}$, $u$ is admissible, so we have a pullback
			\begin{equation}\label{diag:counit-pullback-V}
				\begin{tikzcd}
					\iota_!\iota^*X\arrow[d, "\iota_!\iota^*u"']\arrow[dr,phantom,"\lrcorner"{very near start}] \arrow[r, "\epsilon"] & X\arrow[d, "u"]\\
					\iota_!\iota^*A\arrow[r, "\epsilon"'] & A
				\end{tikzcd}
			\end{equation}
			in $\PSh(T)$ by Proposition~\ref{prop:counit-pb}; in particular $\iota_!\iota^*u \in\bbV^{(T)}(\iota_!\iota^*A)$ as $\bbV^{(T)}$ is a $T$-subcategory of $\ul\Spc_T$. But then $\iota^*u\in\bbV^{(S)}(\iota^*A)$ as desired.
		\end{proof}
	\end{lemma}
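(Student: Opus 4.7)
My plan is to unwind the definition of $\bbV^{(S)}$ and reduce the statement to a closure property of $\bbV^{(T)}$ under pullbacks, which is immediate from its being a $T$-subcategory of $\ul\Spc_T$.

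Concretely, I would first identify the two sides explicitly. By the limit-extension convention together with Kan's formula $\iota_! Y\simeq\colim_{(B,y)\in\text{el}(Y)} B$ in $\PSh(T)$, for any $S$-category $\mathcal E\colon S^\op\to\Cat$ we have $(\iota^*\mathcal E)(Y)\simeq\mathcal E(\iota_! Y)$ when $\mathcal E$ is a $T$-category restricted along $\iota$. Combined with the fact that the inclusion $\iota_!\colon\ul\Spc_S\hookrightarrow\iota^*\ul\Spc_T$ is given pointwise by $(\iota_!)_{/Y}\colon\PSh(S)_{/Y}\to\PSh(T)_{/\iota_!Y}$ (Lemma~\ref{lemma:iota!-ext}), the subcategory $\bbV^{(S)}(Y)\subset\PSh(S)_{/Y}$ consists exactly of those $f\colon Z\to Y$ whose image $\iota_! f$ lies in $\bbV^{(T)}(\iota_!Y)$. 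Taking $Y=\iota^*A$, it thus suffices to show that $\iota_!\iota^* u$ belongs to $\bbV^{(T)}(\iota_!\iota^*A)$.

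The key input is then Proposition~\ref{prop:counit-pb}: because the assumption $\bbV^{(T)}\subset\bbU_S$ makes $u$ admissible, the naturality square of the counit $\epsilon\colon\iota_!\iota^*\Rightarrow\id$ applied to $u$ (displayed above as diagram~(\ref{diag:counit-pullback-V})) is a pullback in $\PSh(T)$. This exhibits $\iota_!\iota^* u$ as a pullback of $u\in\bbV^{(T)}(A)$ along the counit map $\epsilon\colon\iota_!\iota^*A\to A$ in $\PSh(T)$. Since $\bbV^{(T)}$ is by hypothesis a $T$-subcategory of $\ul\Spc_T$ and such $T$-subcategories are by definition stable under pullback along arbitrary maps in $\PSh(T)$, we conclude $\iota_!\iota^* u\in\bbV^{(T)}(\iota_!\iota^*A)$, as needed.

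I do not anticipate a genuine obstacle here: all the work has been done upstream, namely in establishing the counit pullback property (Proposition~\ref{prop:counit-pb}) and in identifying the inclusion $\iota_!\colon\ul\Spc_S\hookrightarrow\iota^*\ul\Spc_T$ (Lemma~\ref{lemma:iota!-ext}). The only mild bookkeeping point is confirming that the definition of $\bbV^{(S)}$ extended to presheaves is governed by $\iota_!$ together with $\bbV^{(T)}$ at $\iota_!Y$; this is a formal consequence of limit preservation.
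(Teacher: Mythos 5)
Your proof is correct and follows essentially the same route as the paper: the two inputs are Proposition~\ref{prop:counit-pb} (the counit square is a pullback because $\bbV^{(T)}\subset\bbU_S$ forces $u$ to be admissible) and the closure of $\bbV^{(T)}$ under pullback as a $T$-subcategory of $\ul\Spc_T$. The only difference is that you spell out the unwinding of the definition of $\bbV^{(S)}$ via $\iota_!$ more explicitly, which the paper leaves implicit.
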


	From now on we will conflate $\bbV^{(S)}$ and $\bbV^{(T)}$ and simply write $\bbV$ for both of them.

	\begin{theorem}\label{thm:iota*-cc}
		The adjunction $\iota^*\colon\Cat_T\rightleftarrows\Cat_S\noloc\iota_*$ restricts to an adjunction $\Cat_T^{\bbV\textup{-cc}}\rightleftarrows\Cat_S^{\bbV\textup{-cc}}$ between the categories of $\bbV$-cocomplete $T$- and $S$-categories, respectively, and $\bbV$-cocontinuous functors.
	\end{theorem}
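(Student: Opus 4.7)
The plan is to verify, using the pointwise characterization of $\bbV$-cocompleteness from Lemma~\ref{lem:CharacterizationMCocompleteness}, that both $\iota^*$ and $\iota_*$ send $\bbV$-cocomplete categories to $\bbV$-cocomplete categories and $\bbV$-cocontinuous functors to $\bbV$-cocontinuous functors, and additionally that the unit and counit of the ambient adjunction remain $\bbV$-cocontinuous on the respective subcategories. The counit $\iota^*\iota_*\Dd \to \Dd$ poses no difficulty: it is induced by the unit $\id \Rightarrow \iota^*\iota_!$ on presheaves, which is an equivalence since $\iota\colon S \hookrightarrow T$ is fully faithful, so this counit is pointwise an equivalence already on the level of $\Cat$.

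For $\iota^*$, I would use the identification $(\iota^*\Cc)(X) = \Cc(\iota_!X)$ for $X\in\PSh(S)$. Fiberwise cocompleteness and the existence of left adjoints to $f^*$ for $f\in\bbV^{(S)}$ are immediate from the corresponding properties of $\Cc$, since $\iota_!f \in \bbV^{(T)}$ by the very definition of $\bbV^{(S)}$ as a preimage. The Beck--Chevalley condition transports along $\iota_!$ because $\iota_!$ preserves pullbacks (fracture axiom~\ref{it:F1_Preserves_Pullbacks}). Dually, for $\iota_*$ I would use $(\iota_*\Dd)(X) = \Dd(\iota^*X)$ together with the facts that $\iota^*$ preserves pullbacks (being a right adjoint) and sends $\bbV^{(T)}$-admissible maps to $\bbV^{(S)}$-admissible ones---this is Lemma~\ref{lemma:V-restr} over representables, extended to arbitrary presheaves by combining pullback-stability of $\bbV^{(T)}$ with the pullback-preservation of $\iota^*$. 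Preservation of $\bbV$-cocontinuous morphisms in both directions follows from the same translation of Beck--Chevalley maps.

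The only substantive work lies in showing that the unit $\eta_\Cc\colon \Cc \to \iota_*\iota^*\Cc$ is $\bbV$-cocontinuous whenever $\Cc$ is $\bbV$-cocomplete. At a presheaf $X \in \PSh(T)$, this unit is restriction along the counit $\epsilon\colon \iota_!\iota^*X \to X$ of the presheaf-level adjunction $\iota_! \dashv \iota^*$. For $f\in\bbV^{(T)}$, the required Beck--Chevalley equivalence $(\iota_!\iota^*f)_!\,\epsilon^* \Rightarrow \epsilon^*\,f_!$ is precisely the Beck--Chevalley condition in $\Cc$ associated to the pullback square of Proposition~\ref{prop:counit-pb}; to invoke it, one only needs to observe that $\iota_!\iota^*f$ itself lies in $\bbV^{(T)}$, which follows from the pullback-stability of $\bbV^{(T)}$ applied to that very same square. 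The main obstacle is thus ensuring that Beck--Chevalley conditions transport cleanly between $\PSh(S)$ and $\PSh(T)$, for which the full strength of the cleft/fracture axioms---in particular pullback-preservation by $\iota_!$ and the counit-pullback identity of Proposition~\ref{prop:counit-pb}---is essential.
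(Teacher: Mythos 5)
Your treatment of $\iota^*$, of $\iota_*$, and of the unit $\eta_\Cc\colon\Cc\to\iota_*\iota^*\Cc$ matches the paper's argument: transport the pointwise adjoints and Beck--Chevalley squares along $\iota_!$ (using that it preserves pullbacks) resp.\ along $\iota^*$ (using Lemma~\ref{lemma:V-restr}), and reduce the cocontinuity of the unit to the counit-pullback square of Proposition~\ref{prop:counit-pb}. That part is correct.

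However, your dismissal of the counit contains a genuine error. You claim that the unit $\id\Rightarrow\iota^*\iota_!$ on presheaves is an equivalence ``since $\iota\colon S\hookrightarrow T$ is fully faithful.'' It is not: $S\subset T$ is a \emph{wide} subcategory, so $\iota$ contains all objects but in general only a proper subspace of each mapping space (e.g.\ for $\Orb\subset\Glo$ only the injective homomorphisms), hence $\iota$ is faithful but not full, and $\iota_!\colon\PSh(S)\to\PSh(T)$ is not fully faithful. The paper's Lemma~\ref{lemma:iota!-faithful} shows precisely that the unit $\eta_X\colon X\to\iota^*\iota_!X$ is only a \emph{monomorphism}, with $(\iota^*\iota_!X)(A)$ splitting as $X(A)\amalg Y$ for a generally nonzero complement. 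Consequently the counit $\iota^*\iota_*\Dd\to\Dd$, which is given by restriction along $\eta$, is \emph{not} a pointwise equivalence of categories, and its $\bbV$-cocontinuity genuinely has to be checked. The missing ingredient is Lemma~\ref{lemma:unit-pb}: the naturality squares of $\eta$ are pullbacks in $\PSh(S)$, so for $(u\colon X\to Y)\in\bbV(Y)$ the Beck--Chevalley map $u_!\epsilon\to\epsilon u_!$ is the Beck--Chevalley transformation of $\Dd$ associated to that pullback square, hence an equivalence by $\bbV$-cocompleteness of $\Dd$. This is the exact mirror image of the argument you already gave for the unit, so the repair is short, but as written the step is wrong rather than merely terse.
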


	For the proof of the theorem we will use:

	\begin{lemma}[See \cite{CLL_Global}*{Lemma~2.3.17}]\label{lemma:restriction-cc}
		Let $f\colon\PSh(S)\to\PSh(T)$ be a left adjoint functor that preserves pullbacks, let $\bbV'\subset\ul\Spc_S$, and let $\bbV\subset\ul\Spc_T$ such that for every $A\in S$ and every $v\in\bbV(A)$ also $f(v)\in \bbV'(f(A))$. Then $f^*\colon\Cat_T\to\Cat_S$ restricts to $\Cat_T^{\bbV\textup{-cc}}\to\Cat_S^{\bbV'\textup{-cc}}$.\qed
	\end{lemma}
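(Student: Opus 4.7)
The plan is to reduce $\bbV'$-cocompleteness of $f^*\mathcal{C}$ directly to $\bbV$-cocompleteness of $\mathcal{C}$, using the tautological identification $(f^*\mathcal{C})(X)\simeq\mathcal{C}(f(X))$ for $X\in\PSh(S)$, under which restriction along a map $w\colon X\to Y$ in $\PSh(S)$ corresponds to restriction along $f(w)\colon f(X)\to f(Y)$ in $\mathcal{C}$. I will read the compatibility hypothesis in the direction that is actually invoked in the proof of Theorem~\ref{thm:iota*-cc}, namely that $v\in\bbV'(A)$ with $A\in S$ implies $f(v)\in\bbV(f(A))$ (the roles of $\bbV$ and $\bbV'$ in the printed statement appear to be switched relative to how the lemma is applied).

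First I would verify the pointwise left adjoints required by the characterization in Lemma~\ref{lem:CharacterizationMCocompleteness}. Given $A\in S$ and $v\colon X\to A$ in $\bbV'(A)$, the hypothesis hands us $f(v)\in\bbV(f(A))$, and $\bbV$-cocompleteness of $\mathcal{C}$ then supplies a left adjoint to $f(v)^*\colon\mathcal{C}(f(A))\to\mathcal{C}(f(X))$; under the identification above this is precisely the sought-after left adjoint to $v^*\colon(f^*\mathcal{C})(A)\to(f^*\mathcal{C})(X)$.

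Next I would check the Beck--Chevalley condition. Given a pullback square in $\PSh(S)$ whose right leg $v$ lies in $\bbV'$, applying $f$ produces a square in $\PSh(T)$ which remains a pullback because $f$ preserves pullbacks by assumption, with both horizontal legs in $\bbV$ (the starting one by the hypothesis, the other by closure of the $T$-subcategory $\bbV$ under pullback along arbitrary morphisms). The Beck--Chevalley transformation for the original square in $f^*\mathcal{C}$ is tautologically the Beck--Chevalley transformation for the image square in $\mathcal{C}$, which is an equivalence by $\bbV$-cocompleteness. An identical bookkeeping handles $\bbV'$-cocontinuity of $f^*F$ when $F\colon\mathcal{C}\to\mathcal{D}$ is $\bbV$-cocontinuous: the Beck--Chevalley map $v_!\circ(f^*F)\Rightarrow(f^*F)\circ v_!$ for $v\in\bbV'$ identifies with $f(v)_!\circ F\Rightarrow F\circ f(v)_!$ for $f(v)\in\bbV$, which is an equivalence by assumption on $F$.

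The only substantive input is the pullback-preservation of $f$, which enters precisely at the Beck--Chevalley step; the rest is formal manipulation of the defining identification $(f^*\mathcal{C})(X)\simeq\mathcal{C}(f(X))$. I therefore anticipate no real obstacle, and expect the proof to consist almost entirely of unwinding definitions once the correct direction of the compatibility hypothesis is fixed.
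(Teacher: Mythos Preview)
The paper does not supply its own proof of this lemma; it is quoted from the cited reference and closed with a \qed. Your argument is correct and is essentially the standard one: the identification $(f^*\mathcal{C})(X)\simeq\mathcal{C}(f(X))$ reduces both the existence of the required left adjoints and the Beck--Chevalley condition for $f^*\mathcal{C}$ to the corresponding properties of $\mathcal{C}$, using only that $f$ preserves pullbacks. You are also right that the roles of $\bbV$ and $\bbV'$ in the printed compatibility hypothesis are swapped relative to how the lemma is actually applied in the proof of Theorem~\ref{thm:iota*-cc}.

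One small remark: the characterization you invoke is not really Lemma~\ref{lem:CharacterizationMCocompleteness} (which is specific to $S$-cocompleteness for a cleft and also has a fiberwise-cocompleteness clause that is irrelevant here) but rather Definition~\ref{defi:U-cc} together with Remark~\ref{rk:U-cc-restriction-to-repr}, which lets you restrict to representable targets. The substance of your verification is unaffected.
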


	\begin{proof}[Proof of Theorem~\ref{thm:iota*-cc}]
		The functor $\iota_!\colon\PSh(S)\to\PSh(T)$ preserves pullbacks by Lemma~\ref{lemma:iota!-pullback}, so the previous lemma shows that $\iota^*\colon\Cat_T\to\Cat_S$ preserves $\bbV$-cocomplete categories and $\bbV$-cocontinuous functors. In the same way, we deduce from Lemma~\ref{lemma:V-restr} that $\iota_*$ restricts accordingly. It remains that the unit and counit maps are $\bbV$-cocontinuous.

		For a $\bbV$-cocomplete $T$-category $\mathcal C$, the unit $\eta\colon \mathcal C\to\iota_*\iota^*\mathcal C$ is given by restriction along the \emph{co}unit $\epsilon\colon \iota_!\iota^*\Rightarrow\id$ of the adjunction $\iota_!\colon\PSh(S)\rightleftarrows\PSh(T)\noloc\iota^*$. Thus, if $A\in T$ and $(u\colon X\to A)\in\bbV(A)$ are arbitrary, then the Beck--Chevalley map $u_!\eta\to\eta u_!$ associated to the commutative diagram
		\[
		\begin{tikzcd}
			\Cc(A) \rar{\eta} \dar[swap]{u^*} & \iota_*\iota^*\Cc(A) \dar{u^*} \\
			\Cc(X) \rar[swap]{\eta} & \iota_*\iota^*\Cc(X)
		\end{tikzcd}
		\qquad = \qquad
		\begin{tikzcd}
		\Cc(A) \rar{\epsilon^*} \dar[swap]{u^*} & \Cc(\iota_!\iota^*A) \dar{(\iota_!\iota^*u)^*} \\
		\Cc(X) \rar[swap]{\epsilon^*} & \Cc(\iota_!\iota^*X)
		\end{tikzcd}
		\]
		is given by the Beck--Chevalley map $(\iota_!\iota^*u)_!\epsilon^*\to\eta^*u_!$ associated to the pullback $(\ref{diag:counit-pullback-V})$ and hence is an equivalence by $\bbV$-cocompleteness of $\mathcal C$.

		Similarly, if $\mathcal D$ is a $\bbV$-cocomplete $S$-category, then the counit $\epsilon\colon\iota^*\iota_*\mathcal D\to\mathcal D$ is given by restricting along the unit of the adjunction $\PSh(S)\rightleftarrows\PSh(T)$, and the Beck--Chevalley transformation $u_!\epsilon\to \epsilon u_!$ for $(u\colon X\to Y)\in\bbV(Y)$ is simply the Beck--Chevalley transformation for the pullback
		\begin{equation*}
			\begin{tikzcd}
				X\arrow[r, "\eta"]\arrow[d, "u"']\arrow[dr, phantom, "\lrcorner"{very near start}] & \iota^*\iota_!X\arrow[d, "\iota^*\iota_!u"]\\
				Y\arrow[r, "\eta"'] & \iota^*\iota_!Y
			\end{tikzcd}
		\end{equation*}
		in $\PSh(S)$ from Lemma~\ref{lemma:unit-pb}, hence an equivalence as claimed.
	\end{proof}

	\begin{corollary}\label{cor:S-cocomplete-adjunction}
		The adjunction $\iota^*\colon\Cat_T\rightleftarrows\Cat_S\noloc\iota_*$ restricts to adjunctions $\Cat_T^{S\textup{-cc}}\rightleftarrows\Cat_S^{S\textup{-cc}}$ and $\Pr_T^S\rightleftarrows\Pr^S_S$.
		\begin{proof}
			Clearly, $\iota^*$ and $\iota_*$ preserve fiberwise cocompleteness and cocontinuity; moreover, the unit $\mathcal C\to\iota_*\iota^*\mathcal C$ and counit $\iota^*\iota_*\mathcal D\to\mathcal D$ are simply given by restricting along suitable maps in $\PSh(T)$ or $\PSh(S)$ respectively, hence fiberwise cocontinuous.

			The first claim now follows from the special case $\bbV=\bbU_S$ of the previous theorem. For the second one it then only remains to observe that $\iota^*$ and $\iota_*$ preserve fiberwise presentability by the same reasoning as for fiberwise cocompleteness.
		\end{proof}
	\end{corollary}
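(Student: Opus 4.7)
The plan is to derive this as a direct consequence of \Cref{thm:iota*-cc} applied to the special case $\bbV = \bbU_S$, with the extra ``fiberwise'' conditions verified by hand. Specifically, recall that $S$-cocompleteness decomposes as fiberwise cocompleteness plus $\bbU_S$-cocompleteness, and similarly $S$-cocontinuity is fiberwise cocontinuity plus a Beck--Chevalley condition along maps in $S$ (which is a special case of $\bbU_S$-cocontinuity). Thus \Cref{thm:iota*-cc} already shows that $\iota^*$ and $\iota_*$ preserve the $\bbU_S$-cocomplete half of the structure and that the unit and counit are $\bbU_S$-cocontinuous; all that remains is the fiberwise half and the fiberwise presentability condition.

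For the fiberwise conditions on $\iota^*$, I would simply note that for $A \in S$ we have $(\iota^*\Cc)(A) = \Cc(A)$, and for $f\colon A \to B$ in $S$ the restriction functor of $\iota^*\Cc$ is literally $f^*\colon \Cc(B) \to \Cc(A)$. Fiberwise cocompleteness, fiberwise cocontinuity, and fiberwise presentability of $\iota^*\Cc$ are therefore inherited from $\Cc$ without further work; the same reasoning shows that $\iota^* F$ is fiberwise cocontinuous whenever $F$ is.

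For $\iota_*$, the relevant description is that $(\iota_*\Dd)(A) = \Dd(\iota^* A)$ where $\iota^* A \in \PSh(S)$ is the presheaf $B \mapsto \maps_T(B,A)$ (and $\Dd$ is extended to presheaves via \Cref{conv:limit-ext}). Since $\Dd$ is fiberwise cocomplete (resp.\ fiberwise presentable), so is its extension to $\PSh(S)$ by \cite{HTT}*{Corollary~5.1.2.3 and Lemma~5.4.5.5} (resp.\ by \cite{HTT}*{Proposition~5.5.3.13}), with all restriction functors preserving the relevant structure; this gives fiberwise cocompleteness and fiberwise presentability of $\iota_*\Dd$ and fiberwise cocontinuity of $\iota_* F$. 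Moreover, both the unit $\Cc \to \iota_*\iota^*\Cc$ and the counit $\iota^*\iota_*\Dd \to \Dd$ are, by construction, pointwise given by restriction along maps in $\PSh(T)$ resp.\ $\PSh(S)$, so they are fiberwise cocontinuous by the same principle.

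Combining these pointwise observations with the $\bbU_S$-part supplied by \Cref{thm:iota*-cc}, one obtains the restricted adjunction $\Cat_T^{S\text{-cc}} \rightleftarrows \Cat_S^{S\text{-cc}}$; the presentable version $\Pr_T^S \rightleftarrows \Pr_S^S$ then follows since we have also verified that $\iota^*$ and $\iota_*$ preserve fiberwise presentability. I expect no serious obstacle: the only thing to be careful about is confirming that the extension-to-presheaves formalism commutes with taking fiberwise cocompleteness/presentability for $\iota_*\Dd$, but this is already encoded in the cited results from \cite{HTT}.
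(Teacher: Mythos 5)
Your proposal is correct and follows essentially the same route as the paper: the $\bbU_S$-part is handled by the special case $\bbV=\bbU_S$ of \Cref{thm:iota*-cc}, while the fiberwise cocompleteness, cocontinuity, and presentability statements (including for the unit and counit, which are restrictions along maps of presheaves) are verified pointwise using the limit-extension formalism and the cited results from \cite{HTT}. The paper compresses these fiberwise checks into a single ``clearly,'' but your expanded justification matches what is implicitly intended.
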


	We close this discussion by giving an `internal' version of the above adjunction, for which we introduce:

	\begin{construction}\label{constr:no-pun-intended}
		For any $\mathcal C,\mathcal D\in\Cat_T$ we get a natural map $\iota^*\hskip0pt minus 1.5pt\colon\hskip0pt minus .5pt\iota^*\ul\Fun_T\hskip0pt minus .5pt(\mathcal C,\hskip0pt minus .5pt\mathcal D)\hskip0pt minus 1pt\to\hskip0pt minus 1pt\ul\Fun_S(\iota^*\mathcal C,\iota^*\mathcal D)$ as the composite
		\begin{align*}
			\iota^*\ul\Fun_T(\mathcal C,\mathcal D)&\xrightarrow[\hphantom{\iota^*\ev}]{\coev}
			\ul\Fun_S(\iota^*\mathcal C,\iota^*\mathcal C\times \iota^*\ul\Fun_T(\mathcal C,\mathcal D))\\
			&\iso \ul\Fun_S(\iota^*\mathcal C,\iota^*(\mathcal C\times\ul\Fun_T(\mathcal C,\mathcal D)))\\
			&\xrightarrow{\iota^*\ev}
			\ul\Fun_T(\iota^*\mathcal C,\iota^*\mathcal D),
		\end{align*}
		where the unlabelled equivalence is the canonical one, and $\ev\colon \Cc \times \ulFun_T(\Cc,\Dd) \to \Dd$ and $\coev\colon \Ee \to \ulFun_S(\iota^*\Cc, \iota^*\Cc \times \Ee)$ are the evaluation and coevaluation maps, adjunct to the respective identity maps. Put differently, for any fixed $\mathcal C$, this is the mate of the canonical natural equivalence filling the square
		\begin{equation}\label{diag:product-equivalence}
			\begin{tikzcd}
				\Cat_T\arrow[d, "\iota^*"']\arrow[r, "\mathcal C\times\blank"] &[1em] \Cat_T \arrow[d,"\iota^*"]\\
				\Cat_S\arrow[r, "\iota^*\Cc\times\blank"'] & \Cat_S \rlap.
			\end{tikzcd}
		\end{equation}
		Explicitly, this sends an object in degree $A\in T$ corresponding to $F\colon\ul{\iota(A)}\times\Cc\to\Dd$ to the composite
		\begin{equation*}
			\ul A\times\iota^*\Cc\xrightarrow{\;\eta\;}\iota^*\ul{\iota(A)}\times\iota^*\Cc\simeq\iota^*(\ul{\iota(A)}\times\Cc)\xrightarrow{\iota^*F}\iota^*D,
		\end{equation*}
		where $\eta$ refers to the adjunction $\iota_!\dashv \iota^*$.

		Passing to mates once more, we also obtain an equivalence $\Phi\colon\ul\Fun_T(\mathcal C,\iota_*\mathcal D)\simeq\iota_*\ul\Fun_S(\iota^*\mathcal C,\mathcal D)$ natural in $\mathcal C\in\Cat_T$ and $\mathcal D\in\Cat_S$, given for any $A\in T$ by sending a functor $F\colon\mathcal C\to \ul\Fun_T(\ul A,\iota_*\mathcal D)$ to the composite
		\begin{equation*}
			\iota^*\mathcal C\xrightarrow{\iota^*F}\iota^*\ul\Fun_T(\ul A,\iota_*\mathcal D)\xrightarrow{\iota^*}\ul\Fun_S(\iota^*\ul A,\iota^*\iota_*\mathcal D)\xrightarrow{\epsilon\circ - }\ul\Fun_S(\iota^*\ul A,\mathcal D).
		\end{equation*}
		The composition of the two rightmost arrows agrees with $\epsilon\circ\iota^* \Phi$ by the triangle identity, i.e.~$\Phi(F)$ is the adjunct of the composite
		\begin{equation*}
			\mathcal C\xrightarrow{\;F\;}\ul\Fun_T(\ul A,\iota_*\mathcal D)\mathrel{\smash{\xrightarrow[\raise12pt\hbox{$\scriptstyle\sim$}]{\,\;\Phi\,\;}}}\iota_*\ul\Fun_S(\iota^*\ul A,\mathcal D).
		\end{equation*}
		The equivalence $\Phi$ can accordingly be viewed as an `internal' version of the adjunction equivalence for $\iota^*\colon\Cat_T\rightleftarrows\Cat_S\noloc\iota_*$.
	\end{construction}

	\begin{corollary}\label{cor:iota**-cc-internal}
		Let $\Cc$ be an $S$-cocomplete $T$-category and $\Dd$ an $S$-cocomplete $S$-category. Then the previous construction restricts to an equivalence
		\begin{equation*}
			\ul\Fun_T^{S\textup{-cc}}(\Cc,\iota_*\Dd)\iso\iota_*\ul\Fun_S^{S\textup{-cc}}(\iota^*\Cc,\Dd).
		\end{equation*}
		\begin{proof}
			It only remains to show that $F\colon\Cc\to\ul\Fun_T(\ul X,\iota_*\Dd)$ is $S$-cocontinuous if and only if $\Phi(F)$ is so. However, by the above explicit description of $\Phi(F)$, this is precisely the statement of Corollary~\ref{cor:S-cocomplete-adjunction}.
		\end{proof}
	\end{corollary}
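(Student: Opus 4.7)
The plan is to leverage the natural equivalence $\Phi\colon \ul\Fun_T(\Cc, \iota_*\Dd) \simeq \iota_*\ul\Fun_S(\iota^*\Cc, \Dd)$ already produced in Construction~\ref{constr:no-pun-intended}. Since the two categories appearing in the corollary are by definition full $T$-subcategories of the respective sides of $\Phi$, all that remains is to verify pointwise at each $A\in T$ that a $T$-functor $F\colon \Cc \to \ul\Fun_T(\ul A, \iota_*\Dd)$ is $S$-cocontinuous if and only if its image $\Phi(F)\colon \iota^*\Cc \to \ul\Fun_S(\iota^*\ul A, \Dd)$ is $S$-cocontinuous.

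I expect this to follow essentially directly from Corollary~\ref{cor:S-cocomplete-adjunction}. Indeed, the explicit formula at the end of Construction~\ref{constr:no-pun-intended} exhibits $\Phi(F)$ as the adjunct of $F$ under the external adjunction bijection
\[
	\Fun_T(\Cc, \iota_*\mathcal E) \simeq \Fun_S(\iota^*\Cc, \mathcal E)
\]
induced by $\iota^*\dashv\iota_*$, applied with $\mathcal E = \ul\Fun_S(\iota^*\ul A, \Dd)$. Corollary~\ref{cor:S-cocomplete-adjunction} states precisely that this adjunction restricts to one between $\Cat_T^{S\textup{-cc}}$ and $\Cat_S^{S\textup{-cc}}$, so the bijection restricts to an equivalence of the subsets of $S$-cocontinuous functors on either side, which is exactly the desired matching.

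The only preliminary check is that the relevant categories are $S$-cocomplete so that Corollary~\ref{cor:S-cocomplete-adjunction} genuinely applies: the $S$-category $\mathcal E$ is $S$-cocomplete by Lemma~\ref{lemma:fun-S-cocomplete}, and hence both $\iota^*\Cc$ and $\iota_*\mathcal E\simeq\ul\Fun_T(\ul A,\iota_*\Dd)$ are $S$-cocomplete by Corollary~\ref{cor:S-cocomplete-adjunction} itself. I do not anticipate any real obstacle here; all the content is already packaged in Corollary~\ref{cor:S-cocomplete-adjunction} (and ultimately in Theorem~\ref{thm:iota*-cc}), and the present corollary is simply its internal-hom reformulation.
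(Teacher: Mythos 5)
Your argument is correct and is essentially the paper's own proof: both identify $\Phi(F)$ via Construction~\ref{constr:no-pun-intended} as the adjunct (under $\iota^*\dashv\iota_*$) of $F$ composed with the equivalence $\ul\Fun_T(\ul A,\iota_*\Dd)\simeq\iota_*\ul\Fun_S(\iota^*\ul A,\Dd)$, and then invoke Corollary~\ref{cor:S-cocomplete-adjunction} to see that taking adjuncts preserves and detects $S$-cocontinuity. Your additional check that the relevant functor categories are $S$-cocomplete (via Lemma~\ref{lemma:fun-S-cocomplete}) is a harmless and correct bit of extra care that the paper leaves implicit.
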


	\begin{remark}
		One can deduce from the previous corollary that the functor $\iota^*\colon\Cat_T^{S\textup{-cc}}\to \Cat_S^{S\textup{-cc}}$ from \Cref{cor:S-cocomplete-adjunction} is symmetric monoidal with respect to the symmetric monoidal structures defined in \cite[Section~8.2]{martiniwolf2022presentable}, applied to $\bbU_S$. It follows in particular that the subcategory $\Pr^S_T \subset \Cat_T^{S\textup{-cc}}$ is closed under tensor products, being the preimage along $\iota^*$ of the symmetric monoidal subcategory $\PrL_S \subset \Cat_S^{S\textup{-cc}}$. Since we will not make use of these symmetric monoidal structures in this paper, we will leave the details to the interested reader.
	\end{remark}

	\subsection{$\bm S$-cocompletion}
	\label{subsec:S-cocompletion}
	As an application of the above theory we can now reinterpret and extend work of Martini and Wolf on parametrized cocompletions:

	\begin{theorem}\label{thm:mixed-presheaves}
		Let $I$ be any small $T$-category. Then the unique $S$-cocontinuous \hbox{$S$-functor} $\iota_!\colon\ul\PSh_S(\iota^*I)=\ul\Fun_S(\iota^*I^\op,\ul\Spc_S)\to\iota^*\ul\Fun_T(I,\ul\Spc_T)=\iota^*\ul\PSh_T(I)$ compatible with the Yoneda embeddings is fully faithful. Moreover, its essential image is actually a $T$-subcategory, and this is the $T$-subcategory generated under $S$-colimits by the Yoneda image.
		\begin{proof}
			Write $\mathcal C$ for the full $T$-subcategory of $\ul\PSh_T(I)$ generated under $S$-colimits by the Yoneda image. Then \cite[Theorem~7.1.13]{martiniwolf2021limits} (for $\mathsf{U}$ the union of $\bbU_S$ and the constant $T$-categories) shows that restriction along the Yoneda embedding $y$ defines an equivalence
			\begin{equation*}
				\maps_{\Cat_T^{S\text{-cc}}}(\mathcal C,\mathcal D)\iso\maps_{\Cat_T}(I,\mathcal D)
			\end{equation*}
			for any $S$-cocomplete $\mathcal D$. Specializing to $\mathcal D=\iota_*\mathcal E$ for $\mathcal E\in\Cat_S^{S\text{-cc}}$ and appealing to Corollary~\ref{cor:S-cocomplete-adjunction} we see that restricting along $\iota^*(y)$ defines an equivalence
			\begin{equation*}
				\maps_{\Cat_S^{S\text{-cc}}}(\iota^*\mathcal C,\mathcal E)\iso\maps_{\Cat_S}(\iota^*I,\mathcal E).
			\end{equation*}
			However, the Yoneda embedding $\iota^* I\to\ul\PSh_S(\iota^*I)$ has the same property by \cite[Theorem~7.1.1]{martiniwolf2021limits}, so comparing corepresented functors shows that $\iota_!$ defines an equivalence $\ul\PSh_{S}(\iota^*I)\simeq\mathcal C$.
		\end{proof}
	\end{theorem}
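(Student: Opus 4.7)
My plan is to identify $\iota^*\Cc$ with $\ul\PSh_S(\iota^*I)$ abstractly via universal properties and then read off the desired full faithfulness and image description. Let $\Cc\subset\ul\PSh_T(I)$ denote the full $T$-subcategory generated under $S$-colimits (that is, under both fiberwise colimits and $\bbU_S$-colimits) by the Yoneda image of $I$. Both candidates should turn out to be free $S$-cocompletions, but of different flavors: $\ul\PSh_S(\iota^*I)$ is the free $S$-cocompletion of $\iota^*I$ as an $S$-category, while $\Cc$ should be characterized as a universal $S$-cocomplete $T$-category receiving a $T$-functor from $I$.

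The first step is to invoke the Martini--Wolf subcategory form of the parametrized Yoneda theorem (their Theorem~7.1.11, applied with the parameter class given by the union of $\bbU_S$ with the constant $T$-categories, which is exactly what encodes $S$-colimits by definition). This identifies $\Cc$ as the free $S$-cocompletion of $I$ in $\Cat_T^{S\textup{-cc}}$: for every $S$-cocomplete $T$-category $\Dd$, restriction along the Yoneda embedding $y\colon I\to\Cc$ gives an equivalence
\begin{equation*}
\maps_{\Cat_T^{S\textup{-cc}}}(\Cc,\Dd)\iso\maps_{\Cat_T}(I,\Dd).
\end{equation*}
Specializing to $\Dd=\iota_*\Ee$ for an arbitrary $S$-cocomplete $S$-category $\Ee$ and feeding in the adjunction $\iota^*\dashv\iota_*$ restricted to $S$-cocomplete categories from Corollary~\ref{cor:S-cocomplete-adjunction}, I obtain a natural equivalence
\begin{equation*}
\maps_{\Cat_S^{S\textup{-cc}}}(\iota^*\Cc,\Ee)\simeq\maps_{\Cat_T^{S\textup{-cc}}}(\Cc,\iota_*\Ee)\simeq\maps_{\Cat_T}(I,\iota_*\Ee)\simeq\maps_{\Cat_S}(\iota^*I,\Ee).
\end{equation*}
By the original Martini--Wolf parametrized Yoneda theorem (their Theorem~7.1.1) the same universal property is enjoyed by $\ul\PSh_S(\iota^*I)$ with respect to $\iota^*I$, so comparing corepresented functors on $\Cat_S^{S\textup{-cc}}$ yields a canonical equivalence $\iota^*\Cc\simeq\ul\PSh_S(\iota^*I)$ which is compatible with the two Yoneda embeddings (this compatibility is forced by naturality of the chain of equivalences above).

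Translating this equivalence back, the inclusion $\Cc\hookrightarrow\ul\PSh_T(I)$ has underlying $S$-functor the unique $S$-cocontinuous extension of $\iota^*I\to\iota^*\ul\PSh_T(I)$, which is precisely the map named $\iota_!$ in the statement. Thus $\iota_!$ factors as the equivalence $\ul\PSh_S(\iota^*I)\iso\iota^*\Cc$ followed by a fully faithful inclusion, proving that $\iota_!$ is fully faithful with essential image $\iota^*\Cc$; moreover, since $\Cc$ is by construction a $T$-subcategory generated under $S$-colimits by the Yoneda image, so is the essential image of $\iota_!$, which gives the remaining claim.

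The main obstacle I expect is locating and correctly applying the appropriate variant of Martini--Wolf's universal property of presheaves, specifically the subcategory/relative version that identifies the subcategory generated under a prescribed class of colimits with the free cocompletion in the matching sense. Once the class is correctly identified (fiberwise colimits together with $\bbU_S$-colimits, which jointly constitute $S$-colimits in the sense of this paper), the remainder is a formal Yoneda argument powered by Corollary~\ref{cor:S-cocomplete-adjunction}.
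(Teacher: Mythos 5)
Your proposal is correct and follows essentially the same route as the paper's proof: defining $\Cc$ as the $T$-subcategory generated under $S$-colimits by the Yoneda image, applying \cite[Theorem~7.1.11]{martiniwolf2021limits} with the same choice of parameter class, transposing through the adjunction of Corollary~\ref{cor:S-cocomplete-adjunction}, and comparing corepresented functors against the universal property of $\ul\PSh_S(\iota^*I)$ from \cite[Theorem~7.1.1]{martiniwolf2021limits}. The only difference is cosmetic: you spell out slightly more explicitly why the resulting equivalence is compatible with the Yoneda embeddings and how full faithfulness of $\iota_!$ follows.
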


	\begin{construction}
		We let $\iota_!\colon\ul\PSh_{S\triangleright T}(I)\to\ul\PSh_T(I)$
		denote the unique extension of $\iota_!\colon\ul\PSh_S(\iota^*I)\to\iota^*\ul\PSh_T(I)$ to a $T$-functor obtained from \Cref{thm:mixed-presheaves}. Note that for $I=1$ the terminal presheaf, this recovers the functor $\iota_!\colon\ul\Spc_{S\triangleright T}\to\ul\Spc_T$ from Lemma~\ref{lemma:iota!-ext}.

		By full faithfulness of $\iota_!$, there is then a unique lift of the $S$-parametrized Yoneda embedding $\iota^*I\to\ul\PSh_S(I)$ to a $T$-functor $y\colon I\to\ul\PSh_{S\triangleright T}(I)$ together with an equivalence between $\iota_!y$ and the $T$-parametrized Yoneda embedding $I\to\ul\PSh_T(I)$.
	\end{construction}

	\begin{corollary}
		In the above situation, $\ul\PSh_{S\triangleright T}(I)$ is $S$-presentable. For any $S$-cocomplete $T$-category $\mathcal D$, restriction along $y$ defines an equivalence
		\begin{equation*}
			\ul\Fun_T^{S\textup{-cc}}(\ul\PSh_{S\triangleright T}(I),\mathcal D)\iso\ul\Fun_T(I,\mathcal D).
		\end{equation*}
		\begin{proof}
			For $S$-presentability, we observe that $\ul\PSh_{S\triangleright T}(I)$ is $S$-cocomplete as it is equivalent to a subcategory of $\ul\Spc_T$ closed under $S$-colimits, and that for any $A\in T$ the category $\ul\PSh_{S\triangleright T}(I)(A)=\ul{\PSh}_S(\iota^*I)(A)$ is presentable by \Cref{ex:Functor_Cat_Presentable}.

			The universal property is an instance of \cite{martiniwolf2021limits}*{Theorem~7.1.13} as before.
		\end{proof}
	\end{corollary}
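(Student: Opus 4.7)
The statement has two parts — $S$-presentability of $\ul\PSh_{S\triangleright T}(I)$ and the universal property — which I treat in turn.

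For $S$-presentability, I first verify $S$-cocompleteness. By construction $\ul\PSh_{S\triangleright T}(I)$ is realized as the full $T$-subcategory of $\ul\PSh_T(I) = \ulFun_T(I^\op,\ul\Spc_T)$ generated under $S$-colimits by the Yoneda image; since $\ul\PSh_T(I)$ is itself $T$-cocomplete (being a $T$-functor category into the $T$-cocomplete $\ul\Spc_T$, cf.~Example~\ref{ex:T-spc-cc}), the subcategory inherits $S$-cocompleteness. Fiberwise presentability at $A \in T$ is then established by identifying $\ul\PSh_{S\triangleright T}(I)(A)$ as a presentable category. For $A \in S$ this follows directly from Theorem~\ref{thm:mixed-presheaves}, which gives $\iota^*\ul\PSh_{S\triangleright T}(I) \simeq \ul\PSh_S(\iota^*I)$; for a general $A \in T$ one extends via $\iota^*\ul A \in \PSh(S)$ together with Convention~\ref{conv:limit-ext} applied to the underlying $S$-category, so that the value at $A$ is again a slice of an $S$-presheaf category.

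For the universal property, the key input is \cite{martiniwolf2021limits}*{Theorem~7.1.11}, invoked as in the proof of Theorem~\ref{thm:mixed-presheaves} with $\mathsf{U}$ taken to be the union of $\bbU_S$ with the constant $T$-categories. This provides, for any $S$-cocomplete $\mathcal D$, a space-level equivalence
\[
\maps_{\Cat_T^{S\textup{-cc}}}(\ul\PSh_{S\triangleright T}(I),\mathcal D) \iso \maps_{\Cat_T}(I,\mathcal D)
\]
given by restriction along $y$. To promote this to the asserted $T$-parametrized equivalence of internal functor categories, I replace $\mathcal D$ by $\ulFun_T(\ul X,\mathcal D)$ for each $X \in \PSh(T)$ — which remains $S$-cocomplete by Lemma~\ref{lemma:fun-S-cocomplete} — and apply the same theorem; naturality in $X$ assembles the pointwise equivalences into the desired equivalence of $T$-categories.

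The main obstacle is the explicit identification of the fiberwise values of $\ul\PSh_{S\triangleright T}(I)$ at objects $A \in T \setminus S$, since the construction only directly pins down the underlying $S$-category via Theorem~\ref{thm:mixed-presheaves}. Once this identification is in hand, presentability is automatic, and the universal-property half becomes essentially formal, resting entirely on Theorem~\ref{thm:mixed-presheaves} and Martini--Wolf's free-cocompletion theorem.
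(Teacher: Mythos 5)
Your proof follows the same route as the paper: $S$-cocompleteness is inherited from the ambient $T$-cocomplete presheaf category since the subcategory is closed under $S$-colimits, and the universal property is Martini--Wolf's Theorem~7.1.11 applied to $\mathsf U=\bbU_S\cup\{\text{constants}\}$, internalized by replacing $\Dd$ with $\ul\Fun_T(\ul X,\Dd)$. However, the ``main obstacle'' you identify is not one, and your proposed workaround for it is wrong as stated. A cleft is by definition a \emph{wide} subcategory (Definition~\ref{defi:cleft}), so $S$ and $T$ have exactly the same objects: there are no objects $A\in T\setminus S$, and since $\iota^*\Cc$ is just $\Cc\circ\iota^\op$, Theorem~\ref{thm:mixed-presheaves} already pins down \emph{every} fiber via $\ul\PSh_{S\triangleright T}(I)(A)=(\iota^*\ul\PSh_{S\triangleright T}(I))(A)\simeq\ul\PSh_S(\iota^*I)(A)$, which is presentable. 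Your detour through $\iota^*\ul A\in\PSh(S)$ together with the limit extension would, taken literally, compute the value at the (generally non-representable) presheaf $\iota^*\ul A$ rather than at the object $A$ --- these differ, since $\ul A$ restricted to $S$ remembers all $T$-morphisms into $A$. Fortunately none of this is needed, and with that step replaced by the one-line observation above, your argument is the paper's.
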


	\begin{corollary}\label{cor:spc-s-t-univ-property}
		The $T$-category $\ul\Spc_{S\triangleright T}$ is $S$-presentable. For any $S$-cocomplete $T$-category $\mathcal D$, evaluation at the terminal object defines an equivalence
		\begin{equation*}
			\ul\Fun_T^{S\textup{-cc}}(\ul\Spc_{S\triangleright T},\mathcal D)\iso\mathcal D.\qednow
		\end{equation*}
	\end{corollary}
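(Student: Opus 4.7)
The plan is to deduce this corollary as the special case $I = \const_{[0]} = 1$ (the terminal $T$-category) of the preceding corollary about presheaf $T$-categories.

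First I would verify the identification $\ul\PSh_{S\triangleright T}(1) \simeq \ul\Spc_{S\triangleright T}$. Since $\iota^* 1 = 1$ in $\Cat_S$, we have $\ul\PSh_S(\iota^* 1) = \ul\Fun_S(1^\op, \ul\Spc_S) \simeq \ul\Spc_S$, and similarly $\ul\PSh_T(1) \simeq \ul\Spc_T$. Thus $\iota_!\colon \ul\PSh_S(\iota^* 1) \to \iota^*\ul\PSh_T(1)$ reduces to $\iota_!\colon \ul\Spc_S \to \iota^*\ul\Spc_T$ from Lemma~\ref{lemma:iota!-ext}, and its $T$-extension $\ul\PSh_{S\triangleright T}(1) \to \ul\PSh_T(1)$ is precisely $\iota_!\colon \ul\Spc_{S\triangleright T} \to \ul\Spc_T$ (the uniqueness clauses for both extensions force them to agree). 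This is, in fact, already noted in the Construction preceding the previous corollary.

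Next I would identify the relevant functor categories. Clearly $\ul\Fun_T(1, \mathcal D) \simeq \mathcal D$ for any $T$-category $\mathcal D$ by the $T$-categorical Yoneda lemma applied to $1 \simeq \ul{(\text{final object of } \PSh(T))}$. Under this identification, restriction along the Yoneda embedding $y\colon 1 \to \ul\PSh_{S\triangleright T}(1) \simeq \ul\Spc_{S\triangleright T}$ corresponds to evaluation at $y(*)$, the image of the terminal object of $1$. Since $\iota_!\circ y$ is the Yoneda embedding $1 \to \ul\Spc_T$, which sends $*$ to the terminal presheaf, full faithfulness of $\iota_!$ (\Cref{lemma:iota!-ext}) shows that $y(*) \in \ul\Spc_{S\triangleright T}$ is itself the terminal object.

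Combining these observations with the previous corollary applied to $I = 1$ gives the $S$-presentability of $\ul\Spc_{S\triangleright T}$ and the stated universal property. There is no real obstacle here; the statement is a direct specialization, and the only minor verification is the identification of the Yoneda embedding with the terminal-object inclusion, which is immediate from the construction.
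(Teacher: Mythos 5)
Your proposal is correct and matches the paper's intent exactly: the corollary is stated with an immediate \qedsymbol\ precisely because it is the specialization $I=1$ of the preceding corollary, using $\ul\PSh_{S\triangleright T}(1)\simeq\ul\Spc_{S\triangleright T}$ (noted in the construction) and $\ul\Fun_T(1,\Dd)\simeq\Dd$. Your additional check that the Yoneda embedding $1\to\ul\Spc_{S\triangleright T}$ picks out the terminal object is the right (and only) verification needed.
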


	In the situation of the previous corollary we actually have further right adjoints:

	\begin{proposition}\label{prop:right-adjoints-adjunct}
		\begin{enumerate}
			\item The $S$-functor $\iota^*\colon\iota^*\ul\Spc_T\to\ul\Spc_S$ right adjoint to $\iota_!$ admits an $S$-right adjoint $\iota_*$.
			\item The adjunct $\tilde\iota^*\colon\ul\Spc_T\to\iota_*\ul\Spc_S$ of $\iota^*$ admits a $T$-right adjoint $\tilde\iota_*$.
		\end{enumerate}
		\begin{proof}
			We will prove the second statement. Corollary~\ref{cor:S-cocomplete-adjunction} then implies that $\iota^*$ is $S$-cocontinuous, so that the first statement is an instance of the Special Adjoint Functor Theorem.

			Recalling the definition (Construction~\ref{constr:iota^*}), $\iota^*$ is given by the composite
			\begin{equation*}
				\PSh(T)_{/\iota_!(\bullet)}\xrightarrow{\,\iota^*\,}\PSh(S)_{/\iota^*\iota_!(\bullet)}\xrightarrow{\eta^*}\PSh(S)_{/\bullet},
			\end{equation*}
			i.e.~it is adjunct to $(\iota^*)_{/\bullet}\colon\PSh(T)_{/\bullet}\to\PSh(S)_{/\iota^*(\bullet)}$. The latter obviously preserves $T$-colimits and has a pointwise right adjoint given by the composites
			\begin{equation*}
				\PSh(S)_{/\iota^*(\bullet)}\xrightarrow{\iota_*}\PSh(T)_{/\iota_*\iota^*(\bullet)}\xrightarrow{\eta^*}\PSh(T)_{/\bullet}.\qedhere
			\end{equation*}
		\end{proof}
	\end{proposition}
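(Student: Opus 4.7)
The plan is to prove (2) directly via the pointwise criterion for $T$-adjoints (\Cref{prop:adjoints-BC}), then obtain (1) as a formal consequence. The core technical work happens in (2); statement (1) is essentially a repackaging once (2) is in hand.

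First I would unwind \Cref{constr:no-pun-intended} to identify the $T$-functor $\tilde\iota^*\colon\ul\Spc_T\to\iota_*\ul\Spc_S$ pointwise. Since $\iota_*\ul\Spc_S$ at $A\in T$ is $\PSh(S)_{/\iota^*A}$ (by the definition of $\iota_*$ as precomposition with $\iota^*\colon\PSh(T)\to\PSh(S)$), and $\tilde\iota^*$ is the adjunct of the $S$-functor $\iota^*$ built in \Cref{constr:iota^*}, one checks that $\tilde\iota^*_A$ is precisely the slice functor $(\iota^*)_{/A}\colon\PSh(T)_{/A}\to\PSh(S)_{/\iota^*A}$. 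Because $\iota^*\colon\PSh(T)\to\PSh(S)$ is itself a left adjoint (to $\iota_*$), each $(\iota^*)_{/A}$ is a left adjoint; its pointwise right adjoint sends $(g\colon Y\to\iota^*A)$ to the pullback of $\iota_*g\colon\iota_*Y\to\iota_*\iota^*A$ along the unit $\eta_A\colon A\to\iota_*\iota^*A$.

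Next I would apply \Cref{prop:adjoints-BC} to promote these pointwise right adjoints to a $T$-right adjoint $\tilde\iota_*$. Pointwise existence is established above, so the remaining input is the Beck--Chevalley condition: for each $f\colon A\to A'$ in $T$, the naturality square of right adjoints must commute. Taking mates of the (already commutative) naturality squares for the left adjoints $\tilde\iota^*$ reduces this to the statement that $\iota_*\colon\PSh(S)\to\PSh(T)$ preserves pullbacks, which holds since $\iota_*$ is itself a right adjoint. This completes (2).

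For (1), the $S$-functor $\iota^*\colon\iota^*\ul\Spc_T\to\ul\Spc_S$ factors by construction as $\iota^*\tilde\iota^*$ followed by the counit $\epsilon\colon\iota^*\iota_*\ul\Spc_S\to\ul\Spc_S$. \Cref{cor:S-cocomplete-adjunction} guarantees that both factors are $S$-cocontinuous: $\iota^*\tilde\iota^*$ is $\iota^*$ applied to the $T$-left adjoint $\tilde\iota^*$ produced in (2), while $\epsilon$ is the counit of the adjunction $\iota^*\dashv\iota_*$ between $\Cat_T$ and $\Cat_S$, which restricts to $S$-cocontinuous functors. Hence $\iota^*$ is an $S$-cocontinuous $S$-functor between $S$-presentable categories, and the $S$-parametrized special adjoint functor theorem (the $S$-analogue of \Cref{ex:la-are-cc}) supplies the desired $S$-right adjoint $\iota_*$. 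The hard step will be the Beck--Chevalley check in step two: although morally it just says ``$\iota_*$ preserves pullbacks'', writing down the two pullbacks comprising either side of the mate square and matching them requires tracking several layers of units, counits, and slice base-changes in $\PSh(T)$ and $\PSh(S)$.
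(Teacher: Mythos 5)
Your proposal is correct and follows essentially the same route as the paper: both identify $\tilde\iota^*$ pointwise with the slice functors $(\iota^*)_{/A}\colon\PSh(T)_{/A}\to\PSh(S)_{/\iota^*A}$, exhibit the pointwise right adjoint as $\eta_A^*\circ(\iota_*)_{/\iota^*A}$, and deduce (1) from (2) via $S$-cocontinuity of the adjunct together with the parametrized Special Adjoint Functor Theorem. The only cosmetic difference is that you verify the Beck--Chevalley condition on the right-adjoint side of Proposition~\ref{prop:adjoints-BC} (reducing it to $\iota_*$ preserving pullbacks), whereas the paper checks the mate condition that $(\iota^*)_{/\bullet}$ is $T$-cocontinuous and invokes the criterion of Example~\ref{ex:la-are-cc}; these are total mates of one another.
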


	\begin{remark}
		We close this discussion by giving a different interpretation of the functor $\iota^*\colon\iota^*\ul\Spc_T\to\ul\Spc_S$. For this we recall once more the equivalences $\ul\Spc_T\simeq\PSh(T_{/\bullet})$, $\ul\Spc_S\simeq\PSh(S_{/\bullet})$ from Example~\ref{ex:spc-T}; we claim that under these equivalences our functor $\iota^*$ is given by the restriction $F\colon\iota^*\PSh(T_{/\bullet})\to\PSh(S_{/\bullet})$ along the $S$-natural transformation $\iota^*{T_{/\bullet}}\to S_{/\bullet}$.

		While this can be carefully proven by hand, we will instead resort to a sequence of cheap tricks that avoids ever talking about coherences. Namely, by the universal property of $S$-spaces it suffices to show that $F$ admits a left adjoint and that this preserves terminal objects. Indeed, $F$ admits a pointwise left adjoint given by the left Kan extension functors $(\iota_{/A})_!\colon\PSh(S_{/A})\to\PSh(T_{/A})$, and each of these preserves terminal objects (as they are simply represented by the respective identity maps). It remains to show that for every $g\colon A\to B$ in $S$ the Beck--Chevalley transformation $(\iota_{/A})_!g^*\Rightarrow g^*(\iota_{/B})_!$ is an equivalence. By full faithfulness of $(\iota_{/A})_!$ and $(\iota_{/B})_!$, this is equivalent to demanding that $g^*$ preserves the essential images, for which it is turn enough to show that there is \emph{some} equivalence $(\iota_{/A})_!g^*\simeq g^*(\iota_{/B})_!$. This however follows simply from the equivalences $(\ref{diag:slice-in-out})$ and the fact that $\iota^*$ has a left adjoint.

		Note that this argument more generally shows that $\iota^*$ corresponds under \emph{any pair of equivalences} to the above restriction functor $\iota^*\PSh(T_{/\bullet})\to\PSh(S_{/\bullet})$.
	\end{remark}

	\section{The universal property of equivariant spaces}\label{sec:spaces}
	Building on the above, we will establish a universal property of equivariant unstable homotopy theory in this section. We begin by introducing the object of study:

	\begin{construction}\label{constr:G-SSet}
		Recall the global indexing category $\Glo$ from Example~\ref{ex:globalCategory}, which comes with an embedding $B\colon\Glo\hookrightarrow\Grpd$ into the $(2,1)$-category of $1$-groupoids, identifying it with the connected $1$-groupoids. We write $\cat{SSet}$ for the $1$-category of simplicial sets, and we define a strict $2$-functor $\cat{$\bm\bullet$-SSet}\colon\Glo^\op\to\Cat_{1}$ into the $(2,1)$-category of $1$-categories as the composite
		\begin{equation*}
			\Glo^\op\xhookrightarrow{\;B\;}\Grpd^\op\xrightarrow{\Fun(\blank,\cat{SSet})} \Cat_{1}.
		\end{equation*}
		This lifts to a functor into the $(2,1)$-category $\RelCat$ of relative categories, homotopical functors, and natural isomorphisms by equipping $\cat{$\bm G$-\hskip0pt minus 1ptSSet}\hskip0pt minus 1pt\coloneqq\hskip0pt minus 1pt\Fun(BG,\hskip0pt minus .5pt\cat{SSet})$ with the \emph{$G$-equivariant weak equivalences}, i.e.~the class of those maps $f$ such that $f^H$ is a weak equivalence for every subgroup $H\subset G$, or equivalently such that the geometric realization $|f|$ is a $G$-equivariant homotopy equivalence.

		Postcomposing with the localization functor $\RelCat\to\Cat$, we obtain a global category $\ul\S\colon\Glo^\op\to\Cat$. We call $\ul\S$ the \emph{global category of equivariant spaces}.
	\end{construction}

	Note that $\ul\S(G)\eqqcolon\mathscr S_G$ is the usual category of $G$-spaces, and for any $\alpha\colon G\to G'$ the structure map $\alpha^*\colon\S_{G'}\to\S_G$ is the usual restriction functor.

	\begin{notation}
		Recall from Example~\ref{ex:Orb} that the wide subcategory $\Orb\subset \Glo$ of injective group homomorphisms is an example of a cleft category, giving rise to notions of $\Orb$-cocompleteness and $\Orb$-presentability. To emphasize the connections to equivariant homotopy theory obtained in this article we will refer to these as \textit{equivariant cocompleteness} and \textit{equivariant presentability}.

		Similarly an $\Orb$-cocontinuous functor $F\colon \Cc\rightarrow \Dd$ between equivariantly cocomplete global categories will be called \textit{equivariantly cocontinuous}, and we will write $\ulFun^{\textup{eq-cc}}_{\Glo}(\Cc,\Dd)$ for the global category $\ulFun^{\Orb\textup{-cc}}_{\Glo}(\Cc,\Dd)$ of equivariantly cocontinuous functor.
	\end{notation}

	We can now state the main result of this section:

	\begin{theorem}\label{thm:orbi-S-univ-prop}
		The global category $\ul\S$ is equivariantly presentable. Moreover, it is the free equivariantly cocomplete global category in the following sense: for any equivariantly cocomplete global category $\mathcal D$, evaluating at the $1$-point space provides an equivalence
		\begin{equation*}
			\ul\Fun^{\textup{eq-cc}}_{\Glo}(\ul\S,\mathcal D)\iso\mathcal D.
		\end{equation*}
	\end{theorem}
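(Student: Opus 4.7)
The plan is to identify $\ul\S$ with the global category $\ul\Spc_{\Orb\triangleright\Glo}$ in such a way that $*\in\ul\S(1)$ corresponds to the terminal object, after which both claims of the theorem follow immediately from Corollary~\ref{cor:spc-s-t-univ-property}.

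First I will check that $\ul\S$ is equivariantly presentable by verifying the criteria in Lemma~\ref{lem:CharacterizationMCocompleteness}. Fiberwise presentability is classical: each $\S_G$ is a presentable $\infty$-category and the restriction functors are colimit-preserving. It then remains to produce a left adjoint $\alpha_!\colon\S_H\to\S_G$ for every injection $\alpha\colon H\hookrightarrow G$ and verify the Beck--Chevalley condition for pullback squares in $\PSh(\Glo)$. The left adjoint is the usual induction $X\mapsto G\times_H X$; the Beck--Chevalley property is a parametrized version of the double coset formula, which can be reduced via universality of colimits in $\PSh(\Glo)$ to the case of pullbacks of representables, and then verified by direct computation with orbits.

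Second, applying the universal property of $\ul\Spc_{\Orb\triangleright\Glo}$ produces a canonical equivariantly cocontinuous global functor
\[
F\colon\ul\Spc_{\Orb\triangleright\Glo}\to\ul\S
\]
sending the terminal object to $*\in\S_1$. I would then show $F$ is an equivalence by verifying it is so fiberwise. For each $G\in\Glo$, both sides are equivalent to $\PSh(\Orb_G)$: the left by Lemma~\ref{lemma:iota!-ff} combined with Example~\ref{ex:spc-T}, the right by Elmendorf's theorem. Under these identifications $F_G$ is colimit-preserving and sends each abstract orbit $i_!(*)$ (for an injection $i\colon H\hookrightarrow G$) to the actual $G$-orbit $G/H\in\S_G$ by equivariant cocontinuity. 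Since both presheaf categories are the free cocompletions of $\Orb_G$ and $F_G$ matches the generators, it must be the Elmendorf equivalence; a fiberwise equivalence of global functors is automatically an equivalence of global categories, so $F$ is an equivalence as desired.

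The main obstacle is the Beck--Chevalley condition in the first step: while conceptually this is the classical double coset formula, the version required is stated in terms of pullbacks in $\PSh(\Glo)$ rather than in $\Glo$ itself, so care is needed to verify the axioms of Lemma~\ref{lem:CharacterizationMCocompleteness} in the required generality. As an alternative, one may pass through the global category $\ul\Spc^{\textup{gl}}$ of \cite{CLL_Global}, as suggested in the introduction: embed $\ul\S$ fully faithfully into $\ul\Spc^{\textup{gl}}$ using the model-categorical input of \cite{g-global}, identify this embedding with the fully faithful $\iota_!\colon\ul\Spc_{\Orb\triangleright\Glo}\hookrightarrow\ul\Spc_\Glo\simeq\ul\Spc^{\textup{gl}}$ by matching essential images with the admissible presheaves of Definition~\ref{defi:admissible}, and then read off both the equivariant cocompleteness and the equivalence with $\ul\Spc_{\Orb\triangleright\Glo}$ in one stroke. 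This second route has the virtue of packaging everything into a single model-categorical comparison, at the cost of requiring the explicit fracture-subcategory analysis of Theorem~\ref{thm:fracture}.
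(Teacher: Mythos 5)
Your ``alternative'' route is in fact the paper's proof: Lemma~\ref{lemma:triv-S-Sgl} produces the fully faithful $\const\colon\ul\S\to\ul\S^{\textup{gl}}$ with an $\Orb$-right adjoint from the model-categorical input of \cite{g-global}; equivariant cocompleteness of $\ul\S$ is then read off from the fact that the essential image of an $\Orb$-left adjoint is closed under equivariant colimits inside the globally cocomplete $\ul\S^{\textup{gl}}$, and the canonical functor $F\colon\ul\Spc_{\Orb\triangleright\Glo}\to\ul\S$ is shown to be fully faithful by fitting it into a commutative square against $\iota_!\colon\ul\Spc_{\Orb\triangleright\Glo}\hookrightarrow\ul\Spc_{\Glo}\simeq\ul\S^{\textup{gl}}$ and $\const$, after which essential surjectivity follows from Elmendorf's theorem and the fact that the essential image of a fully faithful left adjoint is closed under colimits.

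Your primary route has a genuine gap in its final step. Knowing that $F_G\colon\PSh(\Orb_G)\to\S_G\simeq\PSh(\Orb_G)$ is colimit-preserving and sends each generator $i_!(*)$ to an object equivalent to $G/H$ does not force $F_G$ to be an equivalence: a cocontinuous functor out of $\PSh(\Orb_G)$ is determined by its restriction to the Yoneda image as a \emph{functor} $\Orb_G\to\S_G$, and matching the generators objectwise says nothing about what happens on morphisms and higher coherences (composing the Yoneda embedding with a nontrivial automorphism of $\Orb_G$ would also ``match the generators''). Nor can you argue that the essential image of $F_G$ is closed under colimits without first knowing $F_G$ is fully faithful. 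Closing this gap requires identifying the composite $\Orb_G\to\ul\Spc_{\Orb\triangleright\Glo}(G)\xrightarrow{F_G}\S_G$ with the standard orbit functor \emph{as a functor}, which is precisely the coherence problem the paper's detour through $\ul\S^{\textup{gl}}$ is designed to avoid. Separately, your Beck--Chevalley verification in the first step (the double coset formula at the level of mates of $\infty$-categorical adjunctions) is only asserted; the remark following the theorem in the paper indicates this can indeed be carried out by checking that the point-set Beck--Chevalley maps are isomorphisms and that the relevant point-set functors are homotopical, but this is a nontrivial model-categorical computation that the paper deliberately does not perform, so it cannot simply be waved through.
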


	\begin{remark}
	While we will not prove this here, we remark that $\ul\S$ is in fact even globally presentable: this is a rather straightforward model categorical computation using that the left adjoints $\alpha_!\colon\cat{$\bm H$-SSet}\to\cat{$\bm G$-SSet}$ of the restrictions are again homotopical and that the Beck--Chevalley conditions hold on the pointset level by smooth and proper base change.

	However, this `extra presentability' should be considered as an anomaly for two reasons: firstly, it is something rather specific to $\Orb\subset\Glo$, and does not hold for general cleft categories $S\subset T$; secondly, it breaks down as soon as we pass to the semiadditive and stable world, cf.~Warning~\ref{warn:sp-no-fiberwise-limits}.
	\end{remark}

	In view of Corollary~\ref{cor:spc-s-t-univ-property}, the second half of the theorem can be reformulated as follows:

	\begin{theorem}\label{thm:orbi-S-equivalence}
		The essentially unique equivariantly cocontinuous global functor $\ul{\Spc}_{\Orb\triangleright\Glo}\to\ul\S$ preserving the terminal object is an equivalence.
	\end{theorem}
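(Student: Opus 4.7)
The plan is to check that the comparison global functor $F\colon \ul\Spc_{\Orb\triangleright\Glo}\to\ul\S$ from \Cref{cor:spc-s-t-univ-property} is a levelwise equivalence. This requires (a) verifying that $\ul\S$ is equivariantly cocomplete, so that $F$ exists and is the unique equivariantly cocontinuous functor sending the generator $*\in\ul\S(1)$ to itself, and (b) checking that $F_G$ is an equivalence for every $G\in\Glo$.

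For (a), I would verify the conditions of \Cref{lem:CharacterizationMCocompleteness}. Each $\S_G$ is presentable, and for any group homomorphism $\alpha\colon H\to G$ the restriction $\alpha^*\colon\S_G\to\S_H$ preserves colimits, as it admits both a left and a right adjoint in genuine equivariant homotopy theory. For injective $\alpha$, the left adjoint is the classical induction $\ind_H^G=G\times_H(\blank)$. The Beck--Chevalley condition for pullbacks in $\PSh(\Glo)$ along injective maps reduces, via the atomic orbital structure of $\Orb\subset\Glo$ established in \Cref{ex:atomic-orbital-cleft}, to verifying base change for pullbacks of representables; these decompose into disjoint sums of representables indexed by double cosets, at which point the Beck--Chevalley map becomes the classical Mackey double-coset formula.

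For (b), I would use that at each $G\in\Glo$,
\[
\ul\Spc_{\Orb\triangleright\Glo}(G)=\bbU_\Orb(BG)\simeq\PSh(\Orb)_{/BG}\simeq\PSh(\Orb_G),
\]
where the last equivalence uses the standard identification of $\Orb/BG$ with the orbit category $\Orb_G$ of $G$ (objects on either side correspond to subgroups of $G$ up to conjugacy, with matching morphisms). On the other hand, Elmendorf's theorem identifies $\S_G\simeq\PSh(\Orb_G)$. Under both identifications, the object corresponding to a subgroup $H\subseteq G$ is the orbit $G/H$. Since $F$ is equivariantly cocontinuous with $F_1(*)=*$, at level $G$ it sends $\ind_H^G*$ to $\ind_H^G*=G/H$. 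Hence $F_G\colon\PSh(\Orb_G)\to\PSh(\Orb_G)$ is a cocontinuous functor restricting to the identity on $\Orb_G\subset\PSh(\Orb_G)$, and is therefore an equivalence by the universal property of presheaves.

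The main technical obstacle is step (a): making the Beck--Chevalley verification precise requires carefully understanding pullbacks of representables in $\PSh(\Glo)$ along injective maps and matching these with classical Mackey decompositions in $\ul\S$. The introduction suggests the authors instead follow a model categorical route, realizing $\ul\S$ as a reflective subcategory of a suitable diagram of model categories and comparing its image in $\ul\Spc_\Glo$ with the image of the fully faithful inclusion $\iota_!\colon\ul\Spc_{\Orb\triangleright\Glo}\hookrightarrow\ul\Spc_\Glo$ from \Cref{lemma:iota!-ext}; this alternative relies on the model categorical input from \cite{g-global}.
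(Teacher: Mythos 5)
Your overall strategy differs from the paper's: the authors never verify equivariant cocompleteness of $\ul\S$ or full faithfulness of $F$ by direct computation. Instead they construct (Lemma~\ref{lemma:triv-S-Sgl}) a fully faithful global functor $\const\colon\ul\S\to\ul\S^\textup{gl}$ admitting an $\Orb$-right adjoint, import global presentability of $\ul\S^\textup{gl}$ from Theorem~\ref{thm:global-spaces-universal}, and deduce equivariant cocompleteness of $\ul\S$ from the closure of $\essim(\const)$ under equivariant colimits. Your step (a) --- a direct check of the conditions of Lemma~\ref{lem:CharacterizationMCocompleteness} via induction and the double-coset formula --- is a legitimate alternative; indeed the remark following Theorem~\ref{thm:orbi-S-univ-prop} asserts that exactly this pointset-level verification goes through and even yields global presentability. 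So (a) is a different but viable route, modulo the Mackey bookkeeping you explicitly defer.

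Step (b), however, has a genuine gap. From equivariant cocontinuity of $F$ and $F_1(*)\simeq *$ you correctly deduce $F_G(G/H)\simeq G/H$ for every $H\subset G$, but a cocontinuous endofunctor of $\PSh(\Orb_G)$ that merely sends each representable to an equivalent representable need not be an equivalence: by the universal property of presheaves you must identify the restriction $F_G|_{\Orb_G}$ with the Yoneda embedding \emph{as a functor}, i.e.\ on morphisms and all coherences, not just on objects. (Any functor $C\to C$ that is the identity on objects but not an equivalence, composed with Yoneda, classifies a cocontinuous endofunctor of $\PSh(C)$ fixing every representable up to equivalence without being an equivalence.) The paper avoids this issue by first proving that $F$ is \emph{fully faithful}: it fits into a square with the two fully faithful embeddings $\iota_!\colon\ul\Spc_{\Orb\triangleright\Glo}\hookrightarrow\ul\Spc_{\Glo}$ and $\const\colon\ul\S\hookrightarrow\ul\S^\textup{gl}$ over the equivalence $\ul\Spc_{\Glo}\simeq\ul\S^\textup{gl}$, and this square commutes by the universal property of the source. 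Only then do the authors run your orbit-generation argument, which legitimately upgrades a fully faithful, fiberwise cocontinuous functor whose image contains the generators $G/H=i_!(*)$ to an equivalence. To repair your argument you would either need to supply full faithfulness of $F$ by some other means, or promote the object-level identification $F_G(G/H)\simeq G/H$ to a natural equivalence $F_G\circ y\simeq y$ of functors $\Orb_G\to\S_G$; neither is automatic from what you have written.
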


	In fact, our proof of these two theorems below will proceed the other way round by first establishing an equivalence $\ul\Spc_{\Orb\triangleright\Glo}\simeq\ul\S$ and then deducing all the remaining statements from this.

	To motivate what comes next, let us briefly outline how we will obtain such an equivalence. Since we have already exhibited $\ul\Spc_{\Orb\triangleright\Glo}$ as a full subcategory of $\ul\Spc_{\Glo}$, it will suffice to also exhibit $\ul\S$ as a full subcategory of $\ul\Spc_{\Glo}$ with the same essential image. To do this, we will model $\ul\Spc_{\Glo}$ concretely as the localization of a 1-category, and then build the functor explicitly before localization. This is accomplished by the theory of $G$-global spaces and the results of \cite{CLL_Global}, which we recall now.

	\begin{construction}
		We write $I$ for the category of finite sets and injections and $\mathcal I$ for the simplicial category obtained by applying the right adjoint $E\colon\cat{Set}\to\cat{SSet}$ of the evaluation functor $\ev_0\colon\cat{SSet}\to\cat{Set}$ to all hom sets. We write $\cat{$\bm{\mathcal I}$-SSet}$ for the category of enriched functors $\mathcal I\to\cat{SSet}$, and for any $G$ we denote the category of $G$-objects in $\cat{$\bm{\mathcal I}$-SSet}$ by $\cat{$\bm G$-$\bm{\mathcal I}$-SSet}$. Analogously to Construction~\ref{constr:G-SSet} these assemble into a functor $\cat{$\bm\bullet$-$\bm{\mathcal I}$-SSet}\colon\Glo^\op\to\Cat_1$.

		We can evaluate a $G$-$\mathcal I$-simplicial set $X$ at any (not necessarily finite) set $A$ via
		\begin{equation*}
			X(A)\coloneqq\mathop{\text{colim}}\limits_{B\subset A\text{ finite}} X(B),
		\end{equation*}
		and this acquires an action of the symmetric group $\Sigma_A$ via permuting the factors. In particular, if $A$ is a $G$-set, then we can equip $X(A)$ with the diagonal $G$-action, yielding a functor $\ev_A\colon\cat{$\bm{G}$-$\bm{\mathcal I}$-SSet}\to\cat{$\bm G$-SSet}$.

		We now call a map $f\colon X\to Y$ of $G$-$\mathcal I$-simplicial sets a \emph{$G$-equivariant weak equivalence} if $f(\mathcal U)$ is a $G$-equivariant weak equivalence in $\cat{$\bm G$-SSet}$ for some, hence any complete $G$-set universe $\mathcal U$ (i.e.~a countable $G$-set into which any other countable $G$-set embeds equivariantly). Finally, we call $f$ a \emph{$G$-global weak equivalence} if $\phi^*f$ is an $H$-equivariant weak equivalence of $H$-$\mathcal I$-simplicial sets for any homomorphism $\phi\colon H\to G$ from a finite group to $G$.

		Clearly, for any $\alpha\colon G\to G'$ the restriction functor $\alpha^*\colon \cat{$\bm{G'}$-$\bm{\mathcal I}$-SSet}\to\cat{$\bm{G}$-$\bm{\mathcal I}$-SSet}$, sends $G'$-global weak equivalences to $G$-global weak equivalences, lifting $\cat{$\bm\bullet$-$\bm{\mathcal I}$-SSet}$ to $\Glo^\op\to\RelCat$. Localizing, we then again get a global category, which we denote by $\ul\S^\text{gl}$.
	\end{construction}

	We write $\S^\text{gl}_G\coloneqq\ul\S^\text{gl}(G)$ and call it the \emph{category of $G$-global spaces}. Note that \cite{CLL_Global} uses the notation `$\ul\S_{\mathcal I}^\text{gl}$' for the above global category and reserves $\ul\S^\text{gl}$ for a different, but equivalent, model based on actions of a certain `universal finite group.' In the present paper, however, we will only be interested in the above approach.

	\begin{remark}
		The $G$-global weak equivalences are part of several model structures on $\cat{$\bm G$-$\bm{\mathcal I}$-SSet}$, see~\cite[Section~1.4]{g-global}. We will not need them explicitly in this section, but they will make an indirect appearance in Section~\ref{sec:gamma-orbi}.
	\end{remark}

	Our interest in $\ul\S^\text{gl}$ comes from the following `global' version of Theorem~\ref{thm:orbi-S-equivalence}:

	\begin{theorem}[See~\cite{CLL_Global}*{Theorem~3.3.1 and Corollary~3.2.5}]\label{thm:global-spaces-universal}
		The global category $\ul\S^\textup{gl}$ is globally presentable. The unique globally cocontinuous functor $\ul\Spc_{\Glo}\to\ul\S^\textup{gl}$ preserving the terminal object is an equivalence.\qed
	\end{theorem}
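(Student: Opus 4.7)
The plan is to split the proof into two logically distinct pieces: first establishing global presentability of $\ul\S^\textup{gl}$ via model-categorical input, and then checking that the resulting comparison functor $F\colon\ul\Spc_\Glo\to\ul\S^\textup{gl}$ is an equivalence by a standard generation-and-full-faithfulness argument on representables.

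For global presentability, I would equip each $\cat{$\bm G$-$\bm{\mathcal I}$-SSet}$ with a combinatorial model structure presenting $\S^\textup{gl}_G$ (as built in \cite{g-global}), which immediately gives fiberwise presentability. The main content is then to verify the conditions of Definition~\ref{defi:U-cc} for $\bbU=\ul\Spc_\Glo$. For every $\alpha\colon H\to G$ in $\Glo$ there is a point-set left adjoint $\alpha_!$ to $\alpha^*$ given by induction, and one checks by the usual adjoint-lifting argument that $\alpha_!$ is homotopical on cofibrant objects and hence descends to a derived left adjoint $\alpha_!\colon\S^\textup{gl}_H\to\S^\textup{gl}_G$. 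Next, applying Proposition~\ref{prop:adjoints-BC} one reduces the Beck--Chevalley condition to the case of representable pullback squares in $\PSh(\Glo)$, i.e.\ to homotopy pullbacks in $\Glo\subset\Grpd$; for such squares one exhibits a pointwise base-change equivalence on cofibrant objects via the explicit formulas for induction and restriction of (equivariant) $\mathcal I$-diagrams. The reduction to representable pullbacks is essential because only then do the pullbacks exist point-set before deriving.

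Assuming global presentability, Theorem~\ref{thm:univ-prop-T-spc} produces the required globally cocontinuous functor $F\colon\ul\Spc_\Glo\to\ul\S^\textup{gl}$, and it only remains to show $F$ is an equivalence. By Proposition~\ref{prop:adjoints-BC} this can be checked fiberwise. A Yoneda argument, using that $F$ preserves global colimits and that representables generate $\ul\Spc_\Glo$ under such colimits, reduces the problem to showing (a) that the essential image of $F$ generates $\ul\S^\textup{gl}$ under global colimits, and (b) that $F$ is fully faithful on representables. For (a), I would identify $F(\ul G)$ with the global classifying space obtained from $\alpha_!(*)$ for $\alpha\colon 1\to G$ and then invoke a cellular decomposition of $G$-global spaces: the generating cofibrations of the chosen model structure on $\cat{$\bm G$-$\bm{\mathcal I}$-SSet}$ can be chosen with sources and targets of the form $F(\ul H)\times\partial\Delta^n\hookrightarrow F(\ul H)\times\Delta^n$, so that every $G$-global space is built under global colimits from objects in $F$'s image.

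For (b), the mapping space $\Fun_\Glo(\ul K,\ulFun_\Glo(\ul G, \ul H))\simeq\maps_\Glo(K\times G, H)$ on the left-hand side must be matched with $\maps_{\S^\textup{gl}_K}(F(\ul G)_K, F(\ul H)_K)$ on the right. This is essentially a tom Dieck--style computation: one recognizes the derived mapping space out of the global classifying space of $G$ (at level $K$) as the underlying groupoid of homomorphisms $K\times G\to H$ modulo conjugation, matching the hom-groupoid in $\Glo$. The hardest step in the whole program is the combination of the Beck--Chevalley verification with the compatibility of induction with filtered colimits needed to pass from representable to general pullbacks; both are ultimately model-categorical bookkeeping, but getting all coherences right (in particular producing the natural transformations rather than mere equivalences of functors) is where I expect the real work to concentrate.
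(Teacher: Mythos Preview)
The paper does not actually prove this statement: it is imported wholesale from \cite{CLL_Global} (note the \texttt{\textbackslash qed} immediately after the statement). So there is no in-paper argument to compare against, and your proposal should be read as an independent proof sketch rather than a reconstruction.

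Your outline is broadly the right shape and is consistent with how such Elmendorf-type results are established, but two places deserve tightening. First, your reduction of the Beck--Chevalley condition ``to homotopy pullbacks in $\Glo\subset\Grpd$'' is imprecise: $\Glo$ does not have pullbacks, and a pullback of representables in $\PSh(\Glo)$ is typically only a \emph{finite coproduct} of representables (decompose the fibre product of groupoids into connected components). The relevant reduction is Remark~\ref{rk:U-cc-restriction-to-repr}, not Proposition~\ref{prop:adjoints-BC}, and the actual verification then uses the double-coset decomposition together with the fact that $\alpha_!$ commutes with coproducts. This is routine but it is not literally a ``point-set base-change on cofibrant objects for a single square of groups''.

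Second, your full faithfulness step~(b) is underspecified. Knowing that $F$ is fully faithful on representables and cocontinuous does \emph{not} by itself give full faithfulness everywhere: you need that the unit $\eta\colon \id\Rightarrow GF$ is a transformation between colimit-preserving functors, which in turn needs that the right adjoint $G$ preserves (filtered) colimits, i.e.\ that $F$ sends representables to compact objects of $\S^\textup{gl}_G$. This is true---the global classifying spaces are compact generators in the relevant model structure---but it is exactly the ingredient that makes the argument go through, and you should state it explicitly rather than folding it into the ``tom Dieck--style computation''. Once compactness is in place, (a) and (b) together do give the equivalence by the standard argument.
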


	On the other hand we can relate the global categories of global and equivariant spaces as follows:

	\begin{lemma}\label{lemma:triv-S-Sgl}
		There exists a global functor $\textup{const}\colon\ul\S\to\ul\S^\textup{gl}$ with the following properties:
		\begin{enumerate}
			\item $\textup{const}$ is fully faithful and sends the terminal object of $\S$ to the terminal object of $\S^\textup{gl}$,
			\item  it admits an $\Orb$-right adjoint $\cat{R}\,\ev\colon\ul\S^\textup{gl}|_{\Orb}\to\ul\S|_{\Orb}$.
		\end{enumerate}
	\end{lemma}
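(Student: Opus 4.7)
The plan is to realize both functors at the pointset level by strictly natural constructions, and then derive everything by inverting weak equivalences. First I define $\text{const}\colon\cat{$\bm\bullet$-SSet}\to\cat{$\bm\bullet$-$\bm{\mathcal I}$-SSet}$ as the strict natural transformation of $2$-functors $\Glo\catop\to\Cat_1$ sending a $G$-simplicial set $X$ to the constant enriched diagram $\mathcal I\to\cat{SSet}$, $A\mapsto X$ (with its original $G$-action; the $\Sigma_A$-action is trivial). To promote this to a map of relative categories, I check that $\text{const}$ sends $G$-equivariant equivalences to $G$-global equivalences: given $\phi\colon H\to G$ and a $G$-equivariant weak equivalence $f$, the map $\phi^*\text{const}(f)$ evaluates at any $H$-set $A$ to $\phi^*f$, and $(\phi^*f)^K=f^{\phi(K)}$ is a weak equivalence for every subgroup $K\subset H$. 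Localizing produces the desired global functor $\text{const}\colon\ul\S\to\ul\S^\textup{gl}$, and since $\text{const}(*)$ is visibly terminal in $\cat{$\bm G$-$\bm{\mathcal I}$-SSet}$, everything in (1) except full faithfulness is established.

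For the right adjoint I choose for each $G$ a complete $G$-set universe $\mathcal U_G$ and let $\ev_G$ be the evaluation $Y\mapsto Y(\mathcal U_G)$. By definition of $G$-global equivalence, $\ev_G$ sends $G$-global equivalences to $G$-equivariant equivalences. The crucial restriction to $\Orb$ enters here: for $\alpha\colon H\hookrightarrow G$ injective, the restricted universe $\alpha^*\mathcal U_G$ remains a complete $H$-set universe, so any equivariant embedding $\mathcal U_H\hookrightarrow\alpha^*\mathcal U_G$ of complete universes induces an $H$-equivariant weak equivalence comparing $\ev_H(\alpha^*Y)$ with $\alpha^*\ev_G(Y)$; for non-injective $\alpha$ the restricted universe loses orbit types and this fails. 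Hence $\cat{R}\,\ev$ assembles coherently into an $\Orb$-functor $\ul\S^\textup{gl}|_\Orb\to\ul\S|_\Orb$.

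I then verify the $\Orb$-adjunction $\text{const}|_\Orb\dashv\cat{R}\,\ev$ via the pointwise criterion of Proposition~\ref{prop:adjoints-BC}: pointwise, the composite $\ev_H\circ\text{const}_H$ is canonically the identity on $\S_H$, so using the $G$-global model structures of~\cite{g-global} one sees that $\text{const}_H$ is a left Quillen functor with the required derived right adjoint. The Beck--Chevalley condition for injections is precisely the coherence of $\ev$ under restriction along injections recorded above. Since the unit at any $X$ is identified with $\id_X$ under $\ev\,\text{const}\simeq\id$, it is in particular an equivalence, whence $\text{const}_H$ is fully faithful; full faithfulness of $\text{const}$ as a global functor follows since it can be checked pointwise.

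The main obstacle will be packaging this data into honest global-categorical structures rather than mere levelwise functors and adjunctions, while avoiding the strict $2$-functorial difficulties caused by the non-strict dependence of $\ev$ on the choice of universe. The cleanest route is to use functorial cofibrant-fibrant replacements in the $G$-global model structures on $\cat{$\bm G$-$\bm{\mathcal I}$-SSet}$ from~\cite{g-global}, promoting $\text{const}$ to a natural family of Quillen adjunctions along injections, and then passing to the underlying $\infty$-categories. The failure of Beck--Chevalley outside of $\Orb$ can be pinpointed precisely to the fact that pullback of a complete universe along a non-injective homomorphism is no longer complete.
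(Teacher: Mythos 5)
Your construction of $\const$ is exactly the paper's: the pointset-level constant-diagram functor is strictly natural and homotopical, and localizing gives the global functor; the terminal object is clearly preserved. Where you diverge is in how the $\Orb$-right adjoint and the Beck--Chevalley condition are handled. The paper takes the pointwise right adjoint to be the right \emph{derived} functor of the honest pointset adjoint $\ev_\varnothing$ (equivalently, the limit over $\mathcal I$), quotes full faithfulness from the model-categorical literature, and then verifies Beck--Chevalley by passing to the \emph{mate} on the left adjoints: for injective $\alpha$ the induction functors $\alpha_!$ exist on both sides, the transformation $\alpha_!\circ\const\Rightarrow\const\circ\alpha_!$ is an isomorphism already on the pointset level, and both $\alpha_!$'s are homotopical, so the derived mate is an equivalence. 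This completely sidesteps any choice of universes. You instead model the right adjoint by $\ev_{\mathcal U_G}$ for chosen complete universes and check Beck--Chevalley directly via change-of-universe maps $\mathcal U_H\hookrightarrow\alpha^*\mathcal U_G$; this is viable and correctly isolates why injectivity of $\alpha$ is needed (completeness of the restricted universe), but it is exactly the route that creates the coherence headaches you flag at the end.

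Two soft spots to be explicit about. First, $\ev_{\mathcal U_G}$ is \emph{not} a pointset-level right adjoint of $\const$; the actual adjoint is $\ev_\varnothing$. So your claims that ``$\ev_H\circ\const_H$ is canonically the identity'' and that ``the unit is identified with $\id_X$'' do not by themselves identify the \emph{derived} unit of the adjunction you need: you must route through the Quillen adjunction $\const\dashv\ev_\varnothing$ and the fact that $Y(\varnothing)\to Y(\mathcal U)$ is an equivalence for fibrant $Y$, so that $\ev_{\mathcal U}$ computes $\cat{R}\ev_\varnothing$ -- only then does $\ev_{\mathcal U}\circ\const=\id$ yield that the derived unit is an equivalence and hence full faithfulness. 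Second, your comparison map $Y(\mathcal U_H)\to Y(\alpha^*\mathcal U_G)$ must be shown to \emph{be} the Beck--Chevalley transformation of the derived adjunction, not merely some equivalence between the two composites; the paper's mate argument avoids this identification entirely, which is what makes it the cleaner of the two approaches. Neither issue is fatal, but both require the model-categorical input you defer to at the end.
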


	Once the above two theorems have been established, we will see that this adjunction is actually uniquely described by the requirement that the left adjoint preserve the terminal object.

	\begin{proof}
		The functor $\const\colon\cat{$\bm G$-SSet}\to\cat{$\bm G$-$\bm{\mathcal I}$-SSet}$ is homotopical and strictly natural in $G$, so it induces a global functor $\const\colon\ul\S\to\ul\S^\text{gl}$. By \cite[Corollary~1.4.56]{g-global} this functor is fully faithful, and it admits a pointwise right adjoint (given by the right derived functor of $\ev_\emptyset\colon\cat{$\bm G$-$\bm{\mathcal I}$-SSet}\to\cat{$\bm G$-SSet}$).

		To complete the proof, it only remains to establish the Beck--Chevalley condition for the pointwise right adjoint, or equivalently that for any injective $\alpha\colon G\to G'$ the mate transform $\alpha_!\circ\const\Rightarrow{\const}\circ\alpha_!$ is an equivalence of functors $\S_G\to\S^\text{gl}_{G'}$. But indeed, this holds on the pointset level by direct inspection, so the claim follows as $\alpha_!\colon\cat{$\bm G$-$\bm{\mathcal I}$-SSet}\to\cat{$\bm{G'}$-$\bm{\mathcal I}$-SSet}$ is homotopical by \cite[Lemma~1.4.42]{g-global} while $\alpha_!\colon\cat{$\bm G$-SSet}\to\cat{$\bm G'$-SSet}$ is so by (a well-known special case of) Proposition~1.1.18 of \emph{op.~cit.}
	\end{proof}

	\begin{proof}[Proof of Theorems~\ref{thm:orbi-S-univ-prop} and~\ref{thm:orbi-S-equivalence}]
		Lemma~\ref{lemma:triv-S-Sgl} provides a fully faithful global functor $\const\colon\ul\S\to\ul\S^\text{gl}$. Now the right hand side is globally cocomplete, hence in particular equivariantly cocomplete. Moreover, the essential image of the functor $\const$ is closed under all equivariant colimits as it is an $\Orb$-left adjoint. Thus, also $\ul\S$ is equivariantly cocomplete.

		Appealing to Corollary~\ref{cor:spc-s-t-univ-property}, we therefore see that there is an essentially unique equivariantly cocontinuous functor $F\colon\ul{\Spc}_{\Orb\triangleright\Glo}\to\ul\S$ preserving the terminal object, and we claim that this is an equivalence. For this, we consider the diagram
		\begin{equation*}
			\begin{tikzcd}
				\ul\Spc_{\Orb\triangleright\Glo}\arrow[d, "\iota_!"'] \arrow[r, "F"] & \ul\S\arrow[d, "\const"]\\
				\ul\Spc_{\Glo}\arrow[r, "\sim"'] & \ul\S^\text{gl}
			\end{tikzcd}
		\end{equation*}
		of global functors where the lower equivalence is as in Theorem~\ref{thm:global-spaces-universal}; both paths through this diagram are equivariantly cocontinuous and preserve the terminal object, so this commutes by the universal property of $\ul{\Spc}_{\Orb\triangleright\Glo}$. Moreover, the vertical arrows are fully faithful by Theorem~\ref{thm:mixed-presheaves} and Lemma~\ref{lemma:triv-S-Sgl}, respectively. It follows that also $F$ is fully faithful.

		To see that each $F_G\colon\ul{\Spc}_{\Orb}(G)\to\S_G$ is essentially surjective, we observe that $F_G$ is a fully faithful left adjoint, so that its essential image is closed under all colimits. On the other hand, by Elmendorf's Theorem \cite{elmendorf} (or simply looking at the standard generating cofibrations), $\S_G$ is generated under colimits by the $G/H$'s for subgroups $H\subset G$, so it is enough that each $G/H$ is contained in the essential image. However, $G/H=i_!(*)$, where $i\colon H\hookrightarrow G$ denotes the inclusion, so $F_G(i_!(*))\simeq i_!F_H(*)\simeq i_!(*)\simeq G/H$ by the defining properties of $F$.

		Finally, the universal property of $\ul\S$ follows from combining the above with Corollary~\ref{cor:spc-s-t-univ-property}.
	\end{proof}

	\section{The semiadditive story}\label{sec:semiadd}
	Let $P\subset T$ be an \emph{atomic orbital subcategory} in the sense of Definition~\ref{def:atomic-orbital}. In \cite{CLL_Global} we defined what it means for a $T$-category $\Cc$ to be \textit{$P$-semiadditive}: roughly speaking this demands that for every morphism $p\colon A \to B$ in $P$ the restriction functor $p^*\colon \Cc(B) \to \Cc(A)$ admits both a left adjoint $p_!$ and a right adjoint $p_*$ which are canonically equivalent to each other. The goal of this section is to give a description of the universal $S$-presentable $P$-semiadditive $T$-category $\ul\CMon^P_{S\triangleright T}$ for a cleft $S \subset T$ satisfying $P \subset S$.

	The description we give proceeds by reduction to the fully presentable case $S = T$, which was treated in \cite{CLL_Global} and will be recalled in \Cref{subsec:P-sadd} below: the universal $T$-presentable $P$-semiadditive $T$-category $\ul\CMon^P_{T}$ is given by certain \textit{$P$-commutative monoids} $\ulfinptdPsets \to \ul{\Spc}_T$, see \Cref{def:CMon^P}. The $T$-category $\ul\CMon^P_{S\triangleright T}$ is then given as a certain full subcategory of $\ul\CMon^P_{T}$ whose underlying $S$-category is the $S$-category $\ul\CMon^P_{S}$ of $S$-parametrized $P$-commutative monoids. Proving that this is indeed the universal $S$-presentable $P$-semiadditive $T$-category is somewhat technical and will be the content of \Cref{subsec:Univ_Prop_CMon^P_ST}. In \Cref{subsec:Add_Adj} we will investigate in more detail the relationship between the $S$-categories $\ul\CMon^P_{S}$ and $\iota^*\ul\CMon^P_{T}$.

	\subsection{$\bm P$-semiadditivity and $\bm P$-commutative monoids} \label{subsec:P-sadd}
	We begin with a recollection of the relevant material from \cite{CLL_Global} on parametrized semiadditivity. Throughout, we fix an atomic orbital subcategory $P\subset T$.

	\begin{construction}
		We write $\mathbb F_T\subset\PSh(T)$ for the finite coproduct completion of $T$ and $\mathbb F^P_T$ for the finite coproduct completion of $P$, viewed as a subcategory of $\mathbb F_T$.

		We define a $T$-subcategory $\ulfinPsets\subset \ul\Spc_T$ by letting $\ulfinPsets(B)$ be the full subcategory of $\PSh(T)_{/B}$ spanned by objects of the form $(p_i)\colon \coprod_{i=1}^{n} A_i \to B$ such that each morphism $p_i\colon A_i \to B$ is in $P$; put differently, this is the slice $(\mathbb F^P_T)_{/B}$. Note that by atomic orbitality of $P$, $\ulfinPsets$ indeed forms a $T$-subcategory of $\ul{\Spc}_T$.
	\end{construction}

	\begin{definition}
		We say a $T$-category has \emph{finite $P$-products} or \emph{finite $P$-coproducts} if it has $\ulfinPsets$-limits or $\ulfinPsets$-colimits, respectively, in the sense of Definition~\ref{defi:U-cc}.
	\end{definition}

	\begin{definition}
		A $T$-category $\Cc$ is called \emph{pointed} if it factors through the non-full subcategory $\Cat_*\subset\Cat$ of categories with zero objects and functors preserving the zero object.
	\end{definition}

	\begin{construction}
		Let $\Cc$ be a pointed $T$-category which has finite $P$-coproducts, and let $\mathcal D$ be a $T$-category with finite $P$-products. For any functor $F\colon\mathcal C\to\mathcal D$ and any $p\colon A\to B$ in $\mathbb F^P_T$, \cite{CLL_Global}*{Construction 4.6.1} defines a \emph{relative norm map}
		\[
		\Nm^F_{p}\colon F_B \circ p_! \implies p_* \circ F_A.
		\]
		If $\mathcal C$ also has finite $P$-products, we write $\Nm_p\colon p_!\Rightarrow p_*$ for the relative norm map of $\id_{\mathcal C}$, and simply call it the \emph{norm map}.
	\end{construction}

	\begin{definition}\label{defi:P-semiadd}
		A $T$-category $\mathcal C$ is called \emph{$P$-semiadditive} if it is pointed, has finite $P$-products and $P$-coproducts, and they agree in the sense that for every $p$ in $\mathbb F^P_T$ the norm map $\Nm_p\colon p_!\Rightarrow p_*$ is an equivalence.
	\end{definition}

	\begin{example}
		When $P\subset T$ is the subcategory $\Orb\subset \Glo$ from \Cref{ex:Orb}, the previous definition specializes to the notion of \textit{equivariant semiadditivity} from \cite{CLL_Global}.
	\end{example}

	\begin{example}\label{ex:G_semi}
		When $P\subset T$ equals $\Orb_G\subset \Orb_G$ for a finite group $G$, the notion of semiadditivity obtained agrees with $G$-semiadditivity as defined in \cite{nardin2016exposeIV}, see \cite{CLL_Global}*{Proposition 4.6.4}.
	\end{example}

	\begin{definition}
		Let $F\colon\mathcal C\to\mathcal D$ be a functor of $T$-categories, such that $\mathcal C$ is pointed and has finite $P$-coproducts, while $\mathcal D$ has finite $P$-products. Then $F$ is called \emph{$P$-semiadditive} if it sends $P$-coproducts to $P$-products in the sense that the relative norm map $\Nm^F_p\colon Fp_!\Rightarrow p_*F$ is an equivalence for every $p$ in $\mathbb F^P_T$.
	\end{definition}

	\begin{definition}
		We write $\Cat_T^{P\text-\oplus}\subset\Cat_T$ for the non-full subcategory of $P$-semiadditive categories and $P$-semiadditive $T$-functors.
	\end{definition}

	By \cite{CLL_Global}*{Proposition~4.6.14}, the morphisms of $\Cat_T^{P\text-\oplus}$ are equivalently the $\ul{\mathbb F}^P_T$-cocontinuous or $\ul{\mathbb F}^P_T$-continuous $T$-functors.

	\begin{remark}\label{rk:P-semiadd-underlying}
		Similarly to Warning~\ref{warn:S-cocomplete-vs-und-S-cocomplete}, having finite $P$-(co)products is not just a property of the underlying $P$-category. On the other hand, if a $T$-category either has finite $P$-coproducts or finite $P$-products, then it is $P$-semiadditive if and only if its underlying $P$-category is so \cite{CLL_Global}*{Lemma~4.5.2 and Lemma~4.6.4}.
	\end{remark}

	\begin{definition}
		In the above situation, we write $\ul\Fun_T^{P\text-\oplus}(\mathcal C,\mathcal D)$ for the param\-etrized subcategory spanned in degree $X\in\PSh(T)$ by the $P$-semiadditive functors $\mathcal C\to\ul\Fun_T(\ul X,\mathcal D)$.
	\end{definition}

	Note that the above is indeed a $T$-subcategory by \cite{CLL_Global}*{Corollary~4.6.10}.

	\begin{definition}
		We define $\ulfinptdPsets$, the $T$-category of \textit{finite pointed $P$-sets}, to be the essential image of $\ulfinPsets$ under the functor $(-)_+\colon \ul\Spc_T\rightarrow \ul\Spc_{T,\ast}$ which adds a disjoint basepoint.
	\end{definition}

	\begin{definition}\label{def:CMon^P}
		Given a $T$-category $\Cc$ with $P$-products we define $\ul\CMon^P(\Cc)$, the \textit{$T$-category of $P$-commutative monoids in $\Cc$}, as $\ul\Fun_T^{P\text-\oplus}(\ul{\mathbb F}^P_{T,*},\mathcal C)$. If $\Cc = \ul\Spc_T$, we write $\ul\CMon^P_T\coloneqq\ul\CMon^P(\ul\Spc_T)$.
	\end{definition}

	This construction enjoys several universal properties. To express them we introduce:

	\begin{construction}
		Let $\mathcal C$ have finite $P$-products. Evaluation at the global section $S^0\coloneqq(\id)_+\in\ul{\mathbb F}^P_{T,*}(1)\subset(\PSh(T)_{/1})_*$ gives a forgetful functor
		\[\mathbb U\colon \ul\CMon_P(\Cc)\rightarrow \Cc.\]
	\end{construction}

	\begin{construction}
		Assume $\mathcal C$ is presentable. Then $\ul\CMon^P(\mathcal C)\hookrightarrow\ul\Fun_T(\ul{\mathbb F}^P_{T,*},\mathcal C)$ admits a left adjoint $L^{P\text-\oplus}$ by \cite{CLL_Global}*{Proposition~4.6.15}. In particular, the functor $\mathbb U$ from the previous construction has a left adjoint given by composing the left Kan extension functor $(S^0)_!\colon\mathcal C\to\ul\Fun_T(\ul{\mathbb F}^P_{T,*},\mathcal C)$ \cite{martiniwolf2021limits}*{Corollary~6.3.7} with $L^{P\text-\oplus}$.
	\end{construction}

	\begin{theorem}[See \cite{CLL_Global}*{Theorem~4.8.10}]\label{thm:Semiadd_omnibus}
		Let $\Cc$ be a $T$-category with finite $P$-products. The functor $\mathbb{U}\colon\ulPCMon(\Cc)\to \Cc$ exhibits $\ulPCMon(\Cc)$ as the $P$-semiadditive envelope of $\Cc$ in the following sense: for every $P$-semiadditive $T$-category $\tcat$ postcomposition with $\mathbb{U}$ induces an equivalence
		\begin{equation*}
			\ul\Fun^{P\text-\times}(\tcat, \mathbb{U})\colon\ul\Fun^{P\text-\oplus}(\tcat,\ulPCMon(\Cc))\iso \ul\Fun^{P\text-\times}(\tcat, \Cc).
		\end{equation*}
		Suppose now that $\Cc$ is moreover presentable. Then the left adjoint $\mathbb{P}$ of $\mathbb{U}$ exhibits $\ulPCMon(\Cc)$ as the presentable $P$-semiadditive completion of $\Cc$ in the following sense:~for any presentable $P$-semiadditive $T$-category $\tcat$ precomposition with $\mathbb{P}$ yields an equivalence
		\begin{equation*}
			\ul\Fun^\textup{$T$-cc}(\mathbb{P}, \tcat)\colon\ul\Fun^\textup{$T$-cc}(\ulPCMon(\Cc), \tcat)\iso\ul\Fun^\textup{$T$-cc}(\Cc, \tcat).\qednow
		\end{equation*}
	\end{theorem}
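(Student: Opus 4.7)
The plan is to establish both statements by reducing them to a single ``semiadditive Yoneda'' input: that the forgetful functor $\mathbb{U}\colon\ulPCMon(\tcat)\iso\tcat$ is an equivalence whenever $\tcat$ is $P$-semiadditive. Equivalently, this says $\ulfinptdPsets$ is the free $P$-semiadditive $T$-category on the generator $S^0$. Granting this, both universal properties will follow by formal adjunction arguments.

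For the first statement, given a $P$-product-preserving $T$-functor $F\colon\tcat\to\Cc$, pointwise postcomposition defines a $T$-functor $F_*\colon\ulPCMon(\tcat)\to\ulPCMon(\Cc)$. Composing with the inverse of the key equivalence produces a candidate lift $\tilde F\coloneqq F_*\circ\mathbb{U}^{-1}\colon\tcat\to\ulPCMon(\Cc)$, which is $P$-semiadditive because $P$-products in both $\ulPCMon$-categories are computed pointwise and $F$ preserves them. Naturality in $F$ then yields a candidate inverse $\Phi\colon\ul\Fun^{P\text-\times}(\tcat,\Cc)\to\ul\Fun^{P\text-\oplus}(\tcat,\ulPCMon(\Cc))$ to post-composition with $\mathbb{U}$: the identity $\mathbb{U}\circ\Phi=\id$ is clear by construction, while the other direction follows from functoriality of $\ulPCMon$ together with the naturality of $\mathbb{U}$ on $P$-semiadditive categories.

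For the second statement, assume $\Cc$ is presentable and $\tcat$ is $T$-presentable $P$-semiadditive; then $\ulPCMon(\Cc)$ is $T$-presentable, being a reflective localization of $\ul\Fun_T(\ulfinptdPsets,\Cc)$, and precomposition with $\mathbb{P}$ sends $T$-cocontinuous functors to $T$-cocontinuous functors. Conversely, given $T$-cocontinuous $F\colon\Cc\to\tcat$, the parametrized analogue of the Special Adjoint Functor Theorem recalled in \Cref{sec:prelim} produces a $T$-right adjoint $G\colon\tcat\to\Cc$, which is $T$-continuous and hence $P$-product preserving. Applying part~1 lifts $G$ to a $P$-semiadditive $\tilde G\colon\tcat\to\ulPCMon(\Cc)$ with $\mathbb{U}\tilde G\simeq G$. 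A fiberwise application of the classical Adjoint Functor Theorem, combined with a Beck--Chevalley check via the conservative functor $\mathbb{U}$, shows $\tilde G$ has a $T$-left adjoint $\tilde F\colon\ulPCMon(\Cc)\to\tcat$; uniqueness of adjoints then gives $\tilde F\circ\mathbb{P}\simeq F$, furnishing the required inverse.

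The principal obstacle is the foundational input $\mathbb{U}\colon\ulPCMon(\tcat)\iso\tcat$. In the unparametrized case this is the identification of the effective Burnside category with the free semiadditive $\infty$-category on one generator (Guillou--May, Cranch, Harpaz); the parametrized version requires a careful identification of $\ulfinptdPsets$ with a suitable parametrized span construction and an analysis of how norm maps detect $P$-semiadditive extensions, along the lines of \cite{nardin2016exposeIV} and the general framework of \cite{CLL_Global}. Once this is in hand, the remainder of the proof is a mostly formal chain of parametrized adjunction arguments.
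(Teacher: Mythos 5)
First, a point of order: the paper you were given does not actually prove this theorem --- it is imported wholesale from the prequel (Theorem~4.8.10 of \cite{CLL_Global}), so there is no in-text proof to compare against. Measured against the argument given there, your overall architecture is nonetheless the right one: the statement is indeed reduced to the fact that $\mathbb U\colon\ul\CMon^P(\tcat)\to\tcat$ is an equivalence for $P$-semiadditive $\tcat$ (this is Corollary~4.7.8 of \cite{CLL_Global}, which the present paper itself invokes in the proof of Proposition~\ref{prop:C-bar}), and the presentable universal property is obtained by passing to right adjoints via the Special Adjoint Functor Theorem, exactly as you propose; the present paper even comments on this reliance on the SAFT when discussing universe issues in Section~\ref{sec:semiadd}.

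Two substantive criticisms. First, the ``principal obstacle'' you defer is not an auxiliary lemma but essentially the entire content of the theorem: your key input is precisely the special case $\Cc=\tcat$ of the first universal property (apply it with $\Cc=\tcat$ and run the Yoneda argument inside semiadditive $T$-categories), so as written the proposal is a self-reduction plus a literature pointer rather than a proof; in \cite{CLL_Global} establishing this input is what the whole analysis of norm maps and of $P$-semiadditive functors out of $\ul{\mathbb F}^P_{T,*}$ is for. (Minor quibble: $\ul{\mathbb F}^P_{T,*}$ is not itself $P$-semiadditive, so calling it ``the free $P$-semiadditive $T$-category'' is loose; the correct statement is that it corepresents the identity via $P$-semiadditive functors.) Second, your verification that $\Phi$ is a two-sided inverse is too quick in one direction. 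That $\mathbb U\circ\Phi(F)\simeq F$ is immediate from naturality of $\ev_{S^0}$ under postcomposition. But for the other composite you must show, for $P$-semiadditive $H\colon\tcat\to\ul\CMon^P(\Cc)$, that $(\mathbb U_\Cc\circ H)_*\circ\mathbb U_\tcat^{-1}\simeq H$; applying $\mathbb U_\Cc$ to both sides does give the same answer, but postcomposition with $\mathbb U_\Cc$ is not conservative on functor categories, so ``functoriality plus naturality'' does not close the argument. One genuinely needs either the currying equivalence $\ul\Fun^{P\text-\oplus}(\tcat,\ul\Fun^{P\text-\oplus}(\ul{\mathbb F}^P_{T,*},\Cc))\simeq\ul\Fun^{P\text-\oplus}(\ul{\mathbb F}^P_{T,*},\ul\Fun^{P\text-\times}(\tcat,\Cc))$ followed by the key input applied to the (semiadditive) target $\ul\Fun^{P\text-\times}(\tcat,\Cc)$, or an Eckmann--Hilton-type identification of $\mathbb U_{\ul\CMon^P(\Cc)}$ with $(\mathbb U_\Cc)_*$ on double $P$-commutative monoids. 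Relatedly, in part~2 the left adjoint of $\tilde G$ is produced from $T$-continuity and accessibility of $\tilde G$ (both inherited from $G$ through the conservative, limit- and filtered-colimit-preserving $\mathbb U$) rather than by a Beck--Chevalley check ``via $\mathbb U$''; that step should be spelled out.
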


	\begin{theorem}[See \cite{CLL_Global}*{Theorem~4.8.11}]\label{thm:semiadd-T-spc}
		The $T$-category $\ul\CMon^P_T$ is $P$-semiadditive and $T$-presentable. Moreover, it has the following universal property: for any locally small $T$-cocomplete $P$-semiadditive $\mathcal D$, evaluation at $\mathbb P(*)\simeq L^{P\text-\oplus}y(S^0)$ induces an equivalence
		\begin{equation*}
			\ul\Fun^\textup{$T$-cc}(\ul\CMon^P_T,\mathcal D)\iso\mathcal D.
		\end{equation*}
	\end{theorem}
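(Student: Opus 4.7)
I would tackle the three assertions—$P$-semiadditivity, $T$-presentability, and the universal property—in turn, reducing each to Theorems \ref{thm:Semiadd_omnibus} and \ref{thm:univ-prop-T-spc} applied to $\Cc = \ul\Spc_T$.

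For $T$-presentability, I would first observe that the ambient $T$-category $\ul\Fun_T(\ul{\mathbb F}^P_{T,*}, \ul\Spc_T)$ is $T$-presentable, being a functor $T$-category from a small $T$-category into the $T$-presentable $\ul\Spc_T$, and that $\ul\CMon^P_T$ sits inside it as a reflective $T$-subcategory via the left adjoint $L^{P\text-\oplus}$ from Theorem \ref{thm:Semiadd_omnibus}, so $T$-presentability is inherited. For $P$-semiadditivity, $\ul\CMon^P_T$ has finite $P$-products computed pointwise in $\ul\Spc_T$ and finite $P$-coproducts obtained by applying $L^{P\text-\oplus}$ to pointwise coproducts, and that the norm maps are equivalences can be read off by unwinding the definition of a $P$-commutative monoid as a $P$-semiadditive functor out of $\ul{\mathbb F}^P_{T,*}$.

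For the universal property, the plan is to factor evaluation at $\mathbb P(*)$ through Theorem \ref{thm:univ-prop-T-spc}. Using the adjunction $\mathbb P \dashv \mathbb U\colon \ul\Spc_T \rightleftarrows \ul\CMon^P_T$, precomposition with $\mathbb P$ defines a map
\begin{equation*}
\mathbb P^*\colon \ul\Fun^\textup{$T$-cc}(\ul\CMon^P_T, \mathcal D) \longrightarrow \ul\Fun^\textup{$T$-cc}(\ul\Spc_T, \mathcal D),
\end{equation*}
whose composite with the equivalence of Theorem \ref{thm:univ-prop-T-spc} is precisely evaluation at $\mathbb P(*)$, so it suffices to show $\mathbb P^*$ is an equivalence. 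For fully faithfulness I would argue that, since $\mathbb P$ is $T$-cocontinuous and $\ul\Spc_T$ is generated under $T$-colimits by its terminal object, the essential image $\mathbb P(\ul\Spc_T)$ generates $\ul\CMon^P_T$ under $T$-colimits, so any $T$-cocontinuous functor out of $\ul\CMon^P_T$ is determined by its restriction along $\mathbb P$.

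The main obstacle is essential surjectivity: extending a given $T$-cocontinuous $F\colon \ul\Spc_T \to \mathcal D$ to a $T$-cocontinuous $\tilde F\colon \ul\CMon^P_T \to \mathcal D$, given only that $\mathcal D$ is locally small $T$-cocomplete rather than $T$-presentable (where the second half of Theorem \ref{thm:Semiadd_omnibus} would apply directly). My plan is to first produce a $T$-right adjoint $F^R\colon \mathcal D \to \ul\Spc_T$ via the parametrized Special Adjoint Functor Theorem, which applies since $\ul\Spc_T$ is $T$-presentable and $\mathcal D$ is locally small and $T$-cocomplete (cf.\ the remark after Example \ref{ex:la-are-cc}). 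The functor $F^R$ preserves $T$-limits, in particular $P$-products, so by the first half of Theorem \ref{thm:Semiadd_omnibus} it lifts uniquely to a $P$-semiadditive functor $\widetilde{F^R}\colon \mathcal D \to \ul\CMon^P_T$ with $\mathbb U \widetilde{F^R} \simeq F^R$. The desired $\tilde F$ would then arise as a parametrized left adjoint to $\widetilde{F^R}$; constructing this adjoint is the delicate step, which I would address by exploiting the reflective structure of $\ul\CMon^P_T$ inside the ambient $T$-presentable functor $T$-category to reduce adjointability to the already-established properties of $F$ and $F^R$.
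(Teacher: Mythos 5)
First, note that this paper does not actually prove the statement: it is imported wholesale from \cite{CLL_Global}*{Theorem~4.8.11}, and the closest in-paper argument is the proof of \Cref{prop:C-bar} (together with the remark following it), which establishes a generalization by a different mechanism — embedding $\Dd$ fully faithfully into a \emph{large} $T$-cocomplete $P$-semiadditive category via (the opposite of) the Yoneda embedding (\Cref{lemma:semiadd-T-cocompl}), invoking the universal property of the enlarged $\ul\CMON^P_T$, and controlling everything by left Kan extension along the $P$-product-preserving functor $j\colon(\ul{\mathbb F}^P_{T,*})^\op\to\ul\CMon^P_T$ together with the semiadditive Yoneda lemma \cite{CLL_Global}*{Corollary~4.7.8}. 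Your first two paragraphs ($T$-presentability via reflective localization, $P$-semiadditivity) are fine, and your reduction of the universal property to showing that $\mathbb P^*$ is an equivalence is a legitimate reformulation.

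The genuine gap is in the step you yourself flag as delicate, and it is not closed by what you propose. Producing a left adjoint to $\widetilde{F^R}\colon\Dd\to\ul\CMon^P_T$ is not accessible to any adjoint functor theorem here: the relevant AFT would require the \emph{source} $\Dd$ of $\widetilde{F^R}$ to be presentable, which is exactly the hypothesis you do not have (this asymmetry is why the second half of \Cref{thm:Semiadd_omnibus} assumes presentability of the target). ``Exploiting the reflective structure'' of $\ul\CMon^P_T$ does not help, because the reflection lives on the wrong side of the would-be adjunction. What actually closes the gap is a direct corepresentability/Kan-extension argument: every $X\in\ul\CMon^P_T(A)$ is a \emph{small} colimit of objects $L^{P\text-\oplus}y(c)$ with $c$ in the small category $\smallint\ul{\mathbb F}^P_{T,*}\times\ul A$, and for these one computes $\maps(L^{P\text-\oplus}y(c),\widetilde{F^R}(d))\simeq\widetilde{F^R}(d)(c)$, which semiadditivity and the adjunction $F\dashv F^R$ identify with a corepresentable functor of $d$; one then takes the colimit of corepresenting objects in the fiberwise cocomplete $\Dd(A)$ and checks Beck--Chevalley. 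This is the real content of the proof and is missing from your plan. A secondary, smaller gap: your full-faithfulness argument for $\mathbb P^*$ conflates ``determined by the restriction'' (conservativity) with full faithfulness. Since $\mathbb P$ is not fully faithful, the image of $\mathbb P$ is a non-full subcategory of $\ul\CMon^P_T$ — the extra morphisms are exactly the transfers — so naturality of a transformation $F\mathbb P\Rightarrow G\mathbb P$ does not formally propagate to all of $\ul\CMon^P_T$ from generation under $T$-colimits alone; one again needs the semiadditive Yoneda lemma (or the factorization through $\ul\Fun_T^{P\text-\times}((\ul{\mathbb F}^P_{T,*})^\op,\Dd)$ used in \Cref{prop:C-bar}).
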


	Let us end this with the following result, which identifies the universal $P$-semiadditive $T$-category after restricting to slices.

	\begin{proposition}\label{prop:cmon-slice}
		Let $P\subset T$ be atomic orbital, let $A\in T$, write $\pi_A\colon T_{/A} \to T$ for the forgetful functor, and let $T_{/A}^P\coloneqq\pi_A^{-1}(P)\subset T_{/A}$ again. Then $\pi_A^*\ul\CMon^P_T$ is $T_{/A}$-cocomplete and $T_{/A}^P$-semiadditive, and the unique left adjoint $\cramped{\ul\CMon^{T^{P\rule[-1pt]{0pt}{0pt}}_{\smash{/A}}\rule[-1pt]{0pt}{0pt}}_{T_{/A}}}\to\pi_A^*\ul\CMon^P_T$ preserving $\mathbb P(*)$ is an equivalence.
		\begin{proof}
		By \cite{CLL_Global}*{Proposition~2.3.26 and Corollary~4.6.9} $\pi_A^*\dashv\pi_{A*}$ restricts to an adjunction \[\Cat_T^{P\text-\oplus, T\textup{-cc}}\rightleftarrows\Cat_{T_{/A}}^{T_{/A}^P\text-\oplus,T_{/A}\textup{-cc}},\] so the claim follows as before by comparing corepresented functors.
		\end{proof}

	\end{proposition}

	\subsection{The free $\bm P$-semiadditive $\bm S$-presentable $\bm T$-category} \label{subsec:Univ_Prop_CMon^P_ST}
	Let now $P\subset T$ be an atomic orbital subcategory satisfying $P \subset S$. As the main results of this section, we will prove the following `partially presentable' versions of the previous theorem:

	\begin{theorem}\label{thm:CMon-ST-adj}
		There is a unique $S$-cocontinuous functor $\iota_!\colon\ul\CMon^P_S\to\iota^*\ul\CMon^P_T$ sending $\mathbb P(*)$ to $\mathbb P(*)$. Moreover, $\iota_!$ is fully faithful, and it sits in a sequence of $S$-adjoints $\iota_!\dashv\iota^*\dashv\iota_*$.
	\end{theorem}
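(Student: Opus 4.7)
The plan is to combine the universal property of $\ul\CMon^P_S$ from \Cref{thm:semiadd-T-spc} with the $S$-adjoint triple $\iota_!\dashv\iota^*\dashv\iota_*$ on $\ul\Spc$ from \Cref{prop:right-adjoints-adjunct}. For existence and uniqueness of the functor $\iota_!$ on $\ul\CMon^P$, I will first verify that $\iota^*\ul\CMon^P_T$, viewed as an $S$-category, satisfies the hypotheses of \Cref{thm:semiadd-T-spc} applied to the maximal cleft $S\subset S$: local smallness is inherited from $\ul\CMon^P_T$; $S$-cocompleteness follows from \Cref{cor:S-cocomplete-adjunction} applied to the $T$-cocomplete $\ul\CMon^P_T$; and $P$-semiadditivity is a consequence of \Cref{rk:P-semiadd-underlying}, using that $P\subset S$ ensures the underlying $P$-category of $\iota^*\ul\CMon^P_T$ coincides with that of $\ul\CMon^P_T$, while finite $P$-coproducts are available because $\ulfinPsets\subset\bbU_S$. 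The theorem then yields the unique $S$-cocontinuous functor $\iota_!\colon\ul\CMon^P_S\to\iota^*\ul\CMon^P_T$ with $\iota_!(\mathbb{P}(*))\simeq\mathbb{P}(*)$.

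For the adjoint triple, I plan to lift the $\ul\Spc$-level $S$-adjunction $\iota^*\dashv\iota_*$ to $\ul\CMon^P$ via the identification $\iota^*\ul\CMon^P_T\simeq\ul\CMon^P(\iota^*\ul\Spc_T)$ of $S$-categories. This identification follows from the fact that $P\subset S$ gives an equivalence $\iota^*\ulfinptdPsets\simeq\ul{\mathbb F}^P_{S,*}$ of $S$-categories (as both parametrize finite coproducts of $P$-morphisms into the base), combined with the naturality of the $\ul\Fun^{P\text-\oplus}$-construction. Since the $\ul\Spc$-level $\iota^*$ and $\iota_*$ are right adjoints, they preserve finite $P$-products, and hence postcomposition with them sends $P$-semiadditive functors to $P$-semiadditive functors, inducing an $S$-adjunction $\iota^*\dashv\iota_*$ on $\ul\CMon^P$. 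For the remaining $S$-adjunction $\iota_!\dashv\iota^*$, I will invoke the pointwise adjoint functor theorem (\Cref{prop:adjoints-BC}): the pointwise left adjoint of $(\iota^*)_A$ exists because both pointwise categories are presentable, and the Beck--Chevalley conditions along morphisms in $S$ follow from those for $\iota^*$ on $\ul\Spc$ by the naturality of the $\ul\CMon^P$-construction. By uniqueness of $S$-cocontinuous functors preserving $\mathbb{P}(*)$, this pointwise-constructed left adjoint agrees with the $\iota_!$ from the first step.

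For full faithfulness of $\iota_!$, the pointwise criterion reduces us to showing that each $(\iota_!)_A\colon\ul\CMon^P_S(A)\to\ul\CMon^P_T(\iota(A))$ is fully faithful, i.e., that the unit $\id\Rightarrow(\iota^*)_A(\iota_!)_A$ is an equivalence. Since $(\iota^*)_A$ on $\ul\CMon^P$ is induced from $(\iota^*)_A$ on $\ul\Spc$ by postcomposition, the conservative forgetful functor $\mathbb{U}\colon\CMon^P\to\ul\Spc$ from \Cref{thm:Semiadd_omnibus} intertwines the units on both sides. It therefore suffices to show that the corresponding unit on $\ul\Spc$ is an equivalence, which holds because $(\iota_!)_A\colon\PSh(S)_{/A}\hookrightarrow\PSh(T)_{/\iota(A)}$ is fully faithful by \Cref{lemma:iota!-ff}.

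The main technical obstacle will be rigorously establishing the identification $\iota^*\ul\CMon^P_T\simeq\ul\CMon^P(\iota^*\ul\Spc_T)$ of $S$-categories and its compatibility with the relevant $S$-adjunctions. While this identification is intuitively clear from the naturality of $\ul\CMon^P$ in the ambient category, a careful justification requires tracking the equivalence $\iota^*\ulfinptdPsets\simeq\ul{\mathbb F}^P_{S,*}$ as well as the interaction between the internal functor categories under restriction, which amounts to a concrete instance of \Cref{constr:no-pun-intended}. Once this is in place, the remainder of the argument proceeds largely formally from the $\ul\Spc$-level statements.
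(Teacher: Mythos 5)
Your first step is sound and is in fact a shortcut past the paper's universe-theoretic detour: since $\iota^*\ul\CMon^P_T$ is locally small, $S$-cocomplete (\Cref{cor:S-cocomplete-adjunction}) and $P$-semiadditive, applying \Cref{thm:semiadd-T-spc} over $S$ does produce the unique $S$-cocontinuous $\iota_!$ preserving $\mathbb P(*)$. The rest of the argument, however, rests on two claims that do not hold as stated. First, the identification $\iota^*\ul\CMon^P_T\simeq\ul\Fun_S^{P\text-\oplus}(\ul{\mathbb F}^P_{S,*},\iota^*\ul\Spc_T)$ is not a consequence of ``naturality'': restriction along $\iota$ does not commute with parametrized functor categories. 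The canonical map $\iota^*\ul\Fun_T(\ul{\mathbb F}^P_{T,*},\ul\Spc_T)\to\ul\Fun_S(\ul{\mathbb F}^P_{S,*},\iota^*\ul\Spc_T)$ of \Cref{constr:no-pun-intended} forgets the functoriality of a $T$-functor along the morphisms of $T$ not lying in $S$, and there is no reason for it to be an equivalence. This is precisely why \Cref{prop:iota*-semiadd-RA} uses this non-invertible comparison as the \emph{definition} of a candidate right adjoint and then verifies, via its adjunct description $(\ref{eq:iota*-adj-semiadd})$ and \Cref{lemma:iota*-over-iota*}, that it restricts to the right adjoint of $\iota_!$ and itself admits a further right adjoint $\iota_*$. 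Your adjoint triple cannot be obtained by transporting the $\ul\Spc$-level triple across an equivalence that is not there.

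Second, the full faithfulness argument has a genuine gap. Knowing that $\mathbb U$ intertwines the right adjoints $\iota^*$ (\Cref{lemma:iota*-over-iota*}) does not imply that it intertwines the \emph{units} of $\iota_!\dashv\iota^*$: for that you would need a compatible equivalence $\mathbb U\circ\iota_!\simeq\iota_!\circ\mathbb U$. Since $\mathbb U$ is a right adjoint while $\iota_!$ is characterized by preservation of $S$-colimits, the mate formalism only provides a Beck--Chevalley comparison $\iota_!\mathbb U\Rightarrow\mathbb U\iota_!$ (the equivalence one does get from $\iota_!\mathbb P\simeq\mathbb P\iota_!$ is $\mathbb U\iota^*\simeq\iota^*\mathbb U$, not one involving $\iota_!$ and $\mathbb U$), and it need not be invertible. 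So conservativity of $\mathbb U$ does not reduce the unit of the $\ul\CMon^P$-level adjunction to the unit of the $\ul\Spc$-level one, and \Cref{lemma:iota!-ff} does not finish the argument. The paper avoids both issues at once: in \Cref{thm:CMon-technical} it identifies $\ul\CMon^P_S$ with the full subcategory $\mathcal C\subset\ul\CMon^P_T$ generated under $S$-colimits by $\mathbb P(*)$, by showing via \Cref{prop:C-bar} and the restricted adjunction of \Cref{prop:semiadd-cat-adj} that $\iota^*\mathcal C$ corepresents the same functor on $P$-semiadditive $S$-cocomplete $S$-categories as $\ul\CMon^P_S$; full faithfulness of $\iota_!$ is then automatic. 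Some such Yoneda-style comparison, or the explicit construction of $\iota^*$ and $\iota_*$ in \Cref{prop:iota*-semiadd-RA}, is needed to close these gaps.
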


	\begin{theorem}\label{thm:CMon-ST-univ-prop}
		There exists a unique $T$-category $\ul\CMon^P_{S\triangleright T}$ equipped with a fully faithful $T$-functor $\iota_!\colon \ul\CMon^P_{S\triangleright T}\hookrightarrow\ul\CMon^P_T$ whose underlying $S$-functor is the inclusion $\iota_!\colon \ul\CMon^P_S\hookrightarrow\iota^*\ul\CMon^P_T$ from \Cref{thm:CMon-ST-adj}. Moreover, $\ul\CMon^P_{S\triangleright T}$ is $S$-presentable, $P$-semiadditive, and it has the following universal property: for any $S$-cocomplete $P$-semiadditive $T$-category $\mathcal D$ evaluation at a certain global section $\mathbb P(*)$ defines an equivalence
		\begin{equation*}
			\ul\Fun_T^{S\textup{-cc}}(\ul\CMon^P_{S\triangleright T},\mathcal D)\iso\mathcal D.
		\end{equation*}
	\end{theorem}

	\begin{remark}
		Note that in contrast to Theorem~\ref{thm:semiadd-T-spc} there is no local smallness condition on $\Dd$ here anymore; in particular, for $S=T$ this improves upon our result in \cite{CLL_Global}.
	\end{remark}

	The proof of these two theorems will occupy the rest of this section.

	\subsubsection{Construction of the universal example} As our first step, we will construct \emph{some} $P$-semiadditive $S$-cocomplete $T$-category $\mathcal C$ with the correct universal property, which must then a posteriori agree with the $T$-category $\ul\CMon^P_{S\triangleright T}$ from \Cref{thm:CMon-ST-univ-prop}. More precisely, we want to show:

	\begin{proposition}\label{prop:C-bar}
		Write ${\mathcal C}\subset\ul\CMon^P_T$ for the full subcategory generated under $S$-colimits by $\mathbb P(*)$. Then ${\mathcal C}$ is a $P$-semiadditive $S$-cocomplete $T$-category, and for any other such $\mathcal D$ evaluating at $\mathbb P(*)$ yields an equivalence $\ul\Fun^{S\textup{-cc}}_T({\mathcal C},\mathcal D)\simeq\mathcal D$.
	\end{proposition}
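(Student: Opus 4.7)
The plan is to first verify that $\mathcal{C}$ is $S$-cocomplete and $P$-semiadditive by inheritance from the ambient $\ul\CMon^P_T$, and then establish the universal property by combining Corollary~\ref{cor:spc-s-t-univ-property} with a $P$-semiadditive descent argument along the restriction of the free--forget adjunction $\mathbb{P}\dashv\mathbb{U}$.

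For the structural properties, $S$-cocompleteness is automatic by construction, since $\mathcal{C}$ is a full $T$-subcategory of the $T$-cocomplete (in particular $S$-cocomplete) $\ul\CMon^P_T$ closed under $S$-colimits. For $P$-semiadditivity, I would use that $P\subset S$ to conclude that $\mathcal{C}$ is closed under $P$-coproducts in the ambient; in particular it contains the empty $P$-coproduct, which by $P$-semiadditivity of $\ul\CMon^P_T$ coincides with the terminal object, making $\mathcal{C}$ pointed. Closure under $P$-products in $\mathcal{C}$ follows since $P$-products agree with $P$-coproducts in the ambient, and the relevant norm maps of $\mathcal{C}$ agree with those of $\ul\CMon^P_T$ by full faithfulness of the inclusion, hence are equivalences.

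For the universal property, let $\mathcal{D}$ be an $S$-cocomplete $P$-semiadditive $T$-category. Consider the $S$-cocontinuous functor $\mathbb{P}\colon \ul\Spc_{S\triangleright T}\to \mathcal{C}$ obtained by composing the inclusion $\ul\Spc_{S\triangleright T}\hookrightarrow \ul\Spc_T$ with the left adjoint $\mathbb{P}\colon\ul\Spc_T\to\ul\CMon^P_T$ and observing that the image lies in $\mathcal{C}$ by $S$-cocontinuity. This functor is essentially surjective, since its essential image is an $S$-closed full $T$-subcategory of $\mathcal{C}$ containing $\mathbb{P}(*)$. Precomposition with $\mathbb{P}$ yields a factorization
\[
\ul\Fun^{S\textup{-cc}}_T(\mathcal{C},\mathcal{D})\xrightarrow{\;\mathbb{P}^*\;} \ul\Fun^{S\textup{-cc}}_T(\ul\Spc_{S\triangleright T},\mathcal{D})\mathrel{\smash{\xrightarrow[\simeq]{\mathrm{ev}_*}}}\mathcal{D}
\]
of the evaluation-at-$\mathbb{P}(*)$ functor, where the second map is the equivalence of Corollary~\ref{cor:spc-s-t-univ-property}. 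Since $\mathcal{C}$ is generated under $S$-colimits by $\mathbb{P}(*)$, the first map $\mathbb{P}^*$ is fully faithful by a standard argument in the style of the proof of Theorem~\ref{thm:mixed-presheaves}, applied to the terminal $T$-category.

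The main obstacle is essential surjectivity: given an $S$-cocontinuous $F_0\colon \ul\Spc_{S\triangleright T}\to\mathcal{D}$, one must construct an $S$-cocontinuous $F\colon \mathcal{C}\to \mathcal{D}$ with $F\circ \mathbb{P}\simeq F_0$. Here the $P$-semiadditivity of $\mathcal{D}$ is crucial: since $P\subset S$, any $S$-cocontinuous functor into a $P$-semiadditive target is automatically $P$-semiadditive. My plan would be to identify $\mathcal{C}$ with the universal $P$-semiadditive $S$-cocomplete completion of $\ul\Spc_{S\triangleright T}$, by adapting the $P$-semiadditive envelope construction of \cite{CLL_Global}*{Section~4.6} (see Theorem~\ref{thm:Semiadd_omnibus}) from the $T$-cocomplete to the $S$-cocomplete setting: since $\mathcal{C}$ is realized as the $S$-closure of $\mathbb{P}(*)$ inside the $P$-semiadditive $T$-cocompletion $\ul\CMon^P_T$ of $\ul\Spc_T$, it should inherit the expected universal property within the $S$-cocomplete world, providing the required factorization of $F_0$ through $\mathcal{C}$.
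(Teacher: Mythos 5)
Your treatment of the structural properties ($S$-cocompleteness and $P$-semiadditivity of $\mathcal C$ by closure inside the ambient category) is essentially the paper's argument, modulo a size subtlety the paper handles explicitly (one must first check that $\ul\CMon^P_T$ is itself closed under the relevant colimits inside a larger-universe version $\ul\CMON^P_T$, which requires knowing that the semiadditive localization functor preserves small objects). The universal property, however, has a genuine gap. You correctly isolate essential surjectivity of $\mathbb P^*$ --- i.e.\ extending a given $S$-cocontinuous $F_0\colon\ul\Spc_{S\triangleright T}\to\Dd$ to $\mathcal C$ --- as the crux, but your proposed resolution (``identify $\mathcal C$ with the universal $P$-semiadditive $S$-cocomplete completion \dots it should inherit the expected universal property'') is circular: that identification \emph{is} the statement being proven, and no mechanism for producing the extension is given. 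The obstruction is real: the only universal property available from \cite{CLL_Global} is that of $\ul\CMon^P_T$ against \emph{$T$-cocomplete} targets, and $\Dd$ is only assumed $S$-cocomplete, so one cannot directly Kan-extend along $\mathcal C\hookrightarrow\ul\CMon^P_T$ into $\Dd$.

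The paper's key idea, which your proposal is missing, is Lemma~\ref{lemma:semiadd-T-cocompl}: embed $\Dd$ fully faithfully and $S$-cocontinuously into a (large) $P$-semiadditive $T$-category $\Dd'$ admitting all $T$-colimits, via the opposite of the Yoneda embedding into $\ul\Fun_T^{P\text-\times}(\Dd,\ul\SPC_T)$. One can then run the established universal property of $\ul\CMon^P_T$ against $\Dd'$ (the paper does this through finite-$P$-product-preserving functors out of $(\ul{\mathbb F}^P_{T,*})^\op$ and the Kan extensions $j_!$, $k_!$, rather than through $\ul\Spc_{S\triangleright T}$ and $\mathbb P$), and finally check that the resulting extension lands back in $\Dd\subset\Dd'$ because $\mathcal C$ is generated under $S$-colimits by $\mathbb P(*)$ and $\Dd$ is closed under $S$-colimits in $\Dd'$. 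This forces the universe bookkeeping that occupies much of the paper's proof. A further, more minor, error: $\mathbb P\colon\ul\Spc_{S\triangleright T}\to\mathcal C$ is \emph{not} essentially surjective --- generation under $S$-colimits does not put every object of $\mathcal C$ in the essential image (already non-parametrically, $\CMon(\Spc)$ is generated under colimits by $\mathbb P(*)$ but not every commutative monoid is free); fortunately only the generation statement, not essential surjectivity, is needed for the full faithfulness of $\mathbb P^*$.
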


	The basic idea will be to deduce the universal property of $\mathcal C$ from the one for $\ul\CMon^P_T$. However, we only understand maps from the latter to \emph{$T$-cocomplete} categories, so we will have to embed a general $P$-semiadditive $S$-cocomplete $T$-category into a $T$-cocomplete one first. However, in this process some size issues pop up; to avoid any ambiguities, we will therefore for once refer back to our chosen Grothendieck universes explicitly:

	\begin{lemma}\label{lemma:semiadd-T-cocompl}
		Let $\mathcal D$ be an $S$-cocomplete $P$-semiadditive $\groV$-small $T$-category. Then there exists a $\groW$-small, locally $\groV$-small $P$-semiadditive $T$-category $\mathcal D'$ having all $\groV$-small $T$-colimits together with a fully faithful functor $j\colon\mathcal D\to\mathcal D'$ preserving all $\groU$-small $S$-colimits.
		\begin{proof}
			Write $\ul\SPC_T$ for the $\groW$-small and locally $\groV$-small $T$-category of $\groV$-small spaces. Then the Yoneda embedding $\mathcal D^\op\to\ul\Fun_T(\mathcal D,\ul\SPC_T)$ actually lands in the full subcategory $\mathcal E\coloneqq\ul\Fun_T^{P\text-\times}(\mathcal D,\ul\SPC_T)$ by \cite{martiniwolf2021limits}*{Corollary~4.4.8}. Now $\mathcal E$ is closed under all $\groV$-small $T$-limits and the Yoneda embedding preserves all $\groU$-small $S$-limits by Proposition~4.4.7 of \emph{op.~cit.} Thus, the Yoneda embedding $\mathcal D^\op\to\mathcal E$ is a fully faithful functor into a category with all $\groV$-small $T$-limits preserving $\groU$-small $S$-limits. Moreover, as $\mathcal D$ is $P$-semiadditive, so is $\mathcal E$ by \cite{CLL_Global}*{Proposition~4.6.13}, and hence so is $\mathcal E^\op$ by Lemma~4.5.4 of \emph{op.~cit}. The dual $\mathcal D\to\mathcal E^\op$ of the Yoneda embedding therefore has the required properties.
		\end{proof}
	\end{lemma}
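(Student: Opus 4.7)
The plan is to construct $\mathcal D'$ as the opposite of a suitable parametrized presheaf category on $\mathcal D$, with $j$ obtained by dualizing the Yoneda embedding. Since we want $j$ to preserve colimits, I will work with the Yoneda embedding of $\mathcal D^\op$, namely $y\colon\mathcal D^\op \to \ul\Fun_T(\mathcal D,\ul\SPC_T)$, where $\ul\SPC_T$ denotes the $T$-category of $\groV$-small spaces; this is fully faithful and preserves all $\groU$-small limits in $\mathcal D^\op$, i.e.\ all $\groU$-small colimits in $\mathcal D$.

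The target as stated is much too large and fails to be $P$-semiadditive. To remedy this, I would restrict to the full subcategory $\mathcal E\coloneqq\ul\Fun_T^{P\text{-}\times}(\mathcal D,\ul\SPC_T)$ of $P$-product-preserving functors. Every representable lands in $\mathcal E$ because representable functors preserve all limits (in particular $P$-products), and $\mathcal E$ inherits all $\groV$-small $T$-limits from the ambient functor category, since such limits are computed pointwise in $\ul\SPC_T$ and $P$-product preservation is stable under pointwise limits. The crucial input from the prequel is that a functor category of the form $\ul\Fun_T^{P\text{-}\times}(\mathcal D,-)$ out of a $P$-semiadditive source is itself $P$-semiadditive (\cite{CLL_Global}*{Proposition~4.6.13}), which makes $\mathcal E$ $P$-semiadditive. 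Then $\mathcal E^\op$ is likewise $P$-semiadditive, by self-duality of the notion (\cite{CLL_Global}*{Lemma~4.5.4}).

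Setting $\mathcal D'\coloneqq \mathcal E^\op$, the opposite of $y$ gives a fully faithful functor $j\colon\mathcal D\to\mathcal D'$ preserving all colimits that exist in $\mathcal D$, in particular all $\groU$-small $S$-colimits. Size bookkeeping: since $\ul\SPC_T$ is $\groW$-small and locally $\groV$-small and $\mathcal D$ is $\groV$-small, $\mathcal E$ and hence $\mathcal D'$ are $\groW$-small and locally $\groV$-small; moreover $\mathcal D'$ has all $\groV$-small $T$-colimits because $\mathcal E$ has all $\groV$-small $T$-limits.

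The main obstacle I anticipate is not the construction itself, which is a fairly standard Yoneda-opposite argument, but rather isolating and invoking the two cited facts about $\ul\Fun_T^{P\text{-}\times}$: that it actually defines a $T$-subcategory (so it makes sense to ask whether $y$ factors through it) and that it inherits $P$-semiadditivity from its source. Both of these ultimately come down to a careful manipulation of norm maps in parametrized functor categories, which has been done in \cite{CLL_Global}; the rest is bookkeeping of universes, plus the standard fact that the parametrized Yoneda embedding preserves all limits that exist, for which I would cite the relevant statements in \cite{martiniwolf2021limits}.
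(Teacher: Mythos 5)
Your proposal is correct and follows essentially the same route as the paper: dualize the Yoneda embedding $\mathcal D^\op\to\ul\Fun_T(\mathcal D,\ul\SPC_T)$, observe it lands in $\mathcal E=\ul\Fun_T^{P\text{-}\times}(\mathcal D,\ul\SPC_T)$, and take $\mathcal D'=\mathcal E^\op$, invoking the same two results from \cite{CLL_Global} for $P$-semiadditivity of $\mathcal E$ and its opposite. The only difference is cosmetic: the paper cites \cite{martiniwolf2021limits}*{Corollary~4.4.8 and Proposition~4.4.7} for the factorization through $\mathcal E$ and the limit-preservation of Yoneda, whereas you argue these points directly.
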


	As the lemma requires us to pass a larger universe, it is not clear a priori whether $\ul\CMon^P_T$ still has the correct universal property (we will see a posteriori that, as a matter of fact, it does). For locally small $\Dd$, one might try to avoid this issue by considering $\ul\Fun_T^{P\text-\times}(\mathcal D,\ul\Spc_T)$ instead or even by just closing up the Yoneda image under $\groU$-small $T$-limits in there, but even in this case it is not clear whether the result is still locally small---and said local smallness was crucial in the proof of Theorem~\ref{thm:Semiadd_omnibus} given in \cite{CLL_Global}, which relied on the Special Adjoint Functor Theorem. Accordingly, we will have to consider a $\groW$-version $\ul\CMON^P_T$ of $\ul\CMon^P_T$. The crucial technical lemma to relate these two to each other will be the following:

	\begin{lemma}
		The functor $\ul\Fun_T(\ul{\mathbb F}^P_{T,*},\ul\SPC_T)\to\ul\CMON^P_T\coloneqq\ul\Fun_T^{P\text-\oplus}(\ul{\mathbb F}^P_{T,*},\ul\SPC_T)$ left adjoint to the inclusion preserves $\ul\Fun_T(\ul{\mathbb F}^P_{T,*},\ul\Spc_T)$.
	\end{lemma}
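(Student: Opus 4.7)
My plan is to realize $L^{P\text-\oplus}$ as an accessible reflective localization at a small set $S$ of morphisms whose sources and targets already lie in the small-valued subcategory $\mathcal Y \coloneqq \ul\Fun_T(\ul{\mathbb F}^P_{T,*},\ul\Spc_T)$, and then to apply the small object argument to construct the reflection entirely inside~$\mathcal Y$.

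Concretely, I would first show that the inclusion $\ul\CMON^P_T \subset \ul\Fun_T(\ul{\mathbb F}^P_{T,*},\ul\SPC_T)$ is the inclusion of $S$-local objects for some small set $S$ of morphisms. The defining condition—that the relative norm maps $\Nm_p$ are equivalences for every $p \in \mathbb F^P_T$—is a Segal-type condition that, via the parametrized Yoneda lemma, can be rephrased as $F$ inverting a small collection of canonical maps between (finite $T$-colimits of) representable presheaves on $\ul{\mathbb F}^P_{T,*}$. Since $\ul{\mathbb F}^P_{T,*}$ is itself a small $T$-category (as $P$ is atomic orbital and $T$ is small), there is only a small set of such generating maps, and all of their sources and targets lie in $\mathcal Y$.

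Next, I would observe that $\mathcal Y \subset \ul\Fun_T(\ul{\mathbb F}^P_{T,*},\ul\SPC_T)$ is closed under $\groU$-small $T$-colimits: colimits in the ambient functor category are computed pointwise in the target, and the inclusion $\ul\Spc_T \hookrightarrow \ul\SPC_T$ preserves and reflects $\groU$-small $T$-colimits. Combined with the fact that the generators of $S$ already lie in $\mathcal Y$, the small object argument applied to any $F \in \mathcal Y$ produces its $S$-local replacement as a transfinite pushout tower that stays entirely inside $\mathcal Y$ and stabilizes at some small regular cardinal. By uniqueness of $S$-local replacements in the ambient category, this replacement must be equivalent to $L^{P\text-\oplus}(F)$, which therefore lies in $\mathcal Y$, i.e.\ in $\ul\CMon^P_T$ as claimed.

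The main obstacle is the bookkeeping step of making precise the reformulation of the norm-equivalence conditions as $S$-locality for an explicit small set $S$ of morphisms between small-valued $T$-functors, as well as carrying out the small object argument intrinsically in the $T$-parametrized setting. Both ingredients are parametrized analogues of standard classical arguments, but require care to keep the parametrization and the distinction between the universes $\groU \in \groV$ straight. An alternative, more hands-on route would be to exhibit an explicit colimit formula for $L^{P\text-\oplus}$ (e.g.\ via a span/plus construction along the lines of Segal's machine adapted to $\mathbb F^P_T$) and observe directly that this formula involves only $\groU$-small (co)limits of the values of $F$; either approach ultimately relies on the small object argument for its rigorous justification.
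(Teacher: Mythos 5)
Your proposal is correct and follows essentially the same strategy as the paper: the paper likewise rephrases $P$-semiadditivity as locality with respect to a small set $U$ of maps living already in $\ul\Fun_T(\ul{\mathbb F}^P_{T,*},\ul\Spc_T)$ (made explicit via the Grothendieck construction and the Segal/semiadditivity conditions), and then produces the local replacement inside the small-valued subcategory. The only presentational difference is that the paper handles the universe issue by comparing the strongly saturated classes $U_1\subset U_2$ generated under $\groU$-small versus $\groV$-small colimits and invoking \cite{HTT}*{Proposition~5.4.5.15} in both universes, rather than appealing to "uniqueness of local replacements," but this is the same underlying argument.
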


	Accordingly, it restricts to a left adjoint $\ul\Fun_T(\ul{\mathbb F}^P_{T,*},\ul\Spc_T)\to\ul\CMon^P_T$ of the inclusion, and there is no harm in denoting both the localization functor in ordinary $T$-categories and in large $T$-categories by the same symbol $L^{P\text-\oplus}$.

	\begin{proof}
		Let $A\in T$ be arbitrary. \cite{CLL_Global}*{Remark~2.2.14} provides for any $\groW$-small category $\Ee$ a natural equivalence
		\begin{equation}\label{eq:Grothendieck}
			\ul\Fun_T(\ul{\mathbb F}^P_{T,*},\ul{\Ee}_T)(A)\simeq\Fun(\smallint\,\ul{\mathbb F}^P_{T,*}\times\ul A,\Ee)
		\end{equation}
		where $\smallint$ denotes the usual Grothendieck construction over $T^\op$ and $(\blank)_T$ denotes the $T$-category of $T$-objects (Example~\ref{ex:T-objects}). On the other hand, \cite{CLL_Global}*{Remark~4.9.9} characterizes the essential image of $\ul\CMon^P(\ul{\Ee}_T)(A)$ under this---it consists precisely of the functors $F^\dagger\colon \smallint\ul{\mathbb F}^P_{T,*}\times\ul{A}\to\Ee$ satisfying the following:
		\begin{enumerate}
			\item For every $f\colon B\to A$ in $T$ the restriction of $F^\dagger$ to the non-full subcategory $\ul{\mathbb F}^P_{T,*}(B)\simeq\ul{\mathbb F}^P_{T,*}(B)\times\{f\}\subset \int\ul{\mathbb F}^P_{T,*}\times\ul A$ is semiadditive in the usual sense.
			\item For every $p\colon B\to B'$ in $P$ and $f\colon B'\to A$ in $T$ a certain natural \emph{Segal map} $F^\dagger(B',B_+,f)\to F^\dagger(B,B_+,pf)$ is an equivalence; here we as usual denote objects in the Grothendieck construction by triples $(C\in T^\op,X\in \ul{\mathbb F}^P_{T,*}(C),g\in \maps(C,A))$.
		\end{enumerate}
		Specializing to $\Ee=\SPC$ and writing $y$ for the (non-parametrized) Yoneda embedding of $\smallint\ul{\mathbb F}^P_{T,*}\times\ul A$, we see that $F$ is $P$-semiadditive if and only if $F^\dagger$ is local with respect to the set $U^\dagger$ made up of suitable maps
		\begin{enumerate}
			\item $\emptyset\to y(B,*,f)$ for every map $f\colon B\to A$ in $T$ (so that the restriction to each $\ul{\mathbb F}^P_{T,*}(B)\times\{f\}$ is pointed)
			\item $y(B,X_+,f)\amalg y(B,Y_+,f)\to y(B,X_+\vee Y_+,f)$ for all $f\colon B\to A$ in $T$ and $X_+,Y_+\in\ul{\mathbb F}^P_{T,*}(B)$ (so that each restriction sends coproducts to products)
			\item $y(B,B_+,pf)\to y(B'
			,B_+,f)$ for every $p\colon A\to B$ in $P$ and $f\colon B'\to B$ in $T$ (ensuring that the Segal maps are equivalences).
		\end{enumerate}
		Transporting $U^\dagger$ along the equivalence $(\ref{eq:Grothendieck})$, we then get a set $U$ such that $F$ is $P$-semiadditive if and only if it is $U$-local. By direct inspection, each map in $U^\dagger$ actually lives in $\Fun(\int\ul{\mathbb F}^P_{T,*}\times\ul A,\Spc)$ (as opposed to functors into $\SPC$). By naturality of $(\ref{eq:Grothendieck})$ we can therefore also take the set $U$ to consist of maps in $\ul\Fun_T(\ul{\mathbb F}^P_{T,*},\ul\Spc_T)(A)$. We now write $U_1$ for the strongly saturated class generated by $U$ in $\ul\Fun_T(\ul{\mathbb F}^P_{T,*},\ul\Spc_T)(A)$ (with respect to $\groU$-small colimits) and $U_2$ for the strongly saturated class generated in $\ul\Fun_T(\ul{\mathbb F}^P_{T,*},\ul\SPC_T)(A)$ (with respect to $\groV$-small colimits). Clearly, $U_1\subset U_2$.

		By \cite{HTT}*{Proposition~5.4.5.15}, there exists for any $F\in\ul\Fun_T(\ul{\mathbb F}^P_{T,*},\ul\Spc_T)(A)$ a map $\eta\colon F\to F'$ into a $U$-local $F'$ such that $\eta\in U_1$. But then also $\eta\in U_2$, so it qualifies as the adjunction unit in the larger category by the same result, and in particular the image of $F$ under the localization functor to $\ul\CMON^P_T(A)$ is equivalent to $F'\in\ul\Fun_T(\ul{\mathbb F}^P_{T,*},\ul\Spc_T)(A)$ as desired.
	\end{proof}

	\begin{proof}[Proof of Proposition~\ref{prop:C-bar}]
		By the previous lemma, $\ul\CMon^P_T\subset\ul\CMON^P_T$ contains $\mathbb P(*)$. We claim that it is closed under $\groU$-small $T$-colimits: indeed, fiberwise colimits in $\ul\CMON^P_T$ are formed by first computing them in $\ul\Fun_T(\ul{\mathbb F}^P_{T,*},\ul\SPC_T)$ and then reflecting via $L^{P\text-\oplus}$, so $\ul\CMon^P_T$ is closed under $\groU$-small fiberwise colimits by the lemma, and similarly the functor $f_!\colon\ul\CMON^P_T(A)\to\ul\CMON^P_T(B)$ factors for any map $f\colon A\to B$ in $T$ as
		\begin{equation*}\hskip-4.87pt\hfuzz=4.88pt
			\ul\CMON^P_T(A)\hookrightarrow\ul\Fun_T(\ul{\mathbb F}^P_{T,*},\ul\SPC_T)(A)\xrightarrow{\,f_!\,}\ul\Fun_T(\ul{\mathbb F}^P_{T,*},\ul\SPC_T)(B)\xrightarrow{L^{P\text-\oplus}}\ul\CMON^P_T(B).
		\end{equation*}
		In particular, ${\mathcal C}\subset\ul\CMon^P_T$ is also closed under all $\groU$-small $S$-colimits in $\ul\CMON^P_T$, and thus under finite $P$-coproducts. As $\ul\CMON^P_T$ is $P$-semiadditive, ${\mathcal C}$ is then also closed under finite $P$-products and moreover $P$-semiadditive itself. By \cite{CLL_Global}*{Corollary~4.7.8$^\op$} there is then a unique functor $j\colon(\ul{\mathbb F}^P_{T,*})^\op\to{\mathcal C}$ that preserves finite $P$-products and sends $S^0$ to $\mathbb P(*)$. We moreover write $k$ for the inclusion ${\mathcal C}\hookrightarrow\ul\CMON^P_T$; then $k$ preserves $\groU$-small $S$-colimits, and hence in particular finite $P$-(co)products.

		If now $\mathcal D'$ is a $\groW$-small and locally $\groV$-small $P$-semiadditive $T$-category which admits all $\groV$-small $T$-colimits, then \cite{martiniwolf2021limits}*{Theorem~6.3.5 and Corollary~6.3.7} show that the left Kan extension functors $j_!\colon\ul\Fun_T((\ul{\mathbb F}^P_{T,*})^\op,\mathcal D')\to\ul\Fun_T({\mathcal C},\mathcal D')$ and $k_!\colon\ul\Fun_T({\mathcal C},\mathcal D')\to\ul\Fun_T(\ul\CMON^P_T,\mathcal D')$ exist and that the latter is fully faithful, while \cite{CLL_Global}*{Proposition~4.8.12} shows that $k_!j_!$ restricts to an equivalence $\ul\Fun_T^{P\text-\times}(\ul{\mathbb F}^P_{T,*},\mathcal D')\simeq\ul\Fun_T^{T\text{-CC}}(\ul\CMON^P_T,\mathcal D')$, where the right hand side denotes functors preserving all $\groV$-small $T$-colimits.

		We claim that $j_!$ restricts to an equivalence $\ul\Fun_T^{P\text-\times}((\ul{\mathbb F}^P_{T,*})^\op,\mathcal D')\simeq\ul\Fun_T^{S\text{-cc}}({\mathcal C},\mathcal D')$, for which it is enough by $2$-out-of-$3$ that for any $A\in T$ and any finite $P$-product preserving $f\colon(\ul{\mathbb F}^P_{T,*})^\op\to\ul\Fun_T(\ul A,\mathcal D')$ the Kan extension $j_!f$ preserves $\groU$-small $S$-colimits, and that conversely any $S$-cocontinuous functor arises this way.

		For the first statement, it is enough to observe that $k_!j_!f\colon\ul\CMON^P_T\to\ul\Fun_T(A,\Dd')$ is in particular $S$-cocontinuous, whence so is $k^*k_!j_!f\simeq j_!f$ as $k$ is $S$-cocontinuous. Conversely, if $F\colon{\mathcal C}\to\ul \Fun_T(\ul A,\mathcal D')$ is $S$-cocontinuous, then its restriction to $(\ul{\mathbb F}^P_{T,*})^\op$ preserves finite $P$-products. Consider now the subcategory of $\mathcal C$ of all objects for which the counit $\epsilon\colon j_!j^*F\to F$ is an equivalence. Then this is closed under $\groU$-small $S$-colimits as both sides are $S$-cocontinuous, and it moreover contains $\mathbb P(*)$ as the unit $j^*F\to j^*j_!j^*F$ is an equivalence by full faithfulness of $j_!$. The claim then follows as ${\mathcal C}$ is generated by $\mathbb P(*)$ under $\groU$-small $S$-colimits by construction.

		Let now $\mathcal D$ be a $P$-semiadditive $S$-cocomplete $T$-category, and use Lemma~\ref{lemma:semiadd-T-cocompl} to obtain an $S$-cocontinuous embedding into a large $\mathcal D'$ as above. Then the Kan extension $j_!\colon\ul\Fun^{P\text-\times}_T((\ul{\mathbb F}^P_{T,*})^\op,\mathcal D')\to\ul\Fun_T^{T\text{-CC}}({\mathcal C},\mathcal D')$ restricts to $\ul\Fun^{P\text-\times}_T((\ul{\mathbb F}^P_{T,*})^\op,\mathcal D)\to\ul\Fun_T({\mathcal C},\mathcal D)$ as $\mathcal D\subset\mathcal D'$ is closed under $\groU$-small $S$-colimits and ${\mathcal C}$ is generated under them by $\mathbb P(*)$. In particular, $j^*\colon\ul\Fun^{S\text{-cc}}_T({\mathcal C},\mathcal D)\to\ul\Fun_T^{P\text-\times}((\ul{\mathbb F}^P_{T,*})^\op,\mathcal D)$ is an equivalence. The proposition follows as the right hand side is further equivalent to $\mathcal D$ via evaluation at $S^0$ by \cite{CLL_Global}*{Corollary~4.7.8$^\op$}.
	\end{proof}

	\begin{remark}
		Running the same argument in an even larger universe $\mathfrak X$, the above proof (without the penultimate paragraph) shows that $\ul\Fun^{T\text{-cc}}_{T}(\ul\CMon^P_T,\mathcal D)\simeq\mathcal D$ via evaluation at $\mathbb P(*)$ for any $\groW$-small $P$-semiadditive $T$-category $\Dd$ with $\groU$-small $S$-colimits.
	\end{remark}

	As an upshot, we can now stop thinking about universes.

	\subsubsection{Relation to $\ul\CMon^P_S$} Next, we want to understand the underlying $S$-category of the universal $P$-semiadditive $S$-cocomplete $T$-category $\mathcal C$ constructed above. As in the unstable situation this will be some formal Yoneda yoga.

	\begin{proposition}\label{prop:semiadd-cat-adj}
		The adjunction $\iota^*\colon\Cat_T\rightleftarrows\Cat_S\noloc\iota_*$ restricts to give adjunctions $\Cat_T^{P\text-\oplus}\rightleftarrows\Cat_S^{P\textup{-}\oplus}$, $\Cat_T^{P\text-\oplus,S\textup{-cc}}\rightleftarrows\Cat_S^{P\textup-\oplus, S\textup{-cc}}$, and $\Pr_T^{S,P\text-\oplus}\rightleftarrows\Pr_S^{S,P\text-\oplus}$.
		\begin{proof}
			If will suffice to prove the first statement; the second one will then follow from Corollary~\ref{cor:S-cocomplete-adjunction}.

			By Lemma~\ref{lemma:iota!-pullback}, $\iota_!\colon\PSh(S)\to\PSh(T)$ preserves pullbacks, while its right adjoint restricts to $\ul{\mathbb F}^P_T\to\iota_*\ul{\mathbb F}^P_S$ by Lemma~\ref{lemma:V-restr} for $\bbV=\ul{\mathbb F}^P_T$. \cite{CLL_Global}*{Lemma~4.6.5} therefore shows that both $\iota^*$ and $\iota_*$ restrict accordingly. Moreover, Theorem~\ref{thm:iota*-cc} shows that the unit and counit are $P$-cocontinuous and in particular $P$-semiadditive.
		\end{proof}
	\end{proposition}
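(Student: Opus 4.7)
The plan is to follow the general strategy already used for Theorem~\ref{thm:iota*-cc} and Corollary~\ref{cor:S-cocomplete-adjunction}: first restrict $\iota^*\dashv\iota_*$ to $\Cat^{P\text-\oplus}$, then layer in the additional $S$-cocompleteness and fiberwise presentability conditions. The key observation is that since $P\subset S$, the morphisms appearing as components of objects of $\ul{\mathbb F}^P_T(B)\subset\PSh(T)_{/B}$ are admissible, so $\ul{\mathbb F}^P_T\subset\bbU_S$; furthermore, by the same Lemma~\ref{lemma:V-restr} used in the proof of Theorem~\ref{thm:iota*-cc}, $\iota^*\colon\PSh(T)\to\PSh(S)$ sends objects of $\ul{\mathbb F}^P_T$ into the corresponding $\ul{\mathbb F}^P_S$. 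Together with the fact that $\iota_!\colon\PSh(S)\to\PSh(T)$ preserves pullbacks (Lemma~\ref{lemma:iota!-pullback}), the criterion of \cite{CLL_Global}*{Lemma~4.6.5} then shows that both $\iota^*$ and $\iota_*$ send $P$-semiadditive (and pointed) $T$-/$S$-categories to $P$-semiadditive $S$-/$T$-categories and preserve $P$-semiadditive functors.

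Next I would verify that the unit $\mathcal C\to\iota_*\iota^*\mathcal C$ and counit $\iota^*\iota_*\mathcal D\to\mathcal D$ of the original adjunction remain morphisms in $\Cat^{P\text-\oplus}$. This is where Theorem~\ref{thm:iota*-cc}, applied to $\bbV=\ul{\mathbb F}^P_T$ (noting again that this is contained in $\bbU_S$), does the work for free: it shows that the unit and counit are even $\ul{\mathbb F}^P_T$-cocontinuous, and a $\ul{\mathbb F}^P_T$-cocontinuous functor between $P$-semiadditive categories is automatically $P$-semiadditive by \cite{CLL_Global}*{Proposition~4.6.14}. This completes the restriction to $\Cat_T^{P\text-\oplus}\rightleftarrows\Cat_S^{P\text-\oplus}$.

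The second adjunction $\Cat_T^{P\text-\oplus,S\text{-cc}}\rightleftarrows\Cat_S^{P\text-\oplus,S\text{-cc}}$ is then immediate by intersecting the first restriction with the $S$-cocomplete restriction of Corollary~\ref{cor:S-cocomplete-adjunction}: both $\iota^*$ and $\iota_*$ preserve the intersection of the two subcategories on each side, and the unit and counit are simultaneously $S$-cocontinuous and $P$-semiadditive.

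For the third adjunction $\Pr_T^{S,P\text-\oplus}\rightleftarrows\Pr_S^{S,P\text-\oplus}$, it only remains to check that $\iota^*$ and $\iota_*$ preserve fiberwise presentability of the underlying $T$-/$S$-categories. But this is clear since, by construction, both functors act pointwise via precomposition with a functor between presheaf categories, and fiberwise presentability is defined pointwise. I do not expect a genuine obstacle here; the only subtlety worth flagging is being careful that $P\subset S$ is what forces $\ul{\mathbb F}^P_T\subset\bbU_S$, which is precisely the hypothesis needed to legitimately apply Theorem~\ref{thm:iota*-cc} and Lemma~\ref{lemma:V-restr} with $\bbV=\ul{\mathbb F}^P_T$.
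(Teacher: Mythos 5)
Your proposal is correct and follows essentially the same route as the paper's proof: the same combination of Lemma~\ref{lemma:iota!-pullback}, Lemma~\ref{lemma:V-restr} with $\bbV=\ul{\mathbb F}^P_T$, and \cite{CLL_Global}*{Lemma~4.6.5} to restrict the adjoints, Theorem~\ref{thm:iota*-cc} for the unit and counit, and Corollary~\ref{cor:S-cocomplete-adjunction} for the $S$-cocomplete and presentable variants. Your explicit flagging of $P\subset S$ as the hypothesis guaranteeing $\ul{\mathbb F}^P_T\subset\bbU_S$ is exactly the point the paper leaves implicit.
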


	In fact, the above argument also shows slightly more generally:

	\begin{proposition}\label{prop:semiadd-cat-adj-general}
		Let $\mathcal C$ be a pointed $T$-category with finite $P$-coproducts and let $\mathcal D$ be an $S$-category with finite $P$-products. Then a $T$-functor $\mathcal C\to\iota_*\mathcal D$ is $P$-semiadditive if and only if its adjunct $\iota^*\mathcal C\to\mathcal D$ is so.\qed
	\end{proposition}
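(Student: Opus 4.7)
The plan is to imitate the proof of \Cref{prop:semiadd-cat-adj}, extracting a pointwise statement from the same technical inputs. First I would check that the two conditions in the proposition make sense: applying \Cref{lemma:restriction-cc} to $\iota_!\colon\PSh(S)\to\PSh(T)$ (which preserves pullbacks by \Cref{lemma:iota!-pullback} and restricts to $\ul{\mathbb F}^P_S\to\ul{\mathbb F}^P_T$ since $P\subset S$) shows that $\iota^*\mathcal C$ inherits finite $P$-coproducts from $\mathcal C$, while the dual application of \Cref{lemma:V-restr} shows that $\iota_*\mathcal D$ acquires finite $P$-products. Thus both instances of $P$-semiadditivity are well-defined.

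Next I would unwind the adjunction to compare the two norm maps directly. Writing $G=\iota^*F$ for the adjunct, the explicit form of the counit gives $G_A=\eta_A^*\circ F_A$ at a representable $A\in S$, where $\eta_A\colon A\to\iota^*A$ denotes the unit of $\iota_!\dashv\iota^*$ on presheaves. For a morphism $p\colon A\to B$ in $P$, the naturality square of $\eta$ at $p$ is a pullback in $\PSh(S)$ by \Cref{lemma:unit-pb}, and base change in $\mathcal D$ (which has $P$-products with Beck--Chevalley against arbitrary maps in $\PSh(S)$, cf.\ \Cref{lem:CharacterizationMCocompleteness}) yields an equivalence $\eta_B^*(\iota^*p)_*\iso p_*\eta_A^*$. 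Chasing the construction of the relative norm map from \cite{CLL_Global}*{Construction~4.6.1} through this identification then shows $\Nm^G_p\simeq\eta_B^*(\Nm^F_p)$; in particular, $\Nm^F_p$ being an equivalence implies so is $\Nm^G_p$.

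For the reverse implication, I would run the dual argument using the identity $F=\iota_*G\circ\eta_{\mathcal C}$ coming from the triangle identity, together with the counit pullback square from \Cref{prop:counit-pb}: the corresponding Beck--Chevalley equivalence now takes place in $\mathcal C$ and arises from its $P$-cocompleteness, identifying $\Nm^F_p$ with a suitable restriction of the norm map of $G$ at $\iota^*p$. The main obstacle will be to carefully match the two mate constructions through these Beck--Chevalley equivalences, but this amounts to a routine diagram chase in the exact same style as the semiadditivity computations of \cite{CLL_Global}*{Section~4.6} and introduces no genuinely new ideas beyond the inputs listed.
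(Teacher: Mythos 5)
Your proposal is correct and takes essentially the same route as the paper: the paper's proof is just the observation that the argument for \Cref{prop:semiadd-cat-adj} — which rests on exactly the inputs you list (\Cref{lemma:iota!-pullback}, \Cref{lemma:V-restr}, the unit and counit pullback squares of \Cref{lemma:unit-pb} and \Cref{prop:counit-pb}, packaged via \Cref{thm:iota*-cc} into the statement that $\iota^*$, $\iota_*$ and the (co)unit are compatible with $P$-semiadditivity) — applies verbatim to a single functor by factoring the adjunct through the unit resp.\ counit. Your write-up merely unpacks that citation into an explicit norm-map chase; the only blemishes are notational (the unit $\eta_A$ lands in $\iota^*\iota_!A$, and the relevant norm of $F$ in the forward direction is $\Nm^F_{\iota_!p}$), not mathematical.
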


	Arguing as in Corollary~\ref{cor:iota**-cc-internal} we immediately deduce:

	\begin{corollary}\label{cor:Phi-semiadd}
		Let $\Cc$ be a pointed $T$-category with finite $P$-coproducts and let $\Dd$ be an $S$-category with finite $P$-products. Then the equivalence $\Phi\colon\ul\Fun_T(\Cc,\iota_*\Dd)\iso\iota_*\ul\Fun_S(\iota^*\Cc,\Dd)$ from Construction~\ref{constr:no-pun-intended} restricts to an equivalence
		\begin{equation*}
			\ul\Fun_T^{P\text-\oplus}(\Cc,\iota_*\Dd)\iso\iota_*\ul\Fun_S^{P\text-\oplus}(\iota^*\Cc,\Dd).\qednow
		\end{equation*}
	\end{corollary}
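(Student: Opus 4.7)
The plan is to follow exactly the template of Corollary~\ref{cor:iota**-cc-internal}, with $P$-semiadditivity in place of $S$-cocontinuity. Since $\Phi$ is already an equivalence of $T$-categories on the nose, the only thing to verify is that it matches up the full subcategories on each side cut out by the $P$-semiadditivity condition. Unpacking the definition of $\ul\Fun_T^{P\text-\oplus}(\blank,\blank)$, this amounts to showing that for every presheaf $X\in\PSh(T)$, a $T$-functor $F\colon\Cc\to\ul\Fun_T(\ul X,\iota_*\Dd)$ is $P$-semiadditive if and only if the corresponding $S$-functor $\Phi(F)\colon\iota^*\Cc\to\ul\Fun_S(\iota^*\ul X,\Dd)$ is $P$-semiadditive.

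To set this up cleanly, I would first apply Construction~\ref{constr:no-pun-intended} to the pair $(\ul X,\Dd)$ in place of $(\Cc,\Dd)$, obtaining a natural equivalence
\begin{equation*}
\ul\Fun_T(\ul X,\iota_*\Dd)\iso\iota_*\ul\Fun_S(\iota^*\ul X,\Dd)\eqqcolon\iota_*\Ee.
\end{equation*}
The $S$-category $\Ee$ inherits finite $P$-products from $\Dd$ by a standard functor category argument (dual to~\cite{CLL_Global}*{Corollary~2.3.25}), which also ensures that $\iota_*\Ee\simeq\ul\Fun_T(\ul X,\iota_*\Dd)$ has finite $P$-products, so that `$P$-semiadditivity' is meaningfully defined on both sides. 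Under this equivalence, $F$ becomes a $T$-functor $\Cc\to\iota_*\Ee$, and inspecting the explicit formula in Construction~\ref{constr:no-pun-intended} identifies $\Phi(F)$ with its $(\iota^*\dashv\iota_*)$-adjunct $\iota^*\Cc\to\Ee$.

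Once this identification is in place, the result is immediate from Proposition~\ref{prop:semiadd-cat-adj-general} applied to $\Cc$, $\Ee$, and $F$: the proposition asserts precisely that $F$ is $P$-semiadditive if and only if its adjunct is. I do not expect any serious obstacle in carrying this out, since all the nontrivial content has been packaged into Proposition~\ref{prop:semiadd-cat-adj-general}; the only care needed is the bookkeeping to confirm that the `internal' $\Phi$ of Construction~\ref{constr:no-pun-intended} does match the `external' adjunct, but this is built into the explicit pointwise description given there and is entirely parallel to the verification already performed in Corollary~\ref{cor:iota**-cc-internal}.
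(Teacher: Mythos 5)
Your proposal is correct and follows essentially the same route as the paper: the paper likewise deduces the corollary by the argument of Corollary~\ref{cor:iota**-cc-internal}, using the explicit identification of $\Phi(F)$ with the $(\iota^*\dashv\iota_*)$-adjunct from Construction~\ref{constr:no-pun-intended} and then invoking Proposition~\ref{prop:semiadd-cat-adj-general}. The bookkeeping points you flag (that $\ul\Fun_S(\iota^*\ul X,\Dd)$ inherits finite $P$-products, and that the internal $\Phi$ matches the external adjunct) are exactly the ones the paper's one-line proof implicitly relies on.
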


	On the other hand, we now easily get the following result subsuming Theorem~\ref{thm:CMon-ST-univ-prop} and one half of \ref{thm:CMon-ST-adj}:

	\begin{theorem}\label{thm:CMon-technical}
		\begin{enumerate}
			\item There is a unique $S$-cocontinuous functor $\iota_!\colon\ul\CMon^P_S\to\iota^*\ul\CMon^P_T$ sending $\mathbb P(*)$ to $\mathbb P(*)$.
			\item $\iota_!$ is fully faithful and extends uniquely to a $T$-functor $\ul\CMon^P_{S\triangleright T}\to \ul\CMon^P_T$.
			\item $\ul\CMon^P_{S\triangleright T}$ is $S$-presentable and $P$-semiadditive.
			\item Let $\mathbb P(*)\in\Gamma(\ul\CMon^P_{S\triangleright T})$ denote the preimage of the object of $\Gamma(\ul\CMon^P_T)$ of the same name. Then $\ul\CMon^P_{S\triangleright T}$ has the following universal property: for any $P$-semiadditive $S$-cocomplete $T$-category $\Dd$ evaluation at $\mathbb P(*)$ defines an equivalence $\ul\Fun_T(\ul{\CMon}^P_{S\triangleright T},\Dd)\simeq\Dd$.
		\end{enumerate}
		\begin{proof}
			Let $\mathcal C\subset\ul\CMon^P_T$ again be generated under $S$-colimits by $\mathbb P(*)$. Arguing as in the proof of Theorem~\ref{thm:mixed-presheaves}, Proposition~\ref{prop:semiadd-cat-adj} together with Proposition~\ref{prop:C-bar} shows that there is a unique $S$-cocontinuous functor $\ul\CMon^P_S\to\iota^*\mathcal C$ preserving $\mathbb P(*)$, and that this is an equivalence. Thus, $\iota_!\colon\ul\CMon^P_S\to\iota^*\ul\CMon^P_T$ extends uniquely to a fully faithful  $T$-functor $\ul\CMon^P_{S\triangleright T}\to\ul\CMon^P_T$, and this induces an equivalence onto $\mathcal C$. The universal property then follows by another application of Proposition~\ref{prop:C-bar}.

			It only remains to show that the category $\mathcal C$ (and hence also $\ul\CMon^P_{S\triangleright T})$ is $S$-presentable. But indeed, $\Cc$ is $S$-cocomplete as $\ul\CMon^P_T$ is so, and $\mathcal C(A)\simeq\ul\CMon^P_S(A)$ is presentable for any $A\in T$.
		\end{proof}
	\end{theorem}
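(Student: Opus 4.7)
The plan is to construct $\ul\CMon^P_{S\triangleright T}$ as a full $T$-subcategory of $\ul\CMon^P_T$ and to transport the desired properties from the already-established universal properties. Specifically, I will take $\mathcal C \subset \ul\CMon^P_T$ to be the full $T$-subcategory generated under $S$-colimits by the object $\mathbb P(*)$. The $T$-level universal property $\ul\Fun^{S\text{-cc}}_T(\mathcal C, \mathcal D) \simeq \mathcal D$ (for $P$-semiadditive $S$-cocomplete $T$-categories $\mathcal D$) is then furnished immediately by \Cref{prop:C-bar}, yielding part~(4), and $S$-cocompleteness plus $P$-semiadditivity of $\mathcal C$ are also provided by that proposition.

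The core of the argument will be identifying the underlying $S$-category $\iota^*\mathcal C$ with $\ul\CMon^P_S$, which in turn will give parts~(1), (2), and the remaining fiberwise presentability in~(3). I will use the same strategy as in \Cref{thm:mixed-presheaves}: apply the universal property of $\mathcal C$ from \Cref{prop:C-bar} to target categories of the form $\iota_*\mathcal E$, where $\mathcal E$ is a $P$-semiadditive $S$-cocomplete $S$-category. \Cref{prop:semiadd-cat-adj} ensures $\iota_*\mathcal E$ lies in the correct category of $T$-categories, and \Cref{cor:Phi-semiadd} (the $P$-semiadditive refinement of \Cref{cor:iota**-cc-internal}) transports the resulting equivalence across the adjunction $\iota^* \dashv \iota_*$, exhibiting $\iota^*\mathcal C$ as a free $P$-semiadditive $S$-cocomplete $S$-category on one generator. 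Applying \Cref{prop:C-bar} once more, but with $T$ replaced by $S$ (noting that $P \subset S$ remains atomic orbital and that $\ul\CMon^P_S$ is itself generated under $S$-colimits by $\mathbb P(*)$), shows that $\ul\CMon^P_S$ has the same universal property. A comparison of representing objects therefore produces a unique $S$-cocontinuous equivalence $\ul\CMon^P_S \iso \iota^*\mathcal C$ sending $\mathbb P(*)$ to $\mathbb P(*)$; composing with the inclusion $\iota^*\mathcal C \hookrightarrow \iota^*\ul\CMon^P_T$ defines the fully faithful $\iota_!$ of part~(1). Its unique $T$-extension is then simply the inclusion $\mathcal C \hookrightarrow \ul\CMon^P_T$, giving part~(2). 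Fiberwise presentability in~(3) follows because $\mathcal C(A) \simeq \ul\CMon^P_S(A)$ is presentable for each $A \in T$ by \Cref{thm:semiadd-T-spc}.

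The main technical obstacle will be ensuring the universal property used on the $S$-side of the comparison is available without any local smallness hypothesis on $\iota^*\mathcal C$; a priori this is a genuine concern, since the version of the universal property of $\ul\CMon^P_S$ from \Cref{thm:semiadd-T-spc} required such a hypothesis. This is precisely why \Cref{prop:C-bar} was formulated as it was, using the large-universe workaround with $\ul\SPC$, and it is exactly that strengthened form of the universal property, reapplied with $T$ replaced by $S$, that we need here. Once this is in place, the rest of the argument is a direct Yoneda-style comparison.
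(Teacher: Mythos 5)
Your proposal is correct and follows essentially the same route as the paper: define $\mathcal C\subset\ul\CMon^P_T$ as the full subcategory generated under $S$-colimits by $\mathbb P(*)$, invoke \Cref{prop:C-bar} for the universal property, and identify $\iota^*\mathcal C$ with $\ul\CMon^P_S$ by the Yoneda-style comparison through the adjunction $\iota^*\dashv\iota_*$ of \Cref{prop:semiadd-cat-adj}, exactly as in \Cref{thm:mixed-presheaves}. Your side remark that $\ul\CMon^P_S$ is generated under $S$-colimits by $\mathbb P(*)$ does itself require the same corepresentability comparison (it is not needed a priori, since one can compare the two corepresenting objects directly), but this is a presentational nuance, not a gap.
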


	\subsection{An additional adjoint}\label{subsec:Add_Adj}
	Our goal in this subsection will be to understand the right adjoint $\iota^*$ of the above $S$-functor $\iota_!\colon\ul\CMon^P_S\to\iota^*\ul\CMon^P_T$ better, and to use this to show that it in turn admits another right adjoint $\iota_*$, finishing the proof of Theorem~\ref{thm:CMon-ST-adj}. We begin with the following observation:

	\begin{lemma}\label{lemma:iota*-over-iota*}
		The diagram
		\begin{equation*}
			\begin{tikzcd}
				\iota^*\ul\CMon^P_T\arrow[d, "\iota^*\mathbb U"']\arrow[r, "\iota^*"] & \ul\CMon^P_S\arrow[d, "\mathbb U"]\\
				\iota^*\ul\Spc_T\arrow[r, "\iota^*"'] & \ul\Spc_S\rlap.
			\end{tikzcd}
		\end{equation*}
		commutes up to natural equivalence.
	\end{lemma}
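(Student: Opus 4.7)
The plan is to apply the mate correspondence to pass to the left adjoints and then to invoke the universal property of $\ul\Spc_S$.

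First I verify that every functor in the diagram participates in an $S$-adjunction. The bottom $\iota^*$ is $S$-right adjoint to $\iota_!\colon\ul\Spc_S\to\iota^*\ul\Spc_T$ by Lemma~\ref{lemma:iota!-iota*-adj}. The top $\iota^*$ is the $S$-right adjoint of the fully faithful $S$-cocontinuous functor $\iota_!\colon\ul\CMon^P_S\to\iota^*\ul\CMon^P_T$ supplied by Theorem~\ref{thm:CMon-technical}; this right adjoint exists by the parametrized adjoint functor theorem, since source and target are both $S$-presentable. Finally, the right-hand forgetful functor $\mathbb U\colon\ul\CMon^P_S\to\ul\Spc_S$ has $S$-left adjoint $\mathbb P$ by \cite{CLL_Global}, while its left-hand analogue $\iota^*\mathbb U$ is the $\iota^*$-restriction of the $T$-adjunction $\mathbb P\dashv\mathbb U$ on $\ul\CMon^P_T$, hence also an $S$-adjunction with left adjoint that I again denote $\mathbb P$.

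By the mate correspondence applied to these $S$-adjunctions, exhibiting a natural equivalence $\mathbb U\circ\iota^*\simeq\iota^*\circ\iota^*\mathbb U$ is equivalent to producing a natural equivalence
\[
\iota_!\circ\mathbb P\simeq\mathbb P\circ\iota_!
\]
of $S$-functors $\ul\Spc_S\to\iota^*\ul\CMon^P_T$, where the left-hand $\iota_!$ denotes the functor on commutative monoids and the right-hand $\iota_!$ the functor on spaces. To establish this, I invoke the universal property of $\ul\Spc_S$ (Theorem~\ref{thm:univ-prop-T-spc} applied with parametrizing category $S$): since $\iota^*\ul\CMon^P_T$ is $S$-cocomplete by Corollary~\ref{cor:S-cocomplete-adjunction}, any two $S$-cocontinuous $S$-functors out of $\ul\Spc_S$ that agree on the terminal section are naturally equivalent. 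Both composites are $S$-cocontinuous, being composites of $S$-left adjoints. On the terminal section $*$, I compute $\iota_!(\mathbb P(*))\simeq\mathbb P(*)$ by the defining property of $\iota_!$ in Theorem~\ref{thm:CMon-technical}; and $\mathbb P(\iota_!(*))\simeq\mathbb P(*)$ because $\iota_!\colon\ul\Spc_S\to\iota^*\ul\CMon^P_T$ preserves terminal objects, both terminals at level $A\in S$ being represented by the identity map $\id_A$ which lies in the Yoneda image of $S$.

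The main technical subtlety is ensuring that all four adjunctions in play are genuine $S$-adjunctions rather than merely fiberwise ones, so that the mate correspondence can be applied internally in $\Cat_S$; once this is in place, the rest of the argument is essentially mechanical, flowing from the universal property of $\ul\Spc_S$ together with the explicit values on the terminal section.
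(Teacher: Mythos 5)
Your proposal is correct and follows essentially the same route as the paper: pass to the square of left adjoints $\mathbb P\circ{}$(identity on spaces) versus $\iota_!\circ\mathbb P$, check both composites are $S$-cocontinuous and send the terminal section to $\mathbb P(*)$, invoke the universal property of $\ul\Spc_S$, and then recover the stated square by taking total mates. (One harmless slip: in your computation of $\mathbb P(\iota_!(*))$ the functor preserving terminal objects is $\iota_!\colon\ul\Spc_S\to\iota^*\ul\Spc_T$, not a functor into $\iota^*\ul\CMon^P_T$.)
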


	Note that $\iota^*$ is $P$-semiadditive as it is right adjoint; by the universal property of $\ul\CMon^P_S$ from Theorem~\ref{thm:Semiadd_omnibus} the above then actually characterizes $\iota^*$ completely.

	\begin{proof}
		All functors in the diagram
		\begin{equation}\label{diag:iota*/iota*-adj}
			\begin{tikzcd}
				\ul\Spc_S\arrow[r, "\iota_!"]\arrow[d, "\mathbb P"'] & \iota^*\ul\Spc_T\arrow[d, "\iota^*\mathbb P"]\\
				\ul\CMon^P_S\arrow[r, "\iota_!"'] & \iota^*\ul\CMon^P_T
			\end{tikzcd}
		\end{equation}
		are left adjoints, and both paths through this diagram send the terminal object to the same object by the defining property of the horizontal maps. Thus, the universal property of $\ul\Spc_S$ shows that $(\ref{diag:iota*/iota*-adj})$ commutes up to equivalence. The claim follows by passing to total mates.
	\end{proof}

	This suggests a natural strategy to get a more explicit description of $\iota^*$ and to prove that it has a right adjoint: construct \emph{some} left adjoint $\iota^*\ul\CMon^P_T\to\ul\CMon^P_S$ and then show that it is compatible with the forgetful functors.	Indeed, this is precisely what we will do now, using the restriction functor from Construction~\ref{constr:no-pun-intended}:

	\begin{proposition}\label{prop:iota*-semiadd-RA}
		The composite
		\begin{equation}\label{eq:composite-iota*}
			\iota^*\ul\Fun_T(\ul{\mathbb F}^P_{T,*},\ul\Spc_T)\xrightarrow{\;\iota^*\;}\ul\Fun_S(\ul{\mathbb F}^P_{S,*},\iota^*\ul\Spc_T)\xrightarrow{\smash{\ul\Fun_S(\ul{\mathbb F}^P_{S,*},\iota^*)}\rule[-2pt]{0pt}{0pt}}\ul\Fun_S(\ul{\mathbb F}^P_{S,*},\ul\Spc_S)
		\end{equation}
		restricts to the functor $\iota^*\ul\CMon^P_T\to\ul\CMon^P_S$ right adjoint to $\iota_!$. Moreover, $(\ref{eq:composite-iota*})$ admits a right adjoint $\iota_*\colon\ul\Fun_S(\ul{\mathbb F}^P_{S,*},\ul\Spc_S)\to\iota^*\ul\Fun_T(\ul{\mathbb F}^P_{T,*},\ul\Spc_T)$, which again restricts to $\ul\CMon^P_S\to\iota^*\ul\CMon^P_T$.
		\begin{proof}
			For $(\ref{eq:composite-iota*})$ to restrict as claimed, it will be enough to show that its adjunct $\ul\Fun_T(\ul{\mathbb F}^P_{T,*},\ul\Spc_T)\to\iota_*\ul\Fun_S(\ul{\mathbb F}^P_{S,*},\ul\Spc_S)$ restricts to $\ul\CMon^P_T\to\iota_*\ul\CMon^P_S$. However, unravelling the definitions, the adjunct is precisely given by
			\begin{equation}\label{eq:iota*-adj-semiadd}
				\Fun_T(\ul{\mathbb F}^P_{T,*},\ul\Spc_T)\xrightarrow{\ul\Fun_T(\ul{\mathbb F}^P_{T,*},\tilde\iota^*)}\ul\Fun_T(\ul{\mathbb F}^P_{T,*},\iota_*\ul\Spc_S)\mathrel{\smash{\xrightarrow[\raise11pt\hbox{$\scriptstyle\sim$}]{\,\;\Phi\;\,}}}\iota_*\ul\Fun_S(\ul{\mathbb F}^P_{S,*},\ul\Spc_S),
			\end{equation}
			where $\tilde\iota^*$ is the adjunct of $\iota^*$, as in \Cref{prop:right-adjoints-adjunct}. The first functor restricts to semiadditive functors as $\tilde\iota^*$ is $S$-continuous by Corollary~\ref{cor:S-cocomplete-adjunction}$^\op$, and so does the second functor by Corollary~\ref{cor:Phi-semiadd}.

			Proposition~\ref{prop:right-adjoints-adjunct} then shows that $(\ref{eq:iota*-adj-semiadd})$ has a right adjoint $\tilde\iota_*$ given by the composite
			\begin{equation*}
				\iota_*\ul\Fun_S(\ul{\mathbb F}^P_{S,*},\ul\Spc_S)\mathrel{\smash{\xrightarrow[\raise11pt\hbox{$\scriptstyle\sim$}]{\,\Phi^{-1}\,}}}\ul\Fun_T(\ul{\mathbb F}^P_{T,*},\iota_*\ul\Spc_S)\xrightarrow{\ul\Fun_T(\ul{\mathbb F}^P_{T,*},\tilde\iota_*)}\Fun_T(\ul{\mathbb F}^P_{T,*},\ul\Spc_T)
			\end{equation*}
			which restricts to $\iota_*\ul\CMon^P_S\to\ul\CMon^P_T$ by the same argument as before.

			We can now show that $(\ref{eq:composite-iota*})$ has a right adjoint $\iota_*$: namely, as it is adjunct to $(\ref{eq:iota*-adj-semiadd})$, it factors as
			\begin{equation*}
				\iota^*\ul\Fun_T(\ul{\mathbb F}^P_{T,*},\ul\Spc_T)\xrightarrow{\iota^*(\ref{eq:iota*-adj-semiadd})}\iota^*\iota_*\ul\Fun_S(\ul{\mathbb F}^P_{S,*},\ul\Spc_S)\xrightarrow{\,\;\epsilon\;\,}\ul\Fun_S(\ul{\mathbb F}^P_{S,*},\ul\Spc_S),
			\end{equation*}
			and the first map has a right adjoint given by $\iota^*(\tilde\iota_*)$ as $\iota^*$ obviously preserves adjunctions, while the second one has a right adjoint as it $S$-cocontinuous by (the proof of) Corollary~\ref{cor:S-cocomplete-adjunction}.

			Next, let us show that $\iota_*$ restricts to $\ul\CMon^P_S\to\iota^*\ul\CMon_T^P$. By the above, $\iota^*(\tilde\iota_*)$ restricts to $\iota^*\iota_*\ul\CMon^P_S\to\iota^*\ul\CMon^P_T$, so it only remains to show that the right adjoint of the counit $\epsilon\colon \iota^*\iota_* \to \id$ at $\ul\Fun_S(\ul{\mathbb F}^P_{S,*},\ul\Spc_S)$ restricts to $\ul\CMon^P_S\to\iota^*\iota_*\ul\CMon^P_S$. But $\epsilon$ is simply given by restricting along the unit of $\iota_!\colon\PSh(S)\rightleftarrows\PSh(T)\noloc\iota^*$, so the claim follows as $\ul\CMon^P_S\subset\ul\Fun_S(\ul{\mathbb F}^P_{S,*},\ul\Spc_S)$ is closed under all $S$-limits.

			It now only remains to show that the restriction of $(\ref{eq:composite-iota*})$ to $\iota^*\ul\CMon^P_T\to\ul\CMon^P_S$ indeed agrees with the functor $\iota^*$ considered before. For this we consider the diagram
			\begin{equation*}
				\begin{tikzcd}
					\iota^*\ul\Fun_T(\ul{\mathbb F}^P_{T,*},\ul\Spc_T)\arrow[r, "\iota^*"]\arrow[d, "\iota^*\ev_{S^0}"'] & \ul\Fun_S(\ul{\mathbb F}^P_{S,*},\iota^*\ul\Spc_T)\arrow[r, "{\ul\Fun_S(\ul{\mathbb F}^P_{S,*},\iota^*)}"]\arrow[d, "\ev_{S^0}"] &[4em] \ul\Fun_S(\ul{\mathbb F}^P_{S,*},\ul\Spc_S)\arrow[d, "\ev_{S^0}"]\\
					\iota^*\ul\Spc_T\arrow[r, equal] & \iota^*\ul\Spc_T\arrow[r, "\iota^*"'] & \ul\Spc_S
				\end{tikzcd}
			\end{equation*}
			with top row $(\ref{eq:composite-iota*})$. The right hand square commutes by naturality, as does the left hand square by a straightforward mate argument. Thus, the restricted functor $\iota^*\ul\CMon^P_T\to\ul\CMon^P_S$ lies over $\iota^*\colon\iota^*\ul\Spc_T\to\ul\Spc_S$. But it is also $P$-semiadditive (it is even an $S$-left adjoint by the above), so the claim follows from Lemma~\ref{lemma:iota*-over-iota*} and the universal property of $\ul\CMon^P_S$.
		\end{proof}
	\end{proposition}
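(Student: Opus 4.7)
The plan is to use the adjunction $\iota^*\colon\Cat_T\rightleftarrows\Cat_S\noloc\iota_*$ from Theorem~\ref{thm:iota*-cc} to transpose the entire problem into statements about $T$-functors with target $\iota_*\ul\Spc_S$, where the equivalence $\Phi$ of Construction~\ref{constr:no-pun-intended} together with its semiadditive refinement (Corollary~\ref{cor:Phi-semiadd}) provides the most convenient handle.

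First I would identify the adjunct of the composite $(\ref{eq:composite-iota*})$ under $\iota^*\dashv\iota_*$. Unravelling Construction~\ref{constr:no-pun-intended} and the description of the restriction $\iota^*$ there, this adjunct takes the form
\[
\ul\Fun_T(\ul{\mathbb F}^P_{T,*},\ul\Spc_T)\xrightarrow{\ul\Fun_T(\ul{\mathbb F}^P_{T,*},\tilde\iota^*)}\ul\Fun_T(\ul{\mathbb F}^P_{T,*},\iota_*\ul\Spc_S)\xrightarrow{\Phi}\iota_*\ul\Fun_S(\ul{\mathbb F}^P_{S,*},\ul\Spc_S),
\]
where $\tilde\iota^*$ is the $T$-functor from Proposition~\ref{prop:right-adjoints-adjunct}. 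Because $\tilde\iota^*$ is a right adjoint, hence in particular $T$-continuous, postcomposition with it sends $P$-semiadditive functors to $P$-semiadditive ones; combined with Corollary~\ref{cor:Phi-semiadd}, this shows the adjunct restricts to $\ul\CMon^P_T\to\iota_*\ul\CMon^P_S$, so $(\ref{eq:composite-iota*})$ itself restricts to a functor $\iota^*\ul\CMon^P_T\to\ul\CMon^P_S$.

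To identify this restriction with the known right adjoint $\iota^*$ to $\iota_!$ from Theorem~\ref{thm:CMon-technical}, I would invoke the universal property of $\ul\CMon^P_S$ as the free $P$-semiadditive $S$-category on a generator (Theorem~\ref{thm:Semiadd_omnibus}). A straightforward mate chase evaluating at $S^0$ shows that the restriction of $(\ref{eq:composite-iota*})$ lies over $\iota^*\colon\iota^*\ul\Spc_T\to\ul\Spc_S$ along the forgetful functors, matching the diagram of Lemma~\ref{lemma:iota*-over-iota*}; as both candidate functors are $P$-semiadditive (being right adjoints) and agree after forgetting, they must be equivalent.

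For the further right adjoint $\iota_*$, I would first use Proposition~\ref{prop:right-adjoints-adjunct} to obtain $\tilde\iota_*$ right adjoint to $\tilde\iota^*$; postcomposition with this, together with the equivalence $\Phi^{-1}$, yields a right adjoint to the displayed adjunct composite. Transposing back along $\iota^*\dashv\iota_*$ (whose counit at $\ul\Fun_S(\ul{\mathbb F}^P_{S,*},\ul\Spc_S)$ is $S$-cocontinuous, hence a left adjoint, by Corollary~\ref{cor:S-cocomplete-adjunction}) then produces the desired $\iota_*$. The same semiadditivity analysis in reverse gives the restriction to commutative monoids, using moreover that $\ul\CMon^P_S\hookrightarrow\ul\Fun_S(\ul{\mathbb F}^P_{S,*},\ul\Spc_S)$ is closed under $S$-limits so that the counit restricts. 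The main bookkeeping obstacle will be keeping track of which functors are really left or right adjoints in the parametrized sense and verifying the naturality squares behind the various mate calculations, but each verification is a direct consequence of the adjunction-theoretic inputs assembled in Sections~\ref{sec:cleft} and~\ref{sec:Partial-presentability}.
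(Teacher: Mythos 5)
Your proposal follows essentially the same route as the paper's proof: transpose $(\ref{eq:composite-iota*})$ along $\iota^*\dashv\iota_*$ to the composite $\Phi\circ\ul\Fun_T(\ul{\mathbb F}^P_{T,*},\tilde\iota^*)$, check semiadditivity there, identify the restriction with the known right adjoint of $\iota_!$ via \Cref{lemma:iota*-over-iota*} and the universal property of $\ul\CMon^P_S$, and obtain the further right adjoint from $\tilde\iota_*$ together with the (restricting) counit of $\iota^*\dashv\iota_*$.

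One justification is off as stated: you assert that $\tilde\iota^*$ "is a right adjoint, hence $T$-continuous," but \Cref{prop:right-adjoints-adjunct} establishes $\tilde\iota^*$ as a $T$-\emph{left} adjoint (it admits a $T$-right adjoint $\tilde\iota_*$), and no $T$-left adjoint to $\tilde\iota^*$ is available. What you actually need, and what the paper uses, is that $\tilde\iota^*$ is $S$-continuous (by \Cref{cor:S-cocomplete-adjunction}$^\op$), which suffices to preserve finite $P$-products since $P\subset S$; with that substitution your argument goes through.
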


	\begin{proof}[Proof of Theorem~\ref{thm:CMon-ST-adj}]
		Combine Theorem~\ref{thm:CMon-technical} with the previous proposition.
	\end{proof}

	\section{The universal property of equivariant special $\Gamma$-spaces}\label{sec:gamma-orbi}

	In this section we will identify the universal equivariantly semiadditive equivariantly presentable global category in terms of Shimakawa's \emph{special $\Gamma$-$G$-spaces} \cite{shimakawa,shimakawa-simplify}.

	\subsection{Model categories of equivariant $\bm\Gamma$-spaces.}\label{subsec:eq-gamma-spaces}
	We begin by introducing the main players, which will require a bit more model categorical sophistication than the unstable case.

	\begin{definition}
		We write $\Gamma$ for the category of finite pointed sets and based maps. For any $n\ge0$ we let $n^+\coloneqq\{0,\dots,n\}$ with basepoint $0$.
	\end{definition}

	\begin{definition}
		Let $G$ be a finite group. A \emph{$\Gamma$-$G$-space} is a functor $\Gamma\to\cat{$\bm G$-SSet}$ that sends the singleton set $0^+$ to the $1$-point space. We write $\cat{$\bm\Gamma$-$\bm G$-SSet}_*$ for the category of $\Gamma$-$G$-spaces.
	\end{definition}

	By \cite[Lemma~1.17]{mmo} we can equivalently think of a $\Gamma$-$G$-space as an $\cat{Set}_*$-enriched functor $\Gamma\to\cat{$\bm G$-SSet}_*$ into the category of \emph{pointed} $G$-spaces, with the equivalence given by forgetting the basepoints and the enrichment.

	\subsubsection{Level model structures} Next, we will equip $\cat{$\bm\Gamma$-$\bm G$-SSet}_*$ with a suitable \emph{level model structure}. To put this into context, we recall the standard equivariant model structures on $\cat{$\bm G$-SSet}$:

	\begin{proposition}
		Let $G$ be a finite group and let $\mathcal F$ be a \emph{family of subgroups} of $G$, i.e.~a non-empty collection of subgroups that is closed under taking subconjugates. Then $\cat{$\bm G$-SSet}$ carries a model structure with
		\begin{enumerate}
			\item weak equivalences the \emph{$\mathcal F$-weak equivalences}, i.e.~those maps $f$ such that $f^H$ is a weak homotopy equivalence for every $H\in\mathcal F$;
			\item fibrations the \emph{$\mathcal F$-fibrations}, i.e.~those maps $f$ such that $f^H$ is a Kan fibration for every $H\in\mathcal F$;
			\item cofibrations the \emph{$\mathcal F$-cofibrations}: those maps $f$ that are levelwise injective and such that the isotropy of any simplex outside the image of $f$ belongs to $\mathcal F$.
		\end{enumerate}
		We call this the $\mathcal F$-model structure. It is combinatorial, simplicial, and proper.
		\begin{proof}
			See e.g.~\cite[Proposition~1.1.2]{g-global} and \cite[Proposition~2.16]{cellular}.
		\end{proof}
	\end{proposition}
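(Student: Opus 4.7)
The plan is to recognize this as a cofibrantly generated model structure via Kan's recognition theorem, with generating sets
\begin{align*}
I_{\mathcal F}&\coloneqq\{G/H\times\partial\Delta^n\hookrightarrow G/H\times\Delta^n:H\in\mathcal F,n\ge0\},\\
J_{\mathcal F}&\coloneqq\{G/H\times\Lambda^n_k\hookrightarrow G/H\times\Delta^n:H\in\mathcal F,n\ge1,0\le k\le n\}.
\end{align*}
Since $\cat{$\bm G$-SSet}$ is locally presentable and the domains above are compact, the small object argument applies, yielding combinatoriality.

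Next I would verify the recognition criteria. By the adjunction $\maps_G(G/H\times X,Y)\cong \maps(X,Y^H)$, a map has the right lifting property against $I_{\mathcal F}$ (resp.\ $J_{\mathcal F}$) if and only if each $f^H$ for $H\in\mathcal F$ is a trivial Kan fibration (resp.\ a Kan fibration). The $\mathcal F$-weak equivalences satisfy $2$-out-of-$3$ and are closed under retracts, being detected fixed-point-by-fixed-point. To show every $J_{\mathcal F}$-cell complex is an $\mathcal F$-weak equivalence, one uses that $(-)^K$ commutes with filtered colimits and transfinite compositions, together with the observation that $(G/H\times\Delta^n)^K$ is either empty or a disjoint union of copies of $\Delta^n$ indexed by $(G/H)^K$; thus each generator becomes, after applying $(-)^K$, either an identity or a disjoint union of horn inclusions, hence a trivial cofibration of simplicial sets.

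The main technical obstacle is to identify the retracts of relative $I_{\mathcal F}$-cell complexes with the claimed class of $\mathcal F$-cofibrations. One direction is straightforward: any simplex in $G/H\times\Delta^n$ not in $G/H\times\partial\Delta^n$ has stabilizer subconjugate to $H$, so relative $I_{\mathcal F}$-cell complexes are levelwise injective with new simplices having isotropy in $\mathcal F$. For the converse, given an $\mathcal F$-cofibration $f\colon X\hookrightarrow Y$, one builds a skeletal filtration $X=Y_{(-1)}\hookrightarrow Y_{(0)}\hookrightarrow Y_{(1)}\hookrightarrow\cdots$ where $Y_{(n)}$ is obtained from $Y_{(n-1)}$ by attaching the non-degenerate $n$-simplices of $Y\setminus X$; choosing orbit representatives and using that each equivariant $n$-simplex has the form $G/H\times\Delta^n$ with $H\in\mathcal F$ by the isotropy condition exhibits each $Y_{(n-1)}\hookrightarrow Y_{(n)}$ as a pushout of a coproduct of generators in $I_{\mathcal F}$.

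Properness follows from fixed-point-wise checks: pullbacks and pushouts in $\cat{$\bm G$-SSet}$ are computed levelwise, and $(-)^H$ preserves pullbacks and pushouts along cofibrations (since these are injections of discrete sets levelwise), so the relevant (co)base changes of $\mathcal F$-weak equivalences remain so after taking $H$-fixed points for each $H\in\mathcal F$. Finally, the simplicial structure from the cartesian product with (unbased) simplicial sets respects the pushout-product axiom: this reduces to the ordinary pushout-product axiom for simplicial sets applied fixed-point-wise, using $(X\times K)^H=X^H\times K$ for $K\in\cat{SSet}$ with trivial $G$-action.
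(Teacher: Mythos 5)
Your proof is correct, and it is essentially the argument the paper is pointing to: the paper itself gives no proof beyond citing \cite[Proposition~1.1.2]{g-global} and \cite[Proposition~2.16]{cellular}, and those references establish the model structure exactly as you do, by the recognition theorem with the generating sets $G/H\times(\partial\Delta^n\hookrightarrow\Delta^n)$ and $G/H\times(\Lambda^n_k\hookrightarrow\Delta^n)$ for $H\in\mathcal F$, the adjunction $\maps_G(G/H\times X,Y)\cong\maps(X,Y^H)$, and fixed-point-wise verification of acyclicity, properness, and the pushout-product axiom.
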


	\begin{example}
		In the special case that $\mathcal F=\mathcal A\ell\ell$ consists of all subgroups, we call this the \emph{$G$-equivariant model structure}; its weak equivalences are the \emph{$G$-equivariant weak equivalences} considered before.
	\end{example}

	\begin{proposition}\label{prop:equiv-level-model-struct}
		The category $\cat{$\bm\Gamma$-$\bm G$-SSet}_*$ admits a unique model structure with
		\begin{enumerate}
			\item weak equivalences those maps $f\colon X\to Y$ such that $f(S_+)\colon X(S_+)\to Y(S_+)$ is a $G$-weak equivalence for any finite \emph{$G$-set} $S$; here we equip both sides with the \emph{diagonal} $G$-action induced from the actions on $X$, $Y$, and $S$;
			\item fibrations those $f$ such that $f(S_+)$ is a $G$-fibration for any finite $G$-set $S$.
		\end{enumerate}
		We call this the \emph{$G$-equivariant level model structure} and its weak equivalences the \emph{$G$-equivariant level weak equivalences}. It is simplicial, proper, and combinatorial with generating cofibrations the maps
		\begin{equation*}
			\Gamma(S_+,\blank)\smashp G/H_+\smashp(\partial\Delta^n\hookrightarrow\Delta^n)_+
		\end{equation*}
		for all $n\ge0$, all $H\subset G$, and all finite $G$-sets $S$, while its generating acyclic cofibrations are similarly given by the maps $\Gamma(S_+,\blank)\smashp G/H_+\smashp(\Lambda_k^n\hookrightarrow\Delta^n)_+$.
	\end{proposition}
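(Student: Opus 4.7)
The plan is to apply Kan's recognition theorem for cofibrantly generated model structures, with the adjunctions produced by diagonal evaluation as the key technical input. Concretely, for each finite $G$-set $S$ of cardinality $n$, I would first verify that the functor $\textup{ev}_{S_+}^{\textup{diag}}\colon\cat{$\bm\Gamma$-$\bm G$-SSet}_*\to\cat{$\bm G$-SSet}_*$ sending $X$ to $X(n^+)$ with the diagonal action $g\cdot_{\textup{diag}}x=X(g_S)(g\cdot x)$ admits a left adjoint $L_{S_+}$, given explicitly by $L_{S_+}(Y)(Q)=\Gamma(n^+,Q)\smashp Y$ with the $G$-action $g\cdot(\phi,y)=(\phi\circ g_S^{-1},g\cdot y)$. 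This is a direct application of the $\cat{Set}_*$-enriched Yoneda lemma: a map $L_{S_+}(Y)\to X$ is determined by its component at $n^+$ evaluated at $\textup{id}_{n^+}$, and chasing the equivariance constraint converts the trivial action on $\textup{id}_{n^+}$ into the diagonal constraint on the corresponding map $Y\to X(S_+)^{\textup{diag}}$.

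With these adjunctions in hand, the proposed generating sets $I$ and $J$ are, by construction, the images under $L_{S_+}$ (as $S$ ranges over finite $G$-sets) of the generating (acyclic) cofibrations for the $G$-equivariant model structure on $\cat{$\bm G$-SSet}_*$. By adjunction, a map $f$ in $\cat{$\bm\Gamma$-$\bm G$-SSet}_*$ lies in $J$-inj (resp.\ $I$-inj) if and only if $\textup{ev}_{S_+}^{\textup{diag}}(f)$ is a $G$-fibration (resp.\ an acyclic $G$-fibration) for every finite $G$-set $S$. This immediately identifies the right-hand sides with the proposed fibrations and acyclic fibrations, and in particular shows $I$-inj${}=J$-inj${}\cap W$, where $W$ denotes the proposed class of weak equivalences.

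To conclude via Kan's theorem I would then verify the remaining hypotheses. Local presentability of $\cat{$\bm\Gamma$-$\bm G$-SSet}_*$ (e.g.\ as a reflective subcategory of a functor category into simplicial sets) provides the small object argument; the class $W$ inherits the 2-out-of-3 property and closure under retracts from the $G$-equivariant weak equivalences in $\cat{$\bm G$-SSet}_*$ along the jointly conservative family of evaluations. The crux is to show that every element of $J$ is a weak equivalence. Fixing a generator $L_{S_+}(G/H_+\smashp(\Lambda_k^n\hookrightarrow\Delta^n)_+)$ and evaluating at an arbitrary $T_+$ with diagonal action yields, up to the $G$-action on $\Gamma(n^+,|T|^+)$, a map of the form $Z\smashp(\Lambda_k^n\hookrightarrow\Delta^n)_+$ for a pointed $G$-space $Z$; since $(\Lambda_k^n\hookrightarrow\Delta^n)_+$ is an acyclic cofibration in the $G$-equivariant model structure on $\cat{$\bm G$-SSet}_*$ (which is a simplicial model category, so in particular satisfies the pushout-product axiom with pointed $G$-spaces), the result is again an acyclic $G$-cofibration, hence a $G$-equivariant weak equivalence. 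This is the main technical step; I expect it to be the primary obstacle, as it requires carefully tracking the interaction between the diagonal action on $\Gamma(n^+,|T|^+)$ and the smash product.

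Finally, uniqueness of the model structure follows from the fact that cofibrations are determined by fibrations and weak equivalences via the left lifting property. Properness, combinatoriality, and the simplicial structure are transported from the corresponding properties of the $G$-equivariant model structures on $\cat{$\bm G$-SSet}_*$ via the jointly conservative family of right adjoints $\textup{ev}_{S_+}^{\textup{diag}}$, together with the fact that the simplicial enrichment on $\cat{$\bm\Gamma$-$\bm G$-SSet}_*$ is computed objectwise and that pushouts and pullbacks along maps in $I\cup J$ are detected levelwise.
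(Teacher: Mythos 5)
Your approach — Kan's recognition theorem applied to the jointly conservative family of diagonal evaluations $\mathrm{ev}_{S_+}^{\mathrm{diag}}$ together with their left adjoints $L_{S_+}(Y) = \Gamma(S_+,\blank)\smashp Y$ — is the standard construction and appears correct. The paper itself gives no proof of this proposition, only citing \cite[Theorem~4.7]{ostermayr} and \cite[Proposition~2.2.36]{g-global}, so there is nothing specific to compare against; but your identification of the adjoints, the characterization of (acyclic) fibrations by adjunction, and the acyclicity check via the pushout-product axiom in $\cat{$\bm G$-SSet}_*$ all look sound. The equivariance bookkeeping for the left adjoint (converting the trivial stabilizer of $\mathrm{id}_{S_+}$ into the diagonal constraint) is indeed the one subtle point, and your formula $g\cdot(\phi,y)=(\phi\circ g_S^{-1}, g\cdot y)$ is correct: chasing naturality along $g_S^{-1}\in\Gamma(S_+,S_+)$ together with $G$-equivariance of the component at $S_+$ recovers exactly the diagonal constraint $x(gy)=X(g_S)(g\cdot x(y))$ on the adjunct.

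One imprecision worth flagging: for the recognition theorem you need $J\text{-cell}\subset W$ (equivalently, closure of $W$ under pushouts and transfinite compositions of $J$-maps), not merely $J\subset W$. Your calculation in fact proves the stronger statement you need: each generating acyclic cofibration $L_{S_+}\bigl(G/H_+\smashp(\Lambda^n_k\hookrightarrow\Delta^n)_+\bigr)$ is a \emph{level} acyclic cofibration, i.e.\ an acyclic cofibration in the $G$-equivariant model structure on $\cat{$\bm G$-SSet}_*$ after diagonal evaluation at every $T_+$. Since colimits of $\Gamma$-$G$-simplicial sets are computed objectwise and acyclic cofibrations in a model category are closed under pushouts and transfinite compositions, level acyclic cofibrations are stable under the cell operations, giving $J\text{-cell}\subset W$. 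You should state that you are establishing the level acyclic cofibration property rather than stopping at $J\subset W$; the same observation then also streamlines the transport of left and right properness (evaluation preserves monomorphisms, hence sends cofibrations to cofibrations in the $\mathcal{A}\ell\ell$-model structure).
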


	\begin{remark}
		By \cite[Remark~4.11]{ostermayr}, we could equivalently ask for $f(S_+)$ to be an $H$-weak equivalence or $H$-fibration for any $H\subset G$ and any finite $H$-set $S$. Put differently, if $\mathcal G_{G,\Sigma_S}$ denotes the family of \emph{graph subgroups} of $G\times\Sigma_S$ (i.e.~the subgroups of the form $\Gamma_{H,\phi}\coloneqq\{(h,\phi(h)):h\in H\}$ for $H\subset G$ and $\phi\colon H\to\Sigma_S$, or equivalently those subgroups intersecting $1\times\Sigma_S$ trivially), then a map $f$ is a weak equivalence or fibration in the above model structure if and only if $f(S_+)$ is a $\mathcal G_{G,\Sigma_S}$-weak equivalence or fibration, respectively, for any finite set $S$.
	\end{remark}

	\begin{proof}[Proof of Proposition~\ref{prop:equiv-level-model-struct}]
		The model structure appears without proof as \cite[Theorem~4.7]{ostermayr}; see \cite[Proposition~2.2.36]{g-global} for a complete argument.
	\end{proof}

	\begin{lemma}\label{lemma:gamma-level-adj}
		Let $\alpha\colon G\to G'$ be a homomorphism of finite groups. Then the restriction $\alpha^*\colon\cat{$\bm\Gamma$-$\bm{G'}$-SSet}_*\to\cat{$\bm\Gamma$-$\bm G$-SSet}_*$ is left Quillen for the level model structures.
		\begin{proof}
			It suffices that the right adjoint $\alpha_*$ preserves (acyclic) fibrations. As the latter are defined levelwise, this amounts to saying that
			\begin{equation*}
				(\alpha\times\Sigma_S)^*\colon\cat{$\bm{(G'\times\Sigma_S)}$-SSet}_{\mathcal G_{G',\Sigma_S}}\rightleftarrows\cat{$\bm{(G\times\Sigma_S)}$-SSet}_{\mathcal G_{G,\Sigma_S}}\noloc(\alpha\times\Sigma_S)_*
			\end{equation*}
			is a Quillen adjunction for every finite set $S$. But clearly $(\alpha\times\Sigma_S)^*$ preserves cofibrations and sends generating acyclic cofibrations to weak equivalences.
		\end{proof}
	\end{lemma}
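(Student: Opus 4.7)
The plan is to show that $\alpha^*$ is left Quillen by verifying the equivalent condition that its right adjoint $\alpha_*$ preserves fibrations and acyclic fibrations. Since both fibrations and weak equivalences in the level model structure on $\cat{$\bm\Gamma$-$\bm G$-SSet}_*$ are defined by checking the map induced by evaluating at each pointed finite set $S_+$ (equipped with the diagonal action of $G \times \Sigma_S$), and since the evaluation functors $\ev_{S_+}$ on both sides preserve fibrations and weak equivalences (by design) and commute with the relevant restrictions, this reduces to checking that for every finite set $S$ the evaluated restriction
\[
(\alpha \times \Sigma_S)^*\colon \cat{$\bm{(G'\times\Sigma_S)}$-SSet}_{\mathcal G_{G',\Sigma_S}} \longrightarrow \cat{$\bm{(G\times\Sigma_S)}$-SSet}_{\mathcal G_{G,\Sigma_S}}
\]
is left Quillen between the family model structures attached to the respective families of graph subgroups. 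I will verify this directly by showing that $(\alpha \times \Sigma_S)^*$ preserves cofibrations and sends generating acyclic cofibrations to weak equivalences.

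For the first point, recall that an $\mathcal F$-cofibration is a levelwise injection such that the isotropy of every simplex outside the image lies in $\mathcal F$. Restriction obviously preserves injectivity, so the content is the isotropy condition. A graph subgroup of $G' \times \Sigma_S$ has the form $\Gamma_{H',\phi'} = \{(h',\phi'(h')) : h' \in H'\}$ for $H' \subset G'$ and $\phi'\colon H' \to \Sigma_S$; its preimage under $\alpha \times \Sigma_S$ is precisely $\Gamma_{\alpha^{-1}(H'), \phi' \circ \alpha}$, which is again a graph subgroup, so isotropy is preserved as required. For the second point, the $\mathcal G_{G',\Sigma_S}$-generating acyclic cofibrations have the form $(G' \times \Sigma_S)/\Gamma_{H',\phi'} \times (\Lambda^n_k \hookrightarrow \Delta^n)_+$. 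Decomposing $(\alpha\times\Sigma_S)^*(G'\times\Sigma_S)/\Gamma_{H',\phi'}$ into $(G\times\Sigma_S)$-orbits expresses it as a disjoint union of cosets by stabilizers; each such stabilizer is a $(G\times\Sigma_S)$-conjugate of the preimage of a conjugate of $\Gamma_{H',\phi'}$, hence a graph subgroup by the calculation above. Thus the restriction is a coproduct of generating acyclic cofibrations of the $\mathcal G_{G,\Sigma_S}$-model structure, completing the argument. The only mildly delicate step is the stability of graph subgroups under preimage along $\alpha \times \Sigma_S$, but this is a direct group-theoretic verification.
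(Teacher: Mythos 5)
Your proof is correct and follows the same route as the paper: reduce, via the levelwise definition of (acyclic) fibrations, to showing that $(\alpha\times\Sigma_S)^*$ is left Quillen for the graph-subgroup family model structures, and then check preservation of cofibrations and of generating acyclic cofibrations. The only difference is that you spell out the group-theoretic verification (preimages of graph subgroups are graph subgroups, orbit decomposition of restricted cosets) that the paper dismisses with ``clearly,'' and that verification is accurate.
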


	\begin{remark}\label{rk:alpha-RQ-equiv-level}
		If $\alpha\colon G\to G'$ is injective, then $\alpha^*\colon\cat{$\bm\Gamma$-$\bm{G'}$-SSet}_*\to\cat{$\bm\Gamma$-$\bm{G}$-SSet}$ is easily seen to preserve weak equivalences and fibrations; in particular, it is also right Quillen.
	\end{remark}

	Beware that the previous remark does \emph{not} hold for non-injective $\alpha$, see e.g.~\cite[Example~2.2.15]{g-global}, and accordingly the composition
	\begin{equation*}
		\cat{$\bm\Gamma$-$\bm{\bullet}$-SSet}_*\colon\Glo^\op\hookrightarrow\Grpd^\op\xrightarrow{\Fun(\blank,\cat{$\bm\Gamma$-SSet}_*)}\Cat_1
	\end{equation*}
	does not lift to $\RelCat$ via the above weak equivalences. However, by Ken Brown's Lemma we can fix this by restricting to the subcategories of \emph{cofibrant} objects:

	\begin{definition}
		We write $\cat{$\bm\Gamma$-$\bm\bullet$-SSet}_*^\text{cof}$ for the resulting functor $\Glo^\op\to\RelCat$ and $\ul{\GammaS}_*$ for the global category obtained by pointwise localization.
	\end{definition}

	We can (and will) equivalently think of $\ul{\GammaS}_*$ as sending a finite group $G$ to the localization of $\cat{$\bm\Gamma$-$\bm G$-SSet}_*$ and a homomorphism $\alpha$ to the left derived functor $\cat{L}\alpha^*$.

	\begin{lemma}
		Let $G$ be a finite group. Then we have a Quillen adjunction
		\begin{equation*}
			\Gamma(1^+,\blank)\smashp(\blank)_+\colon\cat{$\bm G$-SSet}\rightleftarrows\cat{$\bm\Gamma$-$\bm G$-SSet}_*\noloc\ev_{1^+}
		\end{equation*}
		in which both adjoints are homotopical.
		\begin{proof}
			It is clear that both adjoints are homotopical, and that the right adjoint moreover preserves fibrations, so that it is in particular right Quillen.
		\end{proof}
	\end{lemma}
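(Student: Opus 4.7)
The plan is to verify that the right adjoint $\ev_{1^+}$ is right Quillen and that both adjoints are homotopical. Since a right adjoint between model categories is right Quillen as soon as it preserves both fibrations and weak equivalences (trivial fibrations being the intersection of the two classes), it suffices to establish three things: (i) $\ev_{1^+}$ preserves fibrations, (ii) $\ev_{1^+}$ preserves weak equivalences, and (iii) the left adjoint $L\coloneqq\Gamma(1^+,\blank)\smashp(\blank)_+$ is homotopical.

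For (i) and (ii), the key observation is that both fibrations and weak equivalences in the level model structure on \cat{$\bm\Gamma$-$\bm G$-SSet}$_*$ are characterised by requiring $f(S^+)$ to be a $G$-fibration, respectively a $G$-weak equivalence, for every finite $G$-set $S$, where the target $G$-spaces carry the diagonal action induced from the $G$-actions on $S$ and on the value space. Specialising to the one-point $G$-set $S=1$, on which $G$ acts trivially and $\Sigma_S$ is trivial, the prescribed diagonal action on $X(1^+)$ reduces to the intrinsic $G$-action inherited from $X$, so that $f(1^+)=\ev_{1^+}(f)$ is automatically a $G$-fibration, respectively a $G$-weak equivalence, whenever $f$ is one.

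For (iii), one computes that $L(X)(S^+)=S_+\smashp X_+\cong(S\times X)_+$ carries the diagonal $G$-action $g\cdot(s,x)=(gs,gx)$. For any subgroup $H\subseteq G$ one obtains the identification $(S\times X)^H=S^H\times X^H$, so a $G$-weak equivalence $f\colon X\to Y$ induces on $H$-fixed points the map $\id_{S^H}\times f^H\colon S^H\times X^H\to S^H\times Y^H$, which is a finite disjoint union of copies of the weak equivalence $f^H$ and hence itself a weak equivalence. Thus $L(f)$ is a level weak equivalence, establishing (iii).

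I expect no serious obstacle here; the only point requiring any attention is correctly unwinding the definition of the diagonal $G$-action on $X(S^+)$, but this trivialises for the right adjoint upon specialising to $S=1$, and the computation for the left adjoint reduces immediately to a $G$-set times a weak equivalence via fixed-point inspection.
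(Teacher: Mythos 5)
Your proof is correct and takes essentially the same route as the paper, which simply asserts that both adjoints are homotopical and that $\ev_{1^+}$ preserves fibrations; you have just filled in the routine verifications (specializing the level conditions to $S=1$ and identifying $\Gamma(1^+,\blank)\smashp X_+$ levelwise with $(S\times X)_+$ with the diagonal action).
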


	Thus, $\Gamma(1^+,\blank)\smashp(\blank)_+$ induces a natural transformation $\cat{$\bullet\bm$-SSet}\to\cat{$\bm\Gamma$-$\bm{\bullet}$-SSet}_*^\text{cof}$, and hence a global functor $\ul\S\to\ul{\GammaS}_*$, which we denote by ${\mathbb P}$. It is not hard to check that ${\mathbb P}$ admits a global right adjoint (induced by $\ev_{1^+}$); as we will not need this below, we leave the details to the interested reader.

	\subsubsection{Specialness} In order to study equivariant commutative monoids, we have to Bousfield localize the above level model structures. For this we recall:

	\begin{definition}
		A $\Gamma$-$G$-space is called \emph{special} if for every finite $G$-set $S$ the \emph{Segal map} $X(S_+)\to X(1^+)^{\times S}$ induced by the characteristic maps $\chi_s\colon S_+\to 1^+$ for varying $s\in S$, is a $G$-weak equivalence.
	\end{definition}

	Similarly to the different characterizations of the $G$-equivariant level weak equivalences, specialness is equivalent to asking more generally for the Segal maps to be $H$-equivariant weak equivalences for all $H\subset G$ and all finite $H$-sets $S$, or for them to be $\mathcal G_{G,\Sigma_S}$-weak equivalences for every finite set $S$, see \cite[Lemma~2.2.10]{g-global}.

	\begin{proposition}[See {\cite[Proposition~2.2.60]{g-global}}]
		The $G$-equivariant level model structure on $\cat{$\bm\Gamma$-$\bm G$-SSet}_*$ admits a Bousfield localization with fibrant objects precisely the level fibrant \emph{special} $\Gamma$-$G$-spaces. We call this the \emph{$G$-equivariant model structure} and its weak equivalences the \emph{$G$-equivariant weak equivalences}. It is combinatorial, simplicial, and left proper.\qed
	\end{proposition}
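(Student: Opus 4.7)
The plan is to construct the desired Bousfield localization via the general machinery for combinatorial left proper simplicial model categories (e.g.\ Smith's theorem, as presented in \cite{barwick2010left}, or Hirschhorn's theorem \cite{hirschhorn2003model}). Since the $G$-equivariant level model structure is already known to be combinatorial, simplicial, and left proper by the preceding proposition, it suffices to choose a suitable set $\mathcal{S}$ of maps to invert and then check that the resulting $\mathcal{S}$-local fibrant objects are precisely the level fibrant special $\Gamma$-$G$-spaces.

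For the localizing set, I would take the equivariant \emph{Segal maps}
\begin{equation*}
	\sigma_{S,H} \noloc \Big(\textstyle\bigvee_{s\in S}\Gamma(1^+,\blank)\Big)\smashp G/H_+ \longrightarrow \Gamma(S_+,\blank)\smashp G/H_+,
\end{equation*}
indexed by finite $G$-sets $S$ (taken from a set of representatives of isomorphism classes, so as to give an honest set) and subgroups $H\subset G$. Here $G$ acts on the wedge by permuting the summands, and the $s$-th component of $\sigma_{S,H}$ is induced by the characteristic map $\chi_s\colon S_+\to 1^+$. Both source and target of $\sigma_{S,H}$ are cofibrant in the level model structure, as they are built from the generating cofibrations listed in Proposition~\ref{prop:equiv-level-model-struct}, so no further cofibrant replacement is needed.

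To identify the fibrant objects, I would appeal to the simplicial enriched Yoneda lemma together with the standard identification $\ul\maps(G/H_+,Y)=Y^H$ to compute that for a level fibrant $\Gamma$-$G$-space $X$ the derived mapping space of $\sigma_{S,H}$ into $X$ becomes the map on $H$-fixed points of the ordinary Segal map $X(S_+)\to X(1^+)^{\times S}$. Thus $X$ is $\sigma_{S,H}$-local for every $H\subset G$ if and only if this Segal map is a $G$-weak equivalence, which is precisely the specialness condition for the finite $G$-set $S$. Running over all $S$ and $H$ then shows that the $\mathcal{S}$-local level fibrant objects are exactly the level fibrant special $\Gamma$-$G$-spaces.

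The main technical point I expect will require care is verifying that the derived mapping spaces out of the sources and targets of $\sigma_{S,H}$ really are computed by the enriched hom, which uses that $X$ is fibrant and that the wedge decomposition is homotopically meaningful (one should confirm that the equivariant wedge $\bigvee_{s\in S}\Gamma(1^+,\blank)$ really is a homotopy coproduct of $G$-objects, and in particular that its simplicial mapping space into $X$ computes the desired derived product $X(1^+)^{\times S}$ rather than a homotopy version thereof). This is routine but needs to be written out. Once this is in place, combinatoriality and left properness of the localization, together with the simplicial enrichment, are automatic from the general theorem.
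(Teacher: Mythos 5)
Your proposal is correct and is essentially the intended argument: the paper defers the proof to \cite[Proposition~2.2.60]{g-global}, and its own Remark~\ref{rk:special-generating-we} confirms that the localization is taken at exactly your maps $S_+\smashp\Gamma(1^+,\blank)\smashp G/H_+\to\Gamma(S_+,\blank)\smashp G/H_+$, with the identification of local objects via enriched Yoneda and $H$-fixed points just as you describe. The cofibrancy of source and target and the existence/properties of the localization follow from the combinatorial, simplicial, left proper level model structure, as you note, so no gap remains.
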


	\begin{remark}\label{rk:special-generating-we}
		The above model structure is obtained from the level model structure by localizing with respect to the maps $S_+\smashp \Gamma(1^+,\blank)\smashp G/H_+\to\Gamma(S_+,\blank)\smashp G/H_+$ induced by the map $S_+\to\Gamma(S_+,1^+)$ sending $s\in S$ to its characteristic map $\chi_S\colon S_+\to 1^+$ for all finite $G$-sets $S$. In particular, all of these maps are $G$-equivariant weak equivalences.
	\end{remark}

	\begin{lemma}\label{lemma:gamma-spec-adj}
		Let $\alpha\colon G\to G'$ be a homomorphism. Then
		\begin{equation*}
			\alpha^*\colon\cat{$\bm\Gamma$-$\bm{G'}$-SSet}_*\rightleftarrows\cat{$\bm\Gamma$-$\bm{G}$-SSet}_*
		\end{equation*}
		is left Quillen with respect to the above model structures. If $\alpha$ is injective, then $\alpha^*$ is also right Quillen.
		\begin{proof}
			For the first statement, it will suffice by \cite[Corollary~A.3.7.2]{HTT} that $\alpha^*$ preserves cofibrations and $\alpha_*$ preserves fibrant objects. The first statement is clear from Lemma~\ref{lemma:gamma-level-adj}, while for the second statement it is enough by adjunction to show that $\cat{L}\alpha^*$ sends the maps from the previous remark to weak equivalences. As these are maps between cofibrant objects, it is enough to prove the same for $\alpha^*$. However, decomposing $\alpha^*(G'/H)$ into $G$-orbits expresses $\alpha^*(S_+\smashp\Gamma(1^+,\blank)\smashp G'/H_+\to\Gamma(S_+,\blank)\smashp G'/H_+)$ as a coproduct of weak equivalences between cofibrant objects, so the claim follows.

			The second statement follows similarly from Remark~\ref{rk:alpha-RQ-equiv-level} as $\alpha^*$ clearly preserves specialness for injective $\alpha$.
		\end{proof}
	\end{lemma}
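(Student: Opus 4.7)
The plan is to apply the standard lifting criterion for Quillen adjunctions through left Bousfield localizations (e.g.\ \cite{HTT}*{Corollary~A.3.7.2}). Lemma~\ref{lemma:gamma-level-adj} already supplies the underlying level Quillen adjunction, so for the first statement it suffices to verify that $\cat{L}\alpha^*$ takes a generating set of local equivalences of the $G'$-equivariant model structure to weak equivalences in the $G$-equivariant (special) model structure on $\cat{$\bm\Gamma$-$\bm G$-SSet}_*$.

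For this I would take the explicit generators from Remark~\ref{rk:special-generating-we}: the Segal maps
\[
S_+\smashp\Gamma(1^+,\blank)\smashp(G'/H)_+\longrightarrow\Gamma(S_+,\blank)\smashp(G'/H)_+
\]
indexed by $H\subset G'$ and finite $G'$-sets $S$. Both ends are cofibrant in the level structure, so $\cat{L}\alpha^*$ coincides with $\alpha^*$ on them. The key calculation is that $\alpha^*(G'/H)$ splits into $G$-orbits as $\coprod_i G/K_i$, and since smashing distributes over coproducts of pointed objects, $\alpha^*$ carries the above map to a finite coproduct of Segal maps of the form $(\alpha^*S)_+\smashp\Gamma(1^+,\blank)\smashp(G/K_i)_+\to\Gamma((\alpha^*S)_+,\blank)\smashp(G/K_i)_+$. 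Each summand is itself a generating Segal equivalence on the $G$-side, with $\alpha^*S$ viewed as a finite $G$-set, hence a weak equivalence between cofibrant objects; a finite coproduct of such maps is again a weak equivalence in this simplicial model category, completing the first half.

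For the second statement, assume $\alpha$ is injective. Remark~\ref{rk:alpha-RQ-equiv-level} supplies right Quillenness at the level, and by the dual lifting criterion it suffices to check that $\alpha^*$ preserves fibrant objects, i.e.\ sends level-fibrant special $\Gamma$-$G'$-spaces to level-fibrant special $\Gamma$-$G$-spaces. Level-fibrancy is preserved by Remark~\ref{rk:alpha-RQ-equiv-level}. For specialness, I would invoke the equivalent characterization recorded after the definition: a $\Gamma$-$G$-space $Y$ is special iff for every $H\subset G$ and finite $H$-set $T$, the Segal map of $Y|_H$ at $T$ is an $H$-equivariant weak equivalence. Given such $H$ and $T$, injectivity of $\alpha$ identifies $H\cong\alpha(H)\subset G'$; under this isomorphism the $H$-equivariant Segal map of $\alpha^*X$ at $T$ agrees with the $\alpha(H)$-equivariant Segal map of $X$ at $T$ viewed as an $\alpha(H)$-set, which is a weak equivalence by the analogous reformulation of specialness of $X$ on the $G'$-side. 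The only mild subtlety throughout is the careful group-action bookkeeping; the rest is a routine application of standard Bousfield localization criteria.
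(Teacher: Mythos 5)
Your proposal is correct and follows essentially the same route as the paper: both reduce via the Bousfield-localization lifting criterion (HTT Corollary~A.3.7.2) to checking that $\alpha^*$ sends the generating Segal maps of Remark~\ref{rk:special-generating-we} to weak equivalences, and both do so by decomposing $\alpha^*(G'/H)$ into $G$-orbits to exhibit the result as a coproduct of generating Segal maps between cofibrant objects. The second statement is likewise handled identically, via Remark~\ref{rk:alpha-RQ-equiv-level} together with preservation of specialness under restriction along injective homomorphisms.
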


	In particular, we get a functor $\cat{$\bm\Gamma$-$\bm\bullet$-SSet}_*^\text{cof, spc}\colon\Glo^\op\to\RelCat$ that sends $G$ to $\cat{$\bm\Gamma$-$\bm G$-SSet}_*^\text{cof}$ with the above weak equivalences. The identity of underlying categories $\cat{$\bm\Gamma$-$\bm\bullet$-SSet}_*^\text{cof}\to\cat{$\bm\Gamma$-$\bm\bullet$-SSet}_*^\text{cof, spc}$ then induces a localization $L\colon\ul{\GammaS}_*\to\ul{\GammaS}_*^{\text{spc}}$. We will write ${\mathbb P}\colon\ul\S\to\ul\GammaS_*^{\text{spc}}$ for $L\circ{\mathbb P}$; note that this is again induced by the homotopical left Quillen functors $\Gamma(1^+,\blank)\smashp(\blank)_+$.

	\begin{warning}\label{warning:L_not_global_left_adj}
		The functors $\cat{L}\alpha^*$ do \emph{not} preserve specialness for non-injective $\alpha$, i.e.~the pointwise right adjoints of $L$ do not assemble into a global right adjoint. This is hard to see directly (as we know so few cofibrant objects in the above model structure, making it hard to compute $\cat L\alpha^*$), so we use a trick and a bit of equivariant infinite loop space theory instead:

		Let $\Gamma(1^+,\blank)\to\mathcal S$ be an acyclic cofibration to a special $\Gamma$-space. In particular, $\mathcal S$ is cofibrant, so if $\cat{L}\alpha^*$ preserved specialness, then $\mathcal S$ with the trivial $G$-action would be a special $\Gamma$-$G$-space for any finite $G$. On the other hand, as restrictions are left Quillen by the above, it would be equivalent to $\Gamma(1^+,\blank)$ with trivial $G$-action. We show that already for $G=\mathbb Z/2$ this is impossible: no special $\Gamma$-$\mathbb Z/2$-space equivalent to $\Gamma(1^+,\blank)$ can have trivial action.

		For this we use that the delooping of $\Gamma(1^+,\blank)$ (and hence of any $\Gamma$-$\mathbb Z/2$-space equivalent to it) is the equivariant sphere spectrum. Now the zeroth stable homotopy groups of the latter are given by the Burnside ring, and hence in particular $\pi_0^1(\mathbb S)\cong\mathbb Z\not\cong \mathbb Z^2\cong\cramped{\pi_0^{\smash{\mathbb Z/2}}}(\mathbb S)$. However, for a \emph{special} $\Gamma$-$\mathbb Z/2$-space the homotopy groups of its delooping are simply given as the group completions of the original homotopy monoids. In the case of a trivial $G$-action, the restriction homomorphisms between these homotopy monoids are clearly isomorphisms, and in particular their group completions are isomorphic, yielding the desired contradiction.

		Note that the same argument shows that also the underived functors $\alpha^*$ do not preserve specialness, although there are much more concrete counterexamples available in this case.
	\end{warning}

	We can now finally state the main results of this section.

	\begin{theorem}\label{thm:gamma-comparison}
		The global category $\ul{\GammaS}_*^\textup{spc}$ is equivariantly presentable and equivariantly semiadditive. Moreover, the unique equivariantly cocontinuous global functor $\ul\CMon^{\Orb}_{\Orb\triangleright\Glo}\to\ul{\GammaS}_*^\textup{spc}$ sending ${\mathbb P}(*)$ to ${\mathbb P}(*) $ is an equivalence.
	\end{theorem}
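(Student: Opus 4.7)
My proof plan follows the template established in Section~\ref{sec:spaces} for the unstable case (Theorems~\ref{thm:orbi-S-univ-prop}/\ref{thm:orbi-S-equivalence}): deduce the equivariant statement from the corresponding \emph{global} one by exhibiting $\ul{\GammaS}_*^\textup{spc}$ as a suitable reflective-on-$\Orb$ subcategory of its $G$-global analogue, and then match up the universal functors.

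\textbf{Step 1: the globally presentable comparison.} First I would invoke the analogous result from \cite{CLL_Global}, which identifies the free \emph{globally} presentable equivariantly semiadditive global category with a model-categorically defined global category $\ul{\GammaS}_{*,\mathcal I}^\textup{spc}$ of \emph{special $\Gamma$-$G$-$\mathcal I$-spaces} (built on $\cat{$\bm\Gamma$-$\bm G$-$\bm{\mathcal I}$-SSet}_*$ analogously to Construction~\ref{constr:G-SSet} and the $G$-global model structures of \cite{g-global}). That result provides an equivalence $\ul\CMon^{\Orb}_{\Glo}\simeq\ul{\GammaS}_{*,\mathcal I}^\textup{spc}$ sending $\mathbb P(*)$ to $\mathbb P(*)$.

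\textbf{Step 2: embedding and adjoint.} Next I would prove an analogue of Lemma~\ref{lemma:triv-S-Sgl}: the constant-$\mathcal I$-diagram functor $\const\colon\cat{$\bm\Gamma$-$\bm G$-SSet}_*\to\cat{$\bm\Gamma$-$\bm G$-$\bm{\mathcal I}$-SSet}_*$ is strictly $\Glo$-natural and, via Ken Brown's Lemma applied on cofibrant objects, induces a global functor $\const\colon\ul{\GammaS}_*^\textup{spc}\to\ul{\GammaS}_{*,\mathcal I}^\textup{spc}$ which
\begin{enumerate}
\item is fully faithful and preserves $\mathbb P(*)$, and
\item admits a pointwise right adjoint induced by evaluation at $\varnothing$ which satisfies Beck--Chevalley along all \emph{injective} homomorphisms, hence assembles into an $\Orb$-right adjoint.
\end{enumerate}
Point (1) reduces to checking pointwise full faithfulness on the zero-space level (the level-equivalence of $\const X$ and $\const Y$ in $\cat{$\bm\Gamma$-$\bm G$-$\bm{\mathcal I}$-SSet}_*$), while (2) is the direct analogue of \cite[Lemma~1.4.42, Prop.~1.1.18]{g-global} used in Lemma~\ref{lemma:triv-S-Sgl}; crucially, by Warning~\ref{warning:L_not_global_left_adj} this right adjoint is \emph{not} global, only equivariant.

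\textbf{Step 3: equivariant presentability and semiadditivity.} Since $\ul{\GammaS}_{*,\mathcal I}^\textup{spc}$ is globally presentable and equivariantly semiadditive (by Step~1 and the fact that $\ul\CMon^\Orb_\Glo$ is so by Theorem~\ref{thm:semiadd-T-spc} and Remark~\ref{rk:P-semiadd-underlying}), and the essential image of $\const$ is closed under all equivariant colimits (being an $\Orb$-left adjoint), the full subcategory $\ul{\GammaS}_*^\textup{spc}\subset\ul{\GammaS}_{*,\mathcal I}^\textup{spc}$ inherits equivariant cocompleteness. Fiberwise presentability of $\ul{\GammaS}_*^\textup{spc}(G)$ is immediate from the combinatorial model structure of Proposition~\ref{prop:equiv-level-model-struct} and its special Bousfield localization, yielding equivariant presentability. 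For equivariant semiadditivity, by Remark~\ref{rk:P-semiadd-underlying} it suffices to observe that an $\Orb$-left adjoint inclusion into an equivariantly semiadditive target reflects equivariant semiadditivity.

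\textbf{Step 4: the comparison is an equivalence.} By Theorem~\ref{thm:CMon-ST-univ-prop} there is an essentially unique equivariantly cocontinuous functor $F\colon\ul\CMon^{\Orb}_{\Orb\triangleright\Glo}\to\ul{\GammaS}_*^\textup{spc}$ sending $\mathbb P(*)$ to $\mathbb P(*)$. Postcomposing with $\const$ and comparing with the composite $\ul\CMon^{\Orb}_{\Orb\triangleright\Glo}\xrightarrow{\iota_!}\ul\CMon^{\Orb}_{\Glo}\simeq\ul{\GammaS}_{*,\mathcal I}^\textup{spc}$, both are equivariantly cocontinuous and preserve $\mathbb P(*)$, so they agree by the universal property. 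Since $\iota_!$ is fully faithful (Theorem~\ref{thm:CMon-technical}) and $\const$ is fully faithful, $F$ is fully faithful. For essential surjectivity of each $F_G$, I would use that $F_G$ is a left adjoint (Example~\ref{ex:la-are-cc}, after noting that its target is presentable), hence its image is closed under colimits, and that $\ul{\GammaS}_*^\textup{spc}(G)$ is generated under colimits by the objects $i_!\mathbb P(*)$ for subgroup inclusions $i\colon H\hookrightarrow G$; the latter lie in the image by the defining properties of $F$ and Beck--Chevalley for $\iota_!$.

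\textbf{Main obstacle.} I expect Step~2 to be the most delicate point: proving that the pointwise right adjoint to $\const$ satisfies Beck--Chevalley along injective homomorphisms (necessitating a model categorical check using the homotopicality of $\alpha_!$ on $G$-$\mathcal I$-simplicial sets together with its interaction with the special localization), and verifying that $\const$ preserves specialness and weak equivalences between cofibrant objects. The essential-surjectivity generation statement in Step~4 may also require a small separate argument: one should identify $\ul\GammaS_*^\textup{spc}(G)$ with $\ulPCMon_{\Orb_G}$ up to equivalence via the $G$-equivariant analogue of Theorem~\ref{thm:equivariant-spaces}, which reduces generation to the standard parametrized Yoneda argument on orbit categories.
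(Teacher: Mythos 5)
Your proposal follows essentially the same route as the paper: embed $\ul{\GammaS}_*^\textup{spc}$ into the $G$-global model via a fully faithful $\const$ with an $\Orb$-right adjoint (the paper's Proposition~\ref{prop:Lconst-Gamma}), inherit equivariant cocompleteness and semiadditivity from the globally presentable target, and identify the universal functor using the generation of each $\ul{\GammaS}_*^\textup{spc}(G)$ under colimits by the orbit translates of $\mathbb P(*)$. One caution on your closing remark: that generation statement must be proven directly from the generating cofibrations of the level model structure together with the specialness localization (as in the paper's Lemma~\ref{lemma:GammaS-generated}); deducing it instead from the $G$-equivariant universal property of special $\Gamma$-$G$-spaces would be circular, since that result (Theorem~\ref{thm:equivariant-gamma}) is itself derived from the theorem you are proving.
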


	\begin{theorem}\label{thm:gamma-univ-prop}
		The global category $\ul{\GammaS}_*^\textup{spc}$ is the free equivariantly cocomplete equivariantly semiadditive global category on one generator in the following sense: for every other such $\mathcal D$ evaluation at ${\mathbb P}(*)$ provides an equivalence
		\begin{equation*}
			\ul\Fun^{\textup{eq-cc}}_{\Glo}(\ul{\GammaS}_*^\textup{spc},\mathcal D)\iso\mathcal D.
		\end{equation*}
	\end{theorem}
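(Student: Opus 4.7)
The plan is to deduce Theorem~\ref{thm:gamma-univ-prop} as an essentially formal consequence of Theorem~\ref{thm:gamma-comparison} combined with the abstract universal property of $\ul\CMon^P_{S\triangleright T}$ supplied by Theorem~\ref{thm:CMon-ST-univ-prop}. Concretely, I would instantiate the latter with $T=\Glo$ and $S=P=\Orb$: by Example~\ref{ex:Orb} the inclusion $\Orb\subset\Glo$ is a cleft, and by Example~\ref{ex:atomic-orbital-cleft} together with the reference given in Example~\ref{ex:Orb} it is atomic orbital, so all hypotheses of Theorem~\ref{thm:CMon-ST-univ-prop} are met. This produces, for every equivariantly cocomplete equivariantly semiadditive global category $\mathcal D$, an equivalence
\[
\ul\Fun_{\Glo}^{\Orb\text{-cc}}(\ul\CMon^{\Orb}_{\Orb\triangleright\Glo},\mathcal D)\iso\mathcal D
\]
implemented by evaluation at the distinguished object $\mathbb P(*)\in\Gamma(\ul\CMon^{\Orb}_{\Orb\triangleright\Glo})$; note that by definition of equivariant cocontinuity the left-hand side coincides with $\ul\Fun_{\Glo}^{\textup{eq-cc}}(\ul\CMon^{\Orb}_{\Orb\triangleright\Glo},\mathcal D)$.

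The second ingredient is the comparison result Theorem~\ref{thm:gamma-comparison}, which supplies an equivalence of global categories $\Phi\colon\ul\CMon^{\Orb}_{\Orb\triangleright\Glo}\iso\ul\GammaS_*^\textup{spc}$ characterized by $\Phi(\mathbb P(*))\simeq\mathbb P(*)$. I would then simply transport the universal property along $\Phi$: precomposition with $\Phi$ induces an equivalence
\[
\ul\Fun_{\Glo}^{\textup{eq-cc}}(\ul\GammaS_*^\textup{spc},\mathcal D)\iso\ul\Fun_{\Glo}^{\textup{eq-cc}}(\ul\CMon^{\Orb}_{\Orb\triangleright\Glo},\mathcal D),
\]
and composing with the evaluation equivalence from the previous paragraph yields the desired equivalence $\ul\Fun_{\Glo}^{\textup{eq-cc}}(\ul\GammaS_*^\textup{spc},\mathcal D)\iso\mathcal D$, which is implemented by evaluation at $\mathbb P(*)\in\Gamma(\ul\GammaS_*^\textup{spc})$ precisely because $\Phi$ identifies the two chosen generators.

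In this sense the present theorem is a one-line deduction, and no genuine obstacle arises at this step: all the real content has been absorbed into the abstract machinery (Theorems~\ref{thm:CMon-ST-adj} and \ref{thm:CMon-ST-univ-prop}) and into the model-categorical comparison result Theorem~\ref{thm:gamma-comparison}. The latter is where the difficulty lies---it is there that one must match the cleft-parametrized commutative monoid construction with Shimakawa's special $\Gamma$-$G$-spaces, presumably by mimicking the strategy from Section~\ref{sec:spaces}: construct an embedding $\ul\GammaS_*^\textup{spc}\hookrightarrow\ul\GammaS_*^{\textup{gl},\textup{spc}}$ into a global analogue, identify its essential image with that of $\iota_!\colon\ul\CMon^{\Orb}_{\Orb\triangleright\Glo}\hookrightarrow\ul\CMon^{\Orb}_{\Glo}$ using the universal property, and invoke the globally presentable analogue from \cite{CLL_Global}. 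But for Theorem~\ref{thm:gamma-univ-prop} itself, nothing beyond the above formal combination is required.
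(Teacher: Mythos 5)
Your proposal is correct and matches the paper's own argument: the paper proves Theorems~\ref{thm:gamma-comparison} and~\ref{thm:gamma-univ-prop} together, and its final step is exactly your deduction---transport the universal property of $\ul\CMon^{\Orb}_{\Orb\triangleright\Glo}$ from Theorem~\ref{thm:CMon-ST-univ-prop} along the equivalence of Theorem~\ref{thm:gamma-comparison}, which preserves $\mathbb P(*)$. Your sketch of where the real work lies (the embedding into $\ul\GammaS_*^{\textup{gl},\textup{spc}}$ and the identification of essential images) also agrees with the paper's strategy.
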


	The proofs for these two theorems will be given in Subsection~\ref{subsec:Gamma-semiadd-proofs} after some further preparations. Similarly to the unstable situation, the key idea will be to deduce these from a statement about the universal \emph{globally cocomplete} equivariantly semiadditive category via the special case $T=\Glo,P=S=\Orb$ of Theorem~\ref{thm:CMon-ST-adj}.

	\subsection{$\bm G$-global vs.~$\bm G$-equivariant $\bm\Gamma$-spaces} In this subsection we will recall the pointset model of the free globally cocomplete equivariantly semiadditive category from \cite{CLL_Global} and compare it to $\ul{\GammaS}_*^\textup{spc}$.

	\subsubsection{Model categories of $G$-global $\Gamma$-spaces} We begin by introducing the relevant model categories.

	\begin{definition}
		A \emph{$G$-global $\Gamma$-space} is a functor $X\colon\Gamma\to\cat{$\bm G$-$\bm{\mathcal I}$-SSet}$ such that $X(0^+)=*$.
	\end{definition}

	\begin{proposition}[See {\cite[Theorem~2.2.31]{g-global}}]
		The category $\cat{$\bm\Gamma$-$\bm G$-$\bm{\mathcal I}$-SSet}_*$ of $G$-global $\Gamma$-spaces carries a unique model structure with
		\begin{enumerate}
			\item weak equivalences those maps $f$ such that $f(S_+)$ is a $(G\times\Sigma_S)$-global weak equivalence for every finite set $S$;
			\item acyclic fibrations those maps $f$ such that $f(S_+)(A)$ is a $\mathcal G_{\Sigma_A,G\times\Sigma_S}$-acyclic fibration for all finite sets $S$ and $A$.
		\end{enumerate}
		We call this the \emph{$G$-global level model structure} and its weak equivalences the \emph{$G$-global level weak equivalences}. It is combinatorial, simplicial, and proper. Moreover, pushouts along injective cofibrations (i.e.~levelwise injections) are homotopy pushouts in this model structure; in particular, they preserve weak equivalences.\qed
	\end{proposition}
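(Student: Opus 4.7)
My plan is to construct the $G$-global level model structure on $\cat{$\bm\Gamma$-$\bm G$-$\bm{\mathcal I}$-SSet}_*$ as a lift, via the various evaluation functors, of the $(G\times\Sigma_S)$-global model structures on $\cat{$\bm{(G\times\Sigma_S)}$-$\bm{\mathcal I}$-SSet}$ (which I treat as a black box, being standard machinery from \cite{g-global}). The first step is to identify, for each isomorphism class of finite set $S$, the left adjoint $\Gamma(S_+,\blank)\smashp\blank$ of the evaluation $\ev_{S_+}\colon\cat{$\bm\Gamma$-$\bm G$-$\bm{\mathcal I}$-SSet}_*\to\cat{$\bm{(G\times\Sigma_S)}$-$\bm{\mathcal I}$-SSet}$ and declare the generating (acyclic) cofibrations to be the images under these left adjoints of the generating (acyclic) cofibrations of the respective $(G\times\Sigma_S)$-global model structures (indexed by the graph-subgroup families $\mathcal{G}_{\Sigma_A,G\times\Sigma_S}$).

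The second step is to verify the hypotheses of Kan's transfer theorem, or equivalently to run the small object argument directly. Smallness is automatic once a sufficiently large cardinal is fixed, so the only subtle point is showing that every relative cell complex built from the generating acyclic cofibrations is a $G$-global level weak equivalence, i.e.\ that $\ev_{T_+}$ sends such cell complexes to weak equivalences in the $(G\times\Sigma_T)$-global model structure for every $T$. This reduces to the computation of $\ev_{T_+}\bigl(\Gamma(S_+,\blank)\smashp X\bigr)$ as an iterated wedge of copies of $X$ indexed by $\Gamma(S_+,T_+)$, each with a suitable $(G\times\Sigma_T)$-action, combined with the closure of weak equivalences under such wedges (using left properness together with the fact that wedge inclusions of cofibrant objects are injective cofibrations, hence flat). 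Uniqueness of the model structure follows because its weak equivalences and acyclic fibrations are fixed by the statement.

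Once existence is in hand, combinatoriality is immediate from the explicit set of generators, simpliciality follows from the simpliciality of each $\cat{$\bm{(G\times\Sigma_S)}$-$\bm{\mathcal I}$-SSet}$ and the fact that tensoring by a simplicial set is computed pointwise in the $\Gamma$-variable, and properness reduces to the pointwise properness of the target categories together with the levelwise computation of pushouts and pullbacks. For the final assertion that injective cofibrations are flat, I would use that a levelwise injection evaluates to a monomorphism in each $\cat{$\bm{(G\times\Sigma_S)}$-$\bm{\mathcal I}$-SSet}$, and invoke the analogous statement there. The main obstacle is precisely this last input: verifying that pushouts along monomorphisms are homotopy pushouts in the $(G\times\Sigma_S)$-global model structures. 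This is the most technical piece, and it rests ultimately on the corresponding fact for the Kan--Quillen model structure on $\cat{SSet}$ together with the compatibility of the $\mathcal{I}$-indexed filtered colimits defining the evaluation $X(\mathcal{U})$ with weak equivalences --- a property which is carefully established in \cite{g-global} and which I would simply quote.
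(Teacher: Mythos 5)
The paper does not actually prove this proposition: it is imported wholesale from the cited reference (\cite{g-global}*{Theorem~2.2.31}) and stated with an immediate \qed, so there is no in-paper argument to compare yours against. That said, your sketch follows the expected route — lifting the $(G\times\Sigma_S)$-global model structures along the evaluation functors $\ev_{S_+}$ and running the transfer argument — which is essentially how the cited source proceeds, and the hardest inputs you black-box (the $(G\times\Sigma_S)$-global model structures themselves and the flatness of levelwise injections there) are exactly the inputs one is entitled to quote.

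Two points deserve care if you were to write this out. First, since the acyclic fibrations are required to be $\mathcal G_{\Sigma_A,G\times\Sigma_S}$-acyclic fibrations (graph subgroups involving $\Sigma_S$), you must lift along $\ev_{S_+}$ viewed as landing in $(G\times\Sigma_S)$-objects; the left adjoint is then $\Gamma(S_+,\blank)\smashp_{\Sigma_S}(\blank)$, not $\Gamma(S_+,\blank)\smashp(\blank)$ — the latter would only produce the weaker condition on subgroups of $\Sigma_A\times G$. Second, and relatedly, $\ev_{T_+}$ applied to such a generator is $\Gamma(S_+,T_+)\smashp_{\Sigma_S}X$, and since $\Sigma_S$ does not act freely on the non-basepoint maps $S_+\to T_+$, this is a wedge of quotients $X/\mathrm{Stab}(\phi)$ rather than a plain wedge of copies of $X$; checking that these quotients are still acyclic cofibrations is precisely where the graph-subgroup (freeness) condition on the generating acyclic cofibrations enters, and it is the genuine content of the verification rather than a formality. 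Neither issue invalidates the strategy, but as written the reduction step would not go through verbatim.
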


	For any $\alpha\colon G\to G'$ the functor $\alpha^*\colon \cat{$\bm\Gamma$-$\bm{G'}$-$\bm{\mathcal I}$-SSet}_*\to\cat{$\bm\Gamma$-$\bm G$-$\bm{\mathcal I}$-SSet}_*$ preserves weak equivalences, so the above yields a global category $\ul\GammaS_*^\text{gl}$ by the usual procedure. Note that \cite{CLL_Global} uses the notation `$\ul\GammaS_{\mathcal I,*}^\text{gl}$' instead; however, the above is equivalent to the category denoted by the same symbols in \emph{op.~cit.} by \cite[Theorem~2.2.33]{g-global}.

	\begin{construction}\label{constr:Gamma-free-forgetful}
		For any $G$, we have a homotopical adjunction
		\begin{equation*}
			\Gamma(1^+,\blank)\smashp(\blank)_+\colon\cat{$\bm G$-$\bm{\mathcal I}$-SSet}\rightleftarrows\cat{$\bm\Gamma$-$\bm G$-$\bm{\mathcal I}$-SSet}_* \noloc\ev_{1^+}.
		\end{equation*}
		As both adjoints are moreover strictly compatible with restriction, we obtain an induced adjunction $\mathbb P\colon\ul\S^\text{gl}\rightleftarrows\ul\GammaS^\text{gl}_*\noloc\mathbb U$. We will refer to $\mathbb U$ as the \emph{forgetful functor}.
	\end{construction}

	\begin{remark}\label{rk:pted-unptd-inclusion}
		We can also consider the category $\cat{$\bm\Gamma$-$\bm G$-$\bm{\mathcal I}$-SSet}$ of all functors $\Gamma\to\cat{$\bm G$-$\bm{\mathcal I}$-SSet}$ and equip this with the analogue of the $G$-global level weak equivalence. For varying $G$, these again assemble into a global category, which we denote by $\ul\GammaS^\text{gl}$.

		The inclusion $\cat{$\bm\Gamma$-$\bm G$-$\bm{\mathcal I}$-SSet}_*\hookrightarrow\cat{$\bm\Gamma$-$\bm G$-$\bm{\mathcal I}$-SSet}$ admits a left adjoint $\Lambda$ given by taking the cofibers of the maps $X(0^+)\to X(S_+)$ induced by the unique maps $i\colon 0^+\to S_+$ in $\Gamma$. As each $X(i)$ is an injective cofibration (as $i$ admits a retraction), this is actually a homotopy cofiber and $\Lambda$ is homotopical. It follows easily that the map $\ul\GammaS_*^\text{gl}\to\ul\GammaS^\text{gl}$ induced by the inclusions is fully faithful with essential image given in degree $G$ by those $X$ with $X(0^+)\simeq *$ in $\S^\text{gl}_G$.
	\end{remark}

	\begin{definition}
		A $G$-global $\Gamma$-space $X$ is called \emph{special} if the Segal map $X(S_+)\to X(1^+)^{\times S}$ is a $(G\times\Sigma_S)$-global weak equivalence for every finite set $S$.
	\end{definition}

	Note that unlike their equivariant counterparts, these are stable under arbitrary restrictions, so they form a global subcategory $\ul\GammaS_*^\text{gl, spc}$.

	\begin{theorem}[See \cite{CLL_Global}*{Corollary~5.3.6}]\label{thm:Gamma-gl-comp}
		There exists an equivalence of global categories $\Xi\colon\ul\GammaS^\textup{gl}\simeq\ul\Fun_{\Glo}(\ul{\mathbb F}^{\Orb}_{\Glo,*},\ul\Spc_{\Glo})$ compatible with the forgetful functors and restricting to an equivalence $\ul\GammaS^\textup{gl, spc}_*\simeq\ul\CMon^{\Orb}_{\Glo}$.\qed
	\end{theorem}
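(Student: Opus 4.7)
My plan is to reduce the statement to the universal property of the global category $\ul\Spc_\Glo$ of global spaces (\Cref{thm:global-spaces-universal}) together with a presentation of $\ul{\mathbb F}^{\Orb}_{\Glo,*}$ in terms of orbits, and then use the Segal-style correspondence between special $\Gamma$-objects and commutative monoids to obtain the refinement on the special part. The basic comparison functor should ultimately take a $G$-global $\Gamma$-space $X$ and produce a $\Glo$-functor sending a finite pointed $\Orb$-set $\{*\} \sqcup \coprod_i B H_i$ (over some $K \in \Glo$, together with structure maps to $G$) to the value of $X$ on $\coprod_i BH_i$, viewed as an object of $\ul\S^\textup{gl}$; the homotopical naturality of this recipe in the $H_i$'s is precisely the $\mathcal I$-indexing in the definition of $G$-global $\Gamma$-spaces.

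First I would work pointwise and construct, for each finite group $G$, a functor $\Xi_G$ from the localization of $\cat{$\bm\Gamma$-$\bm G$-$\bm{\mathcal I}$-SSet}_*$ to $\Fun_\Glo(\ul{\mathbb F}^{\Orb}_{\Glo,*},\ul\Spc_\Glo)(G)$. The most convenient approach is to observe that $\Gamma$ is equivalent to the opposite of the category of finite pointed free $G$-sets for varying $G$ in a suitable parametrized sense, so that a $G$-global $\Gamma$-space is essentially a functor out of a full subcategory of $\ul{\mathbb F}^{\Orb}_{\Glo,*}$ restricted to its ``free orbits.'' Left Kan extension along this inclusion (inside the relevant functor category of global spaces) produces a functor on the whole of $\ul{\mathbb F}^{\Orb}_{\Glo,*}$, and one checks that this extension recovers the original on the free part. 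To see this is an equivalence, I would then appeal to the identification $\ul\Spc_\Glo \simeq \ul\S^\textup{gl}$ from \Cref{thm:global-spaces-universal} to turn the right-hand side into a homotopically meaningful functor category out of $\ul{\mathbb F}^{\Orb}_{\Glo,*}$ into global spaces, and reduce essential surjectivity and fully faithfulness to standard Yoneda-type arguments in parametrized category theory, using that both sides are computed as appropriate functor categories.

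Next I would verify naturality in $G$: the construction is strictly compatible with restriction along homomorphisms $\alpha\colon G \to G'$ on the model-categorical side, and after passing to localizations one obtains a global functor $\Xi$, yielding the first equivalence. The compatibility with the forgetful functors is built into the construction, since evaluation at $1^+ \in \Gamma$ on the left corresponds to evaluation at $S^0 \in \ul{\mathbb F}^{\Orb}_{\Glo,*}$ on the right by the Yoneda lemma, both of which compute the underlying object.

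For the restriction to the special/commutative monoid part, the key observation is that a $G$-global $\Gamma$-space $X$ is special exactly when the Segal maps $X(S_+) \to X(1^+)^{\times S}$ are global equivalences for all finite $(G\times\Sigma_S)$-sets $S$; under $\Xi$ this translates into the statement that the corresponding functor $\ul{\mathbb F}^{\Orb}_{\Glo,*} \to \ul\Spc_\Glo$ sends $\Orb$-coproducts to products, i.e.\ defines an object of $\ul\CMon^{\Orb}_\Glo$. Concretely one checks this on generators: the Segal map for a finite $H$-set of the form $H/K$ corresponds, via the orbit decomposition of $\ul{\mathbb F}^{\Orb}_{\Glo,*}$, precisely to the norm map appearing in the definition of $P$-semiadditivity. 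I expect the main obstacle to lie here: one must carefully compare the model-categorical Segal conditions (involving the universe $\mathcal I$ and families of graph subgroups) to the $\infty$-categorical norm maps of \cite{CLL_Global}*{Construction~4.6.1}, which requires a nontrivial computation of the relevant homotopy colimits in $\cat{$\bm G$-$\bm{\mathcal I}$-SSet}$ and an identification of them with the parametrized coproducts in $\ul\Spc_\Glo \simeq \ul\S^\textup{gl}$.
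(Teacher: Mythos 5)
First, a remark on what you are comparing against: the paper does not prove this statement at all — it is quoted with a \qed from \cite{CLL_Global}*{Corollary~5.3.6}, so a genuine proof has to reproduce the content of Section~5.3 of that paper rather than cite it.

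The central gap in your sketch lies in the mechanism you propose for the first, \emph{unlocalized} equivalence $\ul\GammaS^\textup{gl}\simeq\ul\Fun_{\Glo}(\ul{\mathbb F}^{\Orb}_{\Glo,*},\ul\Spc_{\Glo})$. You regard $\Gamma$ as (the ``free part'' of) a full subcategory of the Grothendieck construction $\smallint\ul{\mathbb F}^{\Orb}_{\Glo,*}\times\ul G$ and propose to pass to the whole of $\ul{\mathbb F}^{\Orb}_{\Glo,*}$ by left Kan extension, checking that the extension restricts back to the original functor. But left Kan extension along a fully faithful inclusion of indexing categories induces a fully faithful, and in general far from essentially surjective, functor on functor categories: before imposing specialness, a functor on $\ul{\mathbb F}^{\Orb}_{\Glo,*}$ is not determined by its restriction to any proper full subcategory, and the representables on non-free orbits are not colimits of representables on the free part — the identification $\Gamma(S_+,\blank)\smashp G/H_+\simeq S_+\smashp\Gamma(1^+,\blank)\smashp G/H_+$ of Remark~\ref{rk:special-generating-we} holds only \emph{after} localizing at the special objects. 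So this step cannot yield an equivalence of the full functor categories, which is what the first half of the theorem asserts. The actual content of the cited result is an identification of indexing categories: a $G$-global $\Gamma$-space carries, via the $\mathcal I$-direction together with the diagonal-action/graph-subgroup formalism, canonical and functorial values on all finite $H$-sets for all homomorphisms $H\to G$, and one must show that the resulting strict category of pairs (finite group over $G$, finite pointed set with action) is a model for $\smallint\ul{\mathbb F}^{\Orb}_{\Glo,*}\times\ul G$ compatibly with the level weak equivalences. Your appeal to ``standard Yoneda-type arguments, using that both sides are computed as appropriate functor categories'' assumes precisely this identification and hence begs the question. By contrast, your outline for the second half — matching the model-categorical Segal maps with the Segal/norm conditions cutting out $\ul\CMon^{\Orb}_{\Glo}$ inside $\ul\Fun_{\Glo}(\ul{\mathbb F}^{\Orb}_{\Glo,*},\ul\Spc_{\Glo})$ — is the right idea and correctly locates nontrivial work, but it presupposes that the first equivalence has been established.
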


	\begin{corollary}
		The inclusions $\ul\GammaS^\textup{gl, spc}_*\hookrightarrow\ul\GammaS^\textup{gl}_*$ and $\ul\GammaS^\textup{gl, spc}_*\hookrightarrow\ul\GammaS^\textup{gl}$ admit global left adjoints.
		\begin{proof}
			The second statement follows from the previous theorem as $\ul\CMon^{\Orb}_{\Glo}\hookrightarrow\ul\Fun_{\Glo}(\ul{\mathbb F}^{\Orb}_{\Glo,*},\ul\Spc_{\Glo})$ admits a left adjoint. The first one then follows from this together with Remark~\ref{rk:pted-unptd-inclusion}.
		\end{proof}
	\end{corollary}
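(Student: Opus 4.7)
The plan is to translate both statements through the equivalence $\Xi$ of Theorem~\ref{thm:Gamma-gl-comp} and then appeal to the existing theory of $P$-semiadditive envelopes.

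First I would handle the second inclusion $\ul\GammaS^\textup{gl, spc}_*\hookrightarrow\ul\GammaS^\textup{gl}$. By Theorem~\ref{thm:Gamma-gl-comp}, this inclusion is identified under $\Xi$ with the inclusion $\ul\CMon^{\Orb}_{\Glo}\hookrightarrow\ul\Fun_{\Glo}(\ul{\mathbb F}^{\Orb}_{\Glo,*},\ul\Spc_{\Glo})$. But this inclusion admits a global left adjoint, namely the localization functor $L^{P\text-\oplus}$ recalled in Section~\ref{sec:semiadd} (see the construction preceding Theorem~\ref{thm:Semiadd_omnibus}, applied with $T=\Glo$ and $P=\Orb$, noting that $\ul\Spc_\Glo$ is globally presentable). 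This gives the desired left adjoint; denote it $L$.

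For the first inclusion $j\colon \ul\GammaS^\textup{gl, spc}_*\hookrightarrow\ul\GammaS^\textup{gl}_*$, I would use Remark~\ref{rk:pted-unptd-inclusion}, which provides a fully faithful global functor $i\colon\ul\GammaS^\textup{gl}_*\hookrightarrow\ul\GammaS^\textup{gl}$. By definition, the composite $i\circ j$ is precisely the inclusion $\ul\GammaS^\textup{gl, spc}_*\hookrightarrow\ul\GammaS^\textup{gl}$ whose left adjoint was constructed above. Setting $L'\coloneqq L\circ i$, the standard calculation
\begin{equation*}
\maps\big(L'(Z),Y\big)=\maps\big(L\,i(Z),Y\big)\simeq\maps\big(i(Z),i\,j(Y)\big)\simeq\maps\big(Z,j(Y)\big),
\end{equation*}
where the last equivalence uses full faithfulness of $i$, shows that $L'$ is left adjoint to $j$. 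Since $L$ and $i$ are both global functors, so is $L'$, and the parametrized adjunction can be verified pointwise via Proposition~\ref{prop:adjoints-BC} (the Beck--Chevalley condition for $L'$ being inherited from those of $L$ and $i$).

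There is essentially no main obstacle here: Theorem~\ref{thm:Gamma-gl-comp} performs all of the nontrivial work, and the only remaining content is the abstract nonsense argument that composing a left adjoint with a fully faithful functor whose right adjoint recovers a second full subcategory yields the expected left adjoint. The corollary should accordingly be quite short.
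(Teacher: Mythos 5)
Your proposal is correct and follows essentially the same route as the paper: transport the second inclusion along $\Xi$ to $\ul\CMon^{\Orb}_{\Glo}\hookrightarrow\ul\Fun_{\Glo}(\ul{\mathbb F}^{\Orb}_{\Glo,*},\ul\Spc_{\Glo})$, whose left adjoint $L^{P\text-\oplus}$ is already available, and then obtain the left adjoint to the first inclusion as $L\circ i$ using the full faithfulness of $i$ from Remark~\ref{rk:pted-unptd-inclusion}. The paper leaves this second step implicit, and your mapping-space calculation is exactly the intended abstract-nonsense argument.
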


	\begin{remark}
		One can also prove the corollary via purely model categorical arguments: by \cite[Proposition~2.2.61]{g-global}, the $G$-global level model structure admits a Bousfield localization with fibrant objects the level fibrant \emph{special} $G$-global $\Gamma$-spaces. In particular, we get a pointwise left adjoint, and the Beck--Chevalley condition then translates to demanding that each $\alpha^*\colon\ul\GammaS_*^\text{gl}(G')\to \ul\GammaS_*^\text{gl}(G)$ preserve the weak equivalences of these Bousfield localizations, or equivalently that the restriction functors \begin{equation}\label{eq:pointset-restr}
			\alpha^*\colon\cat{$\bm\Gamma$-$\bm{G'}$-$\bm{\mathcal I}$-SSet}_*\to\cat{$\bm\Gamma$-$\bm{G}$-$\bm{\mathcal I}$-SSet}_*
		\end{equation}
		be homotopical for the localized model structures. While this is doable by careful inspection, it is actually more work than in the equivariant case (as the maps we localize at are more complicated), and hence deliberately avoided in \cite{g-global}, which is why we went via the above route instead.

		Note however that conversely the above corollary now shows that the functor $\alpha^*\colon\ul\GammaS_*^\text{gl}(G')\to \ul\GammaS_*^\text{gl}(G)$ and hence also $(\ref{eq:pointset-restr})$ is homotopical for any $\alpha$, yielding an $\infty$-categorical proof of a model categorical statement.
	\end{remark}

	Composing the above with the adjunction from Construction~\ref{constr:Gamma-free-forgetful}, we get an adjunction $\ul\S^\text{gl}\rightleftarrows\ul\GammaS^\text{gl, spc}_*$ that we again denote by $\mathbb P\dashv\mathbb U$. The (inverse) equivalence $\ul\CMon^{\Orb}_{\Glo}\simeq\ul\GammaS^\text{gl}_*$ from Theorem~\ref{thm:Gamma-gl-comp} can then be described (by some easy mate yoga) as the unique left adjoint that sends $\mathbb P(*)$ to $\mathbb P(*)$.

	\subsubsection{The comparison} Finally, let us relate $G$-global and $G$-equivariant $\Gamma$-spaces to each other:

	\begin{proposition}\label{prop:Lconst-Gamma}
		There is a global functor $\cat{L}\const\colon\ul\GammaS_*^\textup{spc}\to\ul\GammaS^\textup{gl, spc}_*$ with the following properties:
		\begin{enumerate}
			\item It is fully faithful and sends ${\mathbb P}(*)$ to $\mathbb P(*)$.
			\item It admits an $\Orb$-right adjoint.
		\end{enumerate}
	\end{proposition}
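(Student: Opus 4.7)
The plan is to mirror the proof of \Cref{lemma:triv-S-Sgl} closely, applied levelwise to $\Gamma$-spaces and then descended to the specialness localizations. I would begin with the strictly natural (in $G$) functor $\const\colon\cat{$\bm\Gamma$-$\bm G$-SSet}_*\to\cat{$\bm\Gamma$-$\bm G$-$\bm{\mathcal I}$-SSet}_*$ defined by $(\const X)(S_+)(A)=X(S_+)$ with constant $\mathcal I$-structure; this is left adjoint to the levelwise $\ev_\varnothing$. A direct inspection, parallel to the unstable argument, shows that $\const$ is homotopical for the $G$-equivariant \emph{level} weak equivalences: at each $S_+$, a $(G\times\Sigma_S)$-equivariant weak equivalence yields a $(G\times\Sigma_S)$-global weak equivalence of the associated constant $\mathcal I$-diagram, since the latter is checked after evaluating at a complete universe where the map is unchanged. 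Composing with the localization $\ul\GammaS_*^\textup{gl}\to\ul\GammaS^\textup{gl,spc}_*$ I then obtain a candidate global functor $F\colon\ul\GammaS_*\to\ul\GammaS^\textup{gl,spc}_*$.

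The crucial step is to verify that $F$ factors through the equivariant specialness localization $\ul\GammaS_*\to\ul\GammaS_*^\textup{spc}$; equivalently, that $F$ inverts the generating Segal maps $\sigma_{S,H}\colon S_+\smashp\Gamma(1^+,\blank)\smashp G/H_+\to\Gamma(S_+,\blank)\smashp G/H_+$ of \Cref{rk:special-generating-we}. This is the main obstacle, because equivariant Segal maps are indexed by finite $G$-sets $S$ whereas globally special localization inverts Segal maps indexed by plain finite sets with graph-subgroup conditions, so the matching is not automatic. I would verify this by directly analysing $G$-global weak equivalences level by level via the graph-subgroup characterization, showing that $\const(\sigma_{S,H})$ is indeed a $G$-global special weak equivalence; alternatively, one can transport the assertion through the equivalence $\Xi$ of \Cref{thm:Gamma-gl-comp} and verify it in $\ul\CMon^\Orb_\Glo$, where it reduces to the evident identification of a free parametrized commutative monoid on an orbit.

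Granting this factorization, we obtain $\cat L\const\colon\ul\GammaS_*^\textup{spc}\to\ul\GammaS^\textup{gl,spc}_*$. The preservation of $\mathbb P(*)$ is immediate since at both levels $\mathbb P(*)$ is represented by $\Gamma(1^+,\blank)\smashp S^0$ with constant $\mathcal I$-structure on the target, while full faithfulness should follow from an analog of \cite{g-global}*{Corollary~1.4.56} at the level model structures, combined with the compatibility of $\const$ with the two specialness reflectors. Finally, for the $\Orb$-right adjoint I would invoke \Cref{prop:adjoints-BC}: the pointwise right adjoint at each $G$ is the right derived functor of $\ev_\varnothing$, which one verifies sends fibrant globally special objects to equivariantly special ones; and the Beck--Chevalley condition for an injective $\alpha\colon G\hookrightarrow G'$ amounts to checking that $\alpha_!\circ\const\Rightarrow\const\circ\alpha_!$ is an equivalence, which holds on the pointset level by direct inspection and descends using the homotopicality of $\alpha_!$ at both levels for injective $\alpha$ (via \Cref{lemma:gamma-spec-adj} and its evident $G$-global counterpart).
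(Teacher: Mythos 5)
Your outline matches the paper's in broad strokes (constant-functor embedding, descent to the specialness localizations, pointwise right adjoints $\cat{R}\ev_\varnothing$ assembling into an $\Orb$-right adjoint via a pointset-level Beck--Chevalley isomorphism), but there are two genuine problems. First, your opening claim that $\const$ is homotopical for the $G$-equivariant level weak equivalences is false as stated: a $G$-equivariant level weak equivalence only requires $f(S_+)$ to be a $\mathcal G_{G,\Sigma_S}$-weak equivalence (fixed points for \emph{graph} subgroups), whereas $(\const f)(S_+)$ being a $(G\times\Sigma_S)$-global weak equivalence requires weak equivalences on fixed points for \emph{all} subgroups of $G\times\Sigma_S$, since restricting a constant $\mathcal I$-diagram along an arbitrary $\phi\colon H\to G\times\Sigma_S$ and evaluating at a complete universe returns $\phi^*f(S_+)$ with no universe appearing. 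This is rescued only on cofibrant objects (whose simplices have graph-subgroup isotropy), which is why the paper sets up the Quillen adjunction $\const\dashv\ev_\varnothing$ into the \emph{injective} $G$-global level model structure (Lemma~\ref{lemma:injective-model-Gamma}) and invokes Ken Brown's Lemma. Since $\ul\GammaS_*$ is built from cofibrant objects this is repairable, but it must be said.

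Second, and more seriously, your plan for the key descent step does not go through as stated. You propose to show that $\const(\sigma_{S,H})$ is a weak equivalence of the globally special localization ``by directly analysing $G$-global weak equivalences level by level via the graph-subgroup characterization.'' But the weak equivalences of a left Bousfield localization are \emph{not} a levelwise condition, so there is nothing to analyse level by level; the only practical way to check that a map is inverted by the localization is to map into local objects. Unwinding that, by adjunction, is exactly the paper's argument: one shows that the right adjoint $\cat{R}\ev_\varnothing$ carries special $G$-global $\Gamma$-spaces to special $\Gamma$-$G$-spaces --- which is easy because $\cat{R}\ev_\varnothing\simeq\ev_{\mathcal U}$ for a complete $G$-set universe $\mathcal U$ --- and this single fact simultaneously yields the descent of $\cat{L}\const$ through the equivariant specialness localization and the pointwise right adjoint you need in the last step. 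You in fact state this preservation of specialness when discussing the $\Orb$-right adjoint, but fail to notice that it is the missing ingredient for the descent; your alternative route through $\Xi$ is left too vague to assess. Relatedly, full faithfulness ``should follow from an analog of [a level-model-structure result] combined with compatibility of the reflectors'' is not a proof: full faithfulness before localizing does not automatically descend, and the paper instead deduces it from full faithfulness of the further right adjoint $\mathcal R$ of $\cat{R}\ev_\varnothing$ in the adjoint triple.
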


	Once again, after the universal property of $\ul\GammaS_*$ is established, we will see a posteriori that the above adjunction is actually unique.

	For the proof of the proposition we will need another model structure:

	\begin{lemma}[See {\cite[Corollary~2.2.40 and proof of Proposition~2.2.42]{g-global}}]\label{lemma:injective-model-Gamma}
		The category $\cat{$\bm\Gamma$-$\bm G$-$\bm{\mathcal I}$-SSet}_*$ admits a model structure with
		\begin{enumerate}
			\item weak equivalences the $G$-global level weak equivalences
			\item cofibrations the injective cofibrations.
		\end{enumerate}
		We call this the \emph{injective $G$-global level model structure}. It is combinatorial, simplicial, and proper. Moreover, if $\alpha\colon G\to G'$ is an \emph{injective} homomorphism, then $\alpha^*\colon(\cat{$\bm\Gamma$-$\bm{G'}$-$\bm{\mathcal I}$-SSet}_*)_\textup{injective}\to(
		\cat{$\bm\Gamma$-$\bm G$-$\bm{\mathcal I}$-SSet}_*)_\textup{injective}$ is right Quillen.\qed
	\end{lemma}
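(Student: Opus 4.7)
The plan is to apply Jeff Smith's recognition theorem for combinatorial model structures (e.g., \cite{HTT}*{Proposition~A.2.6.8}) to the class of monomorphisms in $\cat{$\bm\Gamma$-$\bm G$-$\bm{\mathcal I}$-SSet}_*$ paired with the $G$-global level weak equivalences inherited from the projective $G$-global level model structure recalled above. The weak equivalences form an accessibly-embedded class satisfying 2-out-of-3 because the projective structure is combinatorial. Monomorphisms are cofibrantly generated in any locally presentable category. The assertion in the earlier proposition that pushouts along injective cofibrations preserve weak equivalences is precisely what is needed to conclude that acyclic monomorphisms are closed under pushout; closure under transfinite composition follows from the combinatoriality of the projective structure, since filtered colimits of $G$-global level weak equivalences are weak equivalences (these being detected levelwise in simplicial sets). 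Finally, a monomorphism with the right lifting property against every monomorphism is necessarily an isomorphism, so the last hypothesis of the recognition theorem is trivial. This yields a combinatorial model structure with the desired weak equivalences and cofibrations.

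Left properness of the resulting model structure is automatic, since every object is injectively cofibrant: the initial object is the constant functor at a point, and any map out of it is a levelwise injection. Right properness reduces to that of the projective structure: since injective cofibrations contain projective cofibrations, the injective fibrations are contained in the projective fibrations, and hence pullbacks along them preserve weak equivalences by right properness of the projective model structure. The simplicial pushout-product axiom is similarly routine---monomorphisms are closed under pushout-products in any presheaf topos, and acyclicity in either factor is preserved because the projective structure is itself simplicial with the same weak equivalences.

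For the final assertion, fix an injective $\alpha\colon G\hookrightarrow G'$; the plan is to show that the left adjoint $\alpha_!$ of $\alpha^*$ is left Quillen between the injective model structures. Preservation of monomorphisms is automatic since $\alpha_!$ is a left adjoint in a presheaf topos. For preservation of weak equivalences, injectivity of $\alpha$ ensures that $\alpha_!$ is homotopical at the level of $G$-$\mathcal I$-simplicial sets by the special case of \cite{g-global}*{Proposition~1.1.18} invoked in the proof of Lemma~\ref{lemma:triv-S-Sgl}, and $G$-global level weak equivalences of $\Gamma$-diagrams are detected levelwise, so $\alpha_!$ preserves them as well. Hence $\alpha_!$ sends acyclic injective cofibrations to acyclic injective cofibrations, establishing the claim. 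The main technical obstacle I anticipate is the accessibility bookkeeping for Smith's theorem---specifically, certifying that the $G$-global level weak equivalences form an accessibly-embedded subcategory of the arrow category in a manner compatible with the $G$-, $\Sigma_S$-, and $\mathcal I$-indexings---but this follows from general principles once the combinatoriality of the projective structure is in hand.
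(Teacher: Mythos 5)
The paper itself offers no proof of this lemma---it is imported wholesale from \cite{g-global}*{Corollary~2.2.40 and Proposition~2.2.42}---so there is no internal argument to compare against; your route via Smith's recognition theorem is the standard way such injective level structures are produced and is viable in outline. However, two of your justifications are wrong as stated. First, the hypothesis of the recognition theorem that you dismiss as ``trivial'' is not the statement you verify: what is needed is that \emph{every} map with the right lifting property against all monomorphisms is a $G$-global level weak equivalence, and such maps need not be monomorphisms, nor isomorphisms (already in $\cat{SSet}$ the map $\Delta^1\to\Delta^0$ has the RLP against all monomorphisms). The claim you prove instead---a monomorphism with the RLP against all monomorphisms is an isomorphism---is true but addresses a different condition. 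The correct argument is that every object is injectively cofibrant and the category is simplicially enriched with monomorphisms stable under pushout-product, so any map with the RLP against all monomorphisms admits a section and is a simplicial homotopy equivalence, hence a levelwise and therefore $G$-global level weak equivalence.

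Second, in the right-Quillen assertion, ``preservation of monomorphisms is automatic since $\alpha_!$ is a left adjoint in a presheaf topos'' is not a valid principle: left adjoints do not preserve monomorphisms in general. Here one must use the explicit form of induction along an injective homomorphism: because of the basepoint condition $X(0^+)=*$, the left adjoint is the \emph{reduced} induction, given levelwise by a coproduct of $[G':\alpha(G)]$ copies of $X(S_+)$ with their basepoints identified, which visibly preserves levelwise injections; the identification $(\alpha_!X)(S_+)\cong(\alpha\times\Sigma_S)_!(X(S_+))$ is also what reduces homotopicality of $\alpha_!$ to the cited fact about $\mathcal I$-simplicial sets. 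Relatedly, your verification of the pushout-product axiom cannot simply be quoted from the projective structure in the case where the \emph{injective} (non-projective) cofibration is the acyclic factor; that case again requires the statement that pushouts along levelwise injections are homotopy pushouts. The remaining steps---pushout and transfinite-composition stability of acyclic monomorphisms, filtered-colimit closure of the weak equivalences, and both halves of properness---are essentially correct.
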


	\begin{proof}[Proof of Proposition~\ref{prop:Lconst-Gamma}]
		For every $G$, we have a Quillen adjunction
		\begin{equation}\label{eq:const-Gamma}
			\const\colon\cat{$\bm\Gamma$-$\bm G$-SSet}_*\rightleftarrows(\cat{$\bm\Gamma$-$\bm G$-$\bm{\mathcal I}$-SSet}_*)_\text{injective}\noloc\ev_{\emptyset},
		\end{equation}
		see \cite[Proposition~2.2.25]{g-global}. By Ken Brown's Lemma, we in particular see that $\const$ sends $G$-equivariant weak equivalences between cofibrant objects to $G$-global level weak equivalences, so we get an induced global functor $\cat{L}\const\colon\ul\GammaS_*\to\ul\GammaS^\text{gl}_*$, which we can postcompose with the localization to $\ul\GammaS^\text{gl, spc}_*$. Note that this sends $\Gamma(1^+,\blank)$ to $\mathbb P(*)$ by direct inspection.

		We now claim that this descends to $\ul\GammaS^\text{spc}_*$, which amounts to saying that the left adjoint in $(\ref{eq:const-Gamma})$ sends $G$-equivariant weak equivalences of cofibrant objects to $G$-global weak equivalences, for which it is in turn enough that the right derived functor $\cat{R}\ev_\emptyset$ preserve specialness. However, by \emph{loc.~cit.} this right adjoint is equivalent to $\ev_{\mathcal U}$ for our favourite complete $G$-set universe $\mathcal U$, and it is clear that the latter has the required property (also see Lemma~2.2.51 of \emph{op.~cit.}). Altogether, we therefore get a functor $\cat{L}\const \colon\ul\GammaS^\text{spc}_*\to \ul\GammaS^\text{gl, spc}_*$ sending ${\mathbb P}(*)$ to $\mathbb P(*)$; moreover, this is fully faithful as the right adjoint $\mathcal R$ of the right adjoint $\cat{R}\ev_\emptyset$ is fully faithful by Theorem~2.2.59 of \emph{op.~cit.}

		It only remains to show that the pointwise right adjoints $\cat{R}\ev_\emptyset$ assemble into an $\Orb$-right adjoint, i.e.~that for any \emph{injective} homomorphism $\alpha\colon G\to G'$ the Beck--Chevalley transformation $\cat{L}\alpha^*\circ\cat{R}\ev_\emptyset\Rightarrow\cat{R}\ev_\emptyset\circ\alpha^*$ is an equivalence.

		However, the \emph{pointset level} Beck--Chevalley map $\alpha^*\circ\ev_\emptyset\Rightarrow\ev_\emptyset\circ\alpha^*$ is clearly an isomorphism, and all functors in question are right Quillen by the above together with Lemmas~\ref{lemma:gamma-spec-adj} and~\ref{lemma:injective-model-Gamma}, so this already models the derived Beck--Chevalley map when restricted to injectively fibrant objects.
	\end{proof}

	\subsection{Proof of Theorems~\ref{thm:gamma-comparison} and~\ref{thm:gamma-univ-prop}}\label{subsec:Gamma-semiadd-proofs} Finally, we turn to the universal property of $\ul\GammaS^\text{spc}_*$.

	\begin{lemma}\label{lemma:GammaS-generated}
		The category $\ul\GammaS_*^\textup{spc}(G)$ is generated under (non-parametrized) co\-limits by the $G$-equivariant $\Gamma$-spaces $\Gamma(1^+,\blank)\smashp G/H_+$ for $H\subset G$.
		\begin{proof}
			Inspecting the generating cofibrations from Proposition~\ref{prop:equiv-level-model-struct} we see that $\ul\GammaS_*(G)$ is generated under colimits by the $\Gamma(S_+,\blank)\smashp G/H_+$ for finite $G$-sets $S$ and subgroups $H\subset G$. Thus, these objects also generate $\ul\GammaS_*^\text{spc}(G)$. However, in the latter $\Gamma(S_+,\blank)\smashp G/H_+\simeq S_+\smashp\Gamma(1^+,\blank)\smashp G/H_+$ by Remark~\ref{rk:special-generating-we}. The claim follows by decomposing the $G$-set $G/H\times S$ into its orbits.
		\end{proof}
	\end{lemma}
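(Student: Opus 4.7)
My plan is to follow the model-categorical route suggested by the setup of the section, chasing the generators through the Bousfield localization and then collapsing the $S$-variable via the special-Segal equivalences.

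First, I would read off generators from the level model structure: Proposition \ref{prop:equiv-level-model-struct} explicitly exhibits the set $\{\Gamma(S_+,\blank)\smashp G/H_+\smashp(\partial\Delta^n\hookrightarrow\Delta^n)_+\}$ as generating cofibrations, so the domains and codomains of these maps generate $\ul\GammaS_*(G)$ under (non-parametrized) colimits. Ignoring the $(\partial\Delta^n \hookrightarrow \Delta^n)_+$ factors (which contribute only geometric-realization type colimits), the category $\ul\GammaS_*(G)$ is generated under colimits by the objects $\Gamma(S_+,\blank)\smashp G/H_+$ as $S$ ranges over finite $G$-sets and $H$ over subgroups of $G$.

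Next, since $\ul\GammaS_*^\textup{spc}(G)$ arises from $\ul\GammaS_*(G)$ by a Bousfield localization, the localization functor $L$ is a colimit-preserving essentially surjective functor, so the images $L\big(\Gamma(S_+,\blank)\smashp G/H_+\big)$ remain a generating family. I would then invoke Remark \ref{rk:special-generating-we}: the Bousfield localization is performed precisely at the maps $S_+\smashp\Gamma(1^+,\blank)\smashp G/H_+\to\Gamma(S_+,\blank)\smashp G/H_+$, hence in the localized category these maps become equivalences. Consequently, in $\ul\GammaS_*^\textup{spc}(G)$ we have
\[
\Gamma(S_+,\blank)\smashp G/H_+\simeq S_+\smashp\Gamma(1^+,\blank)\smashp G/H_+.
\]

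Finally, decomposing the diagonal $G$-set $S\times G/H$ into its orbits yields an equivariant decomposition $S_+\smashp G/H_+\cong\bigvee_{i}(G/K_i)_+$ for certain subgroups $K_i\subset G$, and smashing with $\Gamma(1^+,\blank)$ (which distributes over wedges) presents $S_+\smashp\Gamma(1^+,\blank)\smashp G/H_+$ as a (finite) coproduct of objects $\Gamma(1^+,\blank)\smashp G/K_{i,+}$. Combined with the previous two steps this shows that every one of the original generators is a colimit of objects of the claimed form, proving the lemma. I do not anticipate a real obstacle here: the only point requiring a little care is step two, namely verifying that images of generators under the localization $L$ remain generators, but this follows from $L$ being a colimit-preserving left adjoint which is essentially surjective.
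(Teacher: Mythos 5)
Your argument is correct and follows essentially the same route as the paper: generators from the generating cofibrations of the level model structure, persistence of generation under the Bousfield localization, the identification $\Gamma(S_+,\blank)\smashp G/H_+\simeq S_+\smashp\Gamma(1^+,\blank)\smashp G/H_+$ from Remark~\ref{rk:special-generating-we}, and the orbit decomposition of $S\times G/H$. The extra care you take in justifying that the $(\partial\Delta^n\hookrightarrow\Delta^n)_+$ factors can be discarded and that localization preserves generation merely spells out steps the paper leaves implicit.
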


	Note that $\ul\GammaS_*^\text{spc}(G)\ni\Gamma(1^+,\blank)\smashp G/H_+\simeq i_!p^*\Gamma(1^+,\blank)$ where $i\colon H\hookrightarrow G$ denotes the inclusion and $p\colon H\to 1$ the unique map. Thus, once we know that $\ul\GammaS_*^\textup{spc}$ is equivariantly cocomplete, the lemma will tell us that it is generated under equivariant colimits by $\mathbb P(*)=\Gamma(1^+,\blank)\in\ul\GammaS^\text{spc}_*(1)$.

	\begin{proof}[Proof of Theorems~\ref{thm:gamma-comparison} and~\ref{thm:gamma-univ-prop}]
		The fully faithful functor $\cat{L}\const$ from Proposition~\ref{prop:Lconst-Gamma} identifies $\ul\GammaS^\text{spc}_*$ with a full subcategory of $\ul\GammaS^\text{gl, spc}_*$, and the latter is globally presentable by Theorem~\ref{thm:Gamma-gl-comp}. However, the essential image of $\cat{L}\const$ is closed under all equivariant colimits as $\cat{L}\const$ has an $\Orb$-right adjoint, so $\ul\GammaS^\text{spc}_*$ is equivariantly cocomplete.

		In particular, there is a unique equivariantly cocontinuous functor $\ul\CMon^{\Orb}_{\Orb\triangleright\Glo}\to\ul\GammaS^\textup{spc}_*$ sending ${\mathbb P}(*)$ to ${\mathbb P}(*)$. We claim that this is an equivalence, for which it will be enough to construct \emph{some} equivalence preserving ${\mathbb P}(*)$. To this end, we will show that the composite
		\begin{equation}\label{eq:cmon-orb-Gamma-gl}
			\ul\CMon^{\Orb}_{\Orb\triangleright\Glo}\xrightarrow{\,\iota_!\,}\ul\CMon^{\Orb}_{\Glo}\xrightarrow{\;\Xi\;}\ul\GammaS^\text{gl, spc}_*
		\end{equation}
		of the fully faithful functor from Theorem~\ref{thm:CMon-ST-adj} and the equivalence from Theorem~\ref{thm:Gamma-gl-comp} (which sends ${\mathbb P}(*)$ to $\mathbb P(*)$ by construction) restricts to an equivalence $\ul\CMon^{\Orb}_{\Orb\triangleright\Glo}\simeq\essim(\cat L\const)\eqqcolon\mathcal E$. On the one hand, the source of $(\ref{eq:cmon-orb-Gamma-gl})$ is generated under equivariant colimits by ${\mathbb P}(*)$, so that $(\ref{eq:cmon-orb-Gamma-gl})$ factors through $\mathcal E$ as both functors are in particular $\Orb$-left adjoints. On the other hand, Lemma~\ref{lemma:GammaS-generated} shows that $\ul\GammaS^\text{spc}_*$ and hence also $\mathcal E$ is generated by $\mathbb P(*)$, so this restriction is also essentially surjective, hence an equivalence.

		Finally, the universal property of $\ul\GammaS^\text{spc}_*$ follows immediately from this equivalence and the universal property of $\ul\CMon^{\Orb}_{\Orb\triangleright\Glo}$ established in Theorem~\ref{thm:CMon-ST-univ-prop}.
	\end{proof}

	\section{The stable story}\label{sec:stable}
	As in the previous sections, we fix a cleft category $\iota\colon S \hookrightarrow T$. The goal of this section is to establish the stable analogues of the results from \Cref{sec:semiadd}. We begin with the fiberwise (or na\"ive) version of stability:

	\begin{definition}\label{defi:fib-stable}
		A $T$-category $\Cc$ is called \emph{fiberwise stable} if factors through the non-full subcategory $\Cat^\text{st}\subset\Cat$ of stable categories and exact functors.
	\end{definition}

	\begin{construction}
		Recall \cite{HA}*{Proposition 1.4.4.4, Example 4.8.1.23} that the inclusion $\Pr^\text{L,$\,$st}\hookrightarrow\Pr^\text{L}$ of presentable stable categories and left adjoints into all presentable categories admits a left adjoint, given by tensoring with the category $\Sp$ of spectra.

		If now $\mathcal C$ is a fiberwise presentable $T$-category, then we write $\Sp\otimes\mathcal C$ for the composite
		\begin{equation*}
			T^\op\xrightarrow{\;\mathcal C\;}\Pr\nolimits^\text{L}\xrightarrow{\Sp\otimes\blank}\Pr\nolimits^\text{L,$\,$st}\subset\Cat
		\end{equation*}
		and call it the \emph{left fiberwise stabilization} of $\mathcal C$. It comes with a functor $\Sigma^\infty\colon \mathcal C\to \Sp\otimes\mathcal C$ induced by the unit of the adjunction $\Pr^\text{L}\rightleftarrows\Pr^\text{L,$\,$st}\!$.
	\end{construction}

	\begin{remark}
		There is another way to fiberwise stabilize suitable $T$-categories, which we will refer to as \emph{right fiberwise stabilization} below: if $\mathcal C$ factors through the non-full subcategory $\Cat^\text{lex}$ of pointed categories with finite limits and left exact functors, then we can define $\ul\Sp^{\fib}(\mathcal C)$ by composing with the \emph{right} adjoint to the inclusion $\Cat^\text{st}\hookrightarrow\Cat^\text{lex}$ of stable categories. This is the perspective taken in \cite{CLL_Global}*{Subsection~6.1}.

		For the $T$-categories which we would like to stabilize, such as $\ul\CMon^P_{S\triangleright T}$, it is not clear whether the restriction functors preserve finite limits (as a consequence of the example in Warning~\ref{warn:sp-no-fiberwise-limits} below, they cannot preserve general limits). Therefore $\ul\Sp^{\fib}(\mathcal C)$ is not well-defined, and we cannot sensibly ask for $\Sp\otimes\mathcal C$ to agree with $\ul\Sp^{\fib}(\mathcal C)$.

		However, on the category $\Pr^\text{L,$\,$lex}$ of pointed presentable categories and left exact left adjoints, the two stabilization constructions agree \cite{HA}*{Example~4.8.1.23}. Thus, whenever we are given some subcategory $T'\subset T$ such that $\Cc|_{T'}$ is pointed and restrictions in $\Cc$ along maps in $T'$ \emph{are} left exact, then $(\Sp\otimes\mathcal C)|_{T'}$ agrees with $\ul\Sp^{\fib}(\mathcal C|_{T'})$. This will allow us below to still apply the results from \cite{CLL_Global}*{Section~6} to the present situation.
	\end{remark}

	\begin{lemma}
		Let $\Cc$ be a fiberwise presentable $T$-category. Then $\Sp\otimes\mathcal C$ is fiberwise presentable and fiberwise stable. Moreover, for every fiberwise cocomplete and fiberwise stable $\mathcal D$, restriction along $\Sigma^\infty$ defines an equivalence
		\begin{equation}\label{eq:spectr-fstab-cc}
			\ul\Fun_T^\textup{fib-cc}(\Sp\otimes\mathcal C,\mathcal D)\to\ul\Fun_T^\textup{fib-cc}(\mathcal C,\mathcal D)
		\end{equation}
		of $T$-categories of fiberwise cocontinuous functors.
		\begin{proof}
			It is clear that $\Sp\otimes\mathcal C$ is fiberwise presentable and fiberwise stable. Replacing $\mathcal D$ by $\ul\Fun_T(\mathcal T,\mathcal D)$ for small $\mathcal T\in\Cat_T$, it will suffice for the universal property to show that the induced map
			\begin{equation*}
				\maps_{\Cat_T^\text{fib-cc}}(\Sp\otimes\mathcal C,\mathcal D)\to \maps_{\Cat_T^\text{fib-cc}}(\mathcal C,\mathcal D)
			\end{equation*}
			of mapping spaces in the category $\Cat_T^\text{fib-cc}\coloneqq\Fun(T^\op,\Cat^\text{cc})$ of fiberwise cocomplete $T$-categories and fiberwise colimit-preserving functors is an equivalence.

			Writing both sides as the ends of the mapping spaces in $\Cat^\text{cc}$, it then suffices to consider the case $T=1$, i.e.~that for any cocomplete stable $\Dd$ restriction along $\Cc\to\Sp\otimes\Cc$ defines an equivalence $\maps^{\text{cc}}(\Sp\otimes\Cc,\Dd)\simeq\maps^{\text{cc}}(\Cc,\Dd)$. Using that the tensor product of presentable categories agrees with the tensor product of cocomplete categories \cite{HA}*{Proposition~4.8.1.15}, the tensor-hom adjunction reduces to the case $\Cc=\Spc$, i.e.~we want to show that evaluation at the sphere defines an equivalence $\maps^{\text{cc}}(\Sp,\Dd)\simeq\iota\Dd$, where $\iota\Dd$ again denotes the groupoid core of $\Dd$. This however follows at once by exhibiting $\Sp$ as the $\mathop\text{Ind}$-completion of the Spanier--Whitehead category \cite{SAG}*{Remark~C.1.1.6} and noting that right exact functors out of the latter classify objects by \cite{SAG}*{Proposition~C.1.1.7} together with \cite{HA}*{Proposition~1.4.2.21}.
		\end{proof}
	\end{lemma}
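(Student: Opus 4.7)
The first claim, that $\Sp\otimes\Cc$ is fiberwise presentable and fiberwise stable, is immediate from its construction as the composite $T\catop\to\PrL\to\Pr\nolimits^{\text{L,st}}\subset\Cat$, which by definition factors through fiberwise presentable and fiberwise stable $T$-categories.

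For the universal property, my plan is a two-step reduction: first from the $T$-category of functors to underlying mapping spaces, and then from parametrized mapping spaces to the non-parametrized case. For the first step, I would use the identification $\ul\Fun_T^{\text{fib-cc}}(\Cc,\Dd)(X)\simeq\maps_{\Cat_T^{\text{fib-cc}}}(\Cc,\ul\Fun_T(\ul X,\Dd))$ together with the elementary observation that $\ul\Fun_T(\ul X,\Dd)$ inherits fiberwise cocompleteness and fiberwise stability from $\Dd$ (both properties being preserved by taking functor categories out of any small source). This reduces the problem to showing that restriction along $\Sigma^\infty$ induces an equivalence $\maps_{\Cat_T^{\text{fib-cc}}}(\Sp\otimes\Cc,\Dd)\simeq\maps_{\Cat_T^{\text{fib-cc}}}(\Cc,\Dd)$ for every fiberwise cocomplete fiberwise stable $\Dd$.

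For the second step, since $\Cat_T^{\text{fib-cc}}=\Fun(T\catop,\Cat^{\text{cc}})$, these mapping spaces compute as ends over $T\catop$ of mapping spaces in $\Cat^{\text{cc}}$, so the equivalence reduces pointwise to the non-parametrized statement: for cocomplete $\Cc'$ and cocomplete stable $\Dd'$, restriction along $\Sigma^\infty$ yields $\maps^{\text{cc}}(\Sp\otimes\Cc',\Dd')\simeq\maps^{\text{cc}}(\Cc',\Dd')$. By the coincidence of the tensor products in $\PrL$ and in cocomplete categories \cite{HA}*{Proposition~4.8.1.15} combined with the tensor-hom adjunction, this further reduces to showing $\maps^{\text{cc}}(\Sp,\Dd')\simeq\iota\Dd'$, which I would establish by realizing $\Sp$ as the $\mathop{\text{Ind}}$-completion of the Spanier--Whitehead category \cite{SAG}*{Remark~C.1.1.6} and applying the universal property of $\mathop{\text{Ind}}$ \cite{HA}*{Proposition~1.4.2.21} together with the description of right exact functors on the Spanier--Whitehead category from \cite{SAG}*{Proposition~C.1.1.7}.

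The main subtlety I anticipate is the interplay between the $\PrL$ and the cocomplete-categories tensor products: I want to work within $\Cat^{\text{cc}}$ throughout to freely exploit the end formula for mapping spaces, whereas $\Sp\otimes\Cc$ is originally defined via the $\PrL$ tensor product, so the identification of the two tensor products is essential to complete the bridge. Everything else in the argument is bookkeeping that follows from the standard parametrized functor-category formalism already established in the excerpt.
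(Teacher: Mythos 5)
Your proposal is correct and follows essentially the same route as the paper's proof: reduce to underlying mapping spaces by replacing $\Dd$ with a functor category, compute those mapping spaces as ends over $T\catop$ in $\Cat^{\text{cc}}$ to reduce to $T=1$, then use the agreement of the $\PrL$ and cocomplete tensor products plus tensor-hom adjunction to reduce to $\maps^{\text{cc}}(\Sp,\Dd)\simeq\iota\Dd$, settled by the same Spanier--Whitehead/$\mathop{\text{Ind}}$-completion citations. The only cosmetic difference is that the paper replaces $\Dd$ by $\ul\Fun_T(\mathcal T,\Dd)$ for an arbitrary small $T$-category $\mathcal T$ rather than just $\ul X$ for $X\in\PSh(T)$, which changes nothing of substance.
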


	\begin{lemma}
		Assume $\mathcal C$ is $S$-presentable. Then $\Sp\otimes\mathcal C$ is again $S$-presentable, hence in particular $S$-cocomplete. Moreover, if also $\mathcal D$ is $S$-cocomplete, then $(\ref{eq:spectr-fstab-cc})$ restricts to an equivalence
		\begin{equation*}
			\ul\Fun_T^\textup{$S$-cc}(\Sp\otimes\mathcal C,\mathcal D)\iso\ul\Fun_T^\textup{$S$-cc}(\mathcal C,\mathcal D).
		\end{equation*}
		\begin{proof}
			From the previous lemma we see that $\Sp\otimes \Cc$ is fiberwise presentable. If $s\colon A\to B$ is in $S$, then the adjunction $s_!\colon\mathcal C(A)\rightleftarrows\mathcal C(B)\noloc s^*$ is an internal adjunction in $\Pr^\text{L}$ (as $s^*$ is itself a left adjoint by fiberwise presentability), so we get an induced adjunction $\Sp\otimes s_!\dashv\Sp\otimes s^*$ by $2$-functoriality of the tensor product. Moreover, the Beck--Chevalley conditions for $\Sp\otimes\mathcal C$ follow immediately from the ones for $\mathcal C$.

			Finally, for the universal property it suffices by the previous lemma and replacing $\Dd$ by $\Dd^A$ to show that $\Sigma^\infty$ preserves $S$-colimits and that for any $S$-cocontinuous $F\colon\mathcal C\to\mathcal D$ also the lift $\tilde F\colon\Sp\otimes\mathcal C\to\mathcal D$ is $S$-cocontinuous.

			For the first statement, we observe that $\Sigma^\infty$ is clearly fiberwise cocontinuous, and that for any admissible $f\colon X\to Y$ in $\PSh(T)$ the Beck--Chevalley maps are equivalences by the explicit description of the adjoints $f_!\colon(\Sp\otimes\Cc)(X)\to(\Sp\otimes\Cc)(Y)$ given above.

			For the second statement, we first observe that $\tilde F$ is fiberwise cocontinuous by definition. Given now any admissible $f\colon X\to Y$, the mate of the total square in
			\begin{equation*}
				\begin{tikzcd}
					\Cc(Y)\arrow[d, "\Sigma^\infty"'] &[1em] \arrow[l, "f^*"'] \Cc(X)\arrow[d, "\Sigma^\infty"]\\
					\Sp\otimes\Cc(Y)\arrow[d, "\tilde F"'] & \arrow[l, "f^*"{description}] \Sp\otimes\Cc(X)\arrow[d, "\tilde F"]\\
					\Dd(Y)&\arrow[l,"f^*"] \Dd(X)
				\end{tikzcd}
			\end{equation*}
			is the Beck--Chevalley map $f_!F\Rightarrow Ff_!$, hence an equivalence by $S$-cocontinuity of $F$, and similarly the mate of the top square is an equivalence by the above. By the compatibility of mates with pastings, we conclude that the Beck--Chevalley map $f_!\tilde F\Rightarrow \tilde Ff_!$ becomes an equivalence after precomposition with $\Sigma^\infty\colon\Cc(X)\to\Sp(\Cc(X))$. However, both $f_!\tilde F$ and $\tilde Ff_!$ are cocontinuous functors, so the claim follows from the universal property of $\Sp\otimes\blank$ (cf.~the previous lemma).
		\end{proof}
	\end{lemma}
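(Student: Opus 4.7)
The plan is to split the claim into two separable assertions: (i) that $\Sp \otimes \Cc$ is $S$-presentable, and (ii) that the fiberwise universal property of $\Sp \otimes \blank$ established in the previous lemma restricts along $\Sigma^\infty$ to an equivalence on the subcategories of $S$-cocontinuous functors. The main technical input throughout is the $2$-functoriality of $\Sp \otimes \blank \colon \PrL \to \textup{Pr}^{\textup{L,st}}$, which will transport the pointwise data witnessing $S$-cocompleteness of $\Cc$ to corresponding data for $\Sp \otimes \Cc$.

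For (i), fiberwise presentability and fiberwise stability of $\Sp \otimes \Cc$ are immediate from the construction. To produce the left adjoints $s_!$ for $s \in S$ in the stabilized category, I exploit that fiberwise presentability of $\Cc$ makes both $s_!$ and $s^*$ into left adjoints individually, so the adjunction $s_! \dashv s^*$ already lives internally to $\PrL$. Applying $\Sp \otimes \blank$ then produces an internal adjunction $\Sp \otimes s_! \dashv \Sp \otimes s^*$ in $\textup{Pr}^{\textup{L,st}}$, supplying the required left adjoints. The Beck--Chevalley condition from Lemma~\ref{lem:CharacterizationMCocompleteness} transports similarly: any invertible BC transformation in $\Cc$ is sent by the $2$-functor to an invertible BC transformation in $\Sp \otimes \Cc$.

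For (ii), after replacing $\Dd$ by $\ul\Fun_T(\ul A, \Dd)$ to reduce to a statement about ordinary functors, it suffices by the previous lemma to check two things: that $\Sigma^\infty \colon \Cc \to \Sp \otimes \Cc$ is $S$-cocontinuous, and that the unique fiberwise-cocontinuous lift $\tilde F \colon \Sp \otimes \Cc \to \Dd$ of any $S$-cocontinuous $F \colon \Cc \to \Dd$ is again $S$-cocontinuous. The first is automatic from the identification of $f_!$ in $\Sp \otimes \Cc$ with $\Sp \otimes f_!$ obtained in step (i), which tautologically makes the relevant Beck--Chevalley map an equivalence.

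The second check is where I expect the main work. Given an admissible $f$, the strategy is a mate calculation: stack the BC square built from $\Sigma^\infty$ on top of the BC square built from $\tilde F$. The top square's mate is an equivalence by $S$-cocontinuity of $\Sigma^\infty$, while the total pasted square is the BC square for the composite $\tilde F \circ \Sigma^\infty \simeq F$, whose mate is an equivalence by $S$-cocontinuity of $F$. Compatibility of mates with vertical pasting then forces the bottom square's mate, precomposed with $\Sigma^\infty$, to be an equivalence. Since both $f_! \tilde F$ and $\tilde F f_!$ are fiberwise cocontinuous, the uniqueness half of the fiberwise universal property of $\Sp \otimes \blank$ promotes this to an equivalence on all of $(\Sp \otimes \Cc)(X)$. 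The main subtlety here is carefully tracking the mate calculus across the stacked squares, but no new parametrized phenomenon has to be confronted.
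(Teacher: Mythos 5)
Your proposal is correct and follows essentially the same route as the paper's own proof: $2$-functoriality of $\Sp\otimes\blank$ on the internal adjunctions $s_!\dashv s^*$ in $\PrL$ for part (i), and for part (ii) the same pasted-mates argument showing the Beck--Chevalley map $f_!\tilde F\Rightarrow\tilde Ff_!$ is an equivalence after precomposition with $\Sigma^\infty$, concluded via the uniqueness clause of the fiberwise universal property.
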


	Let us restate the key step in the above proof separately for easy reference:

	\begin{corollary}\label{cor:check-cocontinuity-on-Sigma}
		Let $\mathcal C,\mathcal D$ be as above. Then a fiberwise cocontinuous functor $F\colon\Sp\otimes\mathcal C\to\mathcal D$ is $S$-cocontinuous if and only if $F\circ\Sigma^\infty\colon\mathcal C\to\mathcal D$ is so.\qed
	\end{corollary}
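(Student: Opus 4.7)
The plan is to extract and re-present the mate-theoretic argument already appearing in the proof of the preceding lemma. The "only if" direction is immediate: the previous lemma established that $\Sigma^\infty\colon\Cc\to\Sp\otimes\Cc$ is itself $S$-cocontinuous, so if $F$ is $S$-cocontinuous, then so is the composite $F\circ\Sigma^\infty$.

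For the "if" direction, fix an admissible morphism $f\colon X\to Y$ in $\PSh(T)$; I must show that the Beck--Chevalley transformation $\beta_F\colon f_! F\Rightarrow F f_!$ is an equivalence. The key move is to stack the two squares
\[
\begin{tikzcd}
\Cc(Y) \arrow[d,"\Sigma^\infty"'] & \Cc(X) \arrow[l,"f^*"'] \arrow[d,"\Sigma^\infty"] \\
\Sp\otimes\Cc(Y) \arrow[d,"F"'] & \Sp\otimes\Cc(X) \arrow[l,"f^*"{description}] \arrow[d,"F"] \\
\Dd(Y) & \Dd(X) \arrow[l,"f^*"]
\end{tikzcd}
\]
and invoke the compatibility of mates with vertical pasting. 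The mate of the upper square is the Beck--Chevalley map $\beta_\Sigma\colon f_!\Sigma^\infty\Rightarrow\Sigma^\infty f_!$, which is an equivalence by the previous lemma, while the mate of the outer rectangle is the Beck--Chevalley map $\beta_{F\Sigma^\infty}$, which is an equivalence by assumption. Pasting compatibility then identifies $\beta_F\ast\Sigma^\infty$ with a composite of $\beta_{F\Sigma^\infty}$ and (a whiskering of) $\beta_\Sigma^{-1}$, so $\beta_F\ast\Sigma^\infty$ is an equivalence.

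It remains to promote "equivalence after precomposition with $\Sigma^\infty$" to "equivalence." Both functors $f_! F$ and $F f_!$ are cocontinuous from $\Sp\otimes\Cc(X)$ to $\Dd(Y)$, since $F$ is fiberwise cocontinuous and the left adjoints $f_!$ preserve colimits; in particular, $\beta_F$ is a natural transformation between cocontinuous functors out of $\Sp\otimes\Cc(X)$. By the universal property of the fiberwise stabilization (applied to $\Dd(Y)^{[1]}$ in place of $\Dd$), restriction along $\Sigma^\infty$ is fully faithful on cocontinuous functors; hence $\beta_F$ is an equivalence, completing the argument.

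The only mildly subtle point is the mate bookkeeping in the middle step, but this is purely formal given the pasting lemma for mates and the fact that $\Sigma^\infty$ is already known to satisfy Beck--Chevalley; no further cleft-theoretic input beyond that used in the previous lemma is required.
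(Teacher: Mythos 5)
Your proposal is correct and follows essentially the same route as the paper: the corollary is exactly the key step isolated from the proof of the preceding lemma, namely the pasting-of-mates argument showing the Beck--Chevalley map $f_!F\Rightarrow Ff_!$ is an equivalence after whiskering with $\Sigma^\infty$, followed by the universal property of $\Sp\otimes(-)$ (which makes restriction along $\Sigma^\infty$ an equivalence on categories of cocontinuous functors, hence conservative on natural transformations) to remove the whiskering. Your explicit treatment of the "only if" direction via $S$-cocontinuity of $\Sigma^\infty$ matches what the paper leaves implicit.
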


	Finally, let us move to the setting of \emph{genuine} stability:

	\begin{definition}\label{defi:P-stable}
		Let $P\subset T$ be atomic orbital. A $T$-category $\Cc$ is called \emph{$P$-stable} if it is $P$-semiadditive (Definition~\ref{defi:P-semiadd}) and fiberwise stable (Definition~\ref{defi:fib-stable}).
	\end{definition}

	\begin{lemma}
		Let $P\subset S$ be atomic orbital in $T$ and let $\mathcal C$ be a $P$-semiadditive $S$-cocomplete $T$-category. Then $\Sp\otimes\mathcal C$ is $P$-stable.
		\begin{proof}
			We already know that $\Sp\otimes\mathcal C$ is $S$-cocomplete and fiberwise stable. Moreover, its underlying $S$-category is $P$-semiadditive by \cite[Lemma~6.2.6]{CLL_Global}, so $\Sp\otimes\mathcal C$ is also $P$-semiadditive as a $T$-category by Remark~\ref{rk:P-semiadd-underlying}.
		\end{proof}
	\end{lemma}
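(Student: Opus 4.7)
The plan is to verify the two defining conditions of $P$-stability separately, and to reduce the genuinely nontrivial one---$P$-semiadditivity---to the corresponding statement about underlying $S$-categories already available from \cite{CLL_Global}.

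First, fiberwise stability of $\Sp\otimes\Cc$ is immediate: by construction it factors through $\Pr^{\text{L,st}}\subset\Cat$. Next, the previous lemma shows that $\Sp\otimes\Cc$ is $S$-presentable, and in particular $S$-cocomplete. Since $P\subset S$ is a subcategory of $T$, it then also admits finite $P$-coproducts in the sense of Section~\ref{sec:semiadd} (the norms one needs to build come from $P\subset \ul\Spc_T$ being contained in $\bbU_S$).

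The main step is to exhibit $\Sp\otimes\Cc$ as $P$-semiadditive. By Remark~\ref{rk:P-semiadd-underlying}, which says that for $T$-categories with finite $P$-coproducts $P$-semiadditivity only depends on the underlying $P$-category, it suffices to check that the underlying $P$-category is $P$-semiadditive. Since $P\subset S$, this follows if we can show that the underlying $S$-category $\iota^*(\Sp\otimes\Cc)$ is $P$-semiadditive. Now the left fiberwise stabilization is defined by pointwise postcomposition with $\Sp\otimes\blank\colon \Pr^{\text L}\to\Pr^{\text{L,st}}$, so it strictly commutes with precomposition along $\iota$; that is, $\iota^*(\Sp\otimes\Cc)\simeq\Sp\otimes\iota^*\Cc$ as $S$-categories. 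Moreover $\iota^*\Cc$ is itself $P$-semiadditive by Remark~\ref{rk:P-semiadd-underlying} applied in reverse, since $\Cc$ is.

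At this point I would simply invoke \cite{CLL_Global}*{Lemma~6.2.6}, which asserts that the left fiberwise stabilization of a $P$-semiadditive presentable $S$-category is again $P$-semiadditive, to conclude that $\Sp\otimes\iota^*\Cc$ is $P$-semiadditive, and hence so is $\Sp\otimes\Cc$. The only mild obstacle is bookkeeping: making sure one is allowed to pass between $P$-semiadditivity as a $T$- versus $S$- versus $P$-categorical property, but this is exactly what Remark~\ref{rk:P-semiadd-underlying} is designed to handle once the relevant finite $P$-(co)products are known to exist.
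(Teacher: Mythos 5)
Your proof is correct and follows essentially the same route as the paper: both reduce $P$-semiadditivity of $\Sp\otimes\Cc$ to that of its underlying $S$-category, which is handled by \cite[Lemma~6.2.6]{CLL_Global}, and then pass back up via Remark~\ref{rk:P-semiadd-underlying}. Your version merely spells out the intermediate bookkeeping (the identification $\iota^*(\Sp\otimes\Cc)\simeq\Sp\otimes\iota^*\Cc$ and the existence of finite $P$-coproducts from $P\subset S$) that the paper leaves implicit.
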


	\begin{definition}
		We define $\ul\Sp^P_{S\triangleright T}\coloneqq\Sp\otimes\ul\CMon^P_{S\triangleright T}$, and we write $\Sigma^\infty_+$ for the composite
		\begin{equation*}
			\ul\Spc_{S\triangleright T}\xrightarrow{\;{\mathbb P}\;} \ul\CMon^P_{S\triangleright T}\xrightarrow{\Sigma^\infty}\Sp\otimes\ul\CMon^P_{S\triangleright T}=\ul\Sp^P_{S\triangleright T}.
		\end{equation*}
	\end{definition}

	\begin{remark}
		Note that $\Sigma^\infty_+$ is by construction an extension of the $S$-functor $\Sigma^\infty_+\coloneqq\Sigma^\infty\circ\mathbb P\colon\ul\Spc_S\to\Sp\otimes\ul\CMon^P_S=\ul\Sp^P_S$ from \cite[Definition~6.2.12]{CLL_Global}.
	\end{remark}

	Combining the above fiberwise results with the universal property of $\ul\CMon^P_{S\triangleright T}$ from Theorem~\ref{thm:CMon-ST-univ-prop} we get:

	\begin{theorem}\label{thm:sp-univ-property}
		The $T$-category $\ul\Sp^P_{S\triangleright T}$ is $S$-presentable and $P$-stable. For any $S$-cocomplete $P$-stable $T$-category $\mathcal D$ evaluation at $\mathbb S\coloneqq\Sigma^\infty_+(*)$ induces an equivalence $\ul\Fun_T^\textup{$S$-cc}(\ul\Sp^P_{S\triangleright T},\mathcal D)\simeq\mathcal D$.\qed
	\end{theorem}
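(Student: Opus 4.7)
The plan is to combine two universal properties that have already been established in this paper, so the argument will be essentially formal: a composition of two known equivalences. The inputs are the universal property of the fiberwise stabilization $\Sp\otimes\blank$ for $S$-presentable $T$-categories (proved in the second lemma of this section) and the universal property of $\ul\CMon^P_{S\triangleright T}$ from Theorem~\ref{thm:CMon-ST-univ-prop}.

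First I would verify the structural claims. By Theorem~\ref{thm:CMon-ST-univ-prop}, the $T$-category $\ul\CMon^P_{S\triangleright T}$ is $S$-presentable and $P$-semiadditive; applying the two preceding lemmas of this section then shows that $\ul\Sp^P_{S\triangleright T} = \Sp\otimes\ul\CMon^P_{S\triangleright T}$ is $S$-presentable and $P$-stable.

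For the universal property, I would fix an $S$-cocomplete $P$-stable target $\mathcal D$, which is in particular fiberwise stable, $P$-semiadditive, and $S$-cocomplete, so that the hypotheses of both universal properties are satisfied. Restriction along $\Sigma^\infty\colon\ul\CMon^P_{S\triangleright T}\to\ul\Sp^P_{S\triangleright T}$ then yields an equivalence
\[
\ul\Fun_T^\textup{$S$-cc}(\ul\Sp^P_{S\triangleright T},\mathcal D)\iso\ul\Fun_T^\textup{$S$-cc}(\ul\CMon^P_{S\triangleright T},\mathcal D)
\]
by the universal property of $\Sp\otimes\blank$, and evaluation at $\mathbb P(*)$ provides a further equivalence with $\mathcal D$ by Theorem~\ref{thm:CMon-ST-univ-prop}. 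The composite is given by evaluation at $\Sigma^\infty(\mathbb P(*)) = \Sigma^\infty_+(*) = \mathbb S$, as required.

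I do not anticipate a serious obstacle: the whole argument is a formal concatenation of two previous results. The only point worth checking carefully is that the composite equivalence really tracks the distinguished object $\mathbb S$ correctly, but this is immediate from the definition $\Sigma^\infty_+ = \Sigma^\infty \circ \mathbb P$ together with the naturality of the evaluation maps.
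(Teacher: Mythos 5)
Your proposal is correct and matches the paper's argument exactly: the theorem is stated there as an immediate consequence of the two preceding lemmas on $\Sp\otimes\blank$ (giving $S$-presentability, $P$-stability, and the equivalence $\ul\Fun_T^{S\text{-cc}}(\Sp\otimes\Cc,\Dd)\simeq\ul\Fun_T^{S\text{-cc}}(\Cc,\Dd)$ via restriction along $\Sigma^\infty$) combined with Theorem~\ref{thm:CMon-ST-univ-prop}. The bookkeeping of the distinguished object via $\Sigma^\infty_+=\Sigma^\infty\circ\mathbb P$ is also as in the paper.
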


	We can also compare this to $\ul\Sp^P_T$:

	\begin{theorem}\label{thm:sp-adjunction}
		The essentially unique $S$-cocontinuous functor $\iota_!\colon\ul\Sp^P_{S\triangleright T}\to\ul\Sp^P_{T}$ preserving ${\mathbb S}$ is fully faithful. Moreover, it admits an $S$-right adjoint $\iota^*$, which in turn admits a further $S$-right adjoint $\iota_*$ (again fully faithful for formal reasons).
		\begin{proof}
			The functor $\Sp\otimes\iota_!\colon\ul\Sp^P_{S\triangleright T}=\Sp\otimes\ul\CMon^P_{S\triangleright T}\to\Sp\otimes\ul\CMon^P_T=\ul\Sp^P_T$ admits an $S$-right adjoint given by $\Sp\otimes\iota^*$ (as $\iota^*$ is itself an $S$-left adjoint). For each $A\in T$ the unit $\id\to\Sp\otimes(\iota^*\iota_!)$ is then induced by the unit of $\iota_!\dashv\iota^*$, so it is an equivalence as $\iota_!$ is fully faithful (Theorem~\ref{thm:CMon-ST-adj}). Thus, also $\Sp\otimes\iota_!$ is fully faithful. Moreover, it sends $\Sigma^\infty_+(*)$ to $\Sigma^\infty_+(*)$ simply by naturality, so this is the functor $\ul\Sp^P_{S\triangleright T}\to\ul\Sp^P_T$ in question.

			It only remains to show that also $\Sp\otimes\iota^*$ admits an $S$-right adjoint. However, by construction it admits a pointwise right adjoint, and it is moreover $S$-cocontinuous as a consequence of Corollary~\ref{cor:check-cocontinuity-on-Sigma} (for $T=S$), so the claim follows.
		\end{proof}
	\end{theorem}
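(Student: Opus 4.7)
My strategy is to produce $\iota_!$ on spectra by applying the left fiberwise stabilization $\Sp\otimes(-)$ to the fully faithful $S$-functor $\iota_!\colon\ul\CMon^P_S\to\iota^*\ul\CMon^P_T$ from Theorem~\ref{thm:CMon-ST-adj}, and then to read off the further adjoints from the corresponding adjoints on commutative monoids. First I would observe that Theorem~\ref{thm:CMon-ST-adj} exhibits $\iota^*$ on monoids itself as an $S$-left adjoint (with right adjoint $\iota_*$), so the $S$-adjunction $\iota_!\dashv\iota^*$ on monoids is at each $A\in S$ an internal adjunction in $\PrL$. By $2$-functoriality of $\Sp\otimes(-)$ on $\PrL$, tensoring yields a pointwise $S$-adjunction $\Sp\otimes\iota_!\dashv\Sp\otimes\iota^*$ whose unit is induced pointwise from the unit of $\iota_!\dashv\iota^*$ on monoids. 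Since the latter is an equivalence by Theorem~\ref{thm:CMon-ST-adj}, so is the former, and $\Sp\otimes\iota_!$ is fully faithful. By naturality of $\Sigma^\infty$ it sends $\mathbb S$ to $\mathbb S$, so Theorem~\ref{thm:sp-univ-property} identifies it with the $\iota_!$ of the statement.

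For the further $S$-right adjoint I would apply Proposition~\ref{prop:adjoints-BC} to the $S$-functor $\iota^*\coloneqq\Sp\otimes\iota^*\colon\iota^*\ul\Sp^P_T\to\ul\Sp^P_S$. Pointwise right adjoints at each $A\in S$ exist automatically from fiberwise presentability of $\ul\Sp^P_T$, and passing to mates reduces the remaining Beck--Chevalley condition to the statement that $\iota^*$ is $S$-cocontinuous. This is the main technical point, and to verify it I would invoke Corollary~\ref{cor:check-cocontinuity-on-Sigma}: since $\Sp\otimes\iota^*$ is tautologically fiberwise cocontinuous, its $S$-cocontinuity is equivalent to the $S$-cocontinuity of $(\Sp\otimes\iota^*)\circ\Sigma^\infty$, which by naturality of $\Sigma^\infty$ coincides with $\Sigma^\infty\circ\iota^*\colon\iota^*\ul\CMon^P_T\to\ul\Sp^P_S$. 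Both $\iota^*$ on monoids (by Theorem~\ref{thm:CMon-ST-adj}) and $\Sigma^\infty$ (by the fiberwise stabilization lemmas preceding Theorem~\ref{thm:sp-univ-property}) are $S$-cocontinuous, so the composite is. Finally, full faithfulness of $\iota_*$ is formal: in any adjoint triple $L\dashv F\dashv R$, full faithfulness of $L$ (equivalently $FL\simeq\id$) forces $FR\simeq\id$ via the chain of natural equivalences $\maps(X,FRY)\simeq\maps(LX,RY)\simeq\maps(FLX,Y)\simeq\maps(X,Y)$ and Yoneda.

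The principal obstacle is precisely the $S$-cocontinuity of $\Sp\otimes\iota^*$: this functor arises as the \emph{right} adjoint in the $\iota_!\dashv\iota^*$ adjunction on commutative monoids, and $2$-functoriality of $\Sp\otimes(-)$ on $\PrL$ does not by itself transport such properties for right adjoints across stabilization. Corollary~\ref{cor:check-cocontinuity-on-Sigma} is the crucial ingredient, rerouting the verification through $\Sigma^\infty\circ\iota^*$, a composition of $S$-left adjoints which is manifestly $S$-cocontinuous.
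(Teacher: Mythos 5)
Your proposal is correct and follows essentially the same route as the paper's proof: tensor the adjunction $\iota_!\dashv\iota^*$ on $P$-commutative monoids with $\Sp$, deduce full faithfulness from the unit being an equivalence, identify the result via preservation of $\mathbb S$, and obtain the further right adjoint from the pointwise right adjoints together with $S$-cocontinuity of $\Sp\otimes\iota^*$, verified via Corollary~\ref{cor:check-cocontinuity-on-Sigma} by factoring through $\Sigma^\infty\circ\iota^*$. The only differences are expository (you spell out the mate/Beck--Chevalley reduction and the formal full faithfulness of $\iota_*$, which the paper leaves implicit).
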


	\section{The universal property of equivariant spectra}\label{sec:spectra}
	In this section, we will describe the universal equivariantly presentable equivariantly stable (i.e.~$\Orb$-stable) global category in terms of classical equivariant stable homotopy theory.

	\subsection{$\bm G$-equivariant spectra}\label{subsec:spectra}
	We start by introducing the global category of equivariant spectra, and state our main results.
	\begin{definition}
		We write $\cat{Spectra}$ for the $1$-category of \emph{symmetric spectra} \cite{hss} in simplicial sets. For any finite $G$, we write $\cat{$\bm G$-Spectra}$ for the category of $G$-objects; by slight abuse of language, we will refer to its objects simply as $G$-spectra.
	\end{definition}

	We refer the reader to \cite[Definition~2.35]{hausmann-equivariant} for the definition of the \emph{$G$-stable weak equivalences} of $G$-spectra. Below, we will simply refer to these as \emph{$G$-equivariant weak equivalences}.

	\begin{proposition}[See {\cite[Theorem~4.8 and Proposition~4.9]{hausmann-equivariant}}]
		The category $\cat{$\bm G$-Spectra}$ carries a model structure with
		\begin{enumerate}
			\item weak equivalences the $G$-equivariant weak equivalences
			\item acyclic fibrations those maps $f\colon X\to Y$ such that the level $n$-map $f_n\colon X_n\to Y_n$ is a $\mathcal G_{G,\Sigma_n}$-acyclic fibration for every $n\ge0$.
		\end{enumerate}
		We call this the \emph{$G$-equivariant projective model structure}. It is combinatorial and stable.\qed
	\end{proposition}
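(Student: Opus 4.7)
My plan is to construct the $G$-equivariant projective model structure as a left Bousfield localization of a simpler level model structure on $\cat{$\bm G$-Spectra}$, following the blueprint of the non-equivariant construction of \cite{hss} adapted to the equivariant setting via the graph subgroup families $\mathcal G_{G,\Sigma_n}$.

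First I would establish the \emph{$G$-equivariant projective level model structure}, in which a map $f$ is a weak equivalence (resp.\ fibration) precisely when each level $f_n$ is a $\mathcal G_{G,\Sigma_n}$-weak equivalence (resp.\ $\mathcal G_{G,\Sigma_n}$-fibration) on $\cat{$(\bm G\times\bm\Sigma_n)$-SSet}_*$. Since each level category carries a combinatorial simplicial proper model structure and $G$-symmetric spectra are algebras over a suitable colored operad in the product category, this model structure can be obtained by a standard transfer argument along the forgetful functor $\cat{$\bm G$-Spectra}\to\prod_n\cat{$(\bm G\times\bm\Sigma_n)$-SSet}_*$, with generating (acyclic) cofibrations given by applying the free functors $F_n$ (left adjoint to $n$-th level evaluation) to the generating (acyclic) cofibrations of each level.

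Next I would form the left Bousfield localization at the small set of maps $\{F_{n+1}(X\wedge S^1)\to F_n X\}$, with $X$ ranging over (cofibrant) generators, designed to force the structure maps to become equivalences. Existence of the localization follows from Smith's theorem using that the level model structure is combinatorial and left proper. Cofibrations and acyclic fibrations are left unchanged by the localization, which immediately yields the description of acyclic fibrations in the statement; moreover combinatoriality is preserved throughout. The substantive part is then to identify the new weak equivalences with Hausmann's class: one computes the equivariant homotopy groups $\pi_*^H$ of a fibrant replacement (a levelwise fibrant equivariant $\Omega$-spectrum) for each $H\subset G$ and matches the result with the homotopy-group-theoretic definition in \cite{hausmann-equivariant}.

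Finally, stability is automatic from this setup: the localizing maps are tailored so that the shift/loop-suspension adjunction becomes a Quillen equivalence after localization, making suspension an auto-equivalence of the homotopy category; equivalently, one recognizes the underlying $\infty$-category of the localized model structure as $\mathrm{Sp}(\cat{$\bm G$-SSet}_*)$, which is stable by construction. The main technical obstacle is the identification of the localized weak equivalences with Hausmann's class, i.e.\ verifying that the saturated class generated by the adjoint structure maps and the level equivalences agrees with the class of $\pi_*^H$-isomorphisms for all $H\subset G$; this is exactly where the referenced results of \cite{hausmann-equivariant} do the bulk of the work, and would dominate a full write-up.
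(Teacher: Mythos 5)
The paper gives no proof here: it simply cites Hausmann's Theorem~4.8 and Proposition~4.9, so there is no in-paper argument to compare against. Your sketch is a reasonable reconstruction of the cited construction: a projective level model structure built from the graph-subgroup family model structures $\mathcal G_{G,\Sigma_n}$, followed by a left Bousfield localization at the adjoint structure maps, which is indeed the shape of Hausmann's argument. Two caveats are worth flagging. First, the ``colored operad'' framing is off: the level structure is lifted along the free--forgetful adjunction with the product over $n$ of the categories of pointed $(G\times\Sigma_n)$-simplicial sets via the free-spectrum functors $F_n$, i.e.\ by transferring generating (acyclic) cofibrations; no colored-operad machinery is involved (or needed). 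Second, identifying the localized weak equivalences with Hausmann's $G$-equivariant weak equivalences conceals the semistability subtlety specific to symmetric spectra: stable equivalences are strictly larger than naive $\pi_*^H$-isomorphisms, and one has to show that $\pi_*^H$-isomorphisms after replacement by a level-fibrant $G$-$\Omega$-spectrum detect exactly the localized class. You correctly identify this as the point where the cited source does the bulk of the work, so as a sketch your proposal is sound, but it is a sketch of the cited proof rather than an independent one.
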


	All that we will need to know about this model structure below is that the sphere spectrum is cofibrant, which follows from \cite[discussion after Corollary~2.26]{hausmann-equivariant} or by simply observing that the above acyclic fibrations are surjective in degree $0$ and hence have the right lifting property against $0\to\mathbb S$.

	\begin{lemma}\label{lemma:restr-spectra-left-Quillen}
		Let $\alpha\colon G\to G'$ be any homomorphism. Then $\alpha^*\colon\cat{$\bm{G'}$-Spectra}\to\cat{$\bm G$-Spectra}$ is left Quillen with respect to the above model structures.
		\begin{proof}
			Factoring $\alpha$, we may assume that it is either injective or surjective. In the first case, the claim is an instance of \cite[5.2]{hausmann-equivariant}, while in the latter case it follows by combining 5.3 and 5.1 of \emph{op.~cit.}
		\end{proof}
	\end{lemma}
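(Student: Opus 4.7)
The plan is to reduce to the cases where $\alpha$ is either injective or surjective by factoring $\alpha = \iota\circ\pi$ into the projection $\pi\colon G\twoheadrightarrow\im(\alpha)$ onto the image followed by the inclusion $\iota\colon\im(\alpha)\hookrightarrow G'$. Since $\alpha^* = \pi^*\circ\iota^*$ and compositions of left Quillen functors are left Quillen, it suffices to handle these two extreme cases separately. In each case, I would verify the left Quillen property by showing that $\alpha^*$ preserves cofibrations and weak equivalences (the latter being stronger than needed, but directly accessible in both cases).

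For injective $\alpha\colon H\hookrightarrow G'$, preservation of cofibrations can be reduced to checking on generating cofibrations: restricting a $G'$-orbit $G'/K$ to $H$ decomposes as a disjoint union of $H$-orbits indexed by $H\backslash G'/K$, each of the form $H/L$, so $\iota^*$ turns generating (acyclic) cofibrations into coproducts of such. Preservation of weak equivalences follows from the fact that equivariant stable weak equivalences of symmetric spectra are detected by equivariant stable homotopy groups $\pi_*^L$ at all subgroups $L\subseteq G'$, and for an injective $\iota$ the groups $\pi_*^L(\iota^*X)$ for $L\subseteq H$ are identified tautologically with $\pi_*^L(X)$ viewed through $\iota$.

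For surjective $\alpha$ with kernel $N$, preservation of cofibrations again follows by inspection of generators. The delicate point is preservation of weak equivalences: for $L\subseteq G$ one must identify $\pi_*^L(\pi^*X)$ with an appropriate equivariant stable homotopy group of $X$ encoding the $G'$-equivariant structure. Concretely, this can be done via the definition of $\pi_*^L$ using stabilization along a complete $L$-set universe: the universe $\pi^*\mathcal{U}$ obtained by pulling back a complete $G'$-set universe is compatible with inflation of subgroups of $G$ through $\pi$, allowing the requisite identification of the relevant stable homotopy groups of $\pi^*X$ with those of $X$ at the quotient subgroups.

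The main obstacle is the surjective case. For injective homomorphisms, the compatibility of restriction with equivariant stable invariants is essentially tautological, but for surjective (and in particular non-faithful) homomorphisms the behaviour of equivariant stable homotopy groups under inflation is more subtle: one is comparing stable phenomena occurring across two different ``sizes'' of equivariant world, and care is needed with the definition of $G$-stable equivalences. The cited results from \cite{hausmann-equivariant} package the required identifications of homotopy groups precisely; once these are in place, preservation of weak equivalences becomes a matter of bookkeeping, and the left Quillen property follows immediately.
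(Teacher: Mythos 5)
Your overall skeleton -- factor $\alpha$ through its image and treat the injective and surjective cases separately, outsourcing the hard equivariant input to \cite{hausmann-equivariant} -- is exactly the paper's proof. But the way you propose to carry out the two cases contains a genuine gap in the surjective one. You plan to show that $\alpha^*$ preserves cofibrations \emph{and weak equivalences}. For a surjection $\alpha\colon G\to G'=G/N$ the inflation functor $\alpha^*$ is \emph{not} homotopical: it does not even preserve projective level equivalences. Concretely, a $G'$-level equivalence is only required to be an equivalence on fixed points of \emph{graph} subgroups $\Gamma\subset G'\times\Sigma_n$, whereas testing $\alpha^*f$ requires fixed points of subgroups $\{(\alpha(l),\psi(l)):l\in L\}$ for $L\subseteq G$ and $\psi\colon L\to\Sigma_n$, and these meet $1\times\Sigma_n$ in $\psi(L\cap N)$, so they need not be graph subgroups. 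This is precisely why the paper only asserts ``left Quillen'' and works throughout with the \emph{left derived} functors $\cat{L}\alpha^*$ (compare the explicit warning in the $\Gamma$-space section that restriction along non-injective $\alpha$ fails to be homotopical). Your proposed mechanism for the surjective case is also broken at its core: the pullback $\pi^*\mathcal U$ of a complete $G'$-set universe along a surjection with nontrivial kernel is \emph{not} a complete $G$-set universe, so stabilizing against it does not compute genuine $G$-equivariant stable homotopy groups; the discrepancy is exactly the source of the tom Dieck splitting and of the failure of $\cat{L}\alpha^*\dashv\cat{R}(-)^N$ to be an adjoint equivalence. The correct, and much easier, argument is the one packaged in Hausmann's results: check that $\alpha^*$ preserves (generating) cofibrations and that its right adjoint, the $N$-fixed-point functor, preserves (acyclic) fibrations -- the latter is immediate from the fixed-point description of the projective (acyclic) fibrations. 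No homotopicality claim is needed, and none is available.

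A secondary issue affects your injective case: for \emph{symmetric} spectra (which is the model the paper uses), stable equivalences are \emph{not} detected by equivariant stable homotopy groups $\pi_*^L$ -- this is the standard non-semistability phenomenon, and it persists equivariantly. So ``weak equivalences are detected by $\pi_*^L$'' cannot be the basis of the argument. The conclusion you want (restriction along an injection is homotopical) is nevertheless true, but the proof goes through the characterization of stable equivalences via maps into $\Omega$-spectra together with the fact that coinduction preserves $\Omega$-spectra, which is what the cited result of Hausmann supplies. In short: the reduction to the two cases is right, the injective case is salvageable by replacing your detection claim with the correct characterization of stable equivalences, but the surjective case needs to be rebuilt around the Quillen adjunction $\alpha^*\dashv(-)^N$ rather than around preservation of weak equivalences.
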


	As before, we therefore get a global category $\ul\mySp$ with $\mySp_G\coloneqq\ul\mySp(G)$ the localization of (projectively cofibrant) $G$-spectra at the $G$-weak equivalences, and with structure maps given by the left derived functors $\cat{L}\alpha^*$. We will refer to this as the \emph{global category of equivariant spectra}. It has a natural section ${\mathbb S}$ given by the equivariant sphere spectra (determined by the usual sphere in $\mySp_1$).

	Using this, we can now state our main results:

	\begin{theorem}\label{thm:mysp-univ-prop}
		The global category $\ul\mySp$ is equivariantly presentable and equivariantly stable. For any other equivariantly cocomplete equivariantly stable $\mathcal D$ evaluation at ${\mathbb S}$ defines an equivalence $\ul\Fun^\textup{eq-cc}_{\Glo}(\ul\mySp,\mathcal D)\simeq\mathcal D$.
	\end{theorem}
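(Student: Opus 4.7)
The plan is to follow exactly the strategy used for equivariant spaces (Section~\ref{sec:spaces}) and equivariant special $\Gamma$-spaces (Section~\ref{sec:gamma-orbi}): we will identify $\ul\mySp$ with the abstract free object $\ul\Sp^{\Orb}_{\Orb\triangleright\Glo}$ supplied by Theorem~\ref{thm:sp-univ-property}, which already carries the desired universal property. Concretely, it suffices to construct an equivalence of global categories $\ul\mySp \simeq \ul\Sp^{\Orb}_{\Orb\triangleright\Glo}$ sending $\mathbb S$ to $\mathbb S$; the universal property and the equivariant presentability/stability of $\ul\mySp$ will then be inherited.

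First I would introduce a global category $\ul\Sp^{\mathrm{gl}}$ of \emph{global spectra} (built model-categorically from $G$-equivariant symmetric spectra indexed on a complete universe, in parallel with $\ul\S^{\mathrm{gl}}$ and $\ul\GammaS^{\mathrm{gl,spc}}_*$) and invoke the globalization result from \cite{CLL_Global} that exhibits $\ul\Sp^{\mathrm{gl}}$ as equivalent to $\ul\Sp^{\Orb}_{\Glo}$, via an equivariantly cocontinuous equivalence sending $\mathbb S$ to $\mathbb S$. Then, in parallel with Lemma~\ref{lemma:triv-S-Sgl} and Proposition~\ref{prop:Lconst-Gamma}, I would construct a global functor $\const\colon\ul\mySp\to\ul\Sp^{\mathrm{gl}}$ from the pointset-level functor that views a $G$-spectrum as a $G$-global spectrum with trivial $\mathcal I$-action. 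Using Lemma~\ref{lemma:restr-spectra-left-Quillen} together with the analogous Quillen structure on global spectra, and the fact that the pointwise right adjoint $\ev$ is homotopical and preserves the derived Beck--Chevalley condition for \emph{injective} homomorphisms (standard smooth/proper base change on the pointset level, combined with Ken Brown's lemma), I would check that $\const$ is fully faithful, sends $\mathbb S$ to $\mathbb S$, and admits an $\Orb$-right adjoint $\cat{R}\,\ev\colon\ul\Sp^{\mathrm{gl}}|_{\Orb}\to\ul\mySp|_{\Orb}$.

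With this established, the same formal argument as in the proof of Theorems~\ref{thm:orbi-S-univ-prop} and \ref{thm:gamma-comparison} applies. The essential image of $\const$ is closed under equivariant colimits in $\ul\Sp^{\mathrm{gl}}$ (being that of an $\Orb$-left adjoint into an equivariantly cocomplete category), so $\ul\mySp$ is equivariantly cocomplete; it is fiberwise presentable and fiberwise stable by construction, so it is equivariantly presentable and equivariantly stable. Theorem~\ref{thm:sp-univ-property} then supplies an essentially unique equivariantly cocontinuous functor $F\colon \ul\Sp^{\Orb}_{\Orb\triangleright\Glo}\to \ul\mySp$ with $F(\mathbb S)\simeq\mathbb S$. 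Composing with $\const$ and comparing with the fully faithful $\iota_!\colon \ul\Sp^{\Orb}_{\Orb\triangleright\Glo}\hookrightarrow \ul\Sp^{\Orb}_{\Glo}\simeq \ul\Sp^{\mathrm{gl}}$ from Theorem~\ref{thm:sp-adjunction}, both paths are equivariantly cocontinuous and preserve $\mathbb S$, so they agree by the universal property of $\ul\Sp^{\Orb}_{\Orb\triangleright\Glo}$. Since $\const$ and $\iota_!$ are both fully faithful, so is $F$.

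It remains to verify essential surjectivity of each $F_G\colon \ul\Sp^{\Orb}_{\Orb\triangleright\Glo}(G)\to \mySp_G$. Since $F_G$ is a fully faithful left adjoint between presentable categories, its essential image is closed under all colimits; thus it suffices to show the image contains a set of generators. The stable category $\mySp_G$ is generated under colimits by the suspension spectra $\Sigma^\infty_+ G/H = i_!\mathbb S$ for subgroups $i\colon H\hookrightarrow G$, and each such object is $F_G(i_!\mathbb S)$ by construction. The main obstacle is the verification of the derived Beck--Chevalley condition for the putative $\Orb$-right adjoint of $\const$: unlike the unstable case, restriction of equivariant spectra along injective homomorphisms is not merely a pointset-level right adjoint that is strictly compatible with evaluation, so one has to either argue via injective-style model structures on $G$-global spectra (as in Lemma~\ref{lemma:injective-model-Gamma}) or, better, deduce the Beck--Chevalley equivalence abstractly from the corresponding statement for $\ul\GammaS^{\mathrm{spc}}_*$ by stabilizing, using Corollary~\ref{cor:check-cocontinuity-on-Sigma} and the fact that $\ul\mySp$ sits as the fiberwise stabilization of $\ul\GammaS^{\mathrm{spc}}_*$ on the subcategory $\Orb$.
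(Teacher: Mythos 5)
Your proposal is correct and follows essentially the same route as the paper: embed $\ul\mySp$ fully faithfully into a globally presentable model of $\ul\Sp^{\Orb}_{\Glo}$ via a global functor admitting an $\Orb$-right adjoint, deduce equivariant cocompleteness and stability from closure of the essential image under equivariant colimits and desuspension, match the two embeddings using the universal property of $\ul\Sp^{\Orb}_{\Orb\triangleright\Glo}$ to get full faithfulness of $F$, and finish essential surjectivity with the generators $\Sigma^\infty_+(G/H)\simeq i_!\mathbb S$. The only divergence is that the paper's global model $\ul\mySp^{\mathrm{gl}}$ lives on the \emph{same} pointset category of symmetric $G$-spectra (equipped with the finer $G$-global weak equivalences and the injective $G$-global model structure), so the comparison functor is the derived identity $\cat{L}\id$ of a Quillen adjunction between the two model structures rather than a $\const$ functor, and the Beck--Chevalley verification you flag as the main obstacle becomes trivial: the pointset-level mate is literally the identity, and for injective $\alpha$ the restriction is right Quillen for both model structures, so the pointset identity already models the derived mate.
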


	\begin{theorem}\label{thm:mysp-comp}
		The essentially unique equivariantly cocontinuous global functor $\ul\Sp^{\Orb}_{\Orb\triangleright\Glo}\to\ul\mySp$ sending ${\mathbb S}$ to ${\mathbb S}$ is an equivalence.
	\end{theorem}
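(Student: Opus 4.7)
The strategy is to mirror the arguments from Sections \ref{sec:spaces} and \ref{sec:gamma-orbi} by comparing $\ul\mySp$ with the global category $\ul\mySp^\textup{gl}$ of $G$-global spectra. Recall from the stable analogue of \cite{CLL_Global}*{Theorem~3.3.1} (see Section~6 of \emph{op.~cit.}) that $\ul\mySp^\textup{gl}$ is globally presentable and equivariantly stable, and that the unique equivariantly cocontinuous global functor $\ul\Sp^{\Orb}_{\Glo}\to\ul\mySp^\textup{gl}$ sending $\mathbb S$ to $\mathbb S$ is an equivalence. Assuming this, I would proceed exactly as in the proof of Theorem~\ref{thm:orbi-S-equivalence}.

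The first step is to produce a fully faithful global functor $\const\colon\ul\mySp\to\ul\mySp^\textup{gl}$ sending $\mathbb S$ to $\mathbb S$ and admitting an $\Orb$-right adjoint $\cat R\,\ev$. On the model categorical level, this should be induced by the left Quillen functor $\const\colon\cat{$\bm G$-Spectra}\to\cat{$\bm G$-$\bm{\mathcal I}$-Spectra}$ (analogous to Lemma~\ref{lemma:triv-S-Sgl} and Proposition~\ref{prop:Lconst-Gamma}), whose homotopical right adjoint is evaluation at $\varnothing$. Fully faithfulness and the pointwise adjunction should be a standard consequence of the known comparison between $G$-equivariant and $G$-global stable homotopy theory (cf.~\cite{g-global}). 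The Beck--Chevalley condition for injective homomorphisms $\alpha\colon H\hookrightarrow G$ amounts to showing that the pointset level natural transformation $\alpha_!\circ\const\Rightarrow\const\circ\alpha_!$ is a weak equivalence, which should follow because both $\alpha_!$'s are already homotopical in the injective case (exactly as in the unstable situation).

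Given this adjunction, $\ul\mySp$ becomes equivalent to a full subglobal subcategory of $\ul\mySp^\textup{gl}$ closed under equivariant colimits (by the existence of the $\Orb$-right adjoint); since $\ul\mySp^\textup{gl}$ is equivariantly cocomplete and equivariantly semiadditive, the same holds for $\ul\mySp$, and each $\mySp_G$ is presentable and stable, so $\ul\mySp$ is equivariantly presentable and equivariantly stable. Applying Theorem~\ref{thm:sp-univ-property} produces the desired equivariantly cocontinuous functor $F\colon\ul\Sp^{\Orb}_{\Orb\triangleright\Glo}\to\ul\mySp$ preserving $\mathbb S$. To show it is an equivalence, I would consider the commuting square
\begin{equation*}
\begin{tikzcd}
\ul\Sp^{\Orb}_{\Orb\triangleright\Glo}\arrow[d, "\iota_!"']\arrow[r, "F"] & \ul\mySp\arrow[d, "\const"]\\
\ul\Sp^{\Orb}_{\Glo}\arrow[r, "\sim"'] & \ul\mySp^\textup{gl}
\end{tikzcd}
\end{equation*}
(commutativity holds by the universal property of $\ul\Sp^{\Orb}_{\Orb\triangleright\Glo}$, both composites being equivariantly cocontinuous and sending $\mathbb S$ to $\mathbb S$); since the two vertical arrows are fully faithful (by Theorem~\ref{thm:sp-adjunction} and the first step respectively) and the bottom arrow is an equivalence, $F$ is fully faithful as well. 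For essential surjectivity, the argument proceeds as for spaces: each $F_G$ is a fully faithful left adjoint, so its essential image is closed under colimits, and it therefore suffices to show that $\mySp_G$ is generated under colimits by the orbits $\Sigma^\infty_+(G/H)$ for $H\subset G$, which is a classical fact about genuine equivariant stable homotopy theory. Since $F$ sends the analogous generators $i_!p^*\mathbb S$ of $\ul\Sp^{\Orb}_{\Orb\triangleright\Glo}(G)$ to these orbits by the defining property of $F$ together with $\Orb$-cocontinuity, this yields essential surjectivity and completes the proof.

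The step requiring the most care will be the first one: verifying the Beck--Chevalley condition that promotes the pointwise adjunction to an $\Orb$-adjunction. Unlike the unstable case, we need to work with cofibrant replacements (for instance in the flat level model structure on symmetric spectra) to compute the derived $\alpha_!$ correctly and check that it commutes with $\const$ up to weak equivalence; a clean way to do this is to appeal to results from \cite{g-global} comparing the equivariant and the underlying-global perspective, and this model categorical verification is really the only new input beyond the abstract machinery developed in Section~\ref{sec:stable}.
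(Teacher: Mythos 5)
Your proposal is correct and follows the same skeleton as the paper's proof: embed $\ul\mySp$ fully faithfully into $\ul\mySp^\textup{gl}$ via a global functor preserving $\mathbb S$ and admitting an $\Orb$-right adjoint, deduce equivariant cocompleteness and stability from closure of the essential image under equivariant colimits (and desuspension), obtain $F$ from Theorem~\ref{thm:sp-univ-property}, prove full faithfulness via the commuting square over the equivalence $\ul\Sp^{\Orb}_{\Glo}\simeq\ul\mySp^\textup{gl}$ of Theorem~\ref{thm:mysp-global-equivalence}, and conclude essential surjectivity from generation of $\mySp_G$ under colimits by the orbits $\Sigma^\infty_+(G/H)\simeq i_!\mathbb S$.

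The one place you diverge is the implementation of the comparison lemma, and it is exactly the step you flag as hardest. You propose a functor $\const$ into a category of $G$-$\mathcal I$-spectra, mirroring Lemma~\ref{lemma:triv-S-Sgl} and Proposition~\ref{prop:Lconst-Gamma}; this would work but forces you to set up that model and to derive the left adjoints carefully (hence your worry about flat cofibrant replacements). The paper instead models $G$-global spectra on the \emph{same} underlying category $\cat{$\bm G$-Spectra}$ of symmetric spectra with $G$-action, equipped with the finer $G$-global weak equivalences, so the comparison functor is the derived identity for the Quillen adjunction between the $G$-equivariant projective and injective $G$-global model structures. With that choice the point-set Beck--Chevalley maps $\alpha^*\circ\id\Rightarrow\id\circ\alpha^*$ are literally identities, and for injective $\alpha$ the restriction $\alpha^*$ is homotopical on both sides, so no cofibrant replacement is needed: the step you single out as requiring the most care becomes trivial. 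This is purely a difference in model choice, not in mathematical content.
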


	The proof will be given at the end of this section. For now let us stop to observe that some pleasant properties one might have hoped for $\ul\Sp^{P}_{S\triangleright T}$ to satisfy do not hold even for $\ul\Sp^{\Orb}_{\Orb\triangleright\Glo}\simeq \ul\mySp$:

	\begin{warning}\label{warn:sp-no-finite-limits}
		For any $f\colon G\to G'$ the functor $\cat{L}f^*\colon\mySp_{G'}\to\mySp_G$ admits a right adjoint $\cat{R}f_*$ by Lemma~\ref{lemma:restr-spectra-left-Quillen}. However, these do \emph{not} satisfy the Beck--Chevalley condition in general (i.e.~$\ul\mySp$ does not have finite \emph{global} products). To see this, consider the pullback
		\begin{equation*}
			\begin{tikzcd}
				\mathbb Z/2\times\mathbb Z/2\arrow[dr,phantom,"\lrcorner"{very near start,xshift=-5pt}]\arrow[d, "\pr_1"']\arrow[r, "\pr_2"] & \mathbb Z/2\arrow[d, "q"]\\
				\mathbb Z/2\arrow[r, "q"'] & 1
			\end{tikzcd}
		\end{equation*}
		in $\Glo$, giving rise to a map $\cat{L}q^*\cat{R}q_*X\to\cat{R}{\pr_{2^*}}\cat{L}{\pr_1^*}X$ for any $X\in\mySp_{\mathbb Z/2}$; we will now show that this cannot be an equivalence for $X=\mathbb S$ by computing the result of applying $\cat{R}q_*$ to both sides:

		The functor $\cat{R}q_*$ is given by taking categorical $\mathbb Z/2$-fixed points, so the tom Dieck-splitting \cite{tomDieck} tells us that
		\begin{equation*}
			\cat{R}q_*\mathbb S\simeq\bigvee_{G\subset\mathbb Z/2}\Sigma^\infty_+ B\big((\mathbb Z/2)/G\big).
		\end{equation*}
		The right hand side is actually cofibrant, so $\cat{L}q^*\cat{R}q_*\mathbb S$ is simply given by equipping this with the trivial $\mathbb Z/2$-action. Accordingly, another application of the tom Dieck splitting shows
		\begin{equation*}
			\cat{R}q_*\cat{L}q^*\cat{R}q_*\mathbb S\simeq\bigvee_{G\subset\mathbb Z/2}\bigvee_{H\subset\mathbb Z/2}\Sigma^\infty_+\big(B\big((\mathbb Z/2)/G\big)\times B\big((\mathbb Z/2)/H\big)\big).
		\end{equation*}
		If we take $\pi_0$, then each wedge summand contributes a summand of $\mathbb Z$ (being the unreduced suspension of a connected space), so $\pi_0(\cat{R}q_*\cat{L}q^*\cat{R}q_*\mathbb S)$ is free abelian of rank $4$.

		On the other hand, by uniqueness of adjoints $\cat{R}q_*\cat{R}{\pr_{2*}}$ agrees with $\cat{R}r_*$ for $r\colon\mathbb Z/2\times\mathbb Z/2\to 1$ the unique map, so $\cat{R}q_*\cat R{\pr_{2*}}\cat{L}{\pr_1^*}\mathbb S$ is given by the categorical $(\mathbb Z/2\times\mathbb Z/2)$-fixed points of $\mathbb S$. By another application of the tom Dieck splitting (or using the classical computation of the zeroth equivariant homotopy groups of $\mathbb S$ as the Burnside ring), we therefore see that $\pi_0(\cat{R}q_*\cat{R}{\pr_{2*}}\cat{L}{\pr_1^*}\mathbb S)\cong\cramped{\pi_0^{\smash{\mathbb Z/2\times\mathbb Z/2}}(\mathbb S)}$ is free abelian of rank the number of subgroups of $\mathbb Z/2\times\mathbb Z/2$, which is $5$ instead of $4$.
	\end{warning}

	\begin{remark}
		The extra $\mathbb Z$-summand in $\pi_0(\cat{R}q_*\cat{R}{\pr_{2*}}\cat{L}{\pr_1^*}\mathbb S)$ can be attributed to the fact that $\mathbb Z/2\times\mathbb Z/2$ has a subgroup that is not given as a product of subgroups of its factors, namely the diagonal subgroup. A similar phenomenon appears for general $G$, and as observed in \cite{nickel} this is what prevents the tom Dieck map
		\begin{equation}\label{eq:tom-Dieck}
			\bigvee_{(H\subset G)/\text{conj.}}\Sigma^\infty\big(E(W_GH)\smashp_{W_GH}X^H\big)\to F^G\Sigma^\infty X
		\end{equation}
		for a pointed $G$-simplicial set $X$ from being a \emph{global weak equivalence} instead of just a non-equivariant weak equivalence: after taking categorical $K$-fixed points on both sides, the left hand side only contains the wedge summands of the tom Dieck splitting of $F^{K\times G}\Sigma^\infty X$ corresponding to subgroups of the form $L\times H\subset K\times G$ for $L\subset K,H\subset G$. In fact, this is the only obstruction to $(\ref{eq:tom-Dieck})$ being a global weak equivalence, see \emph{op.~cit.} for details.
	\end{remark}

	\begin{warning}\label{warn:sp-no-fiberwise-limits}
		$\ul\mySp$ is neither globally cocomplete nor fiberwise complete, and hence neither is $\ul\Sp^{\Orb}_{\Orb\triangleright\Glo}$ by \Cref{thm:mysp-comp}. In fact, already the restriction functor $\cat{L}q^*\colon\mySp_{1}\to\mySp_{\mathbb Z/2}$ induced by the unique map $q\colon\mathbb Z/2\to1$ does not preserve all products, and in particular it does not admit a left adjoint. The third author learned the following argument for this fact from Denis Nardin: By \cite{GrothendieckNeeman16}*{Theorem 3.3}, $\cat{L}q^*$ preserves all products if and only if $\cat{R}q_*$ preserves compact objects. However, as observed above $\cat{R}q_*\mathbb S$ contains $\Sigma^\infty_+ B(\Z/2)$ as a wedge summand. As the latter is not compact, neither is $\cat{R}q_*\mathbb S$, yielding the desired contradiction. A similar argument shows that $\cat{L}q^*$ does not have a left adjoint whenever $q$ has a non-trivial kernel.
	\end{warning}

	\subsection{$\bm G$-global spectra} As before, the proof of Theorems~\ref{thm:mysp-univ-prop} and~\ref{thm:mysp-comp} will proceed via comparison with a model of the universal \emph{globally} presentable equivariantly stable category.

	\begin{definition}
		A map $f\colon X\to Y$ in $\cat{$\bm G$-Spectra}$ is called a \emph{$G$-global weak equivalence} if $\alpha^*f$ is an $H$-equivariant weak equivalence for every finite group $H$ and every homomorphism $\alpha\colon H\to G$.
	\end{definition}

	We emphasize that we are \emph{not} deriving $\alpha^*$ here with respect to the equivariant model structures (as otherwise this would of course simply recover the $G$-weak equivalences again).

	\begin{proposition}[See {\cite[Corollary~3.1.46--Proposition~3.1.48]{g-global}}]
		The category $\cat{$\bm G$-Spectra}$ admits a model structure with
		\begin{enumerate}
			\item weak equivalences the $G$-global weak equivalences
			\item cofibrations the injective cofibrations.
		\end{enumerate}
		We call this the \emph{injective $G$-global model structure}. It is combinatorial, simplicial, proper, and stable.\qed
	\end{proposition}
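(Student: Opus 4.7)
The plan is to treat this result as a combination of the cited theorems from \cite{g-global}, and accordingly only sketch the main ingredients of its proof. Since the statement is at the level of model categories of (symmetric) spectra, the most natural route is to check the axioms directly rather than by a transfer argument, exploiting the fact that injective cofibrations in $\cat{$\bm G$-Spectra}$ are just the levelwise injections and are therefore extremely easy to manipulate.

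First, I would verify that the $G$-global weak equivalences form an accessible subcategory of the arrow category satisfying two-out-of-three and closed under retracts. For this it suffices to observe that, by definition, a map $f$ is a $G$-global equivalence iff $\alpha^*f$ is a $H$-equivariant weak equivalence for every $\alpha\colon H\to G$, so this class is the intersection over all such $\alpha$ of the preimages of the $H$-equivariant weak equivalences, and each of these classes is accessible and satisfies two-out-of-three by the corresponding facts for $\cat{$\bm H$-Spectra}$. Second, I would establish the factorization and lifting axioms for the pair (injective cofibrations, $G$-global weak equivalences). The small object argument applies because the injective cofibrations are generated by a set (all levelwise inclusions of bounded presentation, say), and acyclic cofibrations can then be produced via a standard Quillen-style argument combining this with a fibrant replacement functor; one could alternatively realize the structure as a left Bousfield localization of the injective level model structure on $\cat{$\bm G$-Spectra}$ analogous to Lemma~\ref{lemma:injective-model-Gamma}.

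The combinatoriality is immediate from the above once the generating (acyclic) cofibrations are produced, and the simplicial enrichment is inherited from the ambient simplicial category of $G$-spectra since tensoring with a simplicial set preserves injections and preserves $H$-equivariant weak equivalences levelwise. Left properness follows from the fact that the cofibrations are injective and hence pushouts along them are already homotopy pushouts for the levelwise equivalences; combining this with the observation that $G$-global weak equivalences can be detected levelwise after smashing with a cofibrant replacement of the sphere gives left properness of the localized structure. Right properness reduces via restrictions $\alpha^*$ to right properness of the $H$-equivariant projective model structure on $\cat{$\bm H$-Spectra}$.

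The main obstacle, and what makes the statement nontrivial beyond formal model-categorical abstraction, is \emph{stability}: one must show that the suspension functor on the associated homotopy category is an equivalence. The cleanest way to do this is to reduce to the analogous statement for each $\cat{$\bm H$-Spectra}$ by noting that the adjunction $\Sigma\dashv\Omega$ on $\cat{$\bm G$-Spectra}$ commutes with each restriction $\alpha^*\colon\cat{$\bm G$-Spectra}\to\cat{$\bm H$-Spectra}$ up to $H$-equivariant weak equivalence, so that a map becomes an equivalence after suspension iff it is already a $G$-global weak equivalence. Invoking stability of the $H$-equivariant model structures from \cite{hausmann-equivariant} and the fact that $G$-global equivalences are jointly detected by all such restrictions then completes the argument; this, together with the detailed verifications from \cite{g-global}*{Corollary~3.1.46--Proposition~3.1.48}, is what the proposition is asserting.
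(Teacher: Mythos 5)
The paper does not actually prove this proposition: it is imported wholesale from \cite[Corollary~3.1.46--Proposition~3.1.48]{g-global} (hence the qed-symbol in the statement itself), so there is no in-paper argument to measure yours against. Taken on its own terms, your sketch has the right overall shape: the $G$-global weak equivalences are a set-indexed intersection of preimages of accessible classes satisfying two-out-of-three, the injective cofibrations of $\cat{$\bm G$-Spectra}$ are generated by a set, and Smith's theorem (or a Bousfield localization of an injective level model structure, which is closer to how the cited reference proceeds) then produces a combinatorial, simplicial model structure.

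Two of your verifications are genuinely shaky, though. First, the left properness argument is garbled: ``$G$-global weak equivalences can be detected levelwise after smashing with a cofibrant replacement of the sphere'' does not make sense, since these are stable rather than level equivalences. The correct --- and much simpler --- observation is that in an injective model structure every object is cofibrant ($0\to X$ is a levelwise injection), and any model category with all objects cofibrant is automatically left proper. Second, your stability argument only establishes that $\Sigma$ \emph{preserves and reflects} $G$-global weak equivalences (because $S^1\smashp\blank$ commutes with every $\alpha^*$ on the nose and the $H$-equivariant structures are stable); for $\Sigma$ to be an equivalence of homotopy categories you must also show it is essentially surjective, i.e.\ that the derived counit $\Sigma\Omega X\to X$ is a $G$-global weak equivalence. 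Reducing \emph{that} to the equivariant case requires knowing that restriction along $\alpha\colon H\to G$ carries injectively $G$-globally fibrant spectra to $H$-stably fibrant $\Omega$-spectra --- the same unproved input on which your right-properness reduction silently relies, since the restriction of an injective $G$-global fibration has no a priori reason to be an $H$-equivariant fibration. These gaps are fixable, but they constitute the actual content of the cited results rather than formalities.
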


	Basically by definition, the restriction functors $\alpha^*\colon\cat{$\bm{G'}$-Spectra}\to\cat{$\bm G$-Spectra}$ are homotopical and left Quillen. In particular, we again obtain a global category $\ul\mySp^\text{gl}$.

	\begin{theorem}[See {\cite[Corollary~7.3.3]{CLL_Global}}]\label{thm:mysp-global-equivalence}
		$\ul\mySp^\textup{gl}$ is globally presentable and equivariantly stable. The essentially unique globally cocontinuous functor $\ul\Sp^{\Orb}_{\Glo}\to\ul\mySp^\textup{gl}$ sending $\Sigma^\infty_+(*)$ to the global sphere spectrum $\mathbb S$ is an equivalence.\qed
	\end{theorem}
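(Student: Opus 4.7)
The plan is to deduce the result from the universal property of $\ul\Sp^\Orb_\Glo$ (Theorem~\ref{thm:sp-univ-property}) together with a direct identification of $\ul\mySp^\textup{gl}$ with the stabilization of $\ul\GammaS^\textup{gl, spc}_*$, bridging the two via Theorem~\ref{thm:Gamma-gl-comp}. (We interpret the statement as being about $\ul\mySp^\textup{gl}$, since Warning~\ref{warn:sp-no-fiberwise-limits} rules out global presentability of $\ul\mySp$ itself.) I would first verify that $\ul\mySp^\textup{gl}$ is globally presentable and equivariantly stable. Fiberwise presentability and fiberwise stability are immediate since each $\mySp^\textup{gl}_G$ is the localization of a combinatorial, simplicial, stable model category. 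For global cocompleteness, the restriction $\alpha^*$ along any $\alpha\colon G \to G'$ is left Quillen for the injective $G$-global model structures (preserving levelwise injections and $G'$-global weak equivalences essentially by definition), so $\cat{L}\alpha^* = \alpha^*$ is cocontinuous and admits a left adjoint $\alpha_!$ by the presentable adjoint functor theorem; Beck--Chevalley reduces to pointset-level compatibility. Equivariant semiadditivity---the norm equivalence $p_!\simeq p_*$ for every $p\in \ul{\mathbb F}^\Orb_\Glo$---is the substantive input and can be extracted from the global refinement of the tom Dieck splitting sketched in the discussion after Warning~\ref{warn:sp-no-finite-limits}.

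With these properties in hand, Theorem~\ref{thm:sp-univ-property} furnishes an essentially unique globally cocontinuous global functor $F\colon\ul\Sp^\Orb_\Glo\to\ul\mySp^\textup{gl}$ sending $\mathbb S$ to $\mathbb S$. To show $F$ is an equivalence, I would exhibit an explicit equivalence in the other direction via the chain
\begin{equation*}
	\ul\Sp^\Orb_\Glo = \Sp\otimes\ul\CMon^\Orb_\Glo \xleftarrow{\;\sim\;} \Sp\otimes\ul\GammaS^\textup{gl, spc}_* \xrightarrow{\;\sim\;} \ul\mySp^\textup{gl},
\end{equation*}
where the left equivalence is the stabilization of Theorem~\ref{thm:Gamma-gl-comp} and the right equivalence is a $G$-global refinement of Shimakawa's Segal machine. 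Both composites preserve the sphere, so by the uniqueness clause of Theorem~\ref{thm:sp-univ-property} this equivalence must agree with $F$.

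The main obstacle is the last equivalence $\Sp\otimes\ul\GammaS^\textup{gl, spc}_*\simeq\ul\mySp^\textup{gl}$: one must show that the $G$-global Segal construction detects all $G$-global spectra, i.e., every such spectrum is the derived spectrification of a special $\Gamma$-$G$-global space. This is the global analogue of the classical result that special $\Gamma$-spaces model connective spectra, and its equivariant refinement is due to Shimakawa; the $G$-global version can be extracted from the model-categorical machinery of \cite{g-global}. Given this input, all remaining claims---global presentability, equivariant stability, and the uniqueness of $F$---follow formally from the chain of equivalences above together with the universal properties already established in Sections~\ref{sec:semiadd}--\ref{sec:stable}.
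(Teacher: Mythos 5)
This theorem carries a \qed{} and is imported wholesale from the prequel (\cite{CLL_Global}*{Corollary~7.3.3}); the present paper contains no proof of it, so there is no internal argument to compare against. Your sketch is nonetheless a reasonable reconstruction of how the cited proof actually goes: the strategy in \cite{CLL_Global} is precisely to combine the identification $\ul\CMon^{\Orb}_{\Glo}\simeq\ul\GammaS^\textup{gl, spc}_*$ (quoted here as Theorem~\ref{thm:Gamma-gl-comp}) with a $G$-global delooping theorem identifying the fiberwise stabilization of special global $\Gamma$-spaces with $\ul\mySp^\textup{gl}$, the latter resting on the machinery of \cite{g-global}. You also correctly diagnose the typo: the statement must concern $\ul\mySp^\textup{gl}$, not $\ul\mySp$, as Warning~\ref{warn:sp-no-fiberwise-limits} and the use of the theorem in the proof of Theorems~\ref{thm:mysp-univ-prop} and~\ref{thm:mysp-comp} make clear.

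Two soft spots in your sketch deserve flagging. First, equivariant semiadditivity of $\ul\mySp^\textup{gl}$ (the norm equivalence $p_!\simeq p_*$ for $p$ injective) is the Wirthm\"uller isomorphism, not the tom Dieck splitting; the tom Dieck splitting is what computes $\cat{R}q_*\mathbb S$ and is responsible for the \emph{failure} of Beck--Chevalley for non-injective maps in $\ul\mySp$, as in Warning~\ref{warn:sp-no-finite-limits}. (Alternatively, semiadditivity comes for free once the equivalence with $\ul\Sp^{\Orb}_{\Glo}=\Sp\otimes\ul\CMon^{\Orb}_{\Glo}$ is in place, since the right-hand side is $\Orb$-semiadditive by construction.) Second, the claim that the Beck--Chevalley conditions for the left adjoints $\alpha_!$ ``reduce to pointset-level compatibility'' is glossed: since $\alpha^*$ is only left Quillen for the injective global model structures, the left adjoints $\alpha_!$ are produced abstractly by the adjoint functor theorem rather than modeled by a left Quillen functor, and verifying base change for them is genuine work. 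Finally, the central input $\Sp\otimes\ul\GammaS^\textup{gl, spc}_*\simeq\ul\mySp^\textup{gl}$ is a substantial theorem that your proposal assumes rather than proves --- but since the paper itself outsources the entire statement, deferring this step to \cite{g-global} and \cite{CLL_Global} is consistent with the level of detail expected here.
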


	\subsection{Proof of Theorems~\ref{thm:mysp-univ-prop} and~\ref{thm:mysp-comp}} Let us begin with a comparison of the above models complementing Theorem~\ref{thm:sp-adjunction}:

	\begin{lemma}
		There is a global functor $\cat{L}\id\colon\ul\mySp\to\ul\mySp^\textup{gl}$ with the following properties:
		\begin{enumerate}
			\item It is fully faithful and sends ${\mathbb S}$ to $\mathbb S$.
			\item It admits an $\Orb$-right adjoint.
		\end{enumerate}
		\begin{proof}
			For any $G$, \cite[Proposition~3.3.1]{g-global} provides a Quillen adjunction
			\begin{equation}\label{eq:spectra-vs-spectra}
				\id\colon\cat{$\bm G$-Spectra}_\text{$G$-equiv.~proj.}\rightleftarrows\cat{$\bm G$-Spectra}_\text{$G$-gl.~inj.} \noloc\id.
			\end{equation}
			In particular, $G$-equivariant weak equivalences between projectively cofibrant spectra are $G$-global weak equivalences (also see Lemma~\ref{lemma:restr-spectra-left-Quillen}), so the inclusion of projectively cofibrant objects yields a functor $\cat{L}\id\colon\ul\mySp\to\ul\mySp^\text{gl}$ sending ${\mathbb S}$ to $\mathbb S$. Moreover, the right adjoint in $(\ref{eq:spectra-vs-spectra})$ evidently induces a localization, so that $\cat{L}\id$ is fully faithful.

			It only remains that the right adjoints assemble into an $\Orb$-right adjoint. However, the pointset level Beck--Chevalley maps $\alpha^*\circ{\id}\Rightarrow{\id}\circ\alpha^*$ are isomorphisms for trivial reasons, and for \emph{injective} $\alpha$, $\alpha^*$ is also homotopical in the equivariant world \cite{hausmann-equivariant}*{5.2}, so that this already models the derived Beck--Chevalley map.
		\end{proof}
	\end{lemma}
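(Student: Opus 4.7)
The plan is to mimic the analogous constructions for equivariant spaces (\Cref{lemma:triv-S-Sgl}) and equivariant $\Gamma$-spaces (\Cref{prop:Lconst-Gamma}), namely by exhibiting a suitable Quillen adjunction on the pointset level. The crucial observation is that every $G$-global weak equivalence of $G$-spectra is in particular a $G$-equivariant weak equivalence (taking $\alpha=\id_G$ in the defining condition), so that there is a pointset-level identity functor going from ``few weak equivalences'' (global) to ``many weak equivalences'' (equivariant). I would therefore first verify that there is a Quillen adjunction
\[
\id\colon\cat{$\bm G$-Spectra}_\textup{$G$-equiv.~proj.}\rightleftarrows\cat{$\bm G$-Spectra}_\textup{$G$-gl.~inj.}\noloc\id,
\]
the essential content of which is that $G$-equivariant acyclic projective cofibrations are $G$-global weak equivalences. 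This in turn should follow from \Cref{lemma:restr-spectra-left-Quillen} together with Ken Brown's Lemma: since each $\alpha^*$ is left Quillen in the equivariant model structures, a $G$-equivariant weak equivalence between projectively cofibrant spectra remains an $H$-equivariant weak equivalence after applying $\alpha^*$ for any $\alpha\colon H\to G$, which is precisely the definition of a $G$-global weak equivalence.

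Taking left derived functors then yields a global functor $\cat{L}\id\colon\ul\mySp\to\ul\mySp^\textup{gl}$, as the Quillen adjunction is strictly natural in $G$ and restriction functors preserve projectively cofibrant objects (being left Quillen). Since the equivariant sphere spectrum is projectively cofibrant, $\cat{L}\id$ sends $\mathbb S$ to $\mathbb S$ on the nose. For full faithfulness, I would argue that the pointset-level identity $\cat{$\bm G$-Spectra}\to\cat{$\bm G$-Spectra}$ already descends to a localization functor $\ul\mySp^\textup{gl}(G)\to\ul\mySp(G)$ (no fibrant replacement necessary, since global weak equivalences are already equivariant ones), and this localization functor is precisely the $\infty$-categorical right adjoint of $\cat{L}\id$. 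Full faithfulness of $\cat{L}\id$ then follows from the fact that the unit of a derived adjunction modelled by a homotopical right adjoint and its total left derived functor is an equivalence when the left Quillen functor is homotopical on cofibrant objects with respect to the refined weak equivalences.

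For the existence of the $\Orb$-right adjoint, the pointwise right adjoints are already in hand from the construction of the Quillen adjunction, so only the Beck--Chevalley condition remains. Given an \emph{injective} homomorphism $\alpha\colon G\hookrightarrow G'$, I need to show that the derived Beck--Chevalley map $\cat L\alpha^*\circ R(\id)\Rightarrow R(\id)\circ\alpha^*$ is an equivalence. At the pointset level the relevant square of identity and restriction functors commutes on the nose, so the issue reduces to knowing that all four functors in the square model the intended derived functors without cofibrant/fibrant replacement. For injective $\alpha$, Hausmann's result \cite{hausmann-equivariant}*{5.2} ensures that $\alpha^*$ is homotopical in the equivariant world, while $\alpha^*$ is homotopical in the global world by definition; hence no derivation is needed on either side, and the Beck--Chevalley condition follows from the strict commutation. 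The main obstacle I anticipate is checking that the identity is genuinely left Quillen from equivariant projective to global injective (i.e.~that no subtle failure occurs for acyclic cofibrations), but this should be handled cleanly via Ken Brown's Lemma applied to the family of restriction functors $\alpha^*$.
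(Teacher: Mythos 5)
Your proposal is correct and follows essentially the same route as the paper: set up the pointset-level adjunction between the $G$-equivariant projective and $G$-global injective model structures, derive it to get a fully faithful $\cat{L}\id$ (full faithfulness via the right adjoint being a homotopical localization, so the derived unit is an equivalence), and verify the Beck--Chevalley condition for injective $\alpha$ by noting that the relevant square commutes strictly and all four functors are already homotopical, so no (co)fibrant replacement is needed. The only difference is that where the paper cites \cite{g-global}*{Proposition~3.3.1} as a black box, you rederive the key homotopical input (equivariant weak equivalences between projectively cofibrant spectra are global weak equivalences) from \Cref{lemma:restr-spectra-left-Quillen} via Ken Brown's Lemma --- precisely the alternative the paper itself indicates parenthetically.
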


	\begin{proof}[Proof of Theorems~\ref{thm:mysp-univ-prop} and~\ref{thm:mysp-comp}]
		By Theorem~\ref{thm:sp-univ-property} it is enough to prove that $\ul\mySp$ is equivariantly stable and equivariantly cocomplete, and that the preferred map $\ul\Sp^{\Orb}_{\Orb\triangleright\Glo}\to\ul\mySp$ is an equivalence.

		For this, let us write $\mathcal E$ for the essential image of $\cat{L}\id\colon\ul\mySp\to\ul\mySp^\text{gl}$; this is then closed under equivariant colimits as $\cat{L}\id$ admits an $\Orb$-right adjoint, and it is closed under desuspension as each $\mySp_G$ is stable. It follows that $\mathcal E$ and hence also $\ul\mySp$ is indeed equivariantly cocomplete and equivariantly stable.

		Now let $F\colon\ul\Sp^{\Orb}_{\Orb\triangleright\Glo}\to\ul\mySp$ be the unique equivariantly cocontinuous functor preserving ${\mathbb S}$. Then ${\cat{L}\id}\circ F\colon\ul\Sp^{\Orb}_{\Orb\triangleright\Glo}\to\ul\mySp^\text{gl}$ is an equivariantly cocontinuous functor sending ${\mathbb S}$ to $\mathbb S$. The same holds for the composite
		\begin{equation*}
			\ul\Sp^{\Orb}_{\Orb\triangleright\Glo}\xrightarrow{\;\iota_!\;}\ul\Sp^{\Orb}_{\Glo}\iso\ul\mySp^\text{gl}
		\end{equation*}
		of the fully faithful functor from Theorem~\ref{thm:gamma-comparison} with the equivalence from Theorem~\ref{thm:mysp-global-equivalence}, so they have to agree by the universal property of $\ul\Sp^{\Orb}_{\Orb\triangleright\Glo}$. In particular, $F$ is fully faithful. To see that it is also essentially surjective, it is by \cite[Proposition~4.9]{hausmann-equivariant} enough to see that it hits the suspension spectra $\Sigma^\infty_+(G/H)$ for all $H\subset G$. However, as before we have $i_!\mathbb S\simeq\Sigma^\infty_+(G/H)$ for $i\colon H\hookrightarrow G$ the inclusion, so the claim follows from the defining properties of $F$.
	\end{proof}

	\section{The $G$-equivariant universal properties}
	Let $G$ be a finite group, taken to be fixed throughout this section. The universal properties of the global categories of equivariant spaces, equivariant $\Gamma$-spaces, and equivariant spectra proved in this article immediately imply analogous universal properties for the $G$-categories of $G$-spaces, $\Gamma$-$G$-spaces, and $G$-spectra. The goal of this section is to explain this reduction step. For ease of reference, we will briefly recall the relevant definitions regarding $G$-equivariant category theory, originally developed in \cite{nardin2016exposeIV}.

	Recall from \Cref{ex:G-cat} the notion of a \textit{$G$-category}
	\[
		\Cc\colon \Orb_G^{\op}\rightarrow \Cat, \qquad G/H \mapsto \Cc(H),
	\]
 	informally given by assigning a category $\Cc(H)$ to every subgroup $H$ of $G$ together with restriction functors $\res^K_H \colon \Cc(K)\rightarrow \Cc(H)$ for $H \subset K\subset G$ and conjugation functors $c_g^*\colon \Cc(H) \rightarrow \Cc(H)$ for $g\in N_G H$ that satisfy various higher coherences.

	\begin{definition}
		A $G$-category $\Cc$ is called \textit{$G$-cocomplete} if it is $\Orb_G$-cocomplete in the sense of \Cref{def:T-cc}. More concretely:
		\begin{itemize}
			\item The category $\Cc(H)$ is cocomplete for every subgroup $H \subset G$ ;
			\item Each restriction functor $\res^K_H\colon \Cc(K)\rightarrow \Cc(H)$ preserves colimits and admits a left adjoint $\ind^K_H$;
			\item The functors $\ind^K_H$ satisfy the \emph{double coset formula}, i.e.\ for $K,K' \subset H$ a certain map
			\[
			\coprod_{[h] \in K' \backslash H / K} \ind_{K'\cap \,{}^h\! K}^H \res_{K'\cap \,{}^h\! K}^{{}^h\! K} c_h^* \to \res_{K'}^H \ind_K^H
			\]
			is an equivalence.
		\end{itemize}
		A $G$-category $\Cc$ is \textit{$G$-presentable} if it is $G$-cocomplete and each $\Cc(H)$ is presentable.
	\end{definition}
	\begin{definition}
		A $G$-category $\Cc$ is called \textit{$G$-semiadditive} if it is $\Orb_G$-semiadditive in the sense of \Cref{defi:P-semiadd}. Explicitly:
		\begin{itemize}
			\item The category $\Cc(H)$ is semiadditive for every subgroup $H \subset G$;
			\item Each restriction functor $\res^K_H$ admits both a left adjoint $\ind^K_H$ and a right adjoint $\coind_H^K$ satisfying the double coset formula;
			\item A canonically defined norm map
			\[
				\Nm_H^K\colon \ind_H^K\rightarrow \coind_H^K
			\]
			is an equivalence for all inclusions $H\subset K \subset G$.
		\end{itemize}
	A $G$-category $\Cc$ is \textit{$G$-stable} if it is $G$-semiadditive, each $\Cc(H)$ is stable.
	\end{definition}

	As announced, we will show below that the universal $G$-categories with these properties are given by the $G$-categories of $G$-spaces, $\Gamma$-$G$-spaces, and $G$-spectra. We may define these $G$-categories by appropriately restricting the previously defined global categories of equivariant spaces, equivariant $\Gamma$-spaces, and equivariant spectra. This restriction happens via a functor $\Orb_G \to \Glo$ informally given by sending the orbit $G/H$ to the group $H$. To make this precise, let us temporarily view $\Orb_G$ as a subcategory of $\Fun(BG,\Spc)$ via the inclusions
	\[
		\Orb_G \subset \Fun(BG,\Set)\subset \Fun(BG,\Spc).
	\]
	Similarly we will view $\Orb$ and $\Glo$ as subcategories of $\Spc$ via the inclusions
	\[
		\Orb \subset \Glo \overset{B}{\hookrightarrow} \Grpd \subset \Spc.
	\]

	\begin{lemma}
		The unstraightening equivalence $(-)_{hG}\colon \Fun(BG,\Spc) \iso \Spc_{/BG}$ restricts to an equivalence $\Orb_G\simeq \Orb_{/G}$.
	\end{lemma}

	In particular, $\Orb_{/G}$ (which a priori is a $(2,1)$-category) happens to be a $1$-category.

	\begin{proof}
		For this proof we will identify $\Glo$ with its image in $\Grpd$, i.e.\ with the category of finite connected groupoids. Under this idenitification, a morphism $f\colon BH \rightarrow BK$ in $\Glo$ lies in $\Orb$ if and only if the map $\pi_1(f)$ is injective. This implies, by left cancellation of injective maps, that $\Orb_{/G}$ is a full subcategory of $\Spc_{/BG}$. Since $\Orb_G$ is similarly a full subcategory of $\Fun(BG,\Spc)$, it thus suffices to show that for a space $X$ with a $G$-action the unstraightening of $X$ is in $\Orb_{/BG}$ if and only if $X$ is equivalent to a transitive $G$-set.

		Recall that the unstraightening of $X$ is given by the map $X_{hG} \to BG$, where $X_{hG}$ denotes the homotopy quotient. Since the underlying space of $X$ sits in a fiber sequence $X\rightarrow X_{hG}\rightarrow BG$, the long exact sequence on homotopy groups shows that $X$ is discrete if and only if $X_{hG}$ is 1-truncated and the maps $\pi_1(X_{hG},[x]) \to \pi_1(BG)$ are injective for every basepoint $[x] \in X_{hG}$. Because of the bijection $\pi_0(X_{hG}) \cong \pi_0(X)/G$ we also see that $X_{hG}$ is connected if and only if the $G$-action on $\pi_0(X)$ is transitive. Combining these two statements, we may conclude.
	\end{proof}

	\begin{construction}
		\label{cons:Underlying-G-category}
		We write $\upsilon_G\colon \Orb_G \to \Glo$ for the composite
		\[
			\Orb_G\simeq\Orb_{/G}\mathrel{\smash{\xrightarrow{\pi_G}}}\Orb\hookrightarrow\Glo,
		\]
		i.e.~this is the homotopy quotient functor $(-)_{hG}\colon\Orb_G\to\Spc$, viewed as a functor into the full subcategory $\Glo\subset\Spc$. Restricting along $\upsilon_G$ then yields a functor $\upsilon_G^*\colon\Cat_{\Glo}\to\Cat_{\Orb_G}$ sending a global category to its `underlying $G$-category.'
	\end{construction}

	\begin{definition}
	We define the $G$-categories
	\begin{enumerate}
		\item $\ul\S_G\coloneqq \upsilon_G^* \ul\S$ of \emph{$G$-spaces}, which sends $G/H$ to the localization of the category \cat{$\bm H$-SSet} of $H$-simplicial sets at the $H$-equivariant weak equivalences;
		\item $\ul\GammaS^\textup{spc}_{G,*} \coloneqq \upsilon_G^*\ul\GammaS^\textup{spc}_*$ of \emph{special $\Gamma$-$G$-spaces}, which sends $G/H$ to the localization of the category $\cat{$\bm \Gamma$-$\bm H$-SSet}_*$ of $\Gamma$-$H$-simplicial sets at the $H$-equivariant weak equivalences;
		\item $\ul \mySp_G \coloneqq \upsilon_G^*\ul\mySp$ of \emph{$G$-spectra}, which sends $G/H$ to the localization of the category \cat{$\bm H$-Spectra} of $H$-symmetric spectra at the $H$-equivariant weak equivalences.
	\end{enumerate}
	\end{definition}

	\begin{remark}
	The reader may consult Sections \ref{sec:spaces}, \ref{subsec:eq-gamma-spaces}, \ref{subsec:spectra} for the more details on the global categories $\ul\S$, $\ul\GammaS^\textup{spc}_*$, and $\ul\mySp$, respectively. In each case the restriction and conjugation functoriality is given before localization by simply restricting and conjugating group actions, respectively.
	\end{remark}

	We will now provide the universal properties of these three $G$-categories. To this end, recall that a $G$-functor $F\colon \Cc \to \Dd$ between two $G$-cocomplete $G$-categories is called \textit{$G$-cocontinuous} if each functor $F(H)\colon \Cc(H) \to \Cc(G)$ is cocontinuous and if $F$ commutes with the induction functors (in the sense that the associated Beck--Chevalley transformations are equivalences). The $G$-cocontinuous functors assemble into a $G$-category $\ul\Fun_{\Orb_G}^{G\textup{-cc}}(\Cc,\Dd)$, see \Cref{def:cocont-functor-cat}.

	\begin{theorem}\label{thm:equivariant-spaces}$ $
		\begin{enumerate}
			\item The $G$-category $\ul\S_G$ is $G$-presentable. For any $G$-cocomplete $\Dd$, evaluation at the terminal object defines an equivalence $\ul\Fun^{G\textup{-cc}}_{\Orb_G}(\ul\S_G,\Dd)\simeq\Dd$.
			\item The $G$-category $\ul\GammaS^\textup{spc}_{G,*}$ is $G$-presentable and $G$-semiadditive. For any $G$-cocomplete $G$-semiadditive $\Dd$, evaluation at $\mathbb P(*)$ induces an equivalence $\ul\Fun_{\Orb_G}^\textup{$G$-cc}(\ul\GammaS^\textup{spc}_{G,*},\Dd)\simeq\Dd$.
			\item The $G$-category $\ul\mySp_G$ is $G$-presentable and $G$-stable. For any $G$-cocomplete $G$-stable $\Dd$, evaluation at $\mathbb S$ defines an equivalence $\ul\Fun_{\Orb_G}^{G\textup{-cc}}(\ul\mySp_G,\Dd)\simeq\Dd$.
		\end{enumerate}
	\end{theorem}

	\begin{proof}
	We begin with (1). It suffices to construct an equivalence $\upsilon_G^* \ul\S \simeq\ul\Spc_{\Orb_G}$; the theorem will then follow from the universal property of the right hand side (Theorem~\ref{thm:univ-prop-T-spc}). By Theorem~\ref{thm:orbi-S-equivalence} we have an equivalence of global categories $\ul\S\simeq\ul\Spc_{\Orb\triangleright\Glo}$, and hence in particular an equivalence $\ul\S|_{\Orb}\simeq\ul\Spc_{\Orb}$ of $\Orb$-categories. To finish the proof it suffices now to observe that for any small $T$ and any $A\in T$, there is an equivalence $\pi_A^*\ul\Spc_T\simeq\ul\Spc_{T_{/A}}$ by \cite{martini2021yoneda}*{Remark~3.7.2}.

	Similarly, to prove (2) it will be enough to construct an equivalence $\ul\CMon_{\Orb_G}\simeq\upsilon_G^*\ul\GammaS^\text{spc}_*$ preserving $\mathbb P(*)$, for which it in turn suffices to combine Proposition~\ref{prop:cmon-slice} with Theorem~\ref{thm:gamma-comparison}.

	Arguing as in the semiadditive case, (3) follows from \Cref{thm:mysp-comp} and the stable analog of Proposition~\ref{prop:cmon-slice}, which follows immediately from it by applying $\Sp\otimes\blank$ to both sides.
	\end{proof}

	\begin{remark}
	Evaluating at $H\subset G$, part (1) in particular gives an equivalence  $\S_H\simeq \PSh(\Orb_H)$. In this sense, it can be viewed as a `coherent' version of the classical Elmendorf Theorem \cite{elmendorf}, additionally taking into account the restriction functors as well as all higher structure between them.

	A proof of (3) has previously been sketched by Nardin as \cite{nardin2016exposeIV}*{Theorem~A.4}.
	\end{remark}

	\appendix

	\section{The calculus of mates}
	\label{sec:Calculus_Of_Mates}

	In this appendix\footnote{This appendix is based on \cite[Appendix F]{Cnossen2023PhD}.} we recall some general identities involving Beck--Chevalley transformations (or \emph{mates}) that are used throughout this work. A standard reference for the calculus of mates is \cite[Section~2.2]{KellyStreet1974Mates}. Some aspects of our treatment are inspired by \cite[Section~2.2]{CSY20}. Throughout, we denote counits generically by $\epsilon$ and units by $\eta$.

	\begin{definition}[Beck--Chevalley transformation]
		\label{def:BeckChevalley}
		Let $\alpha\colon HF \Rightarrow KG$ be a natural transformation of functors as displayed in the following diagram:
		\[\begin{tikzcd}
			\Cc \rar{F} \dar[swap]{G} & \Dd \dar{H} \dlar[Rightarrow, shorten <=4pt, shorten >=4pt, "\alpha"{description}] \\
			\Cc' \rar[swap]{K} & \Dd'\llap.
		\end{tikzcd}\]
		\begin{enumerate}
			\item If the functors $F$ and $K$ have left adjoints $F_!\colon \Dd \to \Cc$ and $K_!\colon \Dd' \to \Cc'$, we define the \textit{left Beck--Chevalley transformation} $\BC_!\colon K_! H \Rightarrow G F_!$ as the composite
			\begin{align*}
				\BC_!(\alpha)\colon K_! H \xRightarrow{\eta_{F}} K_! H F F_! \xRightarrow{\alpha} K_! K G F_! \xRightarrow{\epsilon_{K}} G F_!.
			\end{align*}
			\item If the functors $G$ and $H$ have right adjoints $G_*\colon \Cc' \to \Cc$ and $H_*\colon \Dd' \to \Dd$, we define the \textit{right Beck--Chevalley transformation} $\BC_*\colon F G_* \Rightarrow H_* K$ as the composite
			\begin{align*}
				\BC_*(\alpha)\colon F G_*  \xRightarrow{\eta_{H}} H_* H F G_* \xRightarrow{\alpha} H_* K G G_* \xRightarrow{\epsilon_{G}} H_* K.
			\end{align*}
		\end{enumerate}
		We write $\BC_!$ and $\BC_*$ if the transformation $\alpha$ is left implicit (e.g.\ in case of some canonical equivalence $HF \simeq KG$).
	\end{definition}

	Diagrammatically, the two Beck--Chevalley transformations may be displayed as follows:
	\[\BC_!(\alpha) = \begin{tikzcd}
		\Dd \\
		\Cc & \Dd \\
		{\Cc'} & {\Dd'} \\
		& {\Cc'}
		\arrow["{F_!}"{description}, from=1-1, to=2-1]
		\arrow["=",""{name=0, anchor=center, inner sep=0}, from=1-1, to=2-2, bend left]
		\arrow["F"{description}, from=2-1, to=2-2]
		\arrow["G"{description}, from=2-1, to=3-1]
		\arrow["H"{description}, from=2-2, to=3-2]
		\arrow["{K}"{description}, from=3-1, to=3-2]
		\arrow["{K_!}"{description}, from=3-2, to=4-2]
		\arrow[""{name=1, anchor=center, inner sep=0}, "="', from=3-1, to=4-2, bend right]
		\arrow["\alpha"{description}, shorten <=4pt, shorten >=4pt, Rightarrow, from=2-2, to=3-1]
		\arrow["{\eta}"{description}, shorten <=2pt, Rightarrow, from=0, to=2-1]
		\arrow["{\epsilon}"{description}, shorten >=2pt, Rightarrow, from=3-2, to=1]
	\end{tikzcd}
	\qquadtext{ and }
	\BC_*(\alpha) = \begin{tikzcd}
		{\Cc'} & \Cc & \Dd \\
		& {\Cc'} & {\Dd'} & \Dd\rlap{.}
		\arrow["F"{description}, from=1-2, to=1-3]
		\arrow["G"{description}, from=1-2, to=2-2]
		\arrow["H"{description}, from=1-3, to=2-3]
		\arrow["{K}"{description}, from=2-2, to=2-3]
		\arrow["\alpha"{description}, shorten <=4pt, shorten >=4pt, Rightarrow, from=1-3, to=2-2]
		\arrow[""{name=0, anchor=center, inner sep=0}, "="', from=1-1, to=2-2, bend right]
		\arrow["{G_*}"{description}, from=1-1, to=1-2]
		\arrow["{H_*}"{description}, from=2-3, to=2-4]
		\arrow[""{name=1, anchor=center, inner sep=0}, "=", from=1-3, to=2-4, bend left]
		\arrow["\epsilon"{description}, shorten >=2pt, Rightarrow, from=1-2, to=0]
		\arrow["\eta"{description}, shorten <=2pt, Rightarrow, from=1, to=2-3]
	\end{tikzcd}\]
	We will frequently employ these diagrammatic ways of displaying Beck--Chevalley transformations in proofs, as it is often easier to parse than the more symbolic way of writing these transformations.

	\begin{warning}
		If $\alpha$ is an equivalence, then the notation of \Cref{def:BeckChevalley} is somewhat abusive: if $G$ and $H$ have left adjoints $G_!$ and $H_!$ we may flip the diagram and obtain another transformation $\BC_!\colon H_!K\Rightarrow FG_!$. Analogously if $F$ and $K$ have right adjoints $F_*$ and $K_*$ one obtains a transformation $\BC_*\colon GF_* \Rightarrow K_*H$. It will always be clear from the source and target of the maps $\BC_!$ and $\BC_*$ which version we mean to use.
	\end{warning}

	\begin{lemma}
		\label{lem:MateEquivalence}
		The assignment $\alpha \mapsto \BC_!(\alpha)$ defines a bijection between homotopy classes of natural transformations
		\[
		\BC_!\colon \pi_0\,\mathrm{Nat}(HF,KG) \xrightarrow{\;\cong\;} \pi_0\,\mathrm{Nat}(K_!H,GF_!),
		\]
		whose inverse is given by $\BC_*\colon \pi_0\,\mathrm{Nat}(K_!H,GF_!) \to \pi_0\,\mathrm{Nat}(HF,KG)$.
	\end{lemma}
	\begin{proof}
		It follows directly from the triangle identities that one may recover $\alpha$ from $\BC_!(\alpha)$ by forming the right Beck--Chevalley transformation:
		\[\BC_*(\BC_!(\alpha))\hskip.5em = \begin{tikzcd}
			\Cc & \Dd \\
			& \Cc & \Dd \\
			& {\Cc'} & {\Dd'} \\
			&& {\Cc'} & {\Dd'}
			\arrow["{F_!}"{description}, from=1-2, to=2-2]
			\arrow[""{name=0, anchor=center, inner sep=0}, "=", from=1-2, to=2-3, bend left]
			\arrow["F"{description}, from=2-2, to=2-3]
			\arrow["G"{description}, from=2-2, to=3-2]
			\arrow["H"{description}, from=2-3, to=3-3]
			\arrow["{K}"{description}, from=3-2, to=3-3]
			\arrow["{K_!}"{description}, from=3-3, to=4-3]
			\arrow[""{name=1, anchor=center, inner sep=0}, "="', from=3-2, to=4-3, bend right]
			\arrow["\alpha"{description}, shorten <=4pt, shorten >=4pt, Rightarrow, from=2-3, to=3-2]
			\arrow["F"{description}, from=1-1, to=1-2]
			\arrow[""{name=2, anchor=center, inner sep=0}, "="', from=1-1, to=2-2, bend right]
			\arrow["{K}"{description}, from=4-3, to=4-4]
			\arrow[""{name=3, anchor=center, inner sep=0}, "=", from=3-3, to=4-4, bend left]
			\arrow["{\eta}"{description}, shorten <=2pt, Rightarrow, from=0, to=2-2]
			\arrow["{\epsilon}"{description}, shorten >=2pt, Rightarrow, from=3-3, to=1]
			\arrow["{\epsilon}"{description}, shorten >=2pt, Rightarrow, from=1-2, to=2]
			\arrow["{\eta}"{description}, shorten <=2pt, Rightarrow, from=3, to=4-3]
		\end{tikzcd}
		\hskip-5pt
		\simeq
		\hskip.5em
		\begin{tikzcd}
			\Cc \rar{F} \dar[swap]{G} & \Dd \dar{H} \dlar[Rightarrow, shorten <=4pt, shorten >=4pt, "\alpha"{description}] \\
			\Cc' \rar[swap]{K} & \Dd'.
		\end{tikzcd} \hskip.5em = \hskip.5em \alpha.
		\]
		One may similarly prove that $\BC_!(\BC_*(\beta)) \simeq \beta$ for a transformation $\beta\colon K_!H \Rightarrow GF_!$.
	\end{proof}

	\begin{remark}
		A more elaborate argument shows that the above can be lifted to an equivalence of \emph{spaces} of natural transformations, see \cite{HHLNa}*{Theorem~5.3.5}.
	\end{remark}

	\begin{lemma}
		\label{lem:BeckChevalleyCompatibleWithCoUnits}
		Consider the situation of \Cref{def:BeckChevalley}.
		\begin{enumerate}
			\item If the functors $F$ and $K$ have left adjoints $F_!\colon \Dd \to \Cc$ and $K_!\colon \Dd' \to \Cc'$, then the following diagrams commute:
			\[
			\begin{tikzcd}
				{K_! HF} & {K_!KG} \\
				{GF_!F} & G
				\arrow["{\BC_!(\alpha)}"', from=1-1, to=2-1]
				\arrow["{\epsilon_F}", from=2-1, to=2-2]
				\arrow["{\epsilon_{K}}", from=1-2, to=2-2]
				\arrow["\alpha", from=1-1, to=1-2]
			\end{tikzcd}
			\qquadtext{ and }
			\begin{tikzcd}
				H & {KK_! H} \\
				{H F F_!} & {KGF_!.}
				\arrow["{\BC_!(\alpha)}", from=1-2, to=2-2]
				\arrow["\alpha", from=2-1, to=2-2]
				\arrow["{\eta_{K}}", from=1-1, to=1-2]
				\arrow["{\eta_{F}}"', from=1-1, to=2-1]
			\end{tikzcd}
			\]
			\item If the functors $G$ and $H$ have right adjoints $G_*\colon \Cc' \to \Cc$ and $H_*\colon \Dd' \to \Dd$, then the following diagrams commute:
			\[
			\begin{tikzcd}
				{HFG_*} & {KGG_*} \\
				{HH_*K} & {K}
				\arrow["{\BC_*(\alpha)}"', from=1-1, to=2-1]
				\arrow["{\epsilon_H}", from=2-1, to=2-2]
				\arrow["{\epsilon_{G}}", from=1-2, to=2-2]
				\arrow["\alpha", from=1-1, to=1-2]
			\end{tikzcd}
			\qquadtext{ and }
			\begin{tikzcd}
				F & {FG_*G} \\
				{H_*H F} & {H_*KG.}
				\arrow["{\BC_*(\alpha)}", from=1-2, to=2-2]
				\arrow["\alpha", from=2-1, to=2-2]
				\arrow["{\eta_{G}}", from=1-1, to=1-2]
				\arrow["{\eta_{H}}"', from=1-1, to=2-1]
			\end{tikzcd}
			\]
		\end{enumerate}
	\end{lemma}
	\begin{proof}
		Just like the proof of \Cref{lem:MateEquivalence}, this follows directly from the triangle identities.
	\end{proof}

	\begin{lemma}[Pasting laws for Beck--Chevalley transformations]
		\label{lem:PastingLemmaBeckChevalley}
		Consider natural transformations $\alpha$, $\beta$ and $\gamma$ as in the following diagram:
		\[
		\begin{tikzcd}
			\Cc \rar{F} \dar[swap]{G} & \Dd \dar["H"{description}] \rar{E} \dlar[Rightarrow, shorten <=4pt, shorten >=4pt, "\alpha"{description}] & \Ee \dar{K} \dlar[Rightarrow, shorten <=4pt, shorten >=4pt, "\beta"{description}] \\
			\Cc' \rar["K"{description}] \dar[swap]{G'} & \Dd' \rar[swap]{E'} \dar{H'} \dlar[Rightarrow, shorten <=4pt, shorten >=4pt, "\gamma"{description}] & \Ee' \\
			\Cc' \rar[swap]{F^{\prime\prime}} & \Dd''.
		\end{tikzcd}
		\]
		\begin{enumerate}
			\item If the functors $F$, $K$, $E$, and $E'$ have left adjoints $F_!$, $K_!$, $E_!$, and $E'_!$, then the composite
			\[
			(E'K)_!K \simeq K_!E'_!K \xrightarrow{\BC_!(\beta)} K_!HE_! \xrightarrow{\BC_!(\alpha)} GF_!E_! \simeq G(EF)_!
			\]
			is homotopic to $\BC_!(\alpha\beta)$.
			\item If the functors $G$, $H$, and $K$ admit right adjoints $G_*$, $H_*$, and $K_*$, then the composite
			\[
			EFG_* \xrightarrow{\BC_*(\beta)} EH_*K \xrightarrow{\BC_*(\alpha)} K_*E'K
			\]
			is homotopic to $\BC_*(\alpha\beta)$.
			\item If the functors $F$, $K$, and $F^{\prime\prime}$ have left adjoints $F_!$, $K_!$, and $F^{\prime\prime}_!$, then the composite
			\[
			F^{\prime\prime}_!H'H \xrightarrow{\BC_!(\gamma)} G'K_!H \xrightarrow{\BC_!(\alpha)} G'GF_!
			\]
			is homotopic to $\BC_!(\alpha\gamma)$.
			\item If the functors $G$, $G'$, $H$, and $H'$ admit right adjoints $G_*$, $G'_*$, $H_*$, and $H'_*$, then the composite
			\[
			F(G'G)_* = FG_*G'_* \xrightarrow{\BC_*(\gamma)} H_*KG'_* \xrightarrow{\BC_*(\alpha)} H_*H'_*F^{\prime\prime} = (H'H)_*F^{\prime\prime}
			\]
			is homotopic to the $\BC_*(\alpha\gamma)$.
		\end{enumerate}
	\end{lemma}
	\begin{proof}
		This follows directly from the definitions and the triangle identities; see \cite[Proposition~2.2]{KellyStreet1974Mates} for details.
	\end{proof}

	We may specialize the definition of Beck--Chevalley transformation to the case where some of the functors are the identity. In this case, the resulting transformation is known as the \textit{total mate}:

	\begin{definition}[Total mate transformation]
		\label{def:MateTransformations}
		Consider two functors $L,L'\colon \Cc \to \Dd$ that admit right adjoints $R,R' \colon \Dd \to \Cc$.
		\begin{enumerate}
			\item Given a transformation $\alpha\colon L \to L'$, we obtain a \textit{(right) total mate} $\overline{\alpha} = \BC_*(\alpha)\colon R' \to R$;
			\item Given a transformation $\beta\colon R' \to R$, we obtain a \textit{(left) total mate} $\overline{\beta} = \BC_!(\beta) \colon L \to L'$.
		\end{enumerate}
		It follows from \Cref{lem:MateEquivalence} that $\overline{\alpha} \simeq \beta$ if and only if $\overline{\beta} \simeq \alpha$, in which case we will say that $\alpha$ and $\beta$ are \textit{total mates} of each other.
	\end{definition}

	\begin{lemma}[Total mates of composites]
		\label{lem:TotalMateOfComposite}
		Consider three functors $L,L',L''\colon \Cc \to \Dd$ with right adjoints $R$, $R'$, and $R''$. Then for all transformations $\alpha\colon L \to L'$ and $\alpha'\colon L' \to L''$, there is an equivalence
		\[
		\overline{\alpha' \circ \alpha} \simeq \overline{\alpha'} \circ \overline{\alpha^{\vphantom{\prime}}} \quad \in \quad \pi_0\,\mathrm{Nat}(R'',R).
		\]
	\end{lemma}
	\begin{proof}
		This is a special case of \Cref{lem:PastingLemmaBeckChevalley}.
	\end{proof}

	\begin{corollary}[Total mates of inverses]
		\label{cor:TotalMateOfInverse}
		Consider functors $L,L' \colon \Cc \to \Dd$ with right adjoints $R$ and $R'$. Then a transformation $\alpha\colon L \to L'$ is an equivalence if and only if its total mate $\overline{\alpha}\colon R' \to R$ is an equivalence. In this case, the inverse of $\overline{\alpha}\colon R' \to R$ is the total mate of $\alpha^{-1}\colon L' \to L$.
	\end{corollary}
	\begin{proof}
		This is immediate from \Cref{lem:TotalMateOfComposite}.
	\end{proof}

	Given a transformation $\alpha$ as in \Cref{def:BeckChevalley}, if both $\BC_!(\alpha)$ and $\BC_*(\alpha)$ are defined, then these transformations are the total mates of each other:

	\begin{lemma}
		\label{lem:MateOfBeckChevalley}
		Consider again a natural transformation $\alpha\colon HF \to KG$ as in \Cref{def:BeckChevalley}, and assume that the following conditions are satisfied:
		\begin{enumerate}
			\item The functors $F$ and $K$ have left adjoints $F_!\colon \Dd \to \Cc$ and $K_!\colon \Dd' \to \Cc'$;
			\item The functors $G$ and $H$ have right adjoints $G_*\colon \Cc' \to \Cc$ and $H_*\colon \Dd' \to \Dd$.
		\end{enumerate}
		Then the left Beck--Chevalley transformation $\BC_!(\alpha)\colon K_! H \Rightarrow G F_!$ is the total mate of the right Beck--Chevalley transformation $\BC_*(\alpha)\colon F G_* \Rightarrow H_* K$. In particular the former transformation is an equivalence if and only if the latter is.
	\end{lemma}
	\begin{proof}
		The total mate of $\BC_!(\alpha)\colon K_! H \Rightarrow G F_!$ is defined as the image of $\BC_!(\alpha)$ under the bijection
		\[
		\BC_*\colon \pi_0\,\mathrm{Nat}(K_! H,GF_!) \xrightarrow{\;\cong\;} \pi_0\,\mathrm{Nat}(FG_*,H_*K).
		\]
		Since $K_! H$ and $GF_!$ are composites of adjoints, this map factors as
		\[
		\pi_0\,\mathrm{Nat}(K_! H,GF_!) \xrightarrow[\cong]{\,\BC_*\,} \pi_0\,\mathrm{Nat}(HF,KG) \xrightarrow[\cong]{\,\BC_*\,} \pi_0\,\mathrm{Nat}(FG_*,H_*K).
		\]
		Since the first map sends $\BC_!(\alpha)$ to $\alpha$ by \Cref{lem:MateEquivalence}, the claim follows.
	\end{proof}
	\bibliographystyle{amsalpha}
	\bibliography{reference}

\newcommand{\etalchar}[1]{$^{#1}$}
\providecommand{\bysame}{\leavevmode\hbox to3em{\hrulefill}\thinspace}
\providecommand{\MR}{\relax\ifhmode\unskip\space\fi MR }
\providecommand{\MRhref}[2]{%
  \href{http://www.ams.org/mathscinet-getitem?mr=#1}{#2}
}
\providecommand{\href}[2]{#2}
\begin{thebibliography}{BDG{\etalchar{+}}16}

\bibitem[ABFJ22]{abfj-left-exact}
Mathieu Anel, Georg Biedermann, Eric Finster, and Andr{\'e} Joyal,
  \emph{Left-exact localizations of {{\(\infty\)}}-topoi. {I}: {Higher}
  sheaves}, Adv. Math. \textbf{400} (2022), 64pp., Id/No 108268.

\bibitem[BD20]{balmerAmbrogio_Mackey}
Paul Balmer and Ivo Dell'Ambrogio, \emph{Mackey 2-functors and {Mackey}
  2-motives}, EMS Monogr. Math., European Mathematical Society (EMS), 2020.

\bibitem[BDG{\etalchar{+}}16]{exposeI}
Clark Barwick, Emanuele Dotto, Saul Glasman, Denis Nardin, and Jay Shah,
  \emph{{Parametrized higher category theory and higher algebra: Exposé I --
  Elements of parametrized higher category theory}}, arXiv:1608.03657 (2016).

\bibitem[BDS16]{GrothendieckNeeman16}
Paul Balmer, Ivo Dell'Ambrogio, and Beren Sanders, \emph{Grothendieck-{Neeman}
  duality and the {Wirthm{\"u}ller} isomorphism}, Compos. Math. \textbf{152}
  (2016), no.~8, 1740--1776.

\bibitem[CLL23]{CLL_Global}
Bastiaan Cnossen, Tobias Lenz, and Sil Linskens, \emph{{Parametrized stability
  and the universal property of global spectra}}, arXiv:2301.08240 (2023).

\bibitem[Cno23]{Cnossen2023PhD}
Bastiaan Cnossen, \emph{Twisted ambidexterity in equivariant homotopy theory:
  Two approaches}, Ph.D. thesis, University of Bonn, 2023.

\bibitem[CSY22]{CSY20}
Shachar Carmeli, Tomer~M. Schlank, and Lior Yanovski, \emph{Ambidexterity in
  chromatic homotopy theory}, Invent. Math. \textbf{228} (2022), no.~3,
  1145--1254.

\bibitem[Elm83]{elmendorf}
Anthony~D. Elmendorf, \emph{Systems of fixed point sets}, Trans. Amer. Math.
  Soc. \textbf{277} (1983), 275--284.

\bibitem[Hau17]{hausmann-equivariant}
Markus Hausmann, \emph{{{\(G\)}}-symmetric spectra, semistability and the
  multiplicative norm}, J. Pure Appl. Algebra \textbf{221} (2017), no.~10,
  2582--2632.

\bibitem[Hau19]{hausmann-global}
\bysame, \emph{{Symmetric spectra model global homotopy theory of finite
  groups}}, {Algebr. Geom. Topol.} \textbf{19} (2019), no.~3, 1413--1452.

\bibitem[HHLN23]{HHLNa}
Rune Haugseng, Fabian Hebestreit, Sil Linskens, and Joost Nuiten,
  \emph{Two-variable fibrations, factorisation systems and
  {{\(\infty\)}}-categories of spans}, Forum Math. Sigma \textbf{11} (2023),
  70, Id/No e111.

\bibitem[Hil24]{hilman2022parametrised}
Kaif Hilman, \emph{Parametrised presentability over orbital categories}, Appl.
  Categ. Struct. \textbf{32} (2024), no.~3, 53pp., Id/No 15.

\bibitem[HSS00]{hss}
Mark {Hovey}, Brooke {Shipley}, and Jeff {Smith}, \emph{{Symmetric spectra}},
  {J. Amer. Math. Soc.} \textbf{13} (2000), no.~1, 149--208.

\bibitem[HY17]{descent-lim}
Asaf Horev and Lior Yanovski, \emph{On conjugates and adjoint descent},
  Topology Appl. \textbf{232} (2017), 140--154.

\bibitem[KS74]{KellyStreet1974Mates}
G.~Maxwell Kelly and Ross Street, \emph{Review of the elements of
  2-categories}, Category {Sem}., {Proc}., {Sydney} 1972/1973, {Lect}. {Notes}
  {Math}. 420, 75-103 (1974), 1974.

\bibitem[Len25]{g-global}
Tobias Lenz, \emph{{{\(G\)}-Global Homotopy Theory and Algebraic
  {{\(K\)}}-Theory}}, Mem. Amer. Math. Soc. \textbf{306} (2025), no.~1545.

\bibitem[Lin24]{Linskens2023globalization}
Sil Linskens, \emph{Globalizing and stabilizing global $\infty$-categories},
  arXiv:2401.02264 (2024).

\bibitem[LNP22]{LNP}
Sil Linskens, Denis Nardin, and Luca Pol, \emph{Global homotopy theory via
  partially lax limits}, to appear in \textit{Geom.~Topol.}, arXiv:2206.01556
  (2022).

\bibitem[Lur09]{HTT}
Jacob Lurie, \emph{Higher topos theory}, Ann.~Math.~Stud., vol. 170, Princeton
  University Press, Princeton, NJ, 2009, updated version available at
  \url{https://people.math.harvard.edu/~lurie/papers/highertopoi.pdf}.

\bibitem[Lur17]{HA}
\bysame, \emph{Higher algebra},
  \url{https://www.math.ias.edu/~lurie/papers/HA.pdf} (2017).

\bibitem[Lur18]{SAG}
\bysame, \emph{{Spectral Algebraic Geometry}}, {under construction (version
  dated February 2018), \url{www.math.ias.edu/~lurie/papers/SAG-rootfile.pdf}}
  (2018).

\bibitem[Mar21]{martini2021yoneda}
Louis Martini, \emph{Yoneda's lemma for internal higher categories},
  arXiv:2103.17141 (2021).

\bibitem[MMO17]{mmo}
J.~Peter {May}, Mona {Merling}, and Ang{\'e}lica~M. {Osorno},
  \emph{{Equivariant Infinite Loop Space Theory, the Space Level Story}}, {to
  appear in \textit{Mem.~Amer.~Math.~Soc.}, {arXiv:1704.03413}} (2017).

\bibitem[MW22]{martiniwolf2022presentable}
Louis Martini and Sebastian Wolf, \emph{Presentable categories internal to an
  $\infty$-topos}, arXiv:2209.05103 (2022).

\bibitem[MW24]{martiniwolf2021limits}
\bysame, \emph{Colimits and cocompletions in internal higher category theory},
  High. Struct. \textbf{8} (2024), no.~1, 97--192.

\bibitem[Nar16]{nardin2016exposeIV}
Denis Nardin, \emph{{Parametrized higher category theory and higher algebra:
  Expos\'e IV -- Stability with respect to an orbital $\infty$-category}},
  arXiv:1608.07704 (2016).

\bibitem[Nar17]{nardin2017thesis}
\bysame, \emph{Stability and distributivity over orbital $\infty$-categories},
  Ph.D. thesis, Massachusetts Institute of Technology, 2017.

\bibitem[{Nic}22]{nickel}
Jana {Nickel}, \emph{{The tom Dieck splitting in global equivariant stable
  homotopy theory}}, Master's thesis, {Rheinische
  Friedrich-Wilhelms-Universität Bonn}, 2022.

\bibitem[Ost16]{ostermayr}
Dominik Ostermayr, \emph{Equivariant {{\(\Gamma\)}}-spaces}, Homology Homotopy
  Appl. \textbf{18} (2016), no.~1, 295--324.

\bibitem[Sch18]{schwede2018global}
Stefan Schwede, \emph{Global homotopy theory}, {New Math. Monogr.}, vol.~34,
  Cambridge University Press, 2018.

\bibitem[Sha23]{shah2021parametrized}
Jay Shah, \emph{Parametrized higher category theory}, Algebr. Geom. Topol.
  \textbf{23} (2023), no.~2, 509--644.

\bibitem[{Shi}89]{shimakawa}
Kazuhisa {Shimakawa}, \emph{{Infinite loop $G$-spaces associated to monoidal
  $G$-graded categories}}, {Publ. Res. Inst. Math. Sci.} \textbf{25} (1989),
  no.~2, 239--262.

\bibitem[{Shi}91]{shimakawa-simplify}
\bysame, \emph{{A note on $\Gamma_G$-spaces}}, Osaka J. Math. \textbf{28}
  (1991), no.~2, 223--228.

\bibitem[{Ste}16]{cellular}
Marc {Stephan}, \emph{{On equivariant homotopy theory for model categories}},
  {Homology Homotopy Appl.} \textbf{18} (2016), no.~2, 183--208.

\bibitem[tD75]{tomDieck}
Tammo tom Dieck, \emph{Orbittypen und {\"a}quivariante {Homologie}. {II}},
  Arch. Math. \textbf{26} (1975), 650--662.

\end{thebibliography}
\end{document}